\newcommand{\E}{\mathbb{E}}
\renewcommand{\Pr}{\mathbb{P}}
\NewDocumentCommand{\MM}{gg}{
	\IfNoValueTF{#2}
		{\IfNoValueTF{#1}
			{M}
			{M(#1)}}
		{M^{#1}(#2)}
}
\newcommand{\ceil}[1]{\lceil #1 \rceil}
\newcommand{\floor}[1]{\lfloor #1 \rfloor}
\newcommand{\dr}[1]{\textnormal{Dr}(#1)}
\newcommand{\reals}{\mathbb{R}}
\newcommand{\nlevs}{h}
\newcommand{\dbar}{\overline{d}}
\newcommand{\calE}{\mathcal{E}}
\newcommand{\calG}{\mathcal{G}}
\newcommand{\calS}{\mathcal{S}}
\newcommand{\eps}{\varepsilon}
\renewcommand{\epsilon}{\eps}
\newcommand{\phif}{\phi_{\!f}}
\newcommand{\psif}{\psi_{\!f}}
\newcommand{\pfix}[1]{f_{#1}}
\newtheorem{theorem}{Theorem}
\newtheorem{lemma}[theorem]{Lemma}
\newtheorem{corollary}[theorem]{Corollary}
\theoremstyle{definition}
\newtheorem{definition}[theorem]{Definition}
\newtheorem{observation}[theorem]{Observation}
\newtheorem{remark}[theorem]{Remark}
\newcommand{\tagme}{\stepcounter{equation}\tag{\theequation}}
\newtheorem*{rep@theorem}{\rep@title}
\newcommand{\newreptheorem}[2]{%
	\newenvironment{rep#1}[1]{%
		\def\rep@title{#2 \ref{##1} (restated)}%
		\begin{rep@theorem}}%
		{\end{rep@theorem}}}
\title {Phase Transitions of the Moran Process and Algorithmic~Consequences\thanks{The research leading to these results has received funding from the European Research Council under the European Union's Seventh Framework Programme (FP7/2007--2013) ERC grant agreement no.\ 334828. The paper reflects only the authors' views and not the views of the ERC or the European Commission. The European Union is not liable for any use that may be made of the information contained therein.}}
\author{Leslie Ann Goldberg, John Lapinskas and David Richerby\thanks{University of Oxford, UK. \texttt{\{leslie.goldberg, john.lapinskas, david.richerby\}@cs.ox.ac.uk}.}}
\date{}
\begin{document}
\maketitle{}

\begin{abstract}
The Moran process is a random process that models the spread of
genetic mutations through graphs.  On connected graphs, the
process eventually reaches ``fixation'', where all vertices are
mutants, or ``extinction'', where none are.

Our main result is an almost-tight upper bound on expected absorption
time.  For all $\epsilon>0$, we show that the expected absorption time
on an $n$-vertex graph is $o(n^{3+\epsilon})$.  Specifically, it is at
most $n^3e^{O((\log\log{}n)^3)}$, and there is a family of graphs
where it is $\Omega(n^3)$.  In proving this, we establish a phase
transition in the probability of fixation, depending on mutants'
fitness~$r$.  We show that no similar phase transition occurs for
digraphs, where it is already known that the expected absorption time
can be exponential.  Finally, we give an improved FPRAS for
approximating the probability of fixation.  On degree-bounded graphs
where some basic properties are given, its running time is independent
of the number of vertices.

    \paragraph{Keywords.} Moran process, evolutionary dynamics, absorption time, fixation probability.
\end{abstract}

\section{Introduction}

The Moran process~\cite{Mor1958}, as generalised by Lieberman, Hauert
and Nowak~\cite{LHN2005}, is a random process that models the
spread of genetic mutations through spatially structured populations.
Similar models have been applied to the study of epidemics, voting preferences, monopolies, ideas in social networks, and neural development~\cite{Ber2001:Monopolies,KKT2015:Influence,Lig1999:IntSys,Turney:Neuro}.
The individuals in the population are represented as the vertices of a
graph, which may be directed or undirected.  Initially, a single
vertex is chosen uniformly at random to possess some mutation.
Individuals that have the mutation are known as ``mutants'' and have
fitness given by some positive real number~$r$; individuals that do
not possess the mutation are ``non-mutants'' and have fitness~$1$.
Generally the size of the graph is assumed to be large relative to~$r$, which is seen as fixed.

At each step of the process, a vertex~$x$ is chosen with probability
proportional to its fitness.  A neighbour~$y$ of~$x$ (out-neighbour,
if the graph is directed) is then chosen uniformly at random and the
mutant/non-mutant status of~$x$ is copied to~$y$. We call the state in which every vertex is a mutant ``fixation'', and the state in which no vertex is a mutant ``extinction''; note that each of these states is absorbing.
The graph is generally assumed to
be connected (if it is undirected) or strongly connected (if it is directed)
which ensures that with probability~$1$, one of these states is reached. The principal
quantities of interest are the fixation probability, which is the probability
of reaching the all-mutant state; and the expected absorption time,
which is the expected number of steps the process runs before
terminating at fixation or extinction.

\subsection{Absorption time}

Our main result is an upper bound on the expected absorption time
of the Moran process with any positive fitness $r \ne 1$, which is almost tight on the family of all undirected graphs.
It is known that there are classes of digraphs on which the
expected absorption time is exponential in the number of
vertices~\cite{MoranAbsorb}, so we focus on undirected graphs.
D\'iaz \emph{et al.}~\cite{DGMRSS2014} showed that the expected absorption time on an $n$-vertex undirected graph
is $O(n^4)$, but noted that they were unaware of any class of graphs 
with an absorption time of $\omega(n^3)$.  We almost
close this gap.

\begin{theorem}\label{thm:abs-time-intro}
	For all positive $r \ne 1$ and all $\epsilon>0$, the expected absorption time
	of the Moran process with fitness~$r$ on a connected undirected $n$-vertex
	graph is $o(n^{3+\epsilon})$.
\end{theorem}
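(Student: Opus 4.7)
The plan is to sharpen the drift argument of D\'iaz \emph{et al.}\ that gives the $O(n^4)$ bound. Their proof uses the potential $\phi(S) = \sum_{v \in S} 1/\deg(v)$, and a direct calculation shows that the expected one-step change in $\phi$ from a non-absorbing state $S$ equals $(r-1)\gamma(S)/Z_S$, where $Z_S = r|S| + (n-|S|) = \Theta(n)$ and $\gamma(S) = \sum_{xy \in E(S, V\setminus S)} 1/(\deg(x)\deg(y))$. Since $\phi(V) \le n$ and trivially $\gamma(S) \ge 1/(n-1)^2$ on any non-absorbing state, optional stopping (or a Hajek-style drift lemma) yields $\E[\tau] = O(n^4/|r-1|)$. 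Saving the factor of nearly $n$ that separates this from the target bound requires a more refined accounting of time spent in states where $\gamma(S)$ is close to $1/n^2$.

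The first step is structural: classify the ``critical'' configurations, say those with $\gamma(S) \le n^{-2+\delta}$ for a small constant $\delta>0$. Small $\gamma(S)$ forces the cut $E(S, V\setminus S)$ to be simultaneously small \emph{and} concentrated on edges between vertices of degree $\Theta(n)$. This rigidity means a critical mutant set $S$ must lie, in Hamming distance, close to a set $S^{\ast}$ with very sparse vertex boundary inside the high-degree part of the graph, and only a handful of such $S^\ast$ can exist per graph.

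The second step is dynamical: partition the state space into dyadic shells by the value of $\gamma(S)$ and bound the total expected time spent in each shell separately. On non-critical shells the drift estimate is already strong enough to give a total of at most $\tilde O(n^3)$ steps. On critical shells, I would argue that over any window of $(\log n)^{O(1)}$ steps the process either escapes the shell or improves $\phi$ by an amount that cannot be reversed too many times; the key input is the structural rigidity above, which ensures that perturbations of $S$ inside a critical shell almost always trigger transitions that either increase $\phi$ noticeably or push $S$ out of the shell entirely.

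The main obstacle I anticipate is controlling oscillations among small families of critical configurations. Even if each individual critical state is visited only rarely, the process could flip repeatedly between two or three nearly equivalent critical states without ever leaving the bad regime. To control this, I expect to need a hierarchical argument organised by dyadic scales of the degree sequence: iterate over high-degree thresholds from largest downwards, and at each scale bound the time contributed by critical configurations whose defining high-degree set lives at that scale. The sharper bound $n^3 e^{O((\log\log n)^3)}$ advertised in the abstract is consistent with telescoping over $O(\log\log n)$ degree scales, each level contributing a polylogarithmic overhead — which is exactly where I would expect the technical heart of the argument to lie.
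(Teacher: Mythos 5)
Your proposal correctly identifies the bottleneck (time spent in states where the drift $\gamma(S)=\dr{S,V\setminus S}$ is close to its minimum) and two ingredients that do appear in the paper's argument: a dyadic decomposition of the small-drift states by the value of the drift, and the observation that a set with small drift must be large and have a small cut concentrated on high-degree vertices. However, the mechanism you propose for the critical shells does not work as stated, and the two ideas that actually carry the proof are absent. First, you cannot argue that within a polylogarithmic window the process ``either escapes the shell or improves $\phi$ irreversibly'': the double star itself shows the process can sit inside a single critical configuration for $\Theta(n^3)$ steps with $\phi$ oscillating, so any per-window progress claim of that form is false. What the paper proves instead is that once the process \emph{reaches} a critical set (a ``barrier''), it is overwhelmingly likely to fixate without ever losing the mutants in that set; this is the genuinely hard part, occupying most of the paper. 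It cannot be extracted from $\phi$ alone (the supermartingale $r^{-\phi\delta(G)}$ is useless when $\delta(G)=1$ and $\phi$ of the barrier is small); it requires constructing a discounted potential $\phif(S)=\sum_{v\in S}f(v)/d(v)$ from a carefully built layered partition of the vertex set, in which low-degree vertices get exponentially down-weighted and almost all drift flows between adjacent layers. Your ``structural rigidity'' of critical configurations is a gesture in this direction but is not substantiated, and the claim that only ``a handful'' of critical sets exist per graph is not the right statement (the paper instead shows the \emph{minimal} barriers are pairwise disjoint, hence at most $n$ of them).

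Second, your stated main obstacle --- oscillation among a small family of critical states --- is resolved in the paper not by a degree-scale hierarchy but by a nesting argument: with probability $1-n^{-\Omega(1)}$ the barriers encountered before absorption form a strictly increasing chain $S_1\subset S_2\subset\cdots$, precisely because reaching a barrier means (w.h.p.) never emptying it of mutants again, and returning to a barrier after the potential has risen by $(\log n)^2$ is exponentially unlikely. The total time is then bounded by summing $n(\log n)^2/\dr{S_i,\overline{S_i}}$ over a nested chain, which the dyadic-shell decomposition controls at $O(n^3\,\mathrm{polylog}\,n)$ because many barriers in the same shell would force $|S_{i_{j+1}}\setminus S_{i_j}|$ to be large too many times. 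Finally, the $e^{O((\log\log n)^3)}$ factor does not come from telescoping over $O(\log\log n)$ degree scales; it comes from the threshold defining a barrier, which must be small enough that a barrier has $\exp(\Theta((\log\log n)^3))$ vertices --- the size needed for the layered-partition construction (whose number of layers is the cube root of the log of the set size) to deliver a $1-1/\mathrm{poly}(n)$ fixation guarantee.
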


More specifically, we show that, for all positive $r \ne 1$, there is a real
number~$C_r$ such that the expected absorption time is at most $n^3
e^{C_r(\log \log n)^3}$. (See Theorem~\ref{thm:tabs-detail} in
Section~\ref{sec:abs-time}.)
Theorem~\ref{thm:abs-time-intro} is almost tight: we show that a
family of graphs known as ``double stars'', consisting of two disjoint
equal-sized stars with an edge between their centres, have expected
absorption time $\Omega(n^3)$ (see Theorem~\ref{thm:dubstar} in Section~\ref{sec:dubstar}).

To prove Theorem~\ref{thm:abs-time-intro}, first note that the $O(n^4)$ bound of~\cite{DGMRSS2014} is derived using a lower bound on the expected change, at any given time step, in the potential function $\phi(S) = \sum_{v\in S}1/d(v)$, where $S$~is the set of mutants and $d(v)$ is the degree of the vertex~$v$. A key notion in the proof of Theorem~\ref{thm:abs-time-intro} is that of a barrier, a set of vertices from which this expected change is small (substantially less than $1/n^2$). For example, each star in the double star is a barrier. If~$G$ contains no barriers, then it is easy to prove Theorem~\ref{thm:abs-time-intro} using the methods of~\cite{DGMRSS2014}. Dealing with barriers is substantially more difficult, and takes up the majority of the paper. It turns out that if the Moran process reaches a barrier~$S$, it is overwhelmingly likely that it subsequently fixates, and that, if it encounters another barrier~$S'$ before doing so, then $S' \supset S$. We use this fact, together with a structural result limiting the total ``time cost'' of any chain of barriers, to prove Theorem~\ref{thm:abs-time-intro}.

We also apply the machinery developed over the course of proving Theorem~\ref{thm:abs-time-intro} to prove the other main result of the paper, so we discuss it further in the next section.

\subsection{Phase transitions}\label{sec:intro-pt}

In a connected regular $n$-vertex digraph, such as the clique (which corresponds to the original Moran process of~\cite{Mor1958}), 
the number of mutants in a fitness-$r$ Moran process evolves as a simple random walk on $\{0, \dots, n\}$ with forward probability $r/(1+r)$~\cite{LHN2005}. This leads to a phase transition between three qualitatively different regimes.

\begin{itemize}
	\item If $r>1$, the fixation probability is $(1-\tfrac1r)/
	(1-\tfrac1{r^n})$.  This is bounded below by the constant
	$1-\tfrac1r$.
	\item If $r=1$, the fixation probability is~$1/n$, which converges to zero
	as $n$~increases.
	\item If $r<1$, the fixation probability is again $(1-\tfrac1r)/
	(1-\tfrac1{r^n})$, but this now converges to zero exponentially
	quickly as~$n$ increases.
\end{itemize}

This raises the question of whether all families of graphs exhibit a significant difference in behaviour between the three regimes described above. We show that this is the case for undirected graphs, writing~$\pfix{G,r}$ for the fixation probability of a Moran process on $G$ with fitness $r$.

\begin{theorem}\label{thm:new-phase-trans}
	For all $C,r>0$, for all sufficiently large connected undirected graphs $G$, we have
	\begin{alignat*}{2}
	f_{G,r} &< 1/n^{C}&\textnormal{ if }r<1,\\
	f_{G,r} &= 1/n&\textnormal{ if }r=1,\\
	f_{G,r} &> (\log n)^C/n\quad&\textnormal{ if }r>1.
	\end{alignat*}
\end{theorem}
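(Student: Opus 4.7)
The case $r=1$ is immediate from the classical Moran martingale. A direct calculation shows that the potential $\phi(S)=\sum_{v\in S}1/d(v)$ has zero expected change when $r=1$ and is therefore a martingale. Averaging the initial value over the uniform choice of mutant gives $\E[\phi(S_0)]=\phi(V)/n$, and optional stopping at the absorbing states $\emptyset$ and $V$ yields $f_{G,1}\cdot\phi(V)=\phi(V)/n$, so $f_{G,1}=1/n$ exactly.

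For $r>1$, the same $\phi$ is a submartingale and immediately yields the weaker bound $f_{G,r}\geq 1/n$. My plan is to sharpen this by a polylogarithmic factor using the barrier machinery underlying Theorem~\ref{thm:abs-time-intro}. I would split into cases. If~$G$ has no barrier, then $\phi$ has expected per-step drift of $\Omega(1/n^2)$ at every non-absorbing state; combining this with the $n^{3+o(1)}$ absorption-time bound of Theorem~\ref{thm:abs-time-intro} via a Hajek-style drift argument amplifies the submartingale inequality to $f_{G,r}\geq(\log n)^C/n$. If~$G$ does contain a barrier, then the structural fact established in the proof of Theorem~\ref{thm:abs-time-intro}---namely, that once a barrier has been reached the process fixates with probability $1-o(1)$---reduces the task to lower-bounding the probability of entering some barrier from the initial mutant, which can be controlled using the drift of $\phi$ in the barrier-free part of the state space together with the degree distribution at the starting vertex.

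For $r<1$, $\phi$ is a supermartingale and the classical argument gives only $f_{G,r}\leq 1/n$; the task is to amplify this all the way down to $1/n^C$ for an arbitrary constant $C$. I would exploit the symmetry between fitness~$r$ and fitness~$1/r$: swapping mutant and non-mutant labels converts the $r$-process from a single mutant at~$v$ into the $(1/r)$-process from the set $V\setminus\{v\}$, and mutant fixation in the former corresponds to extinction of the single non-mutant in the latter. Hence $f_{G,r}=1-(1/n)\sum_v f_{G,1/r}(V\setminus\{v\})$, and it suffices to show that the $(1/r)$-process (with $1/r>1$) started from $V\setminus\{v\}$ fixates with probability at least $1-n^{-C-1}$ on average over~$v$. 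The submartingale bound alone only delivers $1-O(1/d(v))$, which is useless when $v$ has constant degree, so the main obstacle---and the step where the barrier machinery is essential---is to sharpen this to super-polynomial decay. I would do so by a drift analysis of $\phi$ near the all-mutant state: any long-lived non-mutant lineage would either have to push the process into a ``complementary'' barrier (which the structural results from Theorem~\ref{thm:abs-time-intro}'s proof rule out), or else would have to accumulate an $\Omega(1)$ deficit against the $\Omega(1/n^2)$ drift, which occurs with probability at most $n^{-C}$ by a concentration argument on the supermartingale $-\phi$.
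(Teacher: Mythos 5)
Your $r=1$ case and the duality reduction for $r<1$ match the paper, but the amplification steps for $r<1$ and $r>1$ --- which are the entire content of the theorem --- have genuine gaps. The central obstacle, which the paper spends all of Section~\ref{sec:layers} overcoming, is that when $\phi(V)$ is small the plain potential $\phi$ admits single-step jumps of size up to $1/\delta(G)$, which can be a constant fraction of $\phi(V)$ if $G$ has low-degree vertices. For $r<1$ (equivalently, fixation of the dual $1/r$-process from $V\setminus\{v\}$), your ``concentration argument on the supermartingale $-\phi$'' is exactly what fails here: Azuma-type bounds degrade with the number of steps, and the exponential supermartingale of Lemma~\ref{lem:multsubmart} only gives extinction probability at most $r^{-\phi(X)\delta(G)}$, which is a constant --- not $n^{-C}$ --- when $\phi(V)=O(1)$ and $\delta(G)=1$. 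For $r>1$, a per-step drift lower bound of $\Omega(1/n^2)$ together with an absorption-time upper bound does not convert into a fixation-probability lower bound: additive drift controls hitting times, not absorption probabilities, and optional stopping on $\phi$ alone can never beat $\phi(M(0))/\phi(V)$, i.e.\ $1/n$ on average. The paper's gain of $e^{\Omega((\log n)^{1/3})}$ comes specifically from replacing $\phi$ by the discounted potential $\phi_f$ built from the layered partition (Lemma~\ref{lem:true-potential}), whose maximum single-vertex weight $m_f\le\lambda^{-h}$ is tiny; this is what makes both the two-stage argument of Theorem~\ref{thm:advantageous} and the doubly-exponential extinction bound of Theorem~\ref{thm:disadvantageous} work. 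Your proposal constructs no such modified potential, so neither direction of the amplification goes through.

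A second, structural problem: the ``barrier machinery'' you invoke is not available as an independent tool. The statement that the process fixates with high probability once it meets a barrier is Lemma~\ref{lem:magic}, and its proof consists precisely of Lemmas~\ref{lem:layered} and~\ref{lem:killers}, i.e.\ the layered-partition construction and the validity framework of Section~\ref{sec:discount-pot}. So appealing to the internals of Theorem~\ref{thm:abs-time-intro} to prove Theorem~\ref{thm:new-phase-trans} presupposes the hard technical work rather than replacing it; moreover, even granting Lemma~\ref{lem:magic}, your plan in the barrier case still requires lower-bounding the probability that a single initial mutant grows to engulf an entire barrier, for which you give no mechanism and which is essentially as hard as the original problem.
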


In fact, we prove substantially stronger bounds: for a suitable choice of $c_r>0$, we show that if~$r<1$, then $f_{G,r} \le e^{-\exp(c_r(\log n)^{1/3})}$, and if $r>1$, then $f_{G,r} \ge e^{c_r(\log n)^{1/3}}/n$.  (See Theorems~\ref{thm:disadvantageous} and~\ref{thm:advantageous} in Section~\ref{sec:layers}.) Note that the $r=1$ case of Theorem~\ref{thm:new-phase-trans} is not original to this paper; see Section~\ref{sec:rel-work} for a discussion of related work.

In proving Theorem~\ref{thm:new-phase-trans}, the main difficulty lies in the case where $\phi(V)$ is small.
For example, consider the upper bound when $r<1$. In this case, the main obstacle is that, even though $\phi$ is backward-biased in expectation and starts close to $0$, it could still increase by a significant proportion of $\phi(V)$ in a single step if a low-degree vertex becomes a mutant. This sharply limits the effectiveness of martingale-based bounds. Fortunately, it is easy to see that since $\phi(V)$ is small, low-degree vertices must be rare. We exploit this to define a variant $\phi'$ of $\phi$ which remains backward-biased, but assigns low-degree vertices a substantially reduced weight. When $r>1$, $\phi'$ is instead forward-biased; $\phi'$ is also crucial to the proofs of the lower bound of Theorem~\ref{thm:new-phase-trans}, and of Theorem~\ref{thm:abs-time-intro}, where it is applied to barriers.

In order to ensure that $\phi'$ is backward-biased for $r<1$, it is important to minimise the number of edges whose endpoints receive substantially different weights. We therefore carefully construct a partition $(S_0, \dots, S_h)$ of $V$ into layers, where $h$ is as large as possible, such that there is ``little communication'' between non-adjacent layers and all of the low-degree vertices lie in $S_0$. We then define $\phi'$ by assigning weight $\lambda^i/\lambda^h$ to every vertex in $S_i$  for a suitable choice of $\lambda>1$, so that low-degree vertices receive low weight and ``most communication'' happens between layers with similar weights. The construction of $(S_0, \dots, S_h)$ is by far the most technical part of the paper; a detailed sketch is given in Section~\ref{sec:layers-sketch}, but one way of looking at it is that we start with a partition based on vertex degree, then greedily fix individual forward-biased sets in a very carefully chosen order.

Given Theorem~\ref{thm:new-phase-trans}, it is natural to ask whether an analogous result holds for directed graphs. Perhaps surprisingly, we prove that it does not, giving a family with fixation probability $\Theta(1/n)$ for all $r>0$.

\begin{theorem}\label{thm:dir-nophase}
	There is a class~$\calG$ of strongly connected digraphs and a function $C\colon\mathbb{R}_{>0}\to\mathbb{R}_{>0}$ such that for all $r>0$, for all sufficiently large graphs $G \in \calG$,
    \[
        \frac{1}{C(r) |V(G)|}\leq \pfix{G,r}\leq \frac{C(r)}{|V(G)|}\,.
    \]
\end{theorem}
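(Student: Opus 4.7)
The plan is to prove Theorem~\ref{thm:dir-nophase} by exhibiting an explicit family $\calG$ of strongly connected digraphs whose fixation probability is $\Theta(1/n)$ uniformly over all $r > 0$, with implicit constants depending only on $r$. Since Theorem~\ref{thm:new-phase-trans} rules out such behaviour in the undirected setting, any suitable $\calG$ must fully exploit the asymmetry afforded by directed edges. A natural candidate is a ``cascade'' digraph built around a distinguished seed vertex $v^*$, arranged so that a mutant initially at $v^*$ has a constant (only-$r$-dependent) probability of fixing, while a mutant starting at any other vertex fixes with probability $O(1/n)$.

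One concrete construction worth pursuing: let $G_n$ contain a long directed path $v^* = v_1 \to v_2 \to \cdots \to v_n$, augmented with a minimal set of back-edges (for instance, each $v_i$ with $i \ge 2$ also has an edge to $v^*$) so that $G_n$ is strongly connected. The structure should ensure that (i)~once a mutant is established at $v^*$ there is a constant-probability pathway through the downstream vertices to fixation; but (ii)~a mutant starting at $v_i$ with $i \gg 1$ must first propagate ``upstream'' against the main edge directions and, in the meantime, is likely to be overwritten as soon as a non-mutant vertex (especially $v^*$ itself) is selected and copies to it.

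The proof then splits into three parts. First, \emph{construct} $\calG$ and verify strong connectivity. Second, \emph{prove the upper bound} $\pfix{G,r} \le C(r)/n$: write $\pfix{G,r}$ as the average over the uniform initial vertex $v$ of the conditional fixation probability $h(v)$, so it suffices to show $h(v) = O(1/n)$ for every $v \ne v^*$, with constants depending only on $r$. This should come from a potential-function / drift argument, designing a potential on mutant sets that has strictly negative expected increment at every non-absorbing state not containing $v^*$, even for $r > 1$; applying a standard optional-stopping or martingale inequality then bounds $h(v)$ by the initial potential, which is $O(1/n)$ by construction. Third, \emph{prove the lower bound} $\pfix{G,r} \ge 1/(C(r) n)$ by exhibiting $c(r) > 0$ with $h(v^*) \ge c(r)$: once the mutant reaches a ``committed'' downstream configuration, a direct analysis, or a coupling with a simple biased random walk on the number of downstream mutants, shows fixation occurs with constant probability depending only on~$r$.

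The main obstacle I anticipate is the upper bound for $r > 1$, where advantageous mutants have a nontrivial chance of spreading even from a ``bad'' starting vertex; bounding this by $O(1/n)$ requires a drift argument with strictly negative expected increment for every $r > 0$ at every state that does not contain~$v^*$. This is analogous to the difficulty that motivated the modified potential $\phi'$ described in Section~\ref{sec:intro-pt}, but the freedom to tailor the directed structure of $G_n$ (in particular, to route almost all out-edges back to $v^*$) should allow a simpler, directly-defined potential to suffice without the elaborate layering used there.
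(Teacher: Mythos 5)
Your high-level shape (a distinguished seed vertex with constant conditional fixation probability, all other starting vertices contributing $O(1/n)$) does match how the paper's bounds ultimately decompose, but your concrete construction fails in both directions, and the drift argument you hope for cannot exist for it. Take $r>1$ and a single mutant at $v_i$ with $2\le i\le n-1$. The only in-edge of $v_{i+1}$ is $v_i\to v_{i+1}$, and the only vertex that can overwrite $v_i$ is $v_{i-1}$, which has out-degree~$2$ and fitness~$1$; so, conditioned on a relevant step occurring, the mutant interval's right end advances (rate proportional to $r/2$) before its left end is eroded (rate proportional to $1/2$) with probability $r/(r+1)>1/2$. The mutant interval therefore performs a \emph{forward}-biased race and survives to occupy $\{v_l,\dots,v_n\}$ for some $l$ with probability $\Omega(1)$; from there the back-edges infect $v_1$ at rate $\Omega(rn)$ and the process fixates with probability bounded away from $0$. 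So $\pfix{G_n,r}=\Omega(1)$ for your family, and no potential with negative drift on states avoiding $v^*$ can exist, because the true fixation probability (itself a martingale) is $\Omega(1)$ at reachable states not containing $v^*$. Your lower-bound plan also fails on this graph: $v^*=v_1$ has in-degree $n-1$, so a lone mutant there is overwritten before it ever spawns with probability $1-O(r/n)$, and $h(v^*)$ is $O(r/n)$ rather than constant. (Separately, note that the only nontrivial direction of the lower bound is $r<1$, since $\pfix{G,r}\ge \pfix{G,1}=1/n$ for $r\ge 1$ by monotonicity; your sketch does not engage with that case.)

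The missing idea is that each vertex on the ``gateway'' chain must feel non-mutant pressure that \emph{scales with $r$}. The paper's graph $G_{k,a}$ (Section~\ref{sec:dir-supp}) gives each chain vertex $v_i$ a dedicated set $I_i$ of $a=\ceil{4r}$ in-neighbours, arranged on a long feeder cycle so that they stay non-mutant long enough; then $v_i$ is overwritten at rate at least $a/2\ge 2r$ while it spawns forward at rate only $r$, so the index of the lowest infected $v_j$ is dominated by a gambler's ruin with backward bias $2/3$ versus $1/3$ (Lemma~\ref{lem:dir-tspawn}). This makes the probability of reaching $w_1$ from a start ``at depth $i$'' exponentially small in $i$, and summing the geometric series over the uniform initial mutant gives the $O(1/n)$ upper bound (Theorem~\ref{thm:dir-fixate}). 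Note also that the paper's upper bound is not a potential/drift argument at all: it is a direct coupling with gambler's ruin plus a win/lose/spawn event decomposition and a negative-binomial domination (Lemma~\ref{lem:dir-fixate}). The $r<1$ lower bound (Theorem~\ref{thm:dir-extinct}) is obtained by duality together with an explicit low-probability-but-only-$r$-dependent event that clears a constant-depth prefix, paying the $1/n$ only for the initial mutant landing on $w_1$. If you want to salvage your approach, you must redesign the graph so that every forward step of the mutants is opposed by $\Theta(r)$ protected in-neighbours; a single back-edge per vertex to $v^*$ provides no opposition along the path at all.
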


(See Theorems~\ref{thm:dir-fixate} and~\ref{thm:dir-extinct} in Section~\ref{sec:dir-supp} for the value of $C(r)$.) It remains open whether there exists such a family with~$C(r) \rightarrow 1$ as $r\rightarrow 1$. Note it is important that the graphs in $\calG$ are strongly connected, as if this condition were omitted then Theorem~\ref{thm:dir-nophase} would become trivial; for example, an $n$-vertex directed path attains fixation probability $1/n$ for all $r>0$~\cite{LHN2005}. 

In the study of fixation probability, there has been substantial attention devoted to extremal questions. When $r>1$, a graph is said to be an \textit{amplifier} if its fixation probability is larger than that of the clique, and a \textit{suppressor} if it is smaller. It is natural to ask: how strong can an $n$-vertex amplifier or suppressor be? For amplifiers this problem has essentially been solved for both directed and undirected graphs~\cite{Megastars,Incubators}; see Section~\ref{sec:rel-work} for details. For suppressors, much less is known; to our knowledge, the strongest family of (both directed and undirected) suppressors in the literature is due to Giakkoupis~\cite{Gia2016}, and has fixation probability $O(n^{-1/4}\log n)$. Theorem~\ref{thm:dir-nophase}, together with the fact that $f_{G,r} \ge 1/n$ for all $n$-vertex digraphs~$G$ (see Section~\ref{sec:rel-work}), shows that the strongest directed suppressors achieve fixation probability $\Theta(1/n)$.

We note that in the undirected setting, there is a considerable gap between the lower bound on
fixation probability of Theorem~\ref{thm:new-phase-trans} for $r>1$ and the undirected suppressors of~\cite{Gia2016}.  We narrow this gap by exhibiting a family of undirected graphs with fixation probability $O(n^{-1/2})$; see Section~\ref{sec:undir-supp}.

\begin{theorem}\label{thm:undir-suppress}
    There is an infinite family~$\mathcal{H}$ of connected undirected graphs such that for
    all $r>1$, for all
    sufficiently large $H\in\mathcal{H}$,
    \[
        \pfix{H,r}\leq \frac{10r^2}{|V(H)|^{1/2}}\,.
    \]
\end{theorem}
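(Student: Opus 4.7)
The plan is to construct a family $\mathcal{H}$ of undirected graphs with a designed bottleneck structure, and to bound the fixation probability by splitting the expectation over the initial mutant vertex. The form of the bound $10r^2/\sqrt{n}$---whose $r^2$ factor echoes the asymptotic star fixation probability $1-1/r^2$---suggests combining two star-like or clique-like components across a narrow bottleneck, so that two multiplicative factors of $r$ emerge naturally. A concrete candidate is a graph $H$ that splits into a small ``target'' set $A$ of size $\Theta(\sqrt{n})$ (for example, a clique $K_{\sqrt{n}}$) and a large ``periphery'' of size $n-\Theta(\sqrt{n})$, joined by few edges, such that a peripheral single mutant rarely infects $A$ before dying out.

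Writing $f_v$ for the fixation probability starting from a single mutant at $v$, we have $\pfix{H,r} = n^{-1}\sum_v f_v$. The $O(\sqrt{n})$ vertices in $A$ contribute at most $|A|/n = O(1/\sqrt{n})$ via the trivial bound $f_v \le 1$. For each peripheral $v$, I would decompose fixation into two events: (i) the mutation ever reaches $A$ (some vertex of $A$ becomes mutant), and (ii) given (i), the process fixates. Event (ii) is bounded by $O(r)$ using the standard star/clique fixation analysis restricted to $A$ and its neighbourhood in $H$. Event (i) is bounded by $O(r/\sqrt{n})$ using a supermartingale argument: one defines a weighted potential $\phi'$ on subsets of $V(H)$, analogous to the one constructed in Section~\ref{sec:intro-pt} for the disadvantageous-regime bound in the proof of Theorem~\ref{thm:new-phase-trans}, tailored so that $\phi'$ assigns small weight to the periphery and large weight to $A$. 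An optional-stopping argument then bounds the probability that $\phi'$ ever reaches a value corresponding to ``mutant in $A$'' by roughly $\phi'(\{v\})/\min\{\phi'(S)\colon S\cap A\neq\emptyset\}$, which is $O(r/\sqrt{n})$ by design. Multiplying the bounds for (i) and (ii) gives $f_v = O(r^2/\sqrt{n})$, and averaging over $v$ yields the theorem with the claimed constant $10r^2$.

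\textbf{Main obstacle.} The chief difficulty is showing that event (i) has probability only $O(r/\sqrt{n})$ and not something larger. A direct per-step argument is insufficient: even if each single ``crossing attempt'' has small probability, the mutation can in principle make many attempts by repeatedly approaching and retreating from $A$, and a union bound over attempts would blow up. To rule this out, the construction of $\mathcal{H}$ and the potential $\phi'$ must be chosen jointly so that $\phi'$ has a genuine supermartingale property on the periphery---so that retreats actually push the process toward extinction rather than merely resetting it. I would use the barrier-chain result from Section~\ref{sec:abs-time}, which shows that each barrier can only be entered through an increasing sequence of supersets, to prevent the process from indefinitely cycling through near-crossings. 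Reconciling this supermartingale requirement on the periphery with the large $\phi'$-gap required to make the optional-stopping bound tight is, I expect, the key technical step, and is likely what fixes the precise form of the graphs in $\mathcal{H}$.
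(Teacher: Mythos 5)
Your high-level template---a layered graph, a weighted potential $\sigma$ that is a supermartingale until the mutants reach a designated ``win'' threshold, and an optional-stopping bound $f_v\le\sigma(v)/k$ averaged over $v$---is indeed the skeleton of the paper's proof in Section~\ref{sec:undir-supp}. But there are several concrete gaps. First, your candidate architecture is unlikely to work: if the periphery has $n-\Theta(\sqrt n)$ vertices and is only weakly joined to a small clique $A$, then the periphery is essentially a dense graph on its own, and a single peripheral mutant takes over the periphery with probability $\Theta(1)$ (as in $K_n$, or via Corollary~\ref{cor:Dd}); with $n-O(\sqrt n)$ mutants it then certainly absorbs $A$, so $\pfix{H,r}=\Theta(1)$. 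The actual construction $H_{a,k}$ is quite different: the suppression is driven by a \emph{small} set $V_0$ of $ak$ hub vertices of degree $a^2k^2$ attached to a \emph{large} set $V_1$ of $a^2k^2$ vertices of degree $ak+1$, so that mutants spawned from $V_1$ land almost surely in $V_0$ where they die at rate $\Theta(ak)$, and the ``target'' whose occupation triggers the stopping time is the set $V_3$ of pendant vertices, not a clique. Second, your decomposition of $f_v$ into ``reach $A$'' times ``fixate given reaching $A$'' with bounds $O(r/\sqrt n)$ and $O(r)$ does not produce the $r^2$: a probability bound of $O(r)$ is vacuous, so your argument as stated only yields $O(r/\sqrt n)$, and you have given no mechanism for the second factor of $r$. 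In the paper the $r^2$ arises from the weights themselves ($\sigma_2=2r+kar^2/(a^2k-1)$ and the choice $a=\lceil 7r^2/2\rceil$ forced by the drift inequality of Lemma~\ref{lem:undir-martin}), not from multiplying two crossing probabilities.

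Third, and most seriously, your proposed fix for the ``repeated crossing attempts'' problem---invoking the barrier-chain results of Section~\ref{sec:abs-time}---is the wrong tool pointed in the wrong direction. Those results (e.g.\ Lemma~\ref{lem:magic}) say that once the mutant set nearly fills a low-drift set, fixation is overwhelmingly likely; they give \emph{lower} bounds on fixation from such configurations and cannot be used to upper-bound the probability that the mutants ever reach the target. The correct resolution of the repeated-attempts issue is exactly the global supermartingale you mention in passing: because $\E[\sigma(M(t+1))-\sigma(M(t))\mid M(t)=S]\le 0$ for \emph{every} state $S$ with $0<\sigma(S)<k$, optional stopping bounds $\Pr(\sigma\text{ ever reaches }k)\le\sigma(v)/k$ in one step, with no union bound over excursions. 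Verifying that drift inequality for every such state---which is the entire technical content of Lemma~\ref{lem:undir-martin} and what forces the specific sizes, degrees and weights of the four layers---is precisely the step your proposal defers, so the proof is not yet there.
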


\subsection{Approximation algorithms}

Finally, we consider the computational problem of approximating the fixation probability of an undirected input graph.  We first set out some standard terminology. For all $\eps>0$ and all $x,y\in\mathbb{R}$, we say that $x$ is an \textit{$\eps$-approximation} to $y$ if $(1-\eps)y < x < (1+\eps)y$. Writing $\Sigma^*$ for the set of finite binary strings, consider a computational problem $f\colon\Sigma^* \rightarrow \mathbb{R}_{\ge 0}$. A \emph{randomised approximation scheme (RAS)} for $f$ is a randomised algorithm that takes as input an instance $x \in \Sigma^*$ and a rational error tolerance $\eps \in (0,1)$, and outputs a value which, with probability at least $2/3$, is an $\eps$-approximation to $f(x)$. A \emph{fully polynomial randomised approximation scheme (FPRAS)} is a RAS with running time polynomial in $|x|$ and $\eps^{-1}$. Given a fixed $r>1$, we will consider the problem of approximating $f_{G,r}$ given an undirected input graph $G$ and an error tolerance $\eps$.

Since the expected absorption time of a Moran process 
on an undirected $n$-vertex graph is polynomial in~$n$, and $f_{G,r} \ge 1/n$ for all connected undirected graphs $G$ and all
$r>1$, it is clear that there is an FPRAS for the problem of computing $f_{G,r}$ based on a Monte Carlo approach~\cite[Theorem~13]{DGMRSS2014}.
The FPRAS presented in~\cite{DGMRSS2014} was not optimised, and simulates
$O(n^8\eps^{-4})$ steps of Moran processes. This was later improved by Chatterjee, Ibsen-Jensen and
Nowak~\cite[Theorem~11]{CIN2017}, partly by cleaning up the analysis and
partly by only simulating the steps of the process in which the state actually changes. 
The latter is formalised as an \emph{active Moran process} (see Definition~\ref{def:actm}).
When $G$ has maximum degree~$\Delta$, their algorithm samples $O(n^2\Delta\eps^{-2}\log(\Delta\eps^{-1}))$ steps of active Moran processes, and they present an algorithm for sampling such a step in $O(\Delta)$ time, so overall their running time is $O(n^2\Delta^2\eps^{-2}\log(\Delta\eps^{-1})$. We present another substantial improvement, derived by applying an improved lower bound on
$f_{G,r}$ (see~\cite[Lemma~6]{Gia2016} and Corollary~\ref{cor:Dd}) and by terminating early under
conditions which we show are sufficient to ensure that fixation is overwhelmingly likely. Early termination
was first considered by Barbosa, Donangelo and Souza~\cite{Barbosa},
justified empirically; we make this idea rigorous.

\newcommand{\statenewfpras}{
	Let $r>1$. Then there is a RAS for $f_{G,r}$ that takes as input an undirected graph~$G$ with maximum degree~$\Delta$ and average degree~$\dbar$, and a rational error tolerance $\eps \in (0,1)$, and simulates $O(\Delta\dbar\eps^{-2}\log(\dbar\eps^{-1}))$ steps of active Moran processes.
}
\begin{theorem}\label{thm:new-fpras}
	\statenewfpras
\end{theorem}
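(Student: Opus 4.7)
The plan is to combine the Monte Carlo approach of~\cite{CIN2017} with an early-termination rule, relying on two ingredients. First, I would apply Corollary~\ref{cor:Dd} (the strengthening of~\cite[Lemma~6]{Gia2016}) to obtain a lower bound of the form $f_{G,r}\ge c_r/\dbar$ with $c_r>0$ depending only on~$r$. Then, by a standard Chebyshev/Bernoulli-estimator analysis, it suffices to run $N=O(\dbar\eps^{-2})$ independent active Moran simulations from a uniformly random initial mutant and to report the fraction of ``successful'' trials, in order to obtain an $\eps/2$-approximation of the success probability with probability at least $2/3$; provided the success probability is within a $(1\pm\eps/2)$ factor of $f_{G,r}$, this yields the required $\eps$-approximation.

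Second, to keep each simulation short, I would declare a trial \emph{successful} as soon as either fixation is reached or the potential $\phi(S)=\sum_{v\in S}1/d(v)$ first exceeds a threshold $\tau=C_r\log(\dbar\eps^{-1})$; this makes rigorous the heuristic of~\cite{Barbosa}. For $r>1$, $\phi$ is a submartingale~\cite{DGMRSS2014}, so a Hoeffding-type martingale argument (or a Hajek-style bound~\cite{Haj1982:Hitting-time-drift}) shows that the probability of subsequently going extinct from a state with $\phi\ge\tau$ is at most $e^{-\Omega_r(\tau)}=O(\eps/\dbar)=O(\eps\, f_{G,r})$. The bias introduced by early termination is therefore $O(\eps\,f_{G,r})$, and combining this with the sampling error gives the desired multiplicative $\eps$-approximation to $f_{G,r}$.

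It remains to bound the expected number of active steps per simulation by $O(\Delta\log(\dbar\eps^{-1}))$. A direct computation from the Moran transition rules shows that, conditional on an active step occurring, the expected change in~$\phi$ is at least $\Omega_r(1/\Delta)$ while the per-step change is at most $1/\delta_{\min}\le 1$. A standard additive-drift / Hajek-style argument then bounds the expected number of active steps before either absorption or $\phi\ge\tau$ by $O(\Delta\tau)=O(\Delta\log(\dbar\eps^{-1}))$. Summing over the $N=O(\dbar\eps^{-2})$ samples yields the claimed total of $O(\Delta\dbar\eps^{-2}\log(\dbar\eps^{-1}))$ active steps.

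The main technical obstacle I anticipate is upgrading the expected-step bound to a tail bound strong enough to cap each simulation deterministically (and union-bound over all $N$ trials) without losing an additional logarithmic factor in~$N$. The worst behaviour occurs at states whose boundary $E(S,\bar S)$ consists largely of low-degree vertices: there the drift in~$\phi$ can be small while a single flip can move $\phi$ by as much as~$1$, so an Azuma-type concentration step is delicate. I expect this to be resolvable by replacing~$\phi$ with the refined potential~$\phi'$ constructed in Section~\ref{sec:layers}, which damps low-degree vertices so that both drift and per-step fluctuation become controllable, and the same machinery then handles the early-termination bias cleanly.
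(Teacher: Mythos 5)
Your outline matches the paper's proof in all essentials: Corollary~\ref{cor:Dd} gives $f_{G,r}\ge (r-1)/2r\dbar$, so $N=O(\dbar\eps^{-2})$ runs suffice for concentration (the paper uses a Chernoff bound, Lemma~\ref{lem:Chernoff}); each run is terminated once $\phi$ reaches a threshold $P=O(\log_r(\dbar\eps^{-1}))$ (the paper takes $P=\min\{\ceil{\log_r(6N)},\phi(V)\}$ and bounds the probability that \emph{any} of the $N$ truncated indicators disagrees with the true fixation indicator by $Nr^{-P}\le 1/6$, which is the additive form of your $O(\eps f_{G,r})$ bias bound); and the per-run expected step count $O(\Delta\log(\dbar\eps^{-1}))$ follows from the $\Omega_r(1/\Delta)$ drift per active step (Lemma~\ref{lem:CIN-active}) combined with additive drift (Lemma~\ref{lem:he-yao}), exactly as you describe.

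Two corrections are needed. First, the bound on the probability of extinction from a state of potential at least $P$ should not come from a Hoeffding/Azuma argument applied to the submartingale $\phi$: as you yourself note elsewhere, a single step can change $\phi$ by as much as $1$ while the forward drift can be tiny, so additive concentration on $\phi$ is hopeless. The correct tool is the multiplicative supermartingale $r^{-\phi(M(t))\delta(G)}$ of Lemma~\ref{lem:multsubmart} together with optional stopping, i.e.\ Theorem~\ref{thm:potential}, which yields extinction probability at most $r^{-P}$ directly, with no concentration argument at all. Second, the ``main technical obstacle'' you anticipate --- a per-run tail bound allowing a deterministic cap without losing a $\log N$ factor --- is a non-issue, and the refined potential of Section~\ref{sec:layers} is neither needed for it nor suited to it (that construction certifies validity in the small-$\phi(V)$ regime; it does not control hitting-time tails). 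The paper's resolution is elementary: it suffices to bound the \emph{expected} total number of simulated steps by some $T$ of the claimed order; one then caps the algorithm at $27T$ steps (declaring failure if the cap is hit), runs it three times independently, and outputs the median. Markov's inequality bounds the cap-failure probability by $1/27$ per run, so each run errs with probability at most $1/3+1/27=10/27$, and the median errs with probability $(\tfrac{10}{27})^3+3(\tfrac{10}{27})^2\tfrac{17}{27}<\tfrac13$. This loses no logarithmic factors and requires no concentration whatsoever.
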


Using the sampling algorithm of~\cite{CIN2017}, which takes $O(\Delta)$ time per step simulated, with the Monte Carlo algorithm of Theorem~\ref{thm:new-fpras}, which simulates $T = O(\Delta\dbar\eps^{-2}\log(\dbar\eps^{-1}))$ steps, would actually yield an FPRAS with running time $O(nT)$ rather than $O(\Delta T)$ due to the preprocessing time required for each iteration of the Moran process simulated. In Section~\ref{sec:FPRAS} we present an alternative sampling algorithm, which yields an FPRAS with running time $O(n\dbar + \Delta T)$ under the standard word RAM model when $G$ is given in adjacency-list format. In fact, the $O(n\dbar)$ term in the running time can be removed if, in addition to the adjacency lists, we have access to some basic properties of $G$: how many vertices and edges it has, the degrees of its vertices, its maximum degree, and knowledge of whether or not it is connected. The algorithm is therefore highly efficient for graphs of low maximum degree.

\subsection{Related work}\label{sec:rel-work}

Prior to Theorem~\ref{thm:new-phase-trans}, it was known that for all $n$-vertex graphs $G$, $f_{G,r}$ is increasing in $r$~\cite{MoranAbsorb}
and that $f_{G,1} = 1/n$ \cite[Lemma~1]{DGMRSS2014}. Thus $f_{G,r} \ge 1/n$ for all $r>1$, and $f_{G,r} \le 1/n$ for all $r<1$. To our knowledge, these were the best previously known bounds that apply to all graphs, and the only known bound for $r<1$. When $r>1$, given a fixed initial mutant~$v$, Mertzios and Spirakis \cite[Theorem~4]{MS2013} showed that fixation occurs with probability at least $(r-1)/(r+d(v)/\delta(G))$ in a graph with minimum degree~$\delta(G)$, and a result of Giakkoupis \cite[Lemma~6]{Gia2016} implies that fixation occurs with probability at least $1-r^{-\delta(G)/d(v)}$. These results both imply better bounds than $f_{G,r} \ge 1/n$ in the case where $G$ is sparse or has high minimum degree.

We have already discussed the previously best-known family of suppressors from~\cite{Gia2016}; note that using \cite[Lemma~6]{Gia2016}, one can show that they have fixation probability $\Omega(n^{-1/4})$ and so are improved on by Theorem~\ref{thm:undir-suppress}. Theorem~\ref{thm:dir-nophase} provides the first known family of digraphs with fixation probability $\Omega(1/\mbox{poly}(n))$ when $r<1$.

In Section~\ref{sec:intro-pt}, we alluded to the fact that the problem of finding the strongest possible amplifier had essentially been solved. Any directed graph has extinction probability $\Omega(n^{-1/2})$~\cite{Incubators}, which is tight up to a polylogarithmic factor~\cite{Megastars}, and any undirected graph has extinction probability $\Omega(n^{-1/3})$, which is tight up to a constant factor~\cite{Incubators}. In fact, the results of~\cite{Incubators} generalise to sparse graphs; any $m$-edge undirected graph has extinction probability $\Omega(\max\{n^{-1/3}, n/m\})$, which is also tight up to a constant factor. (See also Giakkoupis~\cite{Gia2016} for a lower bound of $\Omega(n^{-1/3}(\log n)^{-4/3})$ in the dense undirected setting, which is proved to be tight to within a factor of $(\log n)^{7/3}$.) 

Several generalisations of the Moran process are also studied~\cite{LHN2005,SRJ-survey}. Some versions of the process allow edge weights, which is equivalent to allowing multiple edges in the graph, and others determine the fitness of a vertex partly or fully by game payoffs with its neighbours. More recently, a variant has been proposed~\cite{Spirakis:TwoGraphs} in which the mutants and non-mutants interact along different graphs. In this paper, we consider the original process of Lieberman \emph{et al.}~\cite{LHN2005}, with a single, simple, unweighted graph and mutants with fixed fitness~$r$.

The Moran process fits into a very well-studied family of graph processes sometimes known as epidemic models or interacting particle systems (see e.g.~\cite{Grimmett,DS,Durrett:NACS,Lig1999:IntSys,Shah}). It is particularly similar to the contact process and the voter model. In the contact process, non-mutants do not reproduce, and instead mutants spontaneously become non-mutants after a random time. In the voter model, individuals are chosen for replacement rather than reproduction. However, note that the behaviour of all of these processes is very sensitive to their specific definitions, and results generally do not transfer between them. 

\section{Preliminaries and notation}\label{sec:prelim}

In our notation, multiplication has higher precedence
than division, so $a/bc$ denotes $\tfrac{a}{bc}$. 
For all positive integers $n$, we write $[n] = \{1, \dots, n\}$.
Whenever we write a logarithm without specifying the base, we mean that
the base is~$e$.

Given a graph $G=(V,E)$, a Moran process on $G$ with fitness $r>0$ is a discrete-time Markov chain with state space $2^V$. It evolves as described in the introduction, taking the state at time~$t$ to be the set $M(t)$ of mutants at that time.  Even though we are ultimately interested in the case where the initial state contains a uniformly random mutant, it helps to consider arbitrary initial states. If a vertex~$u$ is chosen for reproduction at time~$t$, and its state is copied to~$v$, we say $u$ spawns onto $v$ at time~$t$. For all $X\subseteq V$, we write $W(X) = |V| + (r-1)|X|$ for the total population fitness when $X$~is the current state.
We write $\pfix{G,r}$ for the fixation probability of the Moran process
on an undirected graph~$G$, with mutant fitness~$r$, when a single
initial mutant is chosen u.a.r. 

Graphs are undirected unless otherwise stated.
For a graph $G=(V,E)$ and sets $A,B\subseteq V$, we write $G[A]$ for the subgraph induced by~$A$, and $E(A,B)$ for
the set of ordered pairs $(v,w)$ in $A\times B$ such that $vw\in E$.
For $v\in V$, $N(v) = \{w\mid vw\in E\}$. $\Delta(A) = \Delta_G(A) =
\max\,\{d(a)\mid a\in A\}$ and $\delta(A) = \delta_G(A) =
\min\,\{d(a)\mid a\in A\}$. Given some $v\in V$ and
$A\subseteq V$, we write $d_A(v) = |E(\{v\},A)|$.
We write $\Delta(G)=\Delta(V(G))$ and similarly for $\delta(G)$. 
Finally, we write $\dbar(G) = \frac{1}{|V|}\sum_{v \in V}d(v)$.

Given sets $S_0, \dots, S_q$ and some $i\in\mathbb{Z}$,
we write $S_{\leq i} = S_0\cup\dots \cup S_i$ and
$S_{\geq i} = S_i \cup \dots \cup S_q$.  We adopt the convention that
$S_i=\emptyset$ for any $i<0$ or $i>q$.  When we write a partition as
$S_0, \dots, S_\nlevs$, $\nlevs$~is not arbitrary but is specifically
the value defined in Definition~\ref{def:consts}; we will only do so
after Definition~\ref{def:consts}.  $\subset$~denotes the proper subset relation.

\section{Drift and potential}

We will rely on the following potential function, first defined in~\cite{DGMRSS2014}, throughout the paper.
\begin{definition}\label{def:phi}
	Given a connected graph $G$ with at least two vertices, 
	define $\phi\colon 2^{V(G)}\rightarrow\reals$ by $\phi(S) =
	\sum_{v \in S}1/d(v)$.
	We write $\phi(v)$ for~$\phi(\{v\})$.
\end{definition}

Note that in Definition~\ref{def:phi}, since $G$ is connected, we have $d(v) \ge 1$ for all $v \in S$.
We
often use the fact that $\delta(S)\phi(S)\leq |S|\leq
\Delta(S)\phi(S)$, which is immediate from the definition.

\begin{definition}
    Let $G$ be a connected graph with at least two vertices.  For disjoint $A,B\subseteq V(G)$,
    we define the \emph{drift}
    \begin{equation*}
        \dr{A,B} \ = \!\!\sum_{(x,y) \in E(A,B)}\frac{1}{d(x)\,d(y)}\,.
    \end{equation*}
\end{definition}

We will often use the trivial fact that drift is symmetric, additive and monotone. Thus for all $A,B,C \subseteq V(G)$ such that $(A \cup B) \cap C = \emptyset$:
\begin{itemize}
	\item $\dr{A,C} = \dr{C,A}$;
	\item if $A$ and $B$ are disjoint, then $\dr{A \cup B,C} = \dr{A,C} + \dr{B,C}$;
	\item if $A\subseteq B$, then $\dr{A, C} \leq \dr{B, C}$.
\end{itemize}

The definition of drift is motivated by the following simple lemma.

\begin{lemma}\label{lem:drift-works}
	Let $r>1$, let $G = (V,E)$ be a connected graph with at least two vertices, and $\MM$ be a Moran process on $G$ with fitness~$r$. For all $S \subseteq V$ and all $t \ge 0$, we have
	\[
		\E[\phi(\MM{t+1}) - \phi(\MM{t}) \mid \MM{t} = S] \ge \frac{r-1}{rn}\dr{S,V \setminus S}.
	\]
\end{lemma}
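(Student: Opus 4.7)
The plan is to compute the conditional expected change in $\phi$ directly from the definition of the Moran process. Conditioned on $M(t)=S$, a single step is determined by the choice of a reproducing vertex $u$ (chosen with probability proportional to fitness) together with a target $v \in N(u)$ chosen uniformly at random. Only pairs $(u,v)$ with $u,v$ of opposite mutant/non-mutant status contribute to $\phi(M(t+1))-\phi(M(t))$: if $u \in S$ and $v \in V \setminus S$, then $v$ becomes a mutant and $\phi$ increases by $1/d(v)$; if $u \in V\setminus S$ and $v \in S$, then $v$ becomes a non-mutant and $\phi$ decreases by $1/d(v)$.

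Writing $W(S) = n+(r-1)|S|$ for the total fitness, the probability of the reproduction event $(u,v)$ is $r/(W(S)\,d(u))$ if $u \in S$ and $1/(W(S)\,d(u))$ if $u\notin S$. Summing the signed contributions over ordered edges between $S$ and $V\setminus S$ yields
\begin{equation*}
\E[\phi(M(t+1)) - \phi(M(t)) \mid M(t) = S]
= \frac{1}{W(S)}\left(r\!\!\sum_{(u,v)\in E(S,V\setminus S)}\!\!\frac{1}{d(u)d(v)} - \!\!\sum_{(u,v)\in E(V\setminus S,S)}\!\!\frac{1}{d(u)d(v)}\right).
\end{equation*}
Both sums are equal to $\dr{S,V\setminus S}$ by the symmetry of drift, so the right-hand side collapses to $(r-1)\dr{S,V\setminus S}/W(S)$.

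To finish, I would bound $W(S)$ from above. Since $r>1$ and $|S|\leq n$, we have $W(S) = n+(r-1)|S| \leq n + (r-1)n = rn$, so
\begin{equation*}
\frac{r-1}{W(S)}\dr{S,V\setminus S} \;\geq\; \frac{r-1}{rn}\dr{S,V\setminus S},
\end{equation*}
as required. There is no real obstacle here; the only thing to be mildly careful about is getting the signs and the directed-edge conventions right, since $E(A,B)$ is defined as a set of \emph{ordered} pairs so that both $E(S,V\setminus S)$ and $E(V\setminus S,S)$ each contribute the full sum $\dr{S,V\setminus S}$ rather than double-counting.
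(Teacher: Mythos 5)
Your proof is correct and follows essentially the same route as the paper: a direct computation of the one-step expected change in $\phi$ over boundary edges, followed by the bound $W(S)\le rn$. The only cosmetic difference is that you keep the two directed sums separate and invoke symmetry of $\dr{\cdot,\cdot}$, whereas the paper folds both contributions into a single sum over $E(S,V\setminus S)$.
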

\begin{proof}
	We have
	\begin{align*}
	 \E[\phi(\MM(t+1)) - \phi(\MM(t))\mid \MM(t)=S]
	     &= \frac{1}{W(S)}
	        \!\!\!\!\sum_{\substack{(x,y)\in\\E(S,V\setminus S)}}\!\!\!\! \bigg(
	            \frac{r}{d(x)}\,\frac{1}{d(y)} - \frac{1}{d(y)}\,\frac{1}{d(x)}
	        \bigg)\\
	     &= \frac{r-1}{W(S)}\, \dr{S, V\setminus S}\,.
	\end{align*}
	The result follows since $W(S) \le rn$.
\end{proof}

We collect some other important properties of drift here for easy reference. 

\begin{lemma}\label{lem:drift-props}
    Let $G=(V,E)$ be a connected graph with at least two vertices and
    let $\emptyset\subset A\subseteq V$.
	\begin{enumerate}[(i)]
	\item For any $x \in A$, $d_{V \setminus A}(x) \le \Delta(G)\,\dr{A,V \setminus A}\,d(x)$.\label{d:ext-deg}
	\item If $\dr{A,V\setminus A} \le 1/2\Delta(G)$ and $A\subset V$, then $|A|
        \ge 1/(2\Delta(G)\,\dr{A,V\setminus A})$.\label{d:size}
    \item If $\dr{A,V\setminus A} \le 1/2\Delta(G)$, then $\phi(A) \ge 1/2$.\label{d:phi}
	\end{enumerate}
\end{lemma}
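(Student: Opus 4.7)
The plan is to treat the three parts in order, since both (ii) and (iii) will invoke (i).

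For (i), the idea is simply to isolate, in the definition of $\dr{A,V\setminus A}$, the contribution from edges incident to $x$. Since every summand is non-negative and $d(y) \le \Delta(G)$ for $y \in V$,
\[
	\dr{A,V\setminus A} \;\ge\; \sum_{y \in N(x)\setminus A}\frac{1}{d(x)\,d(y)} \;\ge\; \frac{d_{V\setminus A}(x)}{\Delta(G)\,d(x)},
\]
and rearranging yields (i). This is essentially a one-line calculation.

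For (ii), the hypothesis $A \subset V$ combined with the connectedness of $G$ supplies some $x\in A$ with $d_{V\setminus A}(x)\ge 1$. Pairing this lower bound with the upper bound from (i) gives $d(x) \ge 1/(\Delta(G)\,\dr{A,V\setminus A})$. Then, invoking (i) once more together with $\Delta(G)\,\dr{A,V\setminus A}\le 1/2$,
\[
	d_A(x) \;=\; d(x) - d_{V\setminus A}(x) \;\ge\; \bigl(1-\Delta(G)\,\dr{A,V\setminus A}\bigr)d(x) \;\ge\; d(x)/2.
\]
Since $x$ together with its $d_A(x)$ neighbours in $A$ are $1+d_A(x)$ distinct vertices of $A$, we obtain $|A| \ge 1 + d_A(x) > d(x)/2 \ge 1/(2\Delta(G)\,\dr{A,V\setminus A})$.

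For (iii), I would first dispatch the trivial case $A=V$, where $\phi(A) \ge |V|/\Delta(G) \ge |V|/(|V|-1) \ge 1$. If instead $A \subsetneq V$, then (i) applied pointwise, combined with $\Delta(G)\,\dr{A,V\setminus A}\le 1/2$, gives $d_A(v) \ge d(v)/2$, hence $1/d(v) \ge 1/(2 d_A(v))$. Summing,
\[
	\phi(A) \;\ge\; \frac{1}{2}\sum_{v\in A}\frac{1}{d_A(v)}.
\]
Each $d_A(v)$ is a positive integer (it is integer-valued and $\ge d(v)/2 \ge 1/2$); this in particular forces $|A|\ge 2$, and trivially $d_A(v)\le |A|-1$. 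Thus $\sum_{v\in A}1/d_A(v) \ge |A|/(|A|-1) \ge 1$, and $\phi(A) \ge 1/2$.

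I do not expect any genuine obstacle: (i) is immediate from the definition; (ii) rests on using connectedness to locate a boundary vertex whose degree is then forced to be large by (i); and (iii) reduces to the elementary observation that $\sum_{v \in A} 1/d_A(v) \ge 1$ in the induced subgraph $G[A]$. The only subtlety is the need to handle $A=V$ separately in (iii), since the bound $d_A(v) \ge d(v)/2$ becomes vacuous when $V\setminus A$ is empty.
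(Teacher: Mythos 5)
Your proposal is correct and follows essentially the same route as the paper: part (i) is the same one-line calculation, and part (ii) uses the same boundary vertex supplied by connectedness together with (i) to force $d(x)$ and then $|A|$ to be large. The only (cosmetic) difference is in (iii), where the paper extracts $\phi(A)\ge 1/2$ by taking a single maximum-degree vertex $y\in A$ and using $|A|\ge d_A(y)\ge d(y)/2$, whereas you sum $1/d_A(v)$ over all of $A$ and use $d_A(v)\le |A|-1$; both rest on the same consequence of (i), namely $d_A(v)\ge d(v)/2$.
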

\begin{proof}
    For all $x \in A$, 
	\[
		\dr{A,V\setminus A} \ge \dr{\{x\},V\setminus A}
            = \sum_{\substack{y: (x,y)\in\\E(A,V\setminus A)}}\frac{1}{d(x)\,d(y)}
            \ge \frac{d_{V\setminus A}(x)}{\Delta(G)\,d(x)}\,,
	\]
	and (\ref{d:ext-deg}) is immediate.
	
    For (\ref{d:size}), suppose that $\dr{A,V\setminus A} \le 1/2\Delta(G)$ and $A \subset V$. By connectedness, there is some $x\in A$ with a neighbour in
    $V\setminus A$ (so $\dr{A,V \setminus A} \ne 0$). By (\ref{d:ext-deg}),
    \[
    	|A| \ge d_A(x) = d(x) - d_{V \setminus A}(x) \ge d(x)\big(1 - \Delta(G)\dr{A, V \setminus A}\big).
    \]
    By hypothesis, this implies $|A| \ge d(x)/2$. Applying (\ref{d:ext-deg}) again yields
    \[
    	|A| \ge \frac{d_{V \setminus A}(x)}{2\Delta(G)\dr{A,V \setminus A}} \ge \frac{1}{2\Delta(G)\dr{A,V \setminus A}}.
    \]

    For (\ref{d:phi}), there are two cases.  If $A=V$, then
    $\phi(A)\geq |V|/(|V|-1)>1$.  Otherwise,
    let $y \in A$ be a vertex of maximum degree (and hence
    minimum potential). By (\ref{d:ext-deg}), $d_{V \setminus A}(y)\leq d(y)/2$, so
    $|A|\geq d_A(y)\geq d(y)/2$, and each neighbour of~$y$ in~$A$ has
    degree at most $d(y)$. Therefore, $\phi(A) \ge 1/2$.
\end{proof}

\begin{lemma}\label{lem:more-drift-props}
	Let $G=(V,E)$ be a connected graph with at least two vertices, and suppose $S_1, S_2 \subseteq V$ are non-empty. Then:
	\begin{enumerate}[(i)]
		\item $\dr{S_1 \setminus S_2,V\setminus(S_1\setminus S_2)} \le \dr{S_1, V\setminus S_1} + \dr{S_2, V\setminus S_2}$;\label{d:sub}
		\item if, for all non-empty $S_1' \subset S_1$ and non-empty $S_2' \subset S_2$, $\dr{S_1', V\setminus S'_1} > \dr{S_1, V\setminus S_1}$ and $\dr{S_2',V\setminus S'_2} > \dr{S_2,V\setminus S_2}$, then $S_1 \subseteq S_2$, or $S_2 \subseteq S_1$, or the two sets are disjoint.\label{d:minimal}
	\end{enumerate}
\end{lemma}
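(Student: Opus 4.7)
The plan for both parts is direct edge-counting on the four-block partition of $V$ induced by $S_1$ and $S_2$. Write $A_1 = S_1 \cap S_2$, $A_2 = S_1 \setminus S_2$, $A_3 = S_2 \setminus S_1$, and $A_4 = V \setminus (S_1 \cup S_2)$, and set $d_{ij} = \dr{A_i,A_j}$ for $i \ne j$, which is well-defined because the blocks are pairwise disjoint. By the additivity and symmetry of drift I would record the expansions $\dr{S_1,V \setminus S_1} = d_{13} + d_{14} + d_{23} + d_{24}$, $\dr{S_2,V \setminus S_2} = d_{12} + d_{14} + d_{23} + d_{34}$, $\dr{A_2,V \setminus A_2} = d_{12} + d_{23} + d_{24}$, and $\dr{A_3,V \setminus A_3} = d_{13} + d_{23} + d_{34}$. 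Both parts then reduce to comparing finite sums of nonnegative $d_{ij}$'s.

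Part (\ref{d:sub}) is immediate from these expansions: $\dr{S_1 \setminus S_2,V \setminus(S_1 \setminus S_2)} = \dr{A_2,V \setminus A_2}$ consists of three of the seven terms appearing in $\dr{S_1,V \setminus S_1} + \dr{S_2,V \setminus S_2}$, and the remaining four terms are nonnegative.

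For part (\ref{d:minimal}) I would argue by contradiction: if none of the three conclusions holds, then $A_1$, $A_2$, and $A_3$ are all nonempty. Since $A_1 \ne \emptyset$, the set $A_2$ is a proper nonempty subset of $S_1$ and $A_3$ is a proper nonempty subset of $S_2$, so the minimality hypothesis can be applied to them. Adding the two resulting strict inequalities yields
\[
    \dr{A_2,V \setminus A_2} + \dr{A_3,V \setminus A_3} > \dr{S_1,V \setminus S_1} + \dr{S_2,V \setminus S_2}.
\]
On the other hand, the edge-counting expansions from the first paragraph give $\dr{A_2,V \setminus A_2} + \dr{A_3,V \setminus A_3} = d_{12} + d_{13} + 2d_{23} + d_{24} + d_{34}$ and $\dr{S_1,V \setminus S_1} + \dr{S_2,V \setminus S_2} = d_{12} + d_{13} + 2d_{14} + 2d_{23} + d_{24} + d_{34}$, whose difference is $2d_{14} \ge 0$. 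This contradicts the strict inequality above.

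I do not expect a serious obstacle; both parts are purely combinatorial identities about drift. The one design choice worth flagging is taking $S_1' = S_1 \setminus S_2$ and $S_2' = S_2 \setminus S_1$ in the contradiction argument: the more natural attempt $S_1' = S_2' = S_1 \cap S_2$ fails to interact with the edge-counting of part (\ref{d:sub}) in the right way, because the ``outside'' contributions $d_{14}$ need to end up on the $S_i$ side of the final comparison rather than on the $S_i'$ side.
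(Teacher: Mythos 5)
Your proof is correct and follows essentially the same route as the paper's: both parts come down to the identity $\dr{S_1,V\setminus S_1}+\dr{S_2,V\setminus S_2}=\dr{S_1\setminus S_2,V\setminus(S_1\setminus S_2)}+\dr{S_2\setminus S_1,V\setminus(S_2\setminus S_1)}+2\dr{S_1\cap S_2,V\setminus(S_1\cup S_2)}$, which the paper derives via additivity and monotonicity of drift and you make explicit through the four-block expansion, before applying the minimality hypothesis to $S_1\setminus S_2$ and $S_2\setminus S_1$ exactly as the paper does. (The only blemish is the harmless miscount of ``seven terms'' --- the sum $\dr{S_1,V\setminus S_1}+\dr{S_2,V\setminus S_2}$ has eight terms counted with multiplicity.)
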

\begin{proof}
	Part~(\ref{d:sub}) follows from
	\begin{align*}
		\dr{S_1 \setminus S_2, V\setminus (S_1\setminus S_2)}
            &= \dr{S_1 \setminus S_2,S_2} + \dr{S_1 \setminus S_2,V\setminus (S_1 \cup S_2)}\\
            &\le \dr{V\setminus S_2, S_2} + \dr{S_1, V\setminus S_1}\,.
	\end{align*}
	
	Suppose $S_1, S_2 \subseteq V$ satisfy the condition of (ii) above. Then,
	\begin{align*}
		\dr{S_1,V\setminus S_1} + \dr{S_2, V\setminus S_2} 
		&= \dr{S_1, S_2 \setminus S_1} + \dr{S_1,V\setminus (S_1 \cup S_2)}\\
        &\qquad\qquad + \dr{S_2,S_1\setminus S_2} + \dr{S_2,V\setminus(S_1\cup S_2)}\\
		&\ge \dr{S_1, S_2 \setminus S_1} + \dr{S_1\setminus S_2,V\setminus(S_1 \cup S_2)}\\
        &\qquad\qquad + \dr{S_2,S_1\setminus S_2} + \dr{S_2\setminus S_1,V\setminus(S_1\cup S_2)}\\
		&= \dr{S_1 \setminus S_2, V\setminus (S_1 \setminus S_2)} + \dr{S_2 \setminus S_1, V\setminus (S_2 \setminus S_1)}\,.
	\end{align*}
	If $\emptyset \subset S_1 \setminus S_2 \subset S_1$ and $\emptyset \subset S_2 \setminus S_1 \subset S_2$, then, by the hypothesis,
    \[
        \dr{S_1 \setminus S_2, V\setminus (S_1 \setminus S_2)} + \dr{S_2 \setminus S_1, V\setminus (S_2 \setminus S_1)}
            > \dr{S_1,V\setminus S_1} + \dr{S_2, V\setminus S_2}\,,
    \]
    a contradiction.  This establishes (\ref{d:minimal}).
\end{proof}

\begin{lemma}\label{lem:deg-drift}
	Let $G = (V,E)$ be a connected graph with at least two vertices, let $S_1,S_2 \subseteq V$ be disjoint, and suppose that every vertex in $S_1$ has degree at most $d_1$ and every vertex in $S_2$ has degree at least $d_2>0$. 
	Then $\dr{S_1,S_2} \le d_1\phi(S_1)/d_2$.
\end{lemma}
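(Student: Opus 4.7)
The approach is a direct calculation: expand the definition of drift, group the summation by the endpoint lying in $S_1$, and apply the two degree hypotheses as factor-by-factor bounds.

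First I would rewrite
\[
    \dr{S_1,S_2} \;=\!\! \sum_{(x,y) \in E(S_1,S_2)}\!\!\!\!\frac{1}{d(x)\,d(y)}
        \;=\; \sum_{x \in S_1}\sum_{\substack{y \in S_2:\\xy \in E}}\frac{1}{d(x)\,d(y)}.
\]
Since every $y \in S_2$ has $d(y) \ge d_2$, replacing $1/d(y)$ by $1/d_2$ gives
\[
    \dr{S_1,S_2} \;\le\; \frac{1}{d_2}\sum_{x \in S_1}\frac{d_{S_2}(x)}{d(x)}\,.
\]

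To finish, I would observe that for any $x \in S_1$ we have $d_{S_2}(x) \le d(x) \le d_1$ (the first inequality since $S_2 \subseteq V$, the second by hypothesis on $S_1$), so $d_{S_2}(x)/d(x) \le d_1/d(x)$. Substituting this bound and recognising $\sum_{x \in S_1}1/d(x) = \phi(S_1)$ yields $\dr{S_1,S_2} \le d_1 \phi(S_1)/d_2$, as required.

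There is no real obstacle here — the argument is a short sequence of elementary upper bounds on individual factors. The only point worth remarking is that the hypothesis ``$S_1$ and $S_2$ are disjoint'' is used only insofar as it is needed for $\dr{S_1,S_2}$ to be defined in the sense of the preceding definition; the degree bound $d_{S_2}(x) \le d(x)$ does not require disjointness. In particular, the proof is just two inequalities, one for each endpoint of a drift edge.
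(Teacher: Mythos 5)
Your proof is correct and follows essentially the same route as the paper: both bound $1/d(y)$ by $1/d_2$ for the $S_2$-endpoint and then bound the remaining sum by $d_1\phi(S_1)$ (the paper via $\sum_x d_{S_2}(x)/d(x)\le |S_1|\le d_1\phi(S_1)$, you via $d_{S_2}(x)\le d_1$ termwise, which is the same pair of inequalities in a slightly different order).
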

\begin{proof}
	For all $u \in S_1$, we have 
	\[
		\dr{\{u\},S_2} = \frac{1}{d(u)}\sum_{w \in N(u) \cap S_2} \frac{1}{d(w)} \le \frac{1}{d(u)}d(u)\frac{1}{d_2} = \frac{1}{d_2}.
	\]
Since $|S_1| \leq \Delta(S_1) \phi(S_1)$ we have
	 $|S_1| \le d_1\phi(S_1)$. Therefore,
	\[
		\dr{S_1, S_2} = \sum_{u \in S_1}\dr{\{u\},S_2} \le \frac{|S_1|}{d_2} \le \frac{d_1\phi(S_1)}{d_2}.\qedhere
	\]
\end{proof}

\subsection{Potential lemmas}

In this section, we give a simplified proof of the following result of Giakkoupis~\cite{Gia2016}. We will make use of an intermediate result, Lemma~\ref{lem:multsubmart}, in the proof of Theorem~\ref{thm:abs-time-intro}. In Section~\ref{sec:discount-pot}, we will use the same techniques to prove a slightly weaker but more general result, which we will need to prove Theorems~\ref{thm:abs-time-intro} and~\ref{thm:new-phase-trans}. Note that the most difficult parts of these proofs are in Sections~\ref{sec:layers} and~\ref{sec:abs-time}; for the moment, we use standard martingale arguments.

\newcommand{\statepotential}{
	Let $G$ be a connected graph with at least two vertices.
	Let $\MM$ be a Moran process on~$G$, with fitness $r>1$ and deterministic initial state~$X$.
	\begin{equation*}
	\Pr(\MM\textnormal{ fixates})
	\ge \frac{1 - r^{-\phi(X)\delta(G)}}{1-r^{-\phi(V)\delta(G)}}\,.
	\end{equation*}
}
\begin{theorem}[{\cite[Lemma 6]{Gia2016}}]\label{thm:potential}
	\statepotential
\end{theorem}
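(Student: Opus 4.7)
My plan is to prove the bound by a gambler's-ruin argument applied to a bounded supermartingale. Writing $\delta=\delta(G)$, I would set $Y_t = r^{-\delta\phi(\MM{t})}$ and first establish, as an intermediate lemma — this is exactly the ``multiplicative submartingale'' statement foreshadowed as Lemma~\ref{lem:multsubmart} — that $(Y_t)$ is a supermartingale with respect to the natural filtration of~$\MM$. Granted this, since $0\le Y_t\le 1$ and the Moran process almost surely absorbs in finite time (the $O(n^4)$ bound of~\cite{DGMRSS2014} suffices), optional stopping gives $\E[Y_\tau]\le Y_0 = r^{-\delta\phi(X)}$, where $\tau$ is the absorption time. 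Since $\MM{\tau}\in\{\emptyset,V\}$, writing $p=\Pr(\MM\text{ fixates})$, this becomes
\[
(1-p)\cdot 1 + p\cdot r^{-\delta\phi(V)} \ \le\ r^{-\delta\phi(X)},
\]
which rearranges directly to the claimed lower bound on $p$.

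The heart of the argument is the supermartingale property. Fix a state $\emptyset\subset S\subset V$. Only reproductions across an edge in $E(S,V\setminus S)$ change~$\phi$: for $(x,y)\in E(S,V\setminus S)$, the mutant~$x$ spawns onto~$y$ with probability $r/(W(S)\,d(x))$ and increases~$\phi$ by $1/d(y)$, while $y$ spawns onto~$x$ with probability $1/(W(S)\,d(y))$ and decreases~$\phi$ by $1/d(x)$. Hence
\[
\E\!\left[\frac{Y_{t+1}}{Y_t}-1 \;\Big|\; \MM{t}=S\right]
= \frac{1}{W(S)}\!\!\!\sum_{(x,y)\in E(S,V\setminus S)}\!\!\!\left[\frac{r\bigl(r^{-\delta/d(y)}-1\bigr)}{d(x)} + \frac{r^{\delta/d(x)}-1}{d(y)}\right],
\]
and for the supermartingale property it suffices to show that each bracketed summand is non-positive, i.e.\ that $d(x)\bigl(r^{\delta/d(x)}-1\bigr) \le r\,d(y)\bigl(1-r^{-\delta/d(y)}\bigr)$ for every such pair.

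Setting $\alpha=\delta/d(x)$ and $\beta=\delta/d(y)$, both of which lie in~$(0,1]$ because $\delta$ is the minimum degree, this inequality is $(r^{\alpha}-1)/\alpha \le r(1-r^{-\beta})/\beta$. A short calculus check for $r>1$ shows that $c\mapsto (r^{c}-1)/c$ is increasing on~$(0,1]$ and $c\mapsto r(1-r^{-c})/c$ is decreasing on~$(0,1]$; both take the common value $r-1$ at $c=1$. The two sides are therefore separated by the constant $r-1$, which completes the verification and hence the proof.

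I do not expect a significant obstacle. The only non-routine ingredient is the per-edge inequality, and that reduces cleanly to the monotonicity of two one-variable functions meeting at $r-1$. If a pointwise bound were unavailable, a global fallback would be to Taylor-expand $r^{-\delta\Delta\phi}$ to second order and use the positive expected drift of~$\phi$ from Lemma~\ref{lem:drift-works}, but this coarser estimate is not needed here and would give a weaker exponent than~$\delta(G)$.
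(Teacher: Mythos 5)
Your proposal is correct, and its overall architecture is exactly the paper's: show that $Y_t=r^{-\delta(G)\phi(M(t))}$ is a supermartingale (the paper's Lemma~\ref{lem:multsubmart}), then apply optional stopping at absorption and rearrange $(1-p)+p\,r^{-\delta\phi(V)}\le r^{-\delta\phi(X)}$. Where you genuinely diverge is in the proof of the per-edge inequality at the heart of the supermartingale claim. The paper isolates this as Lemma~\ref{lem:tech}, the statement $\frac{r\alpha}{r\alpha+\beta}r^{-\beta\delta}+\frac{\beta}{r\alpha+\beta}r^{\alpha\delta}\le 1$, which upon clearing the denominator $r\alpha+\beta$ is precisely your requirement that each bracketed summand be non-positive; the paper proves it by a two-page calculus analysis of $f(\beta)=\beta r^{\beta+\alpha}-(r\alpha+\beta)r^{\beta}+r\alpha$ (checking $f(0)=0$, $f(1)\le 0$, $f'(0)\le 0$, and uniqueness of the stationary point). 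You instead decouple the two variables: with $\alpha=\delta/d(x)$ and $\beta=\delta/d(y)$ in $(0,1]$, you reduce to $(r^{\alpha}-1)/\alpha\le r(1-r^{-\beta})/\beta$ and observe that the left side is increasing in $\alpha$, the right side is decreasing in $\beta$, and both equal $r-1$ at the endpoint $1$. This is a cleaner and shorter proof of the same technical fact (the two monotonicity claims do check out: they reduce to $ue^{u}\ge e^{u}-1$ and $ue^{-u}\le 1-e^{-u}$ for $u>0$), at the cost of no generality — it buys a more transparent explanation of why the minimum degree $\delta$ is the right normalisation, namely that $r-1$ is the exact separating constant when both endpoints have degree $\delta$. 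The rest of your argument (boundedness of $Y_t$, almost-sure finite absorption via the $O(n^4)$ bound, and the final rearrangement using $r^{-\delta\phi(V)}-1<0$) matches the paper.
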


First, we state and prove a corollary of Theorem~\ref{thm:potential}; then we prove two lemmas that, together, imply the theorem.

\begin{corollary}\label{cor:Dd}
	Let $G=(V,E)$ be a connected $n$-vertex graph with at least two vertices and average degree at most $\overline{d}$, and let $r>1$.
	Then 
	\[
		f_{G,r} \geq \frac{(r-1)\phi(V)}{2rn} \geq \frac{r-1}{2r\overline{d}}.
	\]
\end{corollary}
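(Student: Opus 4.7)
The plan is to decompose $f_{G,r}$ by conditioning on the (uniform) initial mutant, apply Theorem~\ref{thm:potential} pointwise, and average. Specifically, I would write
\[
f_{G,r} = \frac{1}{n}\sum_{v \in V}\Pr(\MM\textnormal{ fixates} \mid \MM(0) = \{v\}),
\]
and invoke Theorem~\ref{thm:potential} with deterministic initial state $X = \{v\}$, so that $\phi(X) = 1/d(v)$. Since $1 - r^{-\phi(V)\delta(G)} \le 1$, the denominator in the theorem's bound can simply be dropped, leaving $\Pr(\MM\textnormal{ fixates} \mid \MM(0) = \{v\}) \ge 1 - r^{-\delta(G)/d(v)}$.

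Next, since $d(v) \ge \delta(G)$, the exponent $\delta(G)/d(v)$ lies in $(0,1]$. On $[0,1]$, the function $g(x) = 1 - r^{-x}$ is concave (its second derivative is $-(\log r)^2 r^{-x}$) with $g(0)=0$ and $g(1) = (r-1)/r$, so concavity gives $1 - r^{-x} \ge x(r-1)/r$ for $x \in [0,1]$. Applying this with $x = \delta(G)/d(v)$ and then discarding the factor $\delta(G) \ge 1$ (which holds because $G$ is connected with at least two vertices) yields the per-vertex bound $\Pr(\MM\textnormal{ fixates} \mid \MM(0) = \{v\}) \ge (r-1)/(r\,d(v))$. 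Averaging over $v$ then gives $f_{G,r} \ge (r-1)\phi(V)/(rn)$, which is in fact stronger than the claimed first inequality; the factor of~$2$ is slack.

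The second inequality is immediate from the AM--HM inequality applied to the degree sequence: $\bar d = \tfrac{1}{n}\sum_v d(v) \ge n/\sum_v (1/d(v)) = n/\phi(V)$, which rearranges to $\phi(V)/n \ge 1/\bar d$ and therefore $(r-1)\phi(V)/(2rn) \ge (r-1)/(2r\bar d)$.

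There is no substantive obstacle here; the only choice is how to pass from $1 - r^{-\phi(v)\delta(G)}$ to something linear in $\phi(v)$, and the concavity bound on $[0,1]$ is the natural one since it preserves the correct $(r-1)/r$ leading behaviour as $r \to 1$.
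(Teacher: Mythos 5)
Your proof is correct and follows essentially the same route as the paper's: decompose $f_{G,r}$ over the uniform initial mutant, apply Theorem~\ref{thm:potential} with $X=\{v\}$, drop the denominator, linearise the resulting bound, and finish with AM--HM. The only difference is the final elementary step, where you use concavity of $x\mapsto 1-r^{-x}$ on $[0,1]$ (and retain the factor $\delta(G)/d(v)$ rather than first reducing to $\phi(v)$), which yields $f_{G,r}\ge (r-1)\phi(V)/rn$ and thus recovers the stated bound with a factor of $2$ to spare, whereas the paper uses $1-e^{-z}\ge z/2$ for $z\in[0,1]$ and lands exactly on the stated constant.
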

\begin{proof}
	For all $x \in V$, let $M^x$ be a Moran process on $G$ with fitness $r$ and initial state $\{x\}$. By Theorem~\ref{thm:potential}, we have
	\begin{align*}
		\Pr(M^x\mbox{ fixates}) &\ge \frac{1 - r^{-\phi(x)\delta(G)}}{1 - r^{-\phi(V)\delta(G)}} \ge 1 - r^{-\phi(x)\delta(G)} \ge 1-r^{-\phi(x)}\\
					&\ge 1 - e^{-\phi(x)\min\{1,\log r\}} \ge \frac{\phi(x)\min\{1,\log r\}}{2} \ge \frac{(r-1)\phi(x)}{2r}.
	\end{align*}
	(Here the penultimate inequality follows since $0 \le \phi(x)\min\{1,\log r\} \le 1$.) Thus
	\begin{equation}\label{eqn:Dd}
		f_{G,r} = \frac{1}{n}\sum_{x \in V}\Pr(M^x\mbox{ fixates}) \ge \frac{(r-1)\phi(V)}{2rn},
	\end{equation}
	as required. Moreover, $n/\phi(V)$ is the harmonic mean of the degrees of $G$'s vertices; it follows by the arithmetic mean--harmonic mean inequality that $n/\phi(V) \le \overline{d}$, and hence $\phi(V)/n \ge 1/\overline{d}$. Together with~\eqref{eqn:Dd}, this implies the result.
\end{proof}

To prove Theorem~\ref{thm:potential}, we require the following technical lemma.

\begin{lemma}\label{lem:tech}
    For all $r>1$, $\delta \ge 1$ and all $0 < \alpha,\beta \le 1/\delta$,
    \begin{equation*}
        \frac{r\alpha}{r\alpha+\beta}r^{-\beta\delta}
            + \frac{\beta}{r\alpha+\beta}r^{\alpha\delta} \le 1\,.
    \end{equation*}
\end{lemma}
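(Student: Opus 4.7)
The plan is to clear denominators and then reduce the inequality, via the substitution $x = \alpha\delta$, $y = \beta\delta$ (so $x,y \in (0,1]$ by hypothesis), to a two-variable inequality in which the two sides separate. Multiplying both sides of the claimed bound by $r\alpha + \beta$ gives $r\alpha\, r^{-\beta\delta} + \beta\, r^{\alpha\delta} \le r\alpha + \beta$, which rearranges to $\beta(r^{\alpha\delta} - 1) \le r\alpha(1 - r^{-\beta\delta})$. After the substitution and division by $\alpha\beta$, this is equivalent to
\[
\frac{r^x - 1}{x} \ \le\ \frac{r\,(1 - r^{-y})}{y}\,.
\]

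The crucial observation is that the left-hand side depends only on $x$ and the right-hand side only on $y$, so it suffices to show that, for all $x,y \in (0,1]$, the left-hand side is at most $r - 1$ and the right-hand side is at least $r - 1$. I would carry this out using standard convexity arguments. Since $r > 1$, the function $t \mapsto r^t - 1$ is convex and vanishes at $t = 0$, hence the chord slope $(r^t - 1)/t$ is non-decreasing in $t > 0$; taking $t = 1$ as the endpoint gives $(r^x - 1)/x \le r - 1$. Symmetrically, $t \mapsto 1 - r^{-t}$ is concave and vanishes at $0$, so $(1 - r^{-t})/t$ is non-increasing in $t > 0$, bounded below by its value at $t = 1$, namely $1 - r^{-1} = (r - 1)/r$; multiplying by $r$ yields $r(1 - r^{-y})/y \ge r - 1$ for all $y \in (0,1]$.

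There is no real obstacle beyond executing the algebra carefully. The apparent asymmetry of the statement (the $r^{-\beta\delta}$ term carries the extra factor of $r$ out front) is exactly what is needed to reconcile the increasing chord $(r^t - 1)/t$ with the decreasing chord $(1 - r^{-t})/t$ at their common reference point $t = 1$. The role of the hypothesis $\alpha,\beta \le 1/\delta$ is precisely to ensure $x,y \le 1$, placing us on the correct monotonicity side of this reference point; without it, the separated inequalities would fail. The assumption $\delta \ge 1$ is never used in the argument after the substitution and could in fact be dropped, which is a useful sanity check that the manipulation is correct.
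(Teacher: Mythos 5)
Your proof is correct, and it takes a genuinely different route from the paper's. You clear denominators, substitute $x=\alpha\delta$, $y=\beta\delta$, and reduce the claim to $\tfrac{r^x-1}{x}\le \tfrac{r(1-r^{-y})}{y}$ for $x,y\in(0,1]$, which you then split into the two one-variable chord-slope bounds $\tfrac{r^x-1}{x}\le r-1$ and $\tfrac{r(1-r^{-y})}{y}\ge r-1$, each following from convexity (resp.\ concavity) of a function vanishing at the origin. The paper instead fixes $\alpha$, sets $f(\beta)=\beta r^{\beta+\alpha}-(r\alpha+\beta)r^\beta+r\alpha$, and verifies $f\le 0$ on $(0,1]$ by checking $f(0)=0$, $f(1)\le 0$, $f'(0)\le 0$, and that $f$ has at most one stationary point there --- a more opaque two-variable calculus argument. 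Your separation of variables is the cleaner and more illuminating approach: it explains the role of the leading factor of $r$ (it matches the two chord slopes at the common reference point $t=1$), shows that equality holds exactly when $\alpha=\beta=1/\delta$, and correctly identifies that the hypothesis $\delta\ge 1$ is superfluous (the paper implicitly agrees, since its first step rescales to $\delta=1$ without using $\delta\ge 1$). The one step worth spelling out in a written-up version is the standard fact that a convex function $f$ with $f(0)=0$ has non-decreasing chord slope $f(t)/t$, which follows from $f(s)\le \tfrac{s}{t}f(t)$ for $0<s<t$; with that included your argument is complete.
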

\begin{proof}
    First note that, by rescaling $\alpha$ and $\beta$, it suffices to prove the result when $\delta=1$. So  fix $\alpha \in (0,1]$, and define $f\colon[0,1]\rightarrow\reals$ by $f(\beta) = \beta r^{\beta+\alpha} - (r\alpha + \beta)r^\beta + r\alpha$. For all $\beta \in (0,1]$, we have
    \begin{equation*}
        \frac{f(\beta)}{r^\beta(r\alpha+\beta)}
            = \frac{r\alpha}{r\alpha+\beta}r^{-\beta}
              + \frac{\beta}{r\alpha+\beta}r^{\alpha} - 1\,.
    \end{equation*}

    It therefore suffices to prove that $f(\beta) \le 0$ for all
    $\beta \in (0,1]$. We will show that $f(0),f(1) \le 0$, $f'(0) \le 0$,
    and that $f$~has at most one stationary point in $(0,1]$, which
    together imply the result.
		
    It is immediate that $f(0) = 0$. Define a function
    $g\colon[0,1]\rightarrow\reals$ by $g(x) = r^x-1+(1-r)x$, and note
    that $f(1) = rg(\alpha)$. Then we have $g''(x) = r^x(\log r)^2 >
    0$, so $g$~is convex. Moreover, $g(0) = g(1) = 0$, so $g(x) \le 0$
    for all $x \in [0,1]$. It follows that $f(1) \le 0$.
		
    Now, for all $\beta \in [0,1]$, we have
    \[
        f'(\beta) = r^\beta\big(\beta(r^\alpha\log r - \log r) - \alpha r \log r
                   + r^\alpha-1\big)\,.
    \]
    It follows that $f$ has at most one stationary point in
    $(0,1]$. Moreover, $f'(0) = -\alpha r\log r+r^\alpha-1$.
    Since $\alpha \in (0,1]$, $-0\cdot r\log r+r^0-1 = 0$, and 
    \begin{equation*}
        \frac{\partial}{\partial\alpha}(-\alpha r\log r+r^\alpha-1)
             = (r^\alpha-r)\log r < 0 \quad\text{for all }\alpha \in (0,1)\,,
    \end{equation*}
    it follows that $f'(0) \leq 0$, as required.
\end{proof}

\begin{lemma}\label{lem:multsubmart}
    Let $G=(V,E)$ be a connected graph on at least two vertices, and let $\MM$ be a Moran process on $G$ with fitness $r>1$.
    For all $t \ge 0$, $\E(r^{-\phi(\MM(t+1))\delta(G)} \mid \MM(t)) \le
    r^{-\phi(\MM(t))\delta(G)}$.
\end{lemma}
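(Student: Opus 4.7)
The plan is to show that $r^{-\phi(M(t))\delta(G)}$ is a supermartingale by a direct edge-by-edge calculation, invoking Lemma~\ref{lem:tech} once per edge of the boundary $E(S, V \setminus S)$, where $S = M(t)$.

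First I would condition on $M(t) = S$ and enumerate transitions. The only transitions that change $\phi$ are those in which a mutant $x$ spawns onto a non-mutant $y$ along some $(x,y) \in E(S, V \setminus S)$ (this happens with probability $r/W(S)d(x)$ and sends $\phi(S) \mapsto \phi(S) + 1/d(y)$), or a non-mutant $y$ spawns onto a mutant $x$ along the same edge (probability $1/W(S)d(y)$, sending $\phi(S) \mapsto \phi(S) - 1/d(x)$). All remaining events leave $\phi$ unchanged and contribute nothing to the change in $r^{-\phi\delta(G)}$. Writing $\delta = \delta(G)$ for brevity, this yields
\begin{align*}
\E\bigl(r^{-\phi(M(t+1))\delta} \mid M(t) = S\bigr)
 &= r^{-\phi(S)\delta}\Biggl[1 + \!\!\!\!\sum_{(x,y) \in E(S,V\setminus S)}\!\!\!\! \biggl(
    \frac{r}{W(S)\,d(x)}(r^{-\delta/d(y)} - 1) \\
 &\qquad\qquad\qquad\qquad\qquad\qquad\qquad + \frac{1}{W(S)\,d(y)}(r^{\delta/d(x)} - 1)\biggr)\Biggr].
\end{align*}

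It therefore suffices to show that each edge's bracketed contribution is non-positive, i.e.\ for each $(x,y) \in E(S, V \setminus S)$,
\[
    \frac{r}{W(S)\,d(x)}\,r^{-\delta/d(y)} + \frac{1}{W(S)\,d(y)}\,r^{\delta/d(x)}
    \le \frac{r}{W(S)\,d(x)} + \frac{1}{W(S)\,d(y)}\,.
\]
After cancelling $W(S)$, this is precisely the inequality of Lemma~\ref{lem:tech} with $\alpha = 1/d(x)$ and $\beta = 1/d(y)$, once we multiply both sides through by $r\alpha + \beta$. The hypotheses $\alpha, \beta \le 1/\delta$ hold because every vertex has degree at least $\delta(G)$, and $\delta(G) \ge 1$ by connectedness of~$G$.

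Summing this per-edge bound and adding back the ``1'' gives the desired inequality. The main (and only) obstacle is recognising that the expected change splits cleanly into a sum of independent edge contributions, each exactly matching the form of Lemma~\ref{lem:tech}; once the substitution $(\alpha, \beta) = (1/d(x), 1/d(y))$ is made, the argument is purely mechanical.
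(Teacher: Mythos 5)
Your proof is correct and follows essentially the same route as the paper's: both reduce the claim to the per-boundary-edge inequality of Lemma~\ref{lem:tech} with $\alpha=1/d(x)$, $\beta=1/d(y)$, the only cosmetic difference being that the paper conditions on which edge is chosen for reproduction while you expand the full expectation and show each edge's contribution is non-positive (equivalent after clearing the factor $r\alpha+\beta$).
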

\begin{proof}
    Let $X\subseteq V$. Say that an edge $xy$ is ``chosen for reproduction'' at time~$t$ if, at that time, either $x$ spawns onto~$y$ or $y$ spawns onto~$x$.  For all $(x,y)\in E(X,V\setminus X)$,
    \begin{multline*}
	\E\big(r^{-\phi(\MM(t+1))\delta(G)} \mid \MM(t) = X,\ xy\mbox{ is chosen for reproduction at time }t\big)\\
		= r^{-\phi(X)\delta(G)}\left(\frac{r/d(x)}{r/d(x)+1/d(y)}r^{-\delta(G)/d(y)} + \frac{1/d(y)}{r/d(x)+1/d(y)}r^{\delta(G)/d(x)}\right)\,.
    \end{multline*}
    Let $\alpha = 1/d(x)$ and $\beta = 1/d(y)$.  Since
    $\alpha,\beta\leq 1/\delta(G)$, we may apply Lemma~\ref{lem:tech} and
    obtain
    \begin{equation*}
        \E\big(r^{-\phi(\MM(t+1))\delta(G)} \mid \MM(t) = X,\ xy\text{ is chosen for reproduction at time }t\big) \le r^{-\phi(X)\delta(G)}\,.
    \end{equation*}

    On the other hand, if $M(t)=X$ and an edge outside $E(X, V\setminus X)$ is chosen for reproduction, then $M(t+1)=X$.  It follows that
    \[
        \E(r^{-\phi(\MM(t+1))\delta(G)} \mid \MM(t) = X)
            \leq r^{-\phi(X)\delta(G)}\,.\qedhere
    \]
\end{proof}
	
\begin{reptheorem}{thm:potential}
	\statepotential
\end{reptheorem}
\begin{proof}
    Let $\tau = \min \{t \ge 0 \mid \phi(M(t)) \in \{0,\phi(V)\}\}$ be
    the absorption time, which is a stopping time. 
    By Lemma~\ref{lem:multsubmart} and the
    optional stopping theorem, $\E(r^{-\phi(M(\tau))\delta(G)}) \le
    r^{-\phi(X)\delta(G)}$.  Moreover, writing $p$ for the fixation
    probability of~$\MM$, we have $\E(r^{-\phi(\MM(\tau))\delta(G)}) =
    (1-p) + pr^{-\phi(V)\delta(G)}$.  Combining the two inequalities
    and rearranging gives $r^{-\phi(X)\delta(G)}-1 \ge
    p(r^{-\phi(V)\delta(G)}-1)$. Since $r^{-\phi(V)\delta(G)}-1 \le r^{-1}-1 < 0$,
    the result follows.
\end{proof}

\subsection{Discounted potential functions}\label{sec:discount-pot}

To prove our main results, Theorems~\ref{thm:abs-time-intro} and~\ref{thm:new-phase-trans}, we will require the following more general class of potential functions.

\begin{definition}\label{defn:phif}
    Let $G=(V,E)$ be a connected graph on at least two vertices and let $f\colon V\to\reals_{\geq 0}$.
    We denote by $\phif$ the function $V\to\reals_{\geq 0}$ given
    by $\phif(v) = f(v)/d(v)$, and for all $S \subseteq V$ we define
    $\phif(S) = \sum_{v\in S} f(v)/d(v)$. 
\end{definition}

Thus if $f(v) = 1$ for all $v \in V$, then $\phif = \phi$. Our goal in this section will be to prove an analogue of Theorem~\ref{thm:potential} for potential functions of the form $\phif$. Recall that in order to prove Theorem~\ref{thm:potential}, we first proved in Lemma~\ref{lem:multsubmart} that for any Moran process $\MM$ on $G$, $r^{-\phi(\MM)\delta(G)}$ is a supermartingale. 

\begin{definition}\label{defn:psif}
    For $r\geq 1$, let
    $\beta=\tfrac{r-1}{6r+2}$.  Let $G=(V,E)$ be a connected graph on at least two vertices and let
    $f\colon V\to\reals_{\geq 0}$ be not everywhere zero.  Let $m_f
    = \max_{v\in V}\phi_f(v)$, and define $\psif\colon 2^V\to\reals_{\geq 0}$
    by $\psif(S) = e^{-\phif(S)\beta/m_f}$.
\end{definition}

Thus if $f(v) = 1$ for all $v \in V$, then $m_f = 1/\delta(G)$ and $\psif(X) = e^{-\beta\phi(X)\delta(G)}$ for all $X \subseteq V$. In Lemma~\ref{lem:psif-supermart}, we will essentially show that if $\phif(\MM)$ is sufficiently strongly forward-biased, then $\psif(\MM)$ is backward-biased, allowing us to apply the optional stopping theorem as in Theorem~\ref{thm:potential}. However, $\phif(\MM)$ will not in general be forward-biased throughout the evolution of $\MM$, and so we will require the following sufficient condition.
\begin{definition}\label{def:valid}
	For $r > 1$, let $\lambda=(r+1)/2>1$.
	Let $G=(V,E)$ be a connected graph on at least two vertices and let $f\colon V\to\reals_{\geq 0}$. 
	For all $X \subseteq V$, $f$ is \emph{valid for $X$} if
	\begin{equation*}
	\sum_{\substack{(x,y)\in E(X,V\setminus X)\\ f(x) > \lambda f(y)}}
	\frac{f(x)}{d(x)\,d(y)}
	\leq \frac{r-1}{4r}
	\sum_{\substack{(x,y)\in E(X,V\setminus X)\\ f(x)\leq \lambda f(y)}}
	\frac{f(y)}{d(x)\,d(y)}.
	\end{equation*}    
	For all $0 \le x^- < x^+ \le \phif(V)$, say $f$ is \textit{$(x^-,x^+)$-valid} if $f$ is
	valid for all $X \subseteq V$ for which $x^- < \phif(X) < x^+$.
\end{definition}
\begin{lemma}\label{lem:add-submart}
	Let $G=(V,E)$ be a connected graph on at least two vertices, and 
	let $\MM$ be a Moran process on $G$ with
	fitness $r>1$.  Let $f\colon V\to \reals_{\geq 0}$ and let
	$X\subseteq V$.  If $f$ is valid for $X$ then, for all $t\geq 0$,
	$\E[\phif(M(t+1))\mid M(t)=X]\geq \phif(X)$.
\end{lemma}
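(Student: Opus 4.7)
The plan is to directly compute the one-step expected change in $\phif$ from state $X$, in the same spirit as Lemma~\ref{lem:drift-works}, and then use the validity condition to show that the (potentially) negative contributions are dominated by the positive ones.

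First I would write, for each directed pair $(x,y) \in E(X,V\setminus X)$, that $x$ spawns onto $y$ with probability $(r/d(x))/W(X)$ (contributing $+f(y)/d(y)$ to $\phif$), and that $y$ spawns onto $x$ with probability $(1/d(y))/W(X)$ (contributing $-f(x)/d(x)$). Every other transition fixes $\phif$. This yields
\begin{equation*}
W(X)\cdot\E[\phif(M(t{+}1)) - \phif(X)\mid M(t)=X]
= \!\!\!\sum_{(x,y)\in E(X,V\setminus X)}\!\!\! \frac{r f(y) - f(x)}{d(x)\,d(y)}\,.
\end{equation*}
It therefore suffices to show that this sum is non-negative.

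Next I would split the sum according to the validity dichotomy into a ``tame'' part $T$, over pairs with $f(x)\leq\lambda f(y)$, and a ``bad'' part $B$, over pairs with $f(x)>\lambda f(y)$. On the tame part, the inequality $f(x)\leq \lambda f(y) = \tfrac{r+1}{2}f(y)$ gives $rf(y)-f(x) \geq \tfrac{r-1}{2}f(y)$, so
\begin{equation*}
T \geq \frac{r-1}{2}\!\!\sum_{\substack{(x,y)\in E(X,V\setminus X)\\ f(x)\leq\lambda f(y)}}\!\!\frac{f(y)}{d(x)\,d(y)}\,.
\end{equation*}
On the bad part, the crude bound $rf(y)-f(x)\geq -f(x)$ together with the validity hypothesis gives
\begin{equation*}
B \geq -\!\!\sum_{\substack{(x,y)\in E(X,V\setminus X)\\ f(x) > \lambda f(y)}}\!\!\frac{f(x)}{d(x)\,d(y)}
\;\geq\; -\frac{r-1}{4r}\!\!\sum_{\substack{(x,y)\in E(X,V\setminus X)\\ f(x)\leq\lambda f(y)}}\!\!\frac{f(y)}{d(x)\,d(y)}\,.
\end{equation*}

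Finally, adding these two estimates gives
\begin{equation*}
T+B \;\geq\; \left(\frac{r-1}{2}-\frac{r-1}{4r}\right)\!\!\sum_{\substack{(x,y)\in E(X,V\setminus X)\\ f(x)\leq\lambda f(y)}}\!\!\frac{f(y)}{d(x)\,d(y)}
\;=\; \frac{(r-1)(2r-1)}{4r}\!\!\sum_{\substack{(x,y)\in E(X,V\setminus X)\\ f(x)\leq\lambda f(y)}}\!\!\frac{f(y)}{d(x)\,d(y)}\,,
\end{equation*}
and the coefficient is strictly positive for $r>1$, so the entire sum is $\geq 0$ and dividing by $W(X)>0$ yields the lemma. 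There is no real obstacle here: the only thing to be careful about is that the ``bad'' contribution has to be bounded from below using only $f(x)$ (not $rf(y)-f(x)$, which could be very negative), which is exactly why the validity inequality compares $f(x)/(d(x)d(y))$ on the bad side to $f(y)/(d(x)d(y))$ on the tame side, and why the factor $(r-1)/(4r)$ is chosen so as to leave room for the $(r-1)/2$ from the tame part.
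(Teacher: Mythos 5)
Your proposal is correct and follows essentially the same route as the paper: compute the one-step expected change as $\frac{1}{W(X)}\sum_{(x,y)\in E(X,V\setminus X)}\frac{rf(y)-f(x)}{d(x)d(y)}$, bound the tame pairs below by $\frac{r-1}{2}\cdot\frac{f(y)}{d(x)d(y)}$ using $f(x)\le\lambda f(y)$ with $r-\lambda=\frac{r-1}{2}$, bound the bad pairs below by $-\frac{f(x)}{d(x)d(y)}$, and invoke validity (noting $\frac{r-1}{4r}<\frac{r-1}{2}$). The only cosmetic difference is that the paper phrases the final step as "non-negative iff the bad sum is at most $\frac{r-1}{2}$ times the tame sum," whereas you add the two estimates and exhibit the positive coefficient $\frac{(r-1)(2r-1)}{4r}$ explicitly; these are the same argument.
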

\begin{proof}
	First note that 
	\[
		\E [\phif(\MM{t+1}) - \phif(\MM{t}) \mid \MM{t} = X] = \frac{1}{W(X)}\sum_{(x,y) \in E(X,V \setminus X)} \frac{rf(y) - f(x)}{d(x)d(y)}\,.
	\]
	By splitting the sum, we see that this expectation is at least
	\[
		\frac{1}{W(X)}\Bigg(
		\sum_{\substack{(x,y)\in E(X,V\setminus X)\\ f(x)\leq \lambda f(y)}}
		\frac{(r-\lambda)f(y)}{d(x)\,d(y)}
		- \sum_{\substack{(x,y)\in E(X,V\setminus X)\\ f(x) > \lambda f(y)}} \frac{f(x)}{d(x)\,d(y)} 
		\Bigg).
	\]
	Since $r-\lambda = (r-1)/2$, this expression is non-negative if and only if
	\[
		\sum_{\substack{(x,y)\in E(X,V\setminus X)\\ f(x) > \lambda f(y)}} \frac{f(x)}{d(x)\,d(y)}
		\le \frac{r-1}{2}\cdot\sum_{\substack{(x,y)\in E(X,V\setminus X)\\ f(x)\leq \lambda f(y)}}\frac{f(y)}{d(x)\,d(y)}.
	\]
	Since $(r-1)/4r < (r-1)/2$, the result follows.
\end{proof}

Note that while the above lemma is useful for motivating Definition~\ref{def:valid}, we will not actually use it in this section; however, we will need it later in Section~\ref{sec:layers}. We now prove an analogue of Lemma~\ref{lem:multsubmart}.

\begin{lemma}\label{lem:psif-supermart}
    Let $G=(V,E)$ be a connected graph on at least two vertices, and 
    let $\MM$ be a Moran process on $G$ with
    fitness $r>1$.  Let $f\colon V\to \reals_{\geq 0}$ be not everywhere zero and let
    $X\subseteq V$.  If $f$ is valid for $X$ then, for all $t\geq 0$,
    $\E[\psif(M(t+1))\mid M(t)=X]\leq \psif(X)$.    
\end{lemma}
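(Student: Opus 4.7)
The plan is to mirror the proof of Lemma~\ref{lem:multsubmart} but with extra care, using the validity hypothesis to neutralise the ``bad'' edges where $f(x) > \lambda f(y)$. First, I would condition on which edge is chosen for reproduction, exactly as in Lemma~\ref{lem:multsubmart}, to obtain
\[
\E[\psif(M(t+1))-\psif(X)\mid M(t)=X] \;=\; \frac{\psif(X)}{W(X)} \sum_{(x,y)\in E(X,V\setminus X)} T_{xy},
\]
where $T_{xy} = \frac{r}{d(x)}\bigl(e^{-\beta f(y)/(m_f d(y))}-1\bigr) + \frac{1}{d(y)}\bigl(e^{\beta f(x)/(m_f d(x))}-1\bigr)$. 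The whole task reduces to showing $\sum_{(x,y)} T_{xy}\le 0$. The key difference from Lemma~\ref{lem:multsubmart} is that per-edge non-positivity need no longer hold (because $f$ may be very uneven); the validity hypothesis is precisely what saves the aggregate sum.

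Second, I would bound each $T_{xy}$ by a linear expression in $f(x),f(y)$ via Taylor expansion. Using $e^{-u}\le 1-u+u^2/2$ for $u\ge 0$ and $e^{v}\le 1+v+v^2 e^{\beta}/2$ for $v\in[0,\beta]$ (valid because $\beta f(x)/(m_f d(x)) \le \beta$ since $\phif(x)\le m_f$), then absorbing the quadratic terms via $f(v)^2/d(v)^2 \le m_f\,f(v)/d(v)$, I obtain
\[
T_{xy} \;\le\; \frac{\beta}{m_f\,d(x)\,d(y)}\Bigl[\bigl(1+\tfrac{e^\beta\beta}{2}\bigr) f(x) \;-\; r\bigl(1-\tfrac{\beta}{2}\bigr) f(y)\Bigr].
\]
Splitting the sum over $E(X,V\setminus X)$ into the set where $f(x)\le\lambda f(y)$ and its complement, and setting $A = \sum_{f(x)\le\lambda f(y)} f(y)/(d(x)d(y))$ and $B = \sum_{f(x)>\lambda f(y)} f(x)/(d(x)d(y))$, this gives
\[
\sum_{(x,y)} T_{xy} \;\le\; \frac{\beta}{m_f}\Bigl[\bigl((1+\tfrac{e^\beta\beta}{2})\lambda - r(1-\tfrac{\beta}{2})\bigr) A \;+\; \bigl(1+\tfrac{e^\beta\beta}{2}\bigr) B\Bigr].
\]
Applying the validity hypothesis $B \le \tfrac{r-1}{4r} A$, non-positivity reduces to the purely numerical inequality
\[
\bigl(1+\tfrac{e^\beta\beta}{2}\bigr)\bigl(\lambda + \tfrac{r-1}{4r}\bigr) \;\le\; r\bigl(1-\tfrac{\beta}{2}\bigr).
\]

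Third, I would verify this inequality algebraically for the specific choices $\lambda=(r+1)/2$ and $\beta=(r-1)/(6r+2)$. The first-order ``skeleton'' $\lambda + (r-1)/(4r) \le r$ is equivalent to $(2r-1)(r-1)\ge 0$, which holds with equality at $r=1$; the choice of $\beta$ as a linear-in-$(r-1)$ quantity is exactly what is needed to make the second-order corrections strictly smaller than the gap that opens up for $r>1$. Concretely, clearing denominators reduces the inequality to verifying $e^{\beta}(2r^2+3r-1) \le 4(5r^2-r-1)$, which follows from $e^\beta\le e^{1/6}<2$ together with $(r-1)(5r^2-r-1)\ge 0$ and $8r^2-5r-1\ge 0$ for $r\ge 1$. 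The main obstacle is this last algebraic step: one has to balance the cost of the second-order terms against the gain from $\lambda<r$ tightly enough that the specific constant $6r+2$ in the definition of $\beta$ makes the bookkeeping go through; everything else is a routine adaptation of Lemma~\ref{lem:multsubmart}.
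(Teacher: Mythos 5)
Your proposal is correct and follows essentially the same route as the paper's proof: compute the exact one-step expectation edge by edge, Taylor-expand $\psif$ to second order, absorb the quadratic terms using $\phif(v)\le m_f$, use $f(x)\le\lambda f(y)$ on the favourable edges and drop the non-positive term on the unfavourable ones, and then invoke the validity hypothesis to close the aggregate bound. The only differences are cosmetic — the paper uses the cruder bound $e^z\le 1+z+z^2$ on $[-1,1]$, which makes its final numerical check ($r-\lambda-\beta(r+\lambda)=\tfrac{r-1}{4}$) come out exactly, whereas your sharper one-sided Taylor bounds lead to an equivalent but slightly messier inequality, which you verify correctly.
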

\begin{proof}
    Write $x\sim y$ to
    abbreviate $(x,y)\in E(X,V \setminus X)$ and note that this relation is not
    symmetric.  The Moran process is time-invariant, so it suffices to
    prove the result for $t=0$. Let $W=W(X)$ for brevity. 

    Let $\mathcal{E}$ be the event that, at time~$1$, a mutant is
    spawned along an edge $xy$ with $x\sim y$ and
    $f(x) \le \lambda f(y)$. Let $\mathcal{E}'$ be the event that, at
    time~$1$, a non-mutant is spawned along an edge~$xy$ with
    $x\sim y$ (since $X$~is the set of mutants, the spawn must have
    been from $y$ to~$x$). Let
    \[
		p = r \!\!\!\!\sum_{\substack{x\sim y\\ f(x)\leq \lambda f(y)}}\!\!\!\!
		                \frac{1}{d(x)} = W\,\Pr(\calE \mid \MM(0) = X)
		\quad\text{and}\quad
		q = \sum_{x\sim y} \frac{1}{d(y)} 
          = W\,\Pr(\calE' \mid \MM(0) = X)\,.
    \]

    First, suppose that neither $\calE$ nor~$\calE'$ occurs. Then
    either $M(1)=M(0)$ or a mutant was spawned from some $x\in X$ to
    some $y\in V\setminus X$ with $f(x)>\lambda f(y)$.  So, in this
    case, $\psif(\MM(1)) = \psif(\MM(0))\,e^{-\phif(y)\beta/m_f} \leq \psif(\MM(0))$.  If $\calE$~occurs
    along~$xy$, then $\psif(\MM(1)) = \psif(\MM(0))\,e^{-\phif(y)\beta/m_f}$.
    If $\calE'$~occurs along~$xy$, then $\psif(\MM(1)) =
    \psif(\MM(0))\,e^{\phif(x)\beta/m_f}$. Thus,
    \begin{multline}
		\E\big(\psif(\MM(1))\mid \MM(0)=X\big) \leq {}\\ 
		    \psif(X) \Big(
		        1 - \frac{p}{W} - \frac{q}{W}
                + \frac{r}{W}
                  \!\!\!\!\sum_{\substack{x\sim y\\f(x)\leq\lambda f(y)}}\!\!\!\!
		              \frac{1}{d(x)}\,e^{-\phif(y)\beta/m_f}
                + \frac{1}{W}\sum_{x\sim y}\frac{1}{d(y)}\,e^{\phif(x)\beta/m_f}
		    \Big)\,.
	\label{eqn:EPsi}
	\end{multline}

    By Taylor expansion, $e^z\leq 1+z+z^2$ for all $z\in[-1,1]$.
    Since $0\leq \phif(x)\beta/m_f < 1$, we may apply this bound to the
    terms of the sums in~\eqref{eqn:EPsi}.

	For any $x\sim y$ with $f(x)\leq\lambda f(y)$,
	\begin{align*}
		\hspace{2em}&\hspace{-2em}
		\frac{r}{d(x)}\,e^{-\phif(y)\beta/m_f} + \frac{1}{d(y)}\,e^{\phif(x)\beta/m_f} \\
		&\leq \frac{r}{d(x)}\left(
                  1 - \frac{f(y)}{d(y)}\,\frac{\beta}{m_f}
		          + \frac{f(y)^2}{d(y)^2}\,\frac{\beta^2}{m_f^2}\right)
		+ \frac{1}{d(y)}\left(1 + \frac{f(x)}{d(x)}\,\frac{\beta}{m_f}
		+ \frac{f(x)^2}{d(x)^2}\,\frac{\beta^2}{m_f^2}\right)\\
		&= \frac{r}{d(x)} + \frac{1}{d(y)}
		+ \frac{\beta}{m_f\,d(x)\,d(y)}\left(
		f(x) - rf(y) + \frac{\beta}{m_f}\left(
		\frac{rf(y)^2}{d(y)} + \frac{f(x)^2}{d(x)}
		\right)\right)\\
		&\leq \frac{r}{d(x)} + \frac{1}{d(y)}
		+ \frac{\beta}{m_f\,d(x)\,d(y)}\big(
		f(x) - rf(y) + \beta(rf(y) + f(x))\big)\\
		&\leq \frac{r}{d(x)} + \frac{1}{d(y)}
		- \frac{\beta}{m_f\,d(x)\,d(y)}f(y)\,\big(
		r - \lambda - \beta r - \beta\lambda\big)\\
		&= \frac{r}{d(x)} + \frac{1}{d(y)} -  \frac{r-1}{4}\,\frac{\beta}{m_f\,d(x)\,d(y)}f(y)\,,
		\tagme\label{eqn:firstpart}
	\end{align*}
	where the second inequality is because $f(v)/m_fd(v)\leq 1$ for
	all~$v\in V$ and the third because $f(x)\leq \lambda f(y)$.

    For $x\sim y$ with $f(x) > \lambda f(y)$, again using
    $f(v)/m_fd(v)\leq 1$, we find
	\begin{align*}
		\frac{1}{d(y)}\,e^{\phif(x)\beta/m_f}
		&\leq \frac{1}{d(y)} \left(
		1 + \frac{f(x)}{d(x)}\,\frac{\beta}{m_f}
		+ \frac{f(x)^2}{d(x)^2}\,\frac{\beta^2}{m_f^2}\right)\\ 
		&= \frac{1}{d(y)} + \frac{\beta}{m_f\,d(x)\,d(y)}\left(
		f(x) + \frac{f(x)^2}{d(x)}\,\frac{\beta}{m_f}\right)\\
		&\leq \frac{1}{d(y)} + \frac{\beta}{m_f\,d(x)\,d(y)} f(x)\,(1+\beta)\\
		&< \frac{1}{d(y)} + \frac{r\beta}{m_f\,d(x)\,d(y)} f(x)\,,
		\tagme\label{eqn:secondpart}
	\end{align*}
    where the final inequality is because $1+\beta < r$ since $r>1$. 

	Now, summing \eqref{eqn:firstpart} and~\eqref{eqn:secondpart} gives
	\begin{multline*}
		\sum_{\substack{x\sim y\\f(x)\leq\lambda f(y)}}\!\!\!\!
		\frac{r}{d(x)}\,e^{-\phif(y)\beta/m_f}
		+ \sum_{x\sim y} \frac{1}{d(y)}\,e^{\phif(x)\beta/m_f} \\
		\leq p + q
		+ \frac{r\beta}{m_f}
		\!\!\!\!\sum_{\substack{x\sim y\\f(x)> \lambda f(y)}}\!\!\!\! \frac{f(x)}{d(x)\,d(y)}            - \frac{r-1}{4}\,\frac{\beta}{m_f}
		\!\!\!\!\sum_{\substack{x\sim y\\f(x)\leq \lambda f(y)}}\!\!\!\! \frac{f(y)}{d(x)\,d(y)}
		\,.
	\end{multline*}
	The right-hand side is at most $p+q$ by the hypothesis of the
	lemma, so the right-hand side of~\eqref{eqn:EPsi} is at most
	$\psif(X)$ and we are done.
\end{proof}

Lemma~\ref{lem:psif-supermart} will allow us to apply the optional stopping theorem to a simple variant of $\psif(M)$. We now exploit this to obtain an analogue of Theorem~\ref{thm:potential} (see Lemma~\ref{lem:gen-pot-b}).

\begin{lemma}\label{lem:gen-pot-a}
	Let $r>1$, let $G=(V,E)$ be a connected graph on at least two vertices, and let $f\colon V\rightarrow\mathbb{R}_{\ge 0}$ 
	be a function that is not
	everywhere zero. 
	Let $0\leq x^- <  x^+ \leq \phif(V)$, and suppose that $f$ is $(x^-, x^+)$-valid and $x^+ - x^- > m_f$. Let $\MM$ be a Moran process on $G$ with fitness~$r$, satisfying $\phi_f(\MM{0}) \ge x^+ - m_f$. Then with probability at least $1-2\exp(-\beta(x^+ - x^-)/m_f)$,
	\[
		\min\{t \ge 0 \mid \phi_f(\MM{t}) \ge x^+\} < \min\{t \ge 0 \mid \phi_f(\MM{t}) \le x^-\}.
	\]
	(Note that with probability 1, at most one of these times is infinite.)
\end{lemma}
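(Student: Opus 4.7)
The plan is to apply the optional stopping theorem to a stopped version of the supermartingale constructed in Lemma~\ref{lem:psif-supermart}. Define stopping times
\[
\tau^+ = \min\{t \ge 0 : \phif(M(t)) \ge x^+\}, \quad
\tau^- = \min\{t \ge 0 : \phif(M(t)) \le x^-\}, \quad \tau = \tau^+ \wedge \tau^-.
\]
Since the Moran process on a connected graph is absorbed at $\emptyset$ or $V$ almost surely, and $\phif(\emptyset) = 0 \le x^-$ while $\phif(V) \ge x^+$, we have $\tau < \infty$ with probability $1$. Moreover $\tau^+ = \tau^-$ is impossible when either is finite (the thresholds $x^-, x^+$ are disjoint), so almost surely one of the events $\{\tau^+ < \tau^-\}$ or $\{\tau^- < \tau^+\}$ occurs, and it suffices to prove $\Pr(\tau^- < \tau^+) \le 2 e^{-\beta(x^+-x^-)/m_f}$.

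Set $Y_t = \psif(M(t \wedge \tau))$. On $\{t < \tau\}$, the definition of $\tau$ gives the strict inequalities $x^- < \phif(M(t)) < x^+$, so $f$ is valid for $M(t)$ by $(x^-,x^+)$-validity, and Lemma~\ref{lem:psif-supermart} yields $\E[Y_{t+1} \mid \mathcal{F}_t] \le Y_t$; on $\{t \ge \tau\}$ the process is constant. Thus $(Y_t)$ is a nonnegative supermartingale bounded by~$1$, and because $\tau < \infty$ a.s.\ and $Y_t \le 1$, bounded convergence gives $\E[Y_\tau] \le Y_0$. The initial-state hypothesis $\phif(M(0)) \ge x^+ - m_f$ yields
\[
Y_0 \le e^{-(x^+ - m_f)\beta/m_f} = e^{\beta}\, e^{-x^+\beta/m_f},
\]
while on the event $\{\tau^- < \tau^+\}$ we have $\phif(M(\tau)) \le x^-$, hence $Y_\tau \ge e^{-x^-\beta/m_f}$. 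Combining these gives
\[
\Pr(\tau^- < \tau^+) \le e^{\beta}\, e^{-\beta(x^+-x^-)/m_f},
\]
and since $\beta = (r-1)/(6r+2) < 1/6 < \log 2$, we have $e^\beta < 2$ and the claim follows.

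The delicate point is choosing $\tau$ correctly: the hypothesis of $(x^-,x^+)$-validity only guarantees that $f$ is valid on $\{X : x^- < \phif(X) < x^+\}$, so we must stop at the first exit from the \emph{open} interval, not on first reaching its closure. The condition $x^+ - x^- > m_f$ is exactly what is needed to ensure that the initial point $\phif(M(0)) \ge x^+ - m_f$ still sits strictly above~$x^-$, so that $\tau^- \ge 1$ and the dichotomy $\{\tau^+ < \tau^-\}$ vs.\ $\{\tau^- < \tau^+\}$ is nondegenerate; otherwise we could have $\tau^- = 0$ and the bound would have nothing to say. The rest of the argument is a standard optional stopping computation.
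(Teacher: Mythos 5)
Your proof is correct and follows essentially the same route as the paper's: stop the supermartingale $\psif(M(\cdot))$ of Lemma~\ref{lem:psif-supermart} at the first exit from the open interval $(x^-,x^+)$, apply optional stopping, and absorb the factor $e^{\beta}$ using $\beta\le 1/6$. The only cosmetic difference is that the paper dispenses with the case $\phif(M(0))\ge x^+$ up front rather than folding it into the general argument.
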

\begin{proof}
	If $\phi_f(\MM{0}) \ge x^+$ then there is nothing to prove, so suppose $\phi_f(\MM{0}) < x^+$. Let $\tau$ be the minimum of the two stopping times in the lemma statement, and let $p = \Pr(\phi_f(\MM{\tau}) \le x^-)$. Thus $\E(\psi_f(\MM{\tau})) \ge pe^{-\beta x^-/m_f}$. For all $0 \le t \le \tau$, let $X_t = \psi_f(\MM{t})$; for all $t \ge \tau$, let $X_t = \psi_f(\MM{\tau})$. Then by Lemma~\ref{lem:psif-supermart} and the fact that $f$ is $(x^-,x^+)$-valid, $X$ is a supermartingale. By the optional stopping theorem, it follows that 
	\[
		\E(\psi_f(\MM{\tau})) = \E(X_\tau) \le \psi_f(X_0) = \exp\left(- \frac{\beta\phi_f(\MM{0})}{m_f} \right) \le \exp\left(- \frac{\beta x^+}{m_f} + \beta \right).
	\]
	Combining the two bounds on $\E(\psi_f(\MM{\tau}))$, we obtain $p \le e^{\beta}\exp(-\beta(x^+ - x^-)/m_f)$. The result then follows since $\beta \le 1/6$.
\end{proof}

\newcommand{\CC}{C_1}
\begin{lemma}\label{lem:gen-pot-b}
	Let $r > 1$, let $G=(V,E)$ be a connected graph on at least two vertices, and let $f\colon V\rightarrow\mathbb{R}_{\ge 0}$
	be a function that is not everywhere zero. Let $0 \le x^- <  x^+ \le \phif(V)$, and suppose that $f$ is $(x^-, x^+)$-valid and $x^+ - x^- > m_f$. Let $\MM$ be a Moran process on $G$ with fitness $r$ satisfying $\phi_f(\MM{0}) \ge x^+ - m_f$. Then 
	\[
		\Pr(\exists t \ge 0 \textnormal{ such that } \phi_f(\MM{t}) \le x^-) \le 
		{\left( {\tfrac{8r}{r-1}} \right)}^{1/2}
		|V|^2 e^{-\beta(x^+-x^-)/2m_f}.
	\]
	In particular, $\MM$ is at most this likely to go extinct.
\end{lemma}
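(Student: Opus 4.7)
The plan is to iterate Lemma~\ref{lem:gen-pot-a} via the strong Markov property, controlling the expected number of downward excursions of the process using the supermartingale $\psi_f$. The halving of the exponent and the $|V|^2$ factor will both arise from this iteration. I would first dispense with the degenerate case $x^+ - x^- \leq 2m_f$: there $(8r/(r-1))^{1/2} \geq 2\sqrt{2}$ and $e^{-\beta(x^+-x^-)/(2m_f)} \geq e^{-\beta} > e^{-1/6}$, so since $|V|^2 \geq 4$ the claimed right-hand side already exceeds $1$ and the bound is vacuous. Assume henceforth $x^+ - x^- > 2m_f$.

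Lemma~\ref{lem:gen-pot-a} applied with endpoints $(x^-, x^+)$ gives that, from $\phi_f(\MM{0}) \geq x^+ - m_f$, the process reaches $\phi_f \geq x^+$ before ever touching $\phi_f \leq x^-$ except with probability at most $2\,e^{-\beta(x^+-x^-)/m_f}$. Beyond this first visit to $\{\phi_f \geq x^+\}$, the process can still come back down, so I would decompose the remaining risk into \emph{excursions}: each excursion starts when $\phi_f$ first drops below $x^+$ after a visit to $\{\phi_f \geq x^+\}$ (at which moment $\phi_f \in (x^+ - m_f, x^+)$, because a single Moran step changes $\phi_f$ by at most $m_f$) and ends either when $\phi_f$ returns to $\geq x^+$ (good) or reaches $\leq x^-$ (bad). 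By the strong Markov property and another application of Lemma~\ref{lem:gen-pot-a}, the per-excursion bad probability is at most $2\,e^{-\beta(x^+-x^-)/m_f}$, so summing over excursions yields
\[
\Pr\bigl(\exists t : \phi_f(\MM{t}) \leq x^-\bigr) \leq 2\,e^{-\beta(x^+-x^-)/m_f}\,\bigl(1 + \E[N]\bigr),
\]
where $N$ is the total number of excursions.

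The technical heart of the argument is then to bound $\E[N]$ by roughly $\tfrac{1}{2}(8r/(r-1))^{1/2}\,|V|^2\,e^{\beta(x^+-x^-)/(2m_f)}$; multiplying this into the per-excursion bound recovers the claimed inequality, and the halving of the exponent is exactly what this $\E[N]$ bound supplies. For this I would use that $\psi_f(\MM{t})$ is a non-negative supermartingale on the valid region $\{X : x^- < \phi_f(X) < x^+\}$ (Lemma~\ref{lem:psif-supermart}) and apply Doob's up-crossing inequality: each excursion forces $\psi_f$ to up-cross an interval of width on the order of $e^{-\beta x^*/m_f} - e^{-\beta x^+/m_f}$ where $x^* = (x^+ + x^-)/2$ is the midpoint, and the reciprocal of this width is exactly what produces the factor $e^{\beta(x^+-x^-)/(2m_f)}$. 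The polynomial $|V|^2$ factor should then emerge from a restart penalty, paid each time the process leaves the valid region through the top (where $\psi_f$ ceases to be a supermartingale), together with crude bounds such as $\phi_f(V)/m_f \leq |V|$ that control how wide the $\psi_f$-range of the process can be.

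The main obstacle is precisely that $\psi_f$ fails to be a supermartingale exactly in the region $\{\phi_f \geq x^+\}$ where excursions originate, so the up-crossing argument has to be applied to a carefully stopped version of the process and restarted by strong Markov at each re-entry into the valid region. The bookkeeping must be closed so as to yield the explicit constant $(8r/(r-1))^{1/2}$ dictated by $\beta = (r-1)/(6r+2)$, and I expect most of the proof effort to lie in this accounting rather than in any additional conceptual input beyond Lemmas~\ref{lem:gen-pot-a} and~\ref{lem:psif-supermart}.
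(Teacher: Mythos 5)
Your decomposition into sojourns is exactly the paper's: each time $\phi_f(M)$ drops below $x^+$ it re-enters at a value at least $x^+ - m_f$, so Lemma~\ref{lem:gen-pot-a} applies afresh by the Markov property, and a union bound gives a per-sojourn failure probability of $2e^{-\beta(x^+-x^-)/m_f}$. The gap is in how you control the number of sojourns. Doob's up-crossing inequality cannot bound $\E[N]$: a sojourn that dips only slightly below $x^+$ and returns never takes $\psi_f$ up to $e^{-\beta x^*/m_f}$ (with your midpoint $x^*=(x^++x^-)/2$), so most sojourns register no up-crossing of your interval at all; and for the sojourns that do go deep, $\psi_f$ is a supermartingale only inside the valid band, so ``restarting by strong Markov at each re-entry'' just reintroduces the quantity $N$ you are trying to bound. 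Moreover, the up-crossing bound for a non-negative supermartingale started near $e^{-\beta x^+/m_f}$ gives an expected up-crossing count of order $1/(e^{\beta(x^+-x^-)/2m_f}-1)$, i.e.\ a \emph{small} number --- it points in the opposite direction from the $e^{+\beta(x^+-x^-)/2m_f}$ growth you need for $\E[N]$.

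The missing ingredient is the polynomial bound on expected absorption time, at most $r|V|^4/(r-1)$ from any state, from \cite[Theorem~9]{DGMRSS2014}. The paper observes that within $\floor{T}$ time steps there are at most $\floor{T}$ sojourns, so the failure probability up to time $\floor{T}$ is at most $2Te^{-\beta(x^+-x^-)/m_f}$, while Markov's inequality gives $\Pr(M(\floor{T}) \notin \{\emptyset,V\}) < r|V|^4/((r-1)T)$; choosing $T = (r/2(r-1))^{1/2}|V|^2e^{\beta(x^+-x^-)/2m_f}$ balances the two terms, and this optimisation is the true source of both the $|V|^2$ factor and the halving of the exponent (not an up-crossing at the midpoint, and not the bound $\phi_f(V)/m_f \le |V|$). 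Your skeleton can be closed along the same lines --- for instance $\E[N]$ is at most the expected absorption time, which together with your vacuous-case reduction already yields the stated constant --- but some appeal to the $O(|V|^4)$ absorption-time bound, which your proposal never invokes, appears unavoidable.
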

\begin{proof}
	By Lemma~\ref{lem:gen-pot-a}, the probability that $\phif(\MM)$ reaches $[0,x^-]$ before reaching $[x^+,\phif(V)]$ is at most $2\exp(-\beta(x^+ - x^-)/m_f)$. Moreover, if $\phif(\MM{t}) < x^+$ and $\phif(\MM{t-1}) \ge x^+$, then $\phif(\MM{t}) \ge x^+ - m_f$ by the definition of $m_f$, and so we may again apply Lemma~\ref{lem:gen-pot-a} starting from~$t$. Thus by a union bound, for all $T \ge 0$, the probability that $\phif(\MM)$ reaches $[0,x^-]$ within $\floor{T}$ sojourns from the start or from $[x^+,\phif(V)]$ is at most $2\floor{T}e^{-\beta(x^+-x^-)/m_f}$. Therefore,
	\begin{equation}\label{eq:phif-small}
		\Pr\big(\exists t \in [0,\floor{T}] \text{ s.t.\@ } \phif(\MM{t}) \le x^- \big) \le 2Te^{-\beta(x^+-x^-)/m_f}\,.
	\end{equation}
	Moreover, by~\cite[Theorem~9]{DGMRSS2014}, the expected absorption time of $\MM$ from any state is at most $r|V|^4/(r-1)$. Thus by Markov's inequality, for all $T \ge 0$, we have
    \begin{equation}\label{eq:nofix}
        \Pr(M(\floor{T}) \notin \{0,V\}) \le \frac{r|V|^4}{(r-1)(\floor{T}+1)} < \frac{r|V|^4}{(r-1)T}\,.
    \end{equation}

	Now, take $T = {\left({\tfrac{r}{2(r-1)}} \right)}^{1/2}
	|V|^2e^{\beta(x^+-x^-)/2m_f}$. Then the result follows from~\eqref{eq:phif-small},~\eqref{eq:nofix} and a union bound.
\end{proof}

\section{The undirected phase transition}\label{sec:layers}

\subsection{Proof sketch}\label{sec:layers-sketch}

Let $G = (V,E)$ be a large connected graph, and let $r>1$. Our main objective in this section is to prove that, for any set $U \subseteq V$ with $\dr{U,V \setminus U}$ sufficiently small, a Moran process $\MM$ on $G$ with fitness $r$ and $|U \setminus \MM{0}| \le 1$ is overwhelmingly likely to fixate before $U$ empties of mutants. This is implied by Lemmas~\ref{lem:layered} and~\ref{lem:killers} (see Lemma~\ref{lem:magic}), which are crucial ingredients in our improved bound on absorption time (Theorem~\ref{thm:abs-time-intro}, proved in Section~\ref{sec:abs-time}). It will also turn out that the tools we develop along the way allow us to quickly prove the phase transition in fixation probability of undirected graphs (Theorem~\ref{thm:new-phase-trans}, proved in Section~\ref{sec:layered-app}). The remaining results of the section are purely ancillary.

We now sketch the proof of our results. In the sketch, we assume $U=V$; allowing $U$ and $V$ to be distinct does make the proof more complicated, but not by too much. Note that all the notation and definitions we introduce in this section are local in scope, so the reader may skip ahead to Section~\ref{sec:layered-proof} if they desire.

If $\phi(V)$ is large, then we immediately obtain a strong lower bound on fixation probability from Theorem~\ref{thm:potential}. (When $U\ne V$, our analogue of this bound will be Lemma~\ref{lem:killers}.) If $\phi(V)$ is small, however, then the bound of Theorem~\ref{thm:potential} is too weak to be useful. We instead construct a function $f\colon V\to\mathbb{R}_{\ge 0}$ to which we may apply Lemma~\ref{lem:gen-pot-b} to obtain a strong bound. We therefore require $f$ to satisfy the following properties:
\begin{enumerate}[(X1)]
	\item $f$ must be $(x^-,x^+)$-valid for some $0 \le x^- < x^+ \le \phif(V)$; in fact, we shall take $x^- = 0$; 
	\item $x^+$ must not be too small;
	\item $m_f = \max_{v \in V}\phif(v)$ must not be too large.
\end{enumerate}

We first consider a special case as a toy problem, in order to better motivate what follows. Let $n=|V|$, let $\lambda=(r+1)/2>1$ as in Definition~\ref{def:valid}, let $h = \ceil{\log_\lambda n}$, and suppose there is a partition $\calS = (S_0, \dots, S_h)$ of $V$ into disjoint sets satisfying the following properties:
\begin{enumerate}[(Y1)]
	\item for all $0 \le i,j \le h$ with $|i-j|\ge 2$, there are no edges between $S_i$ and $S_j$;
	\item $\phi(S_h) \ge 1$;
	\item for all $v \in V \setminus S_0$, $d(v) \ge \sqrt{n}$.
\end{enumerate}
Then we define $f\colon V\rightarrow \mathbb{R}_{\ge 0}$ by mapping each $v \in S_i$ to $\lambda^i/\lambda^h$. 

Recall from Definition~\ref{def:valid} that $f$ is valid for a set $X \subseteq V$ if
\begin{equation}\label{eqn:layers-sketch-valid}
	\sum_{\substack{(x,y)\in E(X,V\setminus X)\\ f(x) > \lambda f(y)}}
	\frac{f(x)}{d(x)\,d(y)}
	\leq \frac{r-1}{4r}
	\sum_{\substack{(x,y)\in E(X,V\setminus X)\\ f(x)\leq \lambda f(y)}}
	\frac{f(y)}{d(x)\,d(y)}.
\end{equation}  
If $f(x)>\lambda f(y)$ then, by our choice of~$f$, we must have $x\in S_i$ and $y\in S_j$ for some $i>j+1$.  Therefore, by property~(Y1), the left-hand term of~\eqref{eqn:layers-sketch-valid} is zero for all $X$, so $f$ is $(0,\phif(V))$-valid as in (X1). Moreover, we have $\phif(V) \ge \phif(S_h) = \phi(S_h) \ge 1$ by (Y2), which gives us the lower bound required by (X2). Finally, for all $v \in S_0$, we have $\phif(v) \le \lambda^{-h} \le 1/n$. For all $v \in V \setminus S_0$, by (Y3) we have $\phif(v) \le 1/d(v) \le 1/\sqrt{n}$. Thus $m_f \le 1/\sqrt{n}$, which gives us the upper bound required by (X3). Suppose $M$ is a Moran process on $G$ with fitness $r>1$ satisfying $|V \setminus M(0)| \le 1$. It is immediate that $\phif(M(0)) \ge \phif(V)-m_f$, so applying Lemma~\ref{lem:gen-pot-b} with $x^- = 0$ and $x^+ = \phif(V)$ yields 
\[
	\Pr(M\mbox{ fixates}) \ge 1-{\left( {\tfrac{8r}{r-1}} \right)}^{1/2}n^2 e^{-\beta(x^+-x^-)/2m_f} \ge 1-{\left( {\tfrac{8r}{r-1}} \right)}^{1/2}n^2e^{-\beta\sqrt{n}/2}.
\] 
We have therefore solved our toy problem.

Now let us return to our original situation: all we know about $G$ is that it is large and connected, and that $\phi(V)$ is small. We use our bound on $\phi(V)$ to find a large set $R \subseteq V$ (see Lemma~\ref{lem:makeR}), then use $R$ to construct a partition of $V$ satisfying similar properties to (Y1)--(Y3). For concreteness, suppose $\phi(V) \le 2$. Then $R$~satisfies the following ``clique-like'' properties:
\begin{enumerate}[(R1)]
	\item $\phi(R) \ge 1/2^6$;
	\item for all $X \subseteq R$ with $\phi(X) \le 1/2^7$, $\dr{X, R \setminus X} \ge \phi(X)/2^7$;
	\item for all $v \in R$, $d(v) \ge n/2^5$.
\end{enumerate}

We now set out the properties our partition will satisfy. In general, we cannot hope to satisfy (Y1), so we require an alternative condition that still implies (X1). Given a partition $\calS = (S_0, \dots, S_h)$, the exact condition that we need for $f$ to be $(0,x^+)$-valid (that is, for~\eqref{eqn:layers-sketch-valid} to hold) is that for all $X \subseteq V$ with $0 < \phif(X) < x^+$, we have
\begin{equation}\label{eqn:duno}
\sum_{j=0}^h \sum_{k=0}^{j-2} \lambda^{j-h}\dr{S_j \cap X, S_k \setminus X} \le \frac{r-1}{4r}\sum_{j=0}^h \sum_{k=j-1}^{h} \lambda^{k-h}\dr{S_j \cap X, S_k \setminus X}.
\end{equation}
Motivated by this, for all $X \subseteq V$ and all partitions $\calS = (S_0, \dots, S_h)$ of $V$, we define
\begin{align*}
B(X,\mathcal{S}) &= \sum_{j=0}^h \sum_{k=0}^{j-2} \dr{S_j \cap X, S_k \setminus X},\\
\Gamma(X,\mathcal{S}) &= \sum_{j=0}^h \sum_{k=j-1}^h \dr{S_j \cap X,S_k \setminus X}.
\end{align*}
We say that a partition $\calS$ is \emph{finished} if it satisfies the following properties:
\begin{enumerate}[(Z1)]
	\item Let $\Lambda = 4r\lambda^h/(r-1)$, and let $x^+$ be ``reasonably large''. For all $X \subseteq V$ with $0 < \phif(X) < x^+$, 
	$\Gamma(X,\calS) \ge \Lambda B(X,\calS)$;
	\item $\phi(R \setminus S_h) \le 1/2^9$;
	\item for all $v \in V \setminus S_0$, $d(v) \ge \sqrt{n}$.
\end{enumerate}
Note that: (Z1) implies (X1) by the above discussion; by (R1), (Z2) implies a weaker version of (Y2), and hence (X2); and (Z3) is the same as (Y3) and hence implies (X3). (See Definition~\ref{def:finished} for the true definition of finished, and Lemma~\ref{lem:true-potential} for the proof that a finished partition satisfies analogues of (X1)--(X3).)

Let $h$ be ``as large as possible'', and let $\delta_1 < \delta_2 < \dots < \delta_h$ be a rapidly increasing sequence with $\delta_1 \ge \sqrt{n}$ and $\delta_h \le n/2^5$; see Definition~\ref{def:consts} for the true values of $\delta_1, \dots, \delta_h$ and $h$. Let
\begin{align*}
S_0 &= \{v \in V \mid d(v) < \delta_1\},\\
S_i &= \{v \in V \mid \delta_i \le d(v) < \delta_{i+1}\}\mbox{ for all }i \in [h-1],\\
S_h &= \{v \in V \mid d(v) \ge \delta_h\}.
\end{align*}
Let $\calS = (S_0, \dots, S_h)$. Note that $\calS$ satisfies (Z3) by definition, and that by (R3) we have $R \subseteq S_h$ and so $\calS$ satisfies (Z2). In general, $\calS$ does not satisfy (Z1), but it does satisfy a weaker version of (Y1), which we shall use later; for all $i,j$ with $j \ge i+2$, by Lemma~\ref{lem:deg-drift} applied with $S_1 = S_i$, $d_1 = \delta_{i+1}$, $S_2 = S_j$ and $d_2 = \delta_j$, we have 
\begin{equation}\label{eqn:xyzzy}
\dr{S_i,S_j} \le \phi(S_i)\frac{\delta_{i+1}}{\delta_j} \le \phi(V)\frac{\delta_{i+1}}{\delta_{i+2}} \le 2\frac{\delta_{i+1}}{\delta_{i+2}},\mbox{ which is very small.}
\end{equation}
Thus instead of having no edges between $S_i$ and $S_j$, we have low drift.

Observe that for any set $X \subseteq V$, we can increase $\Gamma(X,\calS)$ and decrease $B(X,\calS)$ by moving $X$ downwards in the partition; for example, by moving $X \cap S_q$ from $S_q$ into $S_{q-1}$. Moreover, doing so does not decrease $\Gamma(Y,\calS)$ or increase $B(Y,\calS)$ for any set $Y \subseteq V$, and does not lower the minimum degree of any set $S_i$. It is therefore clear that we may use this to turn $\calS$ into a partition that satisfies (Z1) and (Z3). However, in the process we may empty $S_h$ entirely, thus violating (Z2). To avoid doing so, we make use of $R$ and~\eqref{eqn:xyzzy}.

We say that a set is \emph{safe} if $\phi(X \cap R) \le 1/2^8$. In constructing a finished partition from~$\calS$, we will only move safe sets downwards; thus we obtain a partition $\calS^\mathsf{fin} = (S_0^\mathsf{fin},\dots,S_h^\mathsf{fin})$ where all safe sets $X$ satisfy $\Gamma(X,\calS^\mathsf{fin}) \ge \Lambda B(X,\calS^\mathsf{fin})$. Suppose $\calS^\mathsf{fin}$ does not satisfy (Z2), so that $\phi(R \setminus S_h^\mathsf{fin}) > 1/2^9$. Then since we only moved safe sets, we must have passed through a partition $\calS' = (S_0',\dots,S_h')$ with $1/2^9 < \phi(R \setminus S_h') < 1/2^7$. Thus by (R2), we have $\dr{R \cap S_h', R \setminus S_h'} \ge 1/2^{16}$. But using~\eqref{eqn:xyzzy} and by being very careful about how we move sets downwards, we can ensure that this never happens, so that $\calS^\mathsf{fin}$ does satisfy (Z2). This is the most technically difficult part of the proof. We then take $x^+ = 1/2^9$ in (Z1) and define $f(v) = \lambda^i/\lambda^h$ for all $v \in S_i^\mathsf{fin}$ as before.  For any set $X \subseteq V$ with $\phif(X) \le 1/2^9$, by (Z2) we have 
\[
	\phi(X \cap R) \le \phi(X \cap R \cap S_h^\mathsf{fin}) + \phif(R \setminus S_h^\mathsf{fin}) \le \phif(X \cap R \cap S_h^\mathsf{fin}) + 1/2^9 \le \phif(X) + 1/2^9 \le 1/2^8,
\]
and so $X$ is safe and satisfies $\Gamma(X,\calS^\mathsf{fin}) \ge \Lambda B(X,\calS^\mathsf{fin})$ as required. Thus $\calS^\mathsf{fin}$ is finished, and so we may apply Lemma~\ref{lem:gen-pot-b} to $f$ to obtain a strong bound.

Note that while the sketch above is equivalent to our proof in spirit, we actually build up a finished partition step-by-step rather than starting with an $h$-level partition. This makes it much easier to give the details of the exact way in which we move safe sets downwards.

\subsection{The small-potential case}\label{sec:layered-proof}

We first embed a ``clique-like'' structure $R$ into $G$. The following lemma is folklore; recall that $G[R]$ is the subgraph induced by~$R$.

\begin{lemma}[{\cite[Proposition 1.2.2]{Diestel}}]\label{lem:deg-inc}
    For every graph $G=(V,E)$, there is a set $R\subseteq V$ such that
    $\delta(G[R])\geq \dbar(G)/2$.\pushQED{\qed}\qedhere
\end{lemma}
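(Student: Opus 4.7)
The plan is to apply the standard greedy degree-peeling argument. Start from $R_0 = V$, and iteratively delete a vertex of low induced degree: given $R_t$, if there exists $v \in R_t$ with $d_{G[R_t]}(v) < \dbar(G)/2$, set $R_{t+1} = R_t \setminus \{v\}$; otherwise halt and return $R = R_t$. When the process halts, every remaining vertex has induced degree at least $\dbar(G)/2$, which is exactly the conclusion of the lemma.

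The only thing to verify is that the process cannot halt with $R = \emptyset$ (or with a single isolated vertex when $\dbar(G) > 0$). For this I would maintain the invariant that the average degree of $G[R_t]$ is at least $\dbar(G)$ for every $t$ with $R_t \neq \emptyset$. The base case $t = 0$ holds because the average degree of $G$ itself is $\dbar(G)$. For the inductive step, suppose $|R_t| = k$ and $2|E(G[R_t])| \geq \dbar(G) k$. Deleting a vertex $v$ of induced degree strictly less than $\dbar(G)/2$ removes $d_{G[R_t]}(v) < \dbar(G)/2$ edges, hence the degree sum drops by strictly less than $\dbar(G)$, so
\[
2|E(G[R_{t+1}])| > \dbar(G) k - \dbar(G) = \dbar(G)\,|R_{t+1}|,
\]
preserving the invariant.

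Finally I would conclude as follows. If $\dbar(G) = 0$, then $G$ has no edges and any singleton $R = \{v\}$ satisfies $\delta(G[R]) = 0 = \dbar(G)/2$. Otherwise $\dbar(G) > 0$, and a non-empty induced subgraph with average degree at least $\dbar(G) > 0$ must contain at least two vertices (a single vertex has average degree zero in its induced subgraph). Hence the peeling process cannot shrink $R$ below two vertices, and must terminate at some non-empty $R$ with $\delta(G[R]) \geq \dbar(G)/2$.

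I do not anticipate any genuine obstacle; the argument is folklore, and the only care required is the one-line invariant computation above, which guarantees the process does not exhaust all of $V$.
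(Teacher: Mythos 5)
Your proof is correct and is exactly the argument the paper has in mind: it cites Diestel and notes that the lemma "may be proved by greedily deleting vertices of minimum degree," which is precisely your peeling process, and your average-degree invariant is the standard way to show the process cannot exhaust $V$. No issues.
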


As in~\cite{Diestel}, Lemma~\ref{lem:deg-inc} may be proved by greedily deleting vertices of minimum degree. 
By assuming an ordering of~$V$ and always deleting the least of the
minimum-degree vertices according to that order, we may assume that
the procedure in the above proof uniquely defines the set~$R$.

\begin{definition}\label{def:R}
    Let $G=(V,E)$ be a graph and let $\emptyset\subset U\subseteq V$. Let $R(G,U)$~be the
    set given by applying Lemma~\ref{lem:deg-inc} to $G[U]$.
\end{definition}

\begin{lemma}\label{lem:makeR}
    Let $G=(V,E)$ be a connected graph on at least two vertices and let $\emptyset\subset U\subseteq V$.  If
    $\dr{U,V\setminus U}\leq 1/2|V|$, then $R = R(G,U)$ satisfies:
    \begin{enumerate}[(i)]
    \item $\phi(R) \geq 1/2^5\phi(U)$;
    \item for all $Y\subseteq R$ with $\phi(Y)\leq 1/2^6\phi(U)$, $\dr{Y,R\setminus Y}\geq \phi(Y)/2^6\phi(U)$; and
    \item for all $v\in R$, $d(v)\geq |U|/16\phi(U)$.
    \end{enumerate}
\end{lemma}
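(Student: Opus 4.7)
The plan is to apply Lemma~\ref{lem:deg-inc} to $G[U]$ to obtain~$R$, then combine the drift hypothesis with the arithmetic-mean/harmonic-mean (AM-HM) inequality to control the relevant degrees. The main bookkeeping is the interplay between degrees of vertices in $G$, $G[U]$, and $G[R]$; once this is in place all three claims reduce to direct counting, modulo some care with constants.

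First, I would apply Lemma~\ref{lem:drift-props}(i) with $A=U$, using $\dr{U,V\setminus U}\leq 1/2|V|$ and $\Delta(G)\leq|V|-1$, to obtain $d_{V\setminus U}(v)\leq d(v)/2$ for every $v\in U$. This yields $d_{G[U]}(v)\geq d(v)/2$ and in particular $d(v)\leq 2d_{G[U]}(v)<2|U|$. Summing over $v\in U$ and using the AM-HM inequality $\sum_{v\in U}d(v)\geq |U|^2/\phi(U)$ gives $\dbar(G[U])\geq |U|/2\phi(U)$, so writing $d_0 := \dbar(G[U])/2$ we have $d_0\geq |U|/4\phi(U)$. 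Lemma~\ref{lem:deg-inc} then ensures every $v\in R$ has $d(v)\geq d_{G[R]}(v)\geq d_0\geq |U|/16\phi(U)$, which is~(iii). The standard invariant $\dbar(H)\geq 2d_0$ throughout the greedy process (obtained by checking that removing a vertex of degree less than $d_0$ from a graph of average degree at least $2d_0$ preserves this bound) implies that $R$ is non-empty, and since each $v\in R$ has $|R|-1\geq d_{G[R]}(v)\geq d_0$, we have $|R|>d_0$. Combined with $\Delta(R)<2|U|$, this yields $\phi(R)\geq |R|/\Delta(R)>d_0/2|U|\geq 1/8\phi(U)$, which is stronger than~(i).

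For~(ii), fix $Y\subseteq R$ with $\phi(Y)\leq 1/2^6\phi(U)$ and $v\in Y$. Each $y\in Y\cap N(v)\subseteq R$ has $d(y)<2|U|$, so $\phi(Y)\geq d_Y(v)/2|U|$, yielding $d_Y(v)\leq 2|U|\phi(Y)\leq |U|/32\phi(U)\leq d_0/8$. Hence $d_{R\setminus Y}(v)\geq d_{G[R]}(v)-d_Y(v)\geq 7d_0/8$, and since $d(y)<2|U|$ for every $y\in R\setminus Y$,
\[
\dr{\{v\},R\setminus Y}\geq \frac{1}{d(v)}\cdot\frac{7d_0/8}{2|U|}\geq \frac{7}{64\phi(U)\,d(v)}.
\]
Summing over $v\in Y$ gives $\dr{Y,R\setminus Y}\geq 7\phi(Y)/64\phi(U)>\phi(Y)/2^6\phi(U)$, as required.

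The main conceptual step is combining the drift hypothesis with AM-HM to simultaneously tie $\dbar(G[U])$ to $|U|/\phi(U)$ and establish the two-sided control $d_0\leq d(v)<2|U|$ for every $v\in R$. Everything else is routine degree-counting once that link is in place.
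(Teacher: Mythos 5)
Your proof is correct and follows essentially the same route as the paper: both use Lemma~\ref{lem:drift-props}(i) to get $d_{G[U]}(v)\geq d(v)/2$ and $d(v)<2|U|$ for $v\in U$, lower-bound $\dbar(G[U])$, invoke Lemma~\ref{lem:deg-inc} for $\delta(G[R])$, and finish with degree counting. The only difference is that you obtain the average-degree bound via AM--HM (giving $\dbar(G[U])\geq |U|/2\phi(U)$), whereas the paper counts the vertices with $d_U(x)\geq |U|/4\phi(U)$ to get the slightly weaker $|U|/8\phi(U)$; your version is marginally cleaner and yields slightly better constants, but the argument is the same in substance.
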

\begin{proof}
	Let $X = \{x \in U \mid d_U(x) \geq |U|/4\phi(U)\}$; thus 
	\begin{equation}\label{eqn:makeR-1}
		\overline{d}(G[U]) \ge \frac{1}{|U|}\sum_{x \in X}d_U(x) \ge \frac{1}{|U|} \cdot |X| \cdot \frac{|U|}{4\phi(U)}.
	\end{equation}
	 By Lemma~\ref{lem:drift-props}(\ref{d:ext-deg}) applied with $A=U$, each $u\in U$ sends at most $|V|\,\dr{U,V\setminus U}\,d(u)\leq d(u)/2$ edges outside~$U$, so $d(u) \le 2d_U(u)$. Therefore, for each $u\in U\setminus X$, $d(u) < |U|/2\phi(U)$, so $\phi(U) \ge \phi(U\setminus X) \ge |U\setminus X|\cdot 2\phi(U)/|U|$. This gives $|U\setminus X| \le |U|/2$ and, hence, $|X| \ge |U|/2$. It follows from~\eqref{eqn:makeR-1} that $\overline{d}(G[U]) \ge |U|/8\phi(U)$. Thus by Lemma~\ref{lem:deg-inc} and the definition of $R$,
	\begin{equation}\label{eqn:thingy}
		\delta(G[R]) \ge \frac{1}{2}\overline{d}(G[U]) \ge \frac{|U|}{16\phi(U)}.
	\end{equation}
	In particular, (iii) follows. Moreover, recall that every vertex $u \in U$ sends at most $d(u)/2$ edges outside $U$; hence for all $u \in U$, $d(u) \le 2|U|$. We therefore have $\phi(R) \ge |R|/2|U| \ge \delta(G[R])/2|U| \ge 1/2^5\phi(U)$, and so (i) holds.
	
	Towards proving (ii), let $Y \subseteq R$ with $\phi(Y) \le 1/2^6\phi(U)$.  Recall that every vertex in $U$ has degree at most $2|U|$, so $|Y| \le 2|U|\phi(Y) \le |U|/2^5\phi(U)$. Therefore, by~\eqref{eqn:thingy}, $|Y| \le \delta(G[R])/2$, which means that every $x \in Y$ must have at least $\delta(G[R])/2$ neighbours in $R \setminus Y$. Putting all of this together gives
	\begin{align*}
		\dr{Y, R \setminus Y} &= \sum_{x \in Y}\Big(\frac{1}{d(x)}\sum_{y \in N(x) \cap R \setminus Y} \frac{1}{d(y)} \Big)
		\ge \sum_{x \in Y}\Big(\frac{1}{d(x)} \cdot \frac{\delta(G[R])}{2} \cdot \frac{1}{2|U|} \Big)\\
		&= \frac{\phi(Y)\delta(G[R])}{4|U|}
		\ge \frac{\phi(Y)}{2^6\phi(U)},
	\end{align*}
	which establishes (ii).
\end{proof}

We aim to find a partition $\calS$ of $U$ with certain properties. Before we can state these properties, we need the following definitions.

\begin{definition}\label{def:consts}
    For the rest of this section, we will consider a connected graph $G(V,E)$ on at least two vertices
    and a non-empty set $U\subseteq V$ such that $\dr{U,V\setminus U}\leq 1/2|V|$ (so that Lemma~\ref{lem:makeR} applies). We will also assume that $|U| \ge 2$ and write $R = R(G,U)$.
    We will use the following real numbers, where $r>1$ is the fitness of
    the mutants in the Moran process under consideration. (Here $\lambda$ and $\beta$ are recalled from Section~\ref{sec:discount-pot}.)
    \begin{alignat*}{4}
        \alpha  &= 1/2^7\phi(U) &\qquad
        s		&= (\log_r |U|)^{1/3} &\qquad
        \nlevs  &= \ceil{s/2} &\qquad
        \Lambda &= \frac{8r^{\nlevs+1}}{r-1}\\
        D       &= \frac{\alpha}%
                        {2^7\phi(U)(\Lambda h)^{2\nlevs}}&
        \lambda &= \frac{r+1}{2}&
        \beta &= \frac{r-1}{6r+2}
    \end{alignat*}
    and
    \[
        \delta_i = |U|\left(\frac{D}{\phi(U)}\right)^{\nlevs+1-i} \mbox{ for all }i \in [\nlevs].
    \]
\end{definition}

\begin{remark}\label{rem:consts}\label{rem:constants}
    It is immediate from Lemma~\ref{lem:drift-props}(\ref{d:phi})
    (taking $A=U$)
    that $\phi(U)\geq 1/2$. Since we assume $|U| \ge 2$,
    we have $\nlevs\geq 1$. These bounds also give $0<\alpha\leq 1/2^6$,
    $\Lambda>8$, $D < \alpha/2^6 \le 2^{-12}$ 
    and $0 < \delta_1 < \delta_2 < \dots < \delta_h < |U|$. 
    Since $r>1$ we also have
 $\lambda > 1$.
\end{remark}

\begin{definition}\label{def:goodbad}
	Let $G=(V,E)$ be a connected graph on at least two vertices, let $q \ge 0$, and let $\calS = (S_0, \dots, S_q)$
	partition~$U$ for some $U\subseteq V$.  For all
	$X\subseteq U$, define
	\begin{align*}
	B(X,\mathcal{S}) &= \sum_{j=0}^q \sum_{k=0}^{j-2} \dr{S_j \cap X, S_k \setminus X},\\
	\Gamma(X,\mathcal{S}) &= \sum_{j=0}^q \sum_{k=j-1}^{q} \dr{S_j \cap X,S_k \setminus X}.
	\end{align*}
\end{definition}

(Recall from Section~\ref{sec:prelim} that $S_{-1} = \emptyset$.) Note that for any choice of $X$ and $\calS$, we have $B(X,\calS) + \Gamma(X,\calS) = \dr{X, U \setminus X}$. As we will see in the proof of Lemma~\ref{lem:true-potential}, if $U=V$ and $B(X,\calS) \le \Lambda\Gamma(X,\calS)$, then the function $f\colon V\rightarrow\mathbb{R}_{\ge 0}$ given by $f(v) = \lambda^i/\lambda^h$ for all $v \in S_i$ is valid for $X$.

\begin{definition}\label{def:ell-good-set}
    Let $\calS = (S_0, \dots, S_q)$ partition~$U$ for some $q \ge 0$. For all $i \ge 0$, a set $X \subseteq U$ is \emph{$i$-good in~$\calS$} if $\Gamma(X,\calS) \geq i\Lambda B(X,\calS)$, and \emph{safe} if $\phi(X \cap R) \le \alpha$.
\end{definition}

\begin{definition}\label{def:finished}
	A partition $\calS = (S_0, \dots, S_\nlevs)$
	of~$U$ is \emph{finished} if:
	\begin{enumerate}[(F1)]
		\item every safe set $X \subseteq U$ is $1$-good in $\calS$;
		\item every safe set $X \subseteq U$ satisfies $\dr{X,U \setminus X}\ge \phi(X \setminus S_0)D/\phi(U)$;
		\item $\phi(R \setminus S_h) \le \alpha/2$;
		\item for all $v \in U \setminus S_0$, $d(v) \ge \delta_1$.
	\end{enumerate}
\end{definition}

We will often abuse notation by saying that a safe set 
$X \subseteq U$ \emph{satisfies (F1)}
to mean that $X$ is $1$-good in~$\calS$
and by saying that $X$
  \emph{satisfies (F2)} to mean that
$\dr{X,U \setminus X}\ge \phi(X \setminus S_0)D/\phi(U)$. 

Given a finished partition $\mathcal{S}$ with $\nlevs$ levels, we will be able to construct a weighting $f\colon V\rightarrow\mathbb{R}_{\ge 0}$ to which we may usefully apply Lemma~\ref{lem:gen-pot-b} by taking $f(v) = \lambda^i/\lambda^{\nlevs}$ for all $v \in S_i$; see Lemma~\ref{lem:true-potential}. As in our sketch proof (see Section~\ref{sec:layers-sketch}), we will use (F1) to prove validity of~$f$, we will use (F3) to bound $\phif(U)$ below, and we will use (F4) to bound $m_f$ above. Note that we only need (F2) when $U \ne V$, in which case it will be used alongside~(F1) to prove validity of $f$. We next sketch the procedure we will use to construct a finished partition.

\begin{definition}\label{def:lift}
	Let $\calS = (S_0, \dots, S_q)$ partition~$U$ for some $q \ge 0$. If $q \le h$, the \emph{split} of~$\calS$ is the partition $(S_0, \dots, S_{q-1}, S_q', S_{q+1}')$, where $S_q' = \{v \in S_q \mid d(v) < \delta_{q+1}\}$ and $S_{q+1}' = S_{q} \setminus S_q'$. If $q \ge 1$, then for all $X \subseteq U$, the \emph{drop of~$X$ (in~$\calS$)} is the partition $(S_0, \dots, S_{q-2}, S_{q-1} \cup (S_q \cap X), S_q \setminus X)$ formed by moving $X\cap S_q$ into~$S_{q-1}$. 
\end{definition}

Let $\calS^1$ be the partition of $U$ formed by splitting the trivial partition $\calS^0 = (U)$ of $U$. Note that we could form the initial partition used in the sketch proof (Section~\ref{sec:layers-sketch}) by applying $h-1$ splitting operations to $\calS^1$. Instead, we greedily drop safe sets which fail to satisfy (F2) until no more exist. 
Let the resulting partition be called $\calS^{1+} = (S^{1+}_0,S^{1+}_1)$.
We then split $\calS^{1+}$   to get a new partition $\calS^{2} = (S^2_0,S^2_1,S^2_2)$.  
Next, we repeat the following  process: given a partition 
$\calS^q = (S^q_0, \dots, S^q_q)$ with $q \ge 2$, 
we greedily drop safe sets which fail to be $(h+1-q)$-good until no more exist.
If $q<h$, we split the resulting partition and  repeat the process again. If $q=h$,
we will show that the resulting partition is finished. To do so, we maintain the following invariant.

\begin{definition}\label{def:ell-good}
	For $q\in[h]$, a partition $\calS = (S_0, \dots, S_q)$
	of~$U$ is \emph{good} if, writing $T(\calS) = \{v \in S_{q-1} \mid d(v) \ge \delta_q\}$ and $\calS^- = (S_0, \dots, S_{q-2}, S_{q-1} \cup S_q)$:
	\begin{enumerate}[(G1)]
		\item every safe set $X \subseteq U$ is $(h+2-q)$-good in~$\calS^-$;
		\item if $q \ge 2$, every safe set $X \subseteq U$ satisfies $\dr{X,U \setminus X} \ge \phi(X \setminus S_0)D/\phi(U)$;
		\item $\phi(R \setminus S_q) \le \alpha/2$;
		\item for all $i \in [q]$ and all $v \in S_i$, $d(v) \ge \delta_i$;
		\item $\dr{S_{\le q-2}, S_q \cup T(\calS)} \le (\Lambda h)^{2(q-1)}D$;
		\item if $q=1$, then $\dr{S_q, T(\calS)} \le D\phi(T(\calS))/\phi(U)$; if instead $q\ge 2$, then $\dr{S_q, T(\calS)} \le h^2\Lambda \dr{S_{q-2}, T(\calS)}$.
	\end{enumerate}
\end{definition}

Note that   if $q\geq 2$
then (G2) is identical to (F2).
If $q=h$ then (G3) is identical to (F3).  If $q\geq 1$ then
(G4) trivially implies (F4).
Condition (G1) is related to condition (F1), although they are not the
same. 
By (G4), $T(\calS)$ tracks everything we have added to $S_{q-1}$ by dropping sets since our last splitting operation. As in the sketch proof of Section~\ref{sec:layers-sketch}, we will show that (G3) is maintained by upper-bounding the drift between $R \cap S_q$ and $R \setminus S_q$; this is the purpose of (G5) and (G6). We first prove three ancillary lemmas.

\begin{lemma}\label{lem:R-high-deg}
	Let $G=(V,E)$ be a connected graph. Let $U\subseteq V$ with $\dr{U, V \setminus U} \le 1/2|V|$ and $|U|\geq 2$, and let $R=R(G,U)$. Then for all $i \in [h]$, every vertex in $R$ has degree at least~$\delta_i$.
\end{lemma}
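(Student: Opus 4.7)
The plan is to observe that the claim follows by a direct comparison between the lower bound on degrees in $R$ supplied by Lemma~\ref{lem:makeR}(iii) and the values $\delta_i$ from Definition~\ref{def:consts}. Since we only need one lower bound on $d(v)$ that dominates every $\delta_i$, the first step is to identify which $\delta_i$ is largest.

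From Remark~\ref{rem:consts} we have $D < 2^{-12}$ and $\phi(U) \ge 1/2$, so the ratio $D/\phi(U)$ is strictly less than $1$. Since $\delta_i = |U|(D/\phi(U))^{\nlevs+1-i}$, the sequence $\delta_1 < \delta_2 < \dots < \delta_\nlevs$ is increasing and the maximum is
\[
\delta_\nlevs = |U|\,\frac{D}{\phi(U)}.
\]
Thus it suffices to show $d(v) \ge \delta_\nlevs$ for every $v \in R$.

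Because we assume $\dr{U,V\setminus U} \le 1/2|V|$ and $|U|\ge 2$, Lemma~\ref{lem:makeR} applies to $R = R(G,U)$, and part~(iii) gives
\[
d(v) \ge \frac{|U|}{16\,\phi(U)} \quad \text{for every } v \in R.
\]
The final step is to compare this to $\delta_\nlevs$: the required inequality $|U|/(16\phi(U)) \ge |U|D/\phi(U)$ is equivalent to $D \le 1/16$, which is immediate from $D < 2^{-12}$ as recorded in Remark~\ref{rem:consts}. Combining these three observations yields the lemma.

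There is no real obstacle here; the statement is a bookkeeping consequence of how the constants in Definition~\ref{def:consts} were chosen relative to the guarantees of Lemma~\ref{lem:makeR}. The only thing to watch is the monotonicity direction of $\delta_i$, which hinges on $D/\phi(U) < 1$, and this is exactly why Remark~\ref{rem:consts} was stated.
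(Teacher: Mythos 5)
Your proof is correct and follows essentially the same route as the paper's: reduce to the largest value $\delta_\nlevs = |U|D/\phi(U)$ via the monotonicity recorded in Remark~\ref{rem:consts}, then compare with the bound $d(v)\ge |U|/16\phi(U)$ from Lemma~\ref{lem:makeR}(iii) using $D<2^{-12}$. Nothing is missing.
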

\begin{proof}
	First note that by Remark~\ref{rem:consts} it suffices to prove the result for $i=h$. We have $\delta_h = |U|D/\phi(U)$, and every vertex in $R$ has degree at least $|U|/16\phi(U)$ by Lemma~\ref{lem:makeR}(iii). Since $D < 1/2^5$ by Remark~\ref{rem:consts}, the result follows.
\end{proof}

\begin{lemma}\label{lem:split-drift}
	Let $G=(V,E)$ be a connected graph and let $U \subseteq V$ with $\dr{U,V \setminus U} \le 1/2|V|$ and $|U|\geq 2$. Let $\calS = (S_0, \dots, S_q)$ be a partition of $U$ with $q \ge 2$. Then for all $Y \subseteq U$, 
	\begin{align*}	
		\Gamma(Y,\calS^-) &= \Gamma(Y,\calS) + \dr{S_q \cap Y,S_{q-2} \setminus Y},\\
		B(Y,\calS^-) &= B(Y,\calS) - \dr{S_q \cap Y,S_{q-2} \setminus Y}.
	\end{align*}
\end{lemma}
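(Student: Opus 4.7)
The plan is essentially a bookkeeping argument, exploiting the fact that the two identities asserted are equivalent. Observe first that in any partition $\calT = (T_0, \dots, T_p)$ of $U$, every edge from some $v \in Y$ to some $w \in U \setminus Y$ is counted exactly once in $B(Y,\calT) + \Gamma(Y,\calT)$: if $v \in T_j$ and $w \in T_k$, then $(j,k)$ falls in exactly one of the ranges $k \le j-2$ and $k \ge j-1$. Hence $B(Y,\calT) + \Gamma(Y,\calT) = \dr{Y, U \setminus Y}$. Applying this to both $\calS$ and $\calS^-$ (noting that the right-hand side depends only on $Y$, not on the partition), the two claimed equalities sum to $0=0$, so it suffices to prove either one.

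I would then prove the claim for $B$ by direct comparison. Write $\calS^- = (S_0^-, \dots, S_{q-1}^-)$ with $S_i^- = S_i$ for $i \le q-2$ and $S_{q-1}^- = S_{q-1} \cup S_q$. Expanding according to Definition~\ref{def:goodbad}, the two sums $B(Y,\calS)$ and $B(Y,\calS^-)$ agree on all contributions from levels $j \le q-2$. The remaining contributions to $B(Y,\calS)$ come from levels $j = q-1$ and $j = q$, giving
\[
    \sum_{k=0}^{q-3}\dr{S_{q-1} \cap Y, S_k \setminus Y} + \sum_{k=0}^{q-2}\dr{S_q \cap Y, S_k \setminus Y},
\]
while the remaining contribution to $B(Y,\calS^-)$ comes from the merged level $j = q-1$, namely
\[
    \sum_{k=0}^{q-3}\dr{(S_{q-1}\cup S_q) \cap Y, S_k \setminus Y}.
\]
By additivity of drift (Section~3), subtracting gives $B(Y,\calS) - B(Y,\calS^-) = \dr{S_q \cap Y, S_{q-2} \setminus Y}$, which is the first identity. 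The identity for $\Gamma$ then follows from the observation in the opening paragraph.

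There is no real obstacle here; the only thing to be careful about is the boundary case $q = 2$, in which the upper limit $q-3 = -1$ of the $k$-sum in the $j = q-1$ term is vacuous, but the argument still goes through since $S_{-1} = \emptyset$ by the convention of Section~\ref{sec:prelim}. Intuitively, the content of the lemma is simply that merging the top two levels of $\calS$ reclassifies every edge from $S_q \cap Y$ to $S_{q-2} \setminus Y$ from a ``two-level jump'' (counted in $B$) into a ``one-level jump'' (counted in $\Gamma$), and leaves all other edges' classifications unchanged.
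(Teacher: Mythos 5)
Your proposal is correct and follows essentially the same route as the paper: both first observe that $B(Y,\cdot)+\Gamma(Y,\cdot)=\dr{Y,U\setminus Y}$ makes the two identities equivalent, and then verify one of them by expanding the definition directly and using additivity of drift (the paper expands $\Gamma(Y,\calS^-)$ where you expand $B(Y,\calS^-)$, which is immaterial). The only blemish is a label slip: the identity you derive, $B(Y,\calS)-B(Y,\calS^-)=\dr{S_q\cap Y,S_{q-2}\setminus Y}$, is the \emph{second} displayed identity of the lemma, not the first.
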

\begin{proof}
	Observe that since $\Gamma(Y,\calS^-) + B(Y,\calS^-) = \dr{Y,U \setminus Y} = \Gamma(Y,\calS) + B(Y,\calS)$, the two parts of the lemma are equivalent and it suffices to prove the first. Write $\calS^- = (S_0^-, \dots, S_{q-1}^-)$, and recall that $S_i^- = S_i$ for all $i < q-1$ and $S_{q-1}^- = S_{q-1} \cup S_q$. Then by the definition of $\Gamma$, we have
	\begin{align*}
	\Gamma(Y,\calS^-)
	&= \sum_{j=0}^{q-1}\sum_{k=j-1}^{q-1}
	\dr{S^-_j\cap Y, S^-_k\setminus Y}\\
	&= \sum_{j=0}^{q-1}\sum_{k=j-1}^{q-1} \dr{S_j\cap Y, S_k\setminus Y}
	+ \sum_{j=0}^q\dr{S_j\cap Y, S_q\setminus Y}
	+ \sum_{k=q-2}^{q-1}\dr{S_q\cap Y, S_k\setminus Y}\\
	&= \Gamma(Y, \calS) + \dr{S_q\cap Y, S_{q-2}\setminus Y}\,.\qedhere
	\end{align*}
\end{proof}

\begin{lemma}\label{lem:goodness-bound}
	Let $G=(V,E)$ be a connected graph and let $U \subseteq V$ with $|U| \ge 2$. Let $\calS = (S_0, \dots, S_q)$ be a partition of $U$ with $q \ge 2$. Suppose that for some integer $i \ge 2$, some 
non-empty	
	set $Y\subseteq U$ is $i$-good in $\calS^-$ but not $(i-1)$-good in $\calS$. Then $\Gamma(Y,\calS) \le i^2\Lambda\dr{S_{q-2}, S_q \cap Y}$.
\end{lemma}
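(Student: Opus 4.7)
The plan is to chase the definitions: use Lemma~\ref{lem:split-drift} to convert the assumption that $Y$ is $i$-good in $\calS^-$ into an inequality linking $\Gamma(Y,\calS)$, $B(Y,\calS)$, and the quantity $d := \dr{S_q \cap Y, S_{q-2}\setminus Y}$, and then combine this with the failure of $(i-1)$-goodness in $\calS$ to eliminate $B(Y,\calS)$ entirely.

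Concretely, first I would set $d = \dr{S_q \cap Y, S_{q-2}\setminus Y}$ and observe that, by monotonicity of drift, $d \le \dr{S_{q-2}, S_q \cap Y}$, so it suffices to prove $\Gamma(Y,\calS) \le i^2\Lambda\, d$. Lemma~\ref{lem:split-drift} yields $\Gamma(Y,\calS^-) = \Gamma(Y,\calS) + d$ and $B(Y,\calS^-) = B(Y,\calS) - d$, so the hypothesis $\Gamma(Y,\calS^-) \ge i\Lambda\, B(Y,\calS^-)$ rearranges to
\[
\Gamma(Y,\calS) \;\ge\; i\Lambda\, B(Y,\calS) - (i\Lambda+1)\,d.
\]
The failure of $(i-1)$-goodness in $\calS$ gives $(i-1)\Lambda\, B(Y,\calS) > \Gamma(Y,\calS)$; note that this forces $B(Y,\calS) > 0$ (otherwise the $(i-1)$-good inequality would hold trivially). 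Substituting $B(Y,\calS) > \Gamma(Y,\calS)/((i-1)\Lambda)$ into the previous display gives
\[
\Gamma(Y,\calS) \;\ge\; \tfrac{i}{i-1}\,\Gamma(Y,\calS) - (i\Lambda+1)\,d,
\]
which rearranges to $\Gamma(Y,\calS) \le (i-1)(i\Lambda+1)\,d$.

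To finish, I will use $\Lambda > 8$ from Remark~\ref{rem:consts} to verify $(i-1)(i\Lambda+1) \le i^2\Lambda$ for all $i \ge 2$: expanding, this reduces to $i - 1 \le i\Lambda$, which is immediate. There is no real obstacle here; everything is routine once the right substitution is made. The only thing worth double-checking is the degenerate case $\Gamma(Y,\calS) = 0$, for which the claim is vacuous since $d \ge 0$, so the argument above can safely assume $\Gamma(Y,\calS) > 0$ when dividing.
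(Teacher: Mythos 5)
Your proof is correct and follows essentially the same route as the paper's: both combine the $i$-goodness in $\calS^-$ and the failure of $(i-1)$-goodness in $\calS$ via Lemma~\ref{lem:split-drift}, arrive at the same intermediate bound $\Gamma(Y,\calS)\le (i-1)(i\Lambda+1)\,\dr{S_q\cap Y,S_{q-2}\setminus Y}$, and then use $\Lambda>1$ to absorb this into $i^2\Lambda$. The only difference is bookkeeping (you eliminate $B(Y,\calS)$ by substitution rather than passing through $\Gamma(Y,\calS^-)$), and your explicit note that $\dr{S_q\cap Y,S_{q-2}\setminus Y}\le\dr{S_{q-2},S_q\cap Y}$ makes a monotonicity step the paper leaves implicit.
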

\begin{proof}
	Since $Y$ is not $(i-1)$-good in $\calS$, we have
	\begin{align*}
		\Gamma(Y,\calS) &\le (i-1)\Lambda B(Y,\calS)
		= (i-1)\Lambda\big(\dr{Y,U \setminus Y} - \Gamma(Y,\calS)\big)\\
		&= (i-1)\Lambda\big(\Gamma(Y,\calS^-) + B(Y,\calS^-) - \Gamma(Y,\calS)\big).
	\end{align*}
	Since $Y$ is $i$-good in $\calS^-$, it follows that
	\begin{align*}
		\frac{\Gamma(Y,\calS)}{(i-1)\Lambda} &\le \left(1 + \frac{1}{i\Lambda}\right)\Gamma(Y,\calS^-) - \Gamma(Y,\calS)\\
		&= \left(1+\frac{1}{i\Lambda} \right)\big(\Gamma(Y,\calS^-) - \Gamma(Y,\calS)\big)+\frac{\Gamma(Y,\calS)}{i\Lambda}.
	\end{align*}
	Multiplying both sides by $i(i-1)\Lambda$ and rearranging yields
	\begin{align*}
		\Gamma(Y,\calS) \le \big(i(i-1)\Lambda + i-1 \big)\big(\Gamma(Y,\calS^-) - \Gamma(Y,\calS)\big).
	\end{align*}
	Since $\Lambda > 1$ (see Remark~\ref{rem:constants}), this implies 
	\begin{equation*} 
		\Gamma(Y,\calS) < (i+1)(i-1)\Lambda\big(\Gamma(Y,\calS^-) - \Gamma(Y,\calS)\big) < i^2\Lambda\big(\Gamma(Y,\calS^-) - \Gamma(Y,\calS)\big).
	\end{equation*}
	The result therefore follows by Lemma~\ref{lem:split-drift}.
\end{proof}

We next prove the first of two lemmas that hold the argument together: dropping safe sets which violate (F2) (if $q=1$) or which fail to be $(h+1-q)$-good (if $q\ge 2$) preserves goodness. 

\begin{lemma}\label{lem:drop-safe}
	Let $G=(V,E)$ be a connected graph. Let $U\subseteq V$ with $\dr{U, V \setminus U} \le 1/2|V|$ and $|U| \ge 2$, and let $R=R(G,U)$. Let $q\in[h]$ and suppose $\calS = (S_0, \dots, S_q)$ is a good partition of $U$. Let $Y \subseteq U$ be safe. Suppose that either $q=1$ and $\dr{Y,U \setminus Y} < \phi(Y \setminus S_0)D/\phi(U)$ or that $q \ge 2$ and $Y$ is not $(h+1-q)$-good in $\calS$. Form $\calS' = (S_0', \dots, S_q')$ from $\calS$ by dropping~$Y$. Then $\calS'$ is good, and $T(\calS') \supset T(\calS)$.
\end{lemma}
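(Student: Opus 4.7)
The plan is to verify each of conditions (G1)--(G6) of Definition~\ref{def:ell-good} for $\calS'$ and separately establish $T(\calS') \supsetneq T(\calS)$. Several of these will be essentially immediate, but (G3) and (G6) are the real work. I would first record the identities that flow directly from the drop: $S_i' = S_i$ for $i \le q-2$, $S_{q-1}' = S_{q-1} \cup (Y \cap S_q)$, and $S_q' = S_q \setminus Y$. Because vertices in $Y \cap S_q$ have degree at least $\delta_q$ by (G4) for $\calS$, these imply $(\calS')^- = \calS^-$, $T(\calS') = T(\calS) \cup (Y \cap S_q)$, and $S_q' \cup T(\calS') = S_q \cup T(\calS)$. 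From these, (G1), (G2), (G4), and (G5) for $\calS'$ follow at once from the corresponding conditions for $\calS$.

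To show $T(\calS') \supsetneq T(\calS)$, it suffices to prove $Y \cap S_q \ne \emptyset$. If $q = 1$ and $Y \cap S_1 = \emptyset$, then $Y \subseteq S_0$ gives $\phi(Y \setminus S_0) = 0$, contradicting the drop condition $\dr{Y, U \setminus Y} < 0$. If $q \ge 2$ and $Y \cap S_q = \emptyset$, then Lemma~\ref{lem:split-drift} gives $\Gamma(Y, \calS) = \Gamma(Y, \calS^-)$ and $B(Y, \calS) = B(Y, \calS^-)$, so (G1) for $\calS$ promotes the $(h+2-q)$-goodness of $Y$ in $\calS^-$ to $(h+1-q)$-goodness in $\calS$, contradicting the hypothesis.

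For (G6), the key decomposition is $\dr{S_q', T(\calS')} \le \dr{S_q, T(\calS)} + \dr{S_q \cap Y, S_q \setminus Y}$. The first summand is handled by (G6) for $\calS$. For the second summand when $q = 1$, the drop condition $\dr{Y, U \setminus Y} < D\phi(Y \cap S_1)/\phi(U)$ combined with $\phi(T(\calS')) = \phi(T(\calS)) + \phi(Y \cap S_1)$ closes the argument. When $q \ge 2$, I would invoke Lemma~\ref{lem:goodness-bound} with $i = h+2-q$ (justified by (G1) for $\calS$ and the hypothesis that $Y$ is not $(h+1-q)$-good in $\calS$), yielding $\dr{S_q \cap Y, S_q \setminus Y} \le \Gamma(Y, \calS) \le (h+2-q)^2 \Lambda \dr{S_{q-2}, S_q \cap Y}$; using $h+2-q \le h$ then consolidates everything into the required $h^2 \Lambda \dr{S_{q-2}', T(\calS')}$.

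The hardest step is (G3), which I would establish by contradiction. Suppose $\phi(R \setminus S_q') > \alpha/2$. Safety of $Y$ together with (G3) for $\calS$ forces $\phi(R \setminus S_q') \le \alpha/2 + \alpha = 3\alpha/2 < 1/(2^6 \phi(U))$, so Lemma~\ref{lem:makeR}(ii) yields $\dr{R \setminus S_q', R \cap S_q'} > \alpha/(2^7 \phi(U))$. On the other hand, Lemma~\ref{lem:R-high-deg} shows every vertex of $R$ has degree at least $\delta_q$, so $R \cap S_{q-1}' \subseteq T(\calS')$ and hence $R \setminus S_q' \subseteq S_{\le q-2}' \cup T(\calS')$. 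This gives
\begin{equation*}
\dr{R \setminus S_q', R \cap S_q'} \le \dr{S_{\le q-2}', S_q' \cup T(\calS')} + \dr{T(\calS'), S_q'}.
\end{equation*}
Applying (G5) for $\calS'$ to the first summand and the already-verified (G6) for $\calS'$ (together with (G5) for $\calS'$ when $q \ge 2$) to the second, a short calculation using $D = \alpha/(2^7 \phi(U)(\Lambda h)^{2h})$ bounds the sum by $\alpha/(2^9 \phi(U))$, contradicting the lower bound and completing the proof.
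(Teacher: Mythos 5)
Your proof follows essentially the same route as the paper's: the same post-drop identities giving (G1), (G2), (G4), (G5) for free, the same two-case argument that $S_q\cap Y\neq\emptyset$, the same use of Lemma~\ref{lem:goodness-bound} with $i=h+2-q$ for (G6), and the same combination of Lemmas~\ref{lem:makeR}(ii) and~\ref{lem:R-high-deg} with (G5)/(G6) for (G3), merely phrased as a contradiction rather than as a direct bound. The only blemish is the claimed upper bound $\alpha/(2^9\phi(U))$ at the end of (G3): the calculation actually yields $(h\Lambda)^{2h}D=\alpha/(2^7\phi(U))$, but this still contradicts the strict lower bound $\dr{R\setminus S_q',R\cap S_q'}>\alpha/(2^7\phi(U))$, so the argument closes.
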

\begin{proof}

We first show that $T(\calS') \supset T(\calS)$. 
Since $\calS$ is a good partition of $U$, it satisfies (G4), so every vertex of $S_q \cap Y$ has degree at least $\delta_q>\delta_{q-1}$
and therefore $T(\calS') = T(\calS) \cup (S_q \cap Y)$. 
We next consider two cases and show that, in either case, 
$S_q \cap Y$ is non-empty so
$T(\calS)$ is a strict subset of $T(\calS')$, as required.
\begin{itemize}
\item
If $q=1$, then $\dr{Y,U \setminus Y} < \phi(Y \setminus S_0)D/\phi(U)$, so 
$\phi(Y \setminus S_0) > 0$.
Since 
$S_1 \cap Y = Y \setminus S_0$, we conclude that $S_1 \cap Y$ is non-empty. 
\item Suppose that  $q \ge 2$. 
Suppose for contradiction that  $S_q \cap Y = \emptyset$.
By (G1), $Y$ is $(h+2-q)$-good in~$\calS^-$.
But   by Lemma~\ref{lem:split-drift}, $\Gamma(Y,\calS^-) = \Gamma(Y,\calS)$ and $B(Y,\calS^-) = B(Y,\calS)$,
so $Y$ is also $(h+2-q)$-good in~$\calS$, hence it is $(h+1-q)$-good in~$\calS$,
contradicting the definition of~$Y$ in the statement of the lemma. 
\end{itemize} 	

Having shown that $T(\calS') \supset T(\calS)$, it remains to show that $\calS'$ is good.
To do this, we show that  $\calS'$ has each of the six properties required by Definition~\ref{def:ell-good}. 
	
	\begin{enumerate}
		\item[(G1)] Since $\calS^- = (\calS')^-$ and $\calS$ satisfies (G1), $\calS'$ must also satisfy (G1).
		\item[(G2)] If $q=1$ then $\calS'$ vacuously satisfies (G2); if instead $q \ge 2$, then $S_0' = S_0$, so $\calS'$ satisfies (G2) since $\calS$ does.
		\item[(G4)] Since $\calS$ satisfies (G4) and $\delta_q > \delta_{q-1}$, $\calS'$ must also satisfy (G4) by the definition of dropping.
		\item[(G5)] We have $S_{\le q-2}' = S_{\le q-2}$ and $S_q' \cup T(\calS') = (S_q \setminus Y) \cup T(\calS) \cup (S_q \cap Y) = S_q \cup T(\calS)$, so $\calS'$ satisfies (G5) since $\calS$ does. 
		\item[(G6)] We have
		\begin{align*}
		\dr{S_q', T(\calS')}
		&= \dr{S_q \setminus Y, T(\calS) \cup (S_q \cap Y)}
		= \dr{S_q\setminus Y, T(\calS)} + \dr{S_q \setminus Y, S_q \cap Y}\\
		&\le \dr{S_q, T(\calS)} + \Gamma(Y,\calS)\,.\tagme\label{eq:DrST}
		\end{align*}
		If $q=1$, then since $\calS$ satisfies (G6) and $\Gamma(Y,\calS) \le \dr{Y,U \setminus Y} <  \phi(Y \setminus S_0)D/\phi(U)$ by hypothesis, it follows that 
		\begin{align*}
		\dr{S_1', T(\calS')} 
		&\le \frac{D\phi(T(\calS))}{\phi(U)} + \frac{D\phi(Y \setminus S_0)}{\phi(U)}
		= \frac{D\big(\phi(T(\calS)) + \phi(S_1\cap Y)\big)}{\phi(U)}
		= \frac{D\phi(T(\calS'))}{\phi(U)},
		\end{align*}
		and so $\calS'$ satisfies (G6) as required. Suppose instead $q\ge 2$. Then $Y$ is $(h+2-q)$-good in $\calS^-$ by (G1), and $Y$ is not $(h+1-q)$-good in $\calS$ by hypothesis. Thus by Lemma~\ref{lem:goodness-bound}, applied with~$i = h+2-q \le h$, $\Gamma(Y,\calS) \le h^2\Lambda\dr{S_{q-2}, S_q \cap Y}$. Since $\calS$ satisfies (G6), it follows from~\eqref{eq:DrST} that
		\[
		\dr{S_q', T(\calS')} \le h^2\Lambda\dr{S_{q-2}, T(\calS)} + h^2\Lambda\dr{S_{q-2}, S_q \cap Y} = h^2\Lambda\dr{S_{q-2}, T(\calS')}.
		\]
		Thus once again $\calS'$ satisfies (G6), as required.
		\item[(G3)] Observe that, since $Y$ is safe and $\calS$ satisfies (G3), we have $\phi(R \setminus S_q') \le \phi(R \setminus S_q) + \phi(R \cap Y) < 2\alpha = 1/2^6\phi(U)$. Thus by Lemma~\ref{lem:makeR}(ii), applied with $Y = R \setminus S_q'$, 
		\[
		\dr{R \setminus S_q', R \cap S_q'} \ge \phi(R \setminus S_q')/2^6\phi(U).
		\]
		Moreover, by Lemma~\ref{lem:R-high-deg}, every vertex in $R \setminus S_q'$ has degree at least $\delta_q$. Thus $R \setminus S_q' \subseteq S_{\le q-2}' \cup T(\calS')$, and
		\[
		\phi(R \setminus S_q') \le 2^6\phi(U)\dr{R \setminus S_q',R \cap S_q'} \le 2^6\phi(U)\big(\dr{S_{\le q-2}', S_q'} + \dr{T(\calS'), S_q'}\big).
		\]
		If $q=1$ then, since $\calS'$ satisfies (G6) and $S_{\le q-2}' = \emptyset$, it follows that
		\[
		\phi(R \setminus S_q') \le 2^6\phi(U)D < 2^6\phi(U)(h\Lambda)^{2h}D = \alpha/2.
		\]
		If $q \ge 2$, since $\calS'$ satisfies (G6) and (G5), it follows that
		\begin{align*}
		\phi(R \setminus S_q') &\le 2^6\phi(U)\big(\dr{S_{\le q-2}', S_q'} + h^2\Lambda\dr{S_{q-2}',T(\calS')}\big)\\
		&\le 2^6\phi(U)h^2\Lambda\dr{S_{\le q-2}', S_q' \cup T(\calS')}\\
		&\le 2^6\phi(U) (h\Lambda)^{2q}D \le 2^6\phi(U)(h\Lambda)^{2h}D = \alpha/2.
		\end{align*}
		Thus in both cases we have $\phi(R \setminus S_q') \le \alpha/2$, and so $\calS'$ satisfies (G3) as required.\qedhere
	\end{enumerate}
\end{proof}

We now show that the other major step of our algorithm, splitting a partition, also preserves goodness under the circumstances in which we do it.

\begin{lemma}\label{lem:split-safe}
	Let $G=(V,E)$ be a connected graph. Let $U\subseteq V$ with $\dr{U, V \setminus U} \le 1/2|V|$ and $|U| \ge 2$, and let $R=R(G,U)$. Let $q\in[h-1]$ and suppose $\calS = (S_0, \dots, S_q)$ is a good partition of $U$. Suppose moreover that either $q=1$ and every safe $Y \subseteq U$ satisfies $\dr{Y,U \setminus Y} \ge \phi(Y \setminus S_0)D/\phi(U)$, or that $q \ge 2$ and every safe $Y \subseteq U$ is $(h+1-q)$-good in~$\calS$. Form $\calS' = (S_0', \dots, S_{q+1}')$ by splitting $\calS$. Then $\calS'$ is good.
\end{lemma}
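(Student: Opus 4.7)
The plan is to verify conditions (G1)--(G6) of Definition~\ref{def:ell-good} for the split partition $\calS'$. Two observations drive the entire argument. First, by construction every vertex of $S'_{q+1}$ has degree at least $\delta_{q+1}$ while every vertex placed into $S'_q$ has degree strictly less than $\delta_{q+1}$, so $T(\calS') = \emptyset$. Second, collapsing the two new top levels of $\calS'$ back together gives $(\calS')^- = \calS$. Together these make several of the six properties trivial to check.

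With these in hand, (G1) is immediate: the required $(h+1-q)$-goodness of safe sets in $(\calS')^- = \calS$ is exactly the hypothesis when $q\ge 2$, and when $q=1$ it is automatic because the two-level partition $\calS$ forces $B(X,\calS) = 0$ for every $X\subseteq U$. (G2) reduces to the hypothesis when $q=1$ and to (G2) for $\calS$ when $q\ge 2$, since $S'_0 = S_0$. For (G3), the hypothesis $q\le h-1$ together with Lemma~\ref{lem:R-high-deg} imply that every vertex of $R$ has degree at least $\delta_{q+1}$, so $R\cap S_q \subseteq S'_{q+1}$ and hence $R\setminus S'_{q+1} = R\setminus S_q$, to which (G3) for $\calS$ applies. (G4) is clear from the construction of the split combined with (G4) for $\calS$. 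Finally, (G6) is vacuous because $T(\calS') = \emptyset$ forces both sides of the required inequality to zero.

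The only substantive step, and the main obstacle, is verifying (G5), which asks for $\dr{S_{\le q-1}, S'_{q+1}} \le (\Lambda h)^{2q} D$. The strategy is to decompose $S_{\le q-1} = S_{\le q-2} \cup T(\calS) \cup (S_{q-1}\setminus T(\calS))$ and bound each piece separately. The first piece is handled by (G5) for $\calS$, which gives $\dr{S_{\le q-2}, S'_{q+1}} \le \dr{S_{\le q-2}, S_q\cup T(\calS)} \le (\Lambda h)^{2(q-1)}D$. The third piece uses Lemma~\ref{lem:deg-drift}: vertices in $S_{q-1}\setminus T(\calS)$ have degree less than $\delta_q$ while vertices in $S'_{q+1}$ have degree at least $\delta_{q+1}$, and the geometric ratio $\delta_q/\delta_{q+1} = D/\phi(U)$ yields a clean bound of at most $D$. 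The trickiest piece is $\dr{T(\calS), S'_{q+1}} \le \dr{T(\calS), S_q}$; when $q\ge 2$ we invoke (G6) for $\calS$ and then (G5) for $\calS$, obtaining a bound of order $h^2\Lambda(\Lambda h)^{2(q-1)}D = (\Lambda h)^{2q}D/\Lambda$, while when $q=1$ the alternative $q=1$ branch of (G6) delivers a direct bound of at most $D$. Summing the three contributions produces a geometric-series-like expression whose dominant term is $(\Lambda h)^{2q}D/\Lambda$, and the total is bounded by $(\Lambda h)^{2q}D$ since $\Lambda > 8$ by Remark~\ref{rem:consts}. Balancing these three decomposed drifts against the available estimates from (G5) and (G6) for $\calS$ is the only non-routine aspect of the proof.
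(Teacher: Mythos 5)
Your proposal is correct and follows essentially the same route as the paper's proof: the observations that $T(\calS')=\emptyset$ and $(\calS')^-=\calS$ dispose of (G1)--(G4) and (G6), and (G5) is handled by the same three-way decomposition of $S_{\le q-1}$ into $S_{\le q-2}$, $T(\calS)$ and $S_{q-1}\setminus T(\calS)$, using Lemma~\ref{lem:deg-drift} and (G5)/(G6) for $\calS$ exactly as the paper does (the paper merely groups the first two contributions before invoking (G5), which is an immaterial difference).
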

\begin{proof}
	We show that $\calS'$ has each of the six properties required by Definition~\ref{def:ell-good} for a partition with $q+1$ parts.
	\begin{enumerate}[(G1)]
		\item Observe that by the definition of splitting, $(\calS')^- = \calS$. If $q=1$ then every safe set $X \subseteq U$ is vacuously $h$-good in $\calS$, since $B(X,\calS) = 0$; if $q\ge 2$ then every set $X \subseteq U$ is $(h+1-q)$-good in $\calS$ by hypothesis. Thus in either case, $\calS'$ satisfies (G1).
		\item We have $S_0' = S_0$. Thus, if $q=1$ then $\calS'$ satisfies (G2) since (by hypothesis) every safe $Y\subseteq U$ has $\dr{Y,U \setminus Y} \ge \phi(Y \setminus S_0)D/\phi(U)$.  If $q \ge 2$ then $\calS'$ satisfies (G2) since $\calS$ does.
		\item By Lemma~\ref{lem:R-high-deg}, every vertex in $R$ has degree at least $\delta_{q+1}$, so $S_{q+1}' \cap R = S_q \cap R$. Thus $\calS'$ satisfies (G3) since $\calS$ does. 
		\item By the definition of splitting and the fact that $\calS$ satisfies (G4), $\calS'$ also satisfies (G4).
		\item Since $S_{\le q-1}' = S_{\le q-1}$, $S_{q+1}' \subseteq S_q$ and $T(\calS') = \emptyset$, we have
		\begin{align}\nonumber
		\dr{S_{\le q-1}', S_{q+1}' \cup T(\calS')} &= \dr{S_{\le q-2}, S_{q+1}'} + \dr{T(\calS), S_{q+1}'} + \dr{S_{q-1} \setminus T(\calS), S_{q+1}'}\\\label{eqn:G5-split}
		&\le \dr{S_{\le q-2}, S_q} + \dr{T(\calS), S_q} + \dr{S_{q-1} \setminus T(\calS), S_{q+1}'}.
		\end{align}
		Every vertex in $S_{q-1} \setminus T(\calS)$ has degree less than $\delta_q$, and every vertex in $S_{q+1}'$ has degree at least $\delta_{q+1}$. Thus by Lemma~\ref{lem:deg-drift},
		\[
		\dr{S_{q-1} \setminus T(\calS),S_{q+1}'} \le \frac{\delta_q\phi(U)}{\delta_{q+1}} = D.
		\]
		Moreover, since $\calS$ satisfies (G6), if $q=1$ we have $\dr{T(\calS), S_q} \le D$ and if $q \ge 2$ we have $\dr{T(\calS), S_q} \le h^2\Lambda\dr{S_{q-2}, T(\calS)}$. In all cases, it follows from~\eqref{eqn:G5-split} that
		\begin{align*}
		\dr{S_{\le q-1}', S_{q+1}' \cup T(\calS')} &\le \dr{S_{\le q-2}, S_q} + h^2\Lambda\dr{S_{q-2},T(\calS)} + 2D\\
		&\le h^2\Lambda\dr{S_{\le q-2}, S_q \cup T(\calS)} + 2D.
		\end{align*}
		Since $\calS$ satisfies (G5), it follows that
		\[
		\dr{S_{\le q-1}', S_{q+1}' \cup T(\calS')} \le h^2\Lambda\cdot h^{2(q-1)}\Lambda^{2(q-1)}D + 2D < h^{2q}\Lambda^{2q}D.
		\]
		For the final inequality we use the fact that $h \ge 1$ and $\Lambda \ge 2$ by Remark~\ref{rem:consts}. Thus $\calS'$ satisfies (G5) as required.
		\item We have $T(\cal{S}') = \emptyset$, so $\calS'$ satisfies~(G6).\qedhere
	\end{enumerate}
\end{proof}

We now repeatedly apply Lemmas~\ref{lem:drop-safe} and~\ref{lem:split-safe} to show that our construction yields a finished partition.

\begin{lemma}\label{lem:finished-partition}
	There exists $u_0\geq 2$ such that, for all connected graphs $G=(V,E)$, every $U\subseteq V$ with $|U| \ge u_0$ and $\dr{U, V \setminus U} \le 1/2|V|$ has a finished partition $\calS$.
\end{lemma}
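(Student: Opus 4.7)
The plan is to construct a finished partition by induction on the number of levels, alternating the split and drop operations of Definition~\ref{def:lift} while maintaining the goodness invariant of Definition~\ref{def:ell-good}. Lemmas~\ref{lem:drop-safe} and~\ref{lem:split-safe} are precisely the tools needed: the former says that dropping a ``bad'' safe set preserves goodness and strictly enlarges $T(\calS)$, while the latter says that splitting preserves goodness once no safe set is ``bad''.

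For the base case, I would take $u_0 \ge 2$ large enough that Remark~\ref{rem:consts} applies, and apply the split operation to the trivial one-level partition $(U)$. This produces $\calS^{(1)} = (A_0, A_1)$ with $A_1 = \{v \in U : d(v) \ge \delta_1\}$, which I claim is good in the sense of Definition~\ref{def:ell-good} with $q=1$. Indeed, (G1) is trivial because $(\calS^{(1)})^- = (U)$ and so $B(X, (\calS^{(1)})^-)=0$ for every $X\subseteq U$; (G2) is vacuous at $q = 1$; (G3) follows because Lemma~\ref{lem:R-high-deg} places all of~$R$ into $A_1$; (G4) is immediate from the definition of the split; and (G5)--(G6) hold trivially because $T(\calS^{(1)}) = \emptyset$ and $A_{\le -1} = \emptyset$.

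For the inductive step, suppose we have a good $q$-level partition $\calS$ with $q \le h$. I would run a ``drop loop'': while some safe set $Y \subseteq U$ violates the drop hypothesis of Lemma~\ref{lem:drop-safe} (i.e.\ (F2) when $q=1$, or failure to be $(h+1-q)$-good in $\calS$ when $q \ge 2$), drop $Y$. By Lemma~\ref{lem:drop-safe}, each iteration preserves goodness and strictly enlarges $T(\calS)$, so the loop terminates in at most $|U|$ iterations, yielding a good partition in which no safe set violates the drop hypothesis. If $q < h$, this termination condition is precisely the hypothesis of Lemma~\ref{lem:split-safe}, so splitting yields a good $(q+1)$-level partition. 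Iterating $q = 1, 2, \dots, h$ produces a good $h$-level partition $\calS_h$ in which no safe set fails to be $1$-good in $\calS_h$.

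Finally, I would verify that $\calS_h$ is finished: (F1) is exactly the drop-loop termination condition at $q = h$ (since $h+1-h=1$); (F3) is (G3) at $q=h$; (F4) follows from (G4) since $\delta_1 \le \delta_i$ for all $i \ge 1$; and (F2) follows from (G2) when $h \ge 2$ (maintained inductively after the first split, where the drop-loop condition at $q=1$ is already (F2) and subsequent splits preserve $S_0$), or, in the corner case $h = 1$, directly from the drop-loop termination at $q = 1$. The main subtlety, already absorbed into Lemmas~\ref{lem:drop-safe} and~\ref{lem:split-safe}, is a ``one level for one unit of goodness'' trade-off: the drop condition at level $q$ is stated in terms of $\calS$, whereas the invariant (G1) is stated in terms of $\calS^-$, so each split loses one unit of goodness in exchange for an additional level and the arithmetic is tuned so that $\calS_h$ delivers exactly $1$-goodness when $q$ reaches $h$. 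Given the machinery already in place, the main obstacle is careful bookkeeping rather than any new idea.
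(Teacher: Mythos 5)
Your proposal is correct and follows essentially the same route as the paper's proof: start from the two-level degree-threshold partition (verified good directly), then alternate the drop loop (terminating because $T$ strictly grows, by Lemma~\ref{lem:drop-safe}) with splits (Lemma~\ref{lem:split-safe}), and read off (F1)--(F4) from the goodness invariant at $q=h$. The only cosmetic difference is that the paper simply chooses $u_0$ large enough to force $h\ge 2$ rather than treating $h=1$ as a corner case.
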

\begin{proof}
	Let $R=R(G,U)$, and take $u_0$ large enough to ensure $h \ge 2$. 
	We take our initial partition to be $\calS^1=(S^1_0,S^1_1)$, where $S^1_0 = \{v \in U \mid d(v) < \delta_1\}$ and $S^1_1 = \{v \in U \mid d(v) \ge \delta_1\}$. We claim that $\calS^1$ is a good partition. 
First, $(\calS^1)^- = (U)$ and every set $X \subseteq U$ satisfies $B(X,(\calS^1)^-) = 0$, so (G1) is satisfied. 
Since $q=1$, (G2) is vacuous. 
By Lemma~\ref{lem:R-high-deg}, 
$R \subseteq S^1_1$, 
so (G3) is satisfied. (G4) is satisfied by construction. (G5) is vacuous since
$S^1_{\leq q-2} = S^1_{\le -1} = \emptyset$. Finally, (G6) is satisfied since $T(\calS^1) = \emptyset$.
	
We now form $\calS^{1+}$ from $\calS^1$ by the following greedy process. 
During each step of the greedy process we have a ``current partition'' $(S_0',S_1')$.
Originally, this is $\calS^1$.
The process from $(S_0',S_1')$ is as follows. If there exists a safe set $Y \subseteq U$ such that $\dr{Y, U \setminus Y} < \phi(Y \setminus S_0') D/\phi(U)$, we drop $Y$ and continue from the resulting partition $(S_0' \cup (S_1' \cap Y), S_1' \setminus   Y)$; otherwise we stop and set $\calS^{1+} = (S_0',S_1')$. By Lemma~\ref{lem:drop-safe}, goodness is preserved throughout this process, and $T$ is strictly increasing so the process must terminate. Thus $\calS^{1+} = (S_0^{1+},S_1^{1+})$ is a good partition with the property that, for all safe $Y \subseteq U$, $\dr{Y, U \setminus Y} \ge \phi(Y \setminus S_0^{1+}) D/\phi(U)$. We then form $\calS^2$ by splitting $\calS^{1+}$, and note that $\calS^2$ is good by Lemma~\ref{lem:split-safe}.
	
	We now proceed  iteratively. Given a good partition $\calS^q = (S_0^q, \dots, S_q^q)$ with $2 \le q \le h$, we form $\calS^{q+}$ from $\calS^q$ by the following greedy process. Given a partition $\calS'$, if there exists a safe set $Y \subseteq U$ such that $Y$ is not $(h+1-q)$-good in $\calS'$, we drop $Y$; otherwise we stop and set $\calS^{q+} = \calS'$. By Lemma~\ref{lem:drop-safe}, goodness is preserved throughout this process, and $T$ is strictly increasing so the process must terminate. Thus $\calS^{q+}$ is a good partition with the property that every safe set $Y \subseteq U$ is $(h+1-q)$-good in $\calS^{q+}$. If $q<h$, we then form $\calS^{q+1}$ by splitting $\calS^{q+}$, and note that $\calS^{q+1}$ is good by Lemma~\ref{lem:split-safe}.
	
	We claim that $\calS^{h+}$ is a finished partition of $U$. Indeed, (F1) holds by construction; (F2) holds by (G2), where $h\geq 2$ by our choice of $u_0$; (F3) holds by (G3); and (F4) holds by (G4).
\end{proof}

We now use the finished partition guaranteed by Lemma~\ref{lem:finished-partition} to construct a potential function to which we may productively apply Lemma~\ref{lem:gen-pot-b}.

\begin{lemma}\label{lem:true-potential}
	There exists $u_0 \ge 2$ such that, for all connected graphs $G = (V,E)$ and $U \subseteq V$ with $|U| \ge u_0$, $\dr{U,V \setminus U} \le 1/2|V|$, and $\phi(U) \le \lambda^{s/5}$, there is a function $f\colon V\rightarrow\mathbb{R}_{\ge 0}$ such that:
	\begin{enumerate}[(i)]
		\item $\phif(V \setminus U) = 0$;
		\item $\phif(U) \ge \alpha/2$;
		\item $m_f \le 1/\lambda^h$;
		\item $f$ is $(\alpha/4,\alpha/2)$-valid; moreover, if $U=V$, then $f$ is $(0,\alpha/2)$-valid (recall Definition~\ref{def:valid}).
	\end{enumerate}
\end{lemma}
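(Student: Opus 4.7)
By Lemma~\ref{lem:finished-partition}, for $u_0$ large enough $U$ admits a finished partition $\calS=(S_0,\ldots,S_h)$. I would define $f\colon V\to\reals_{\ge 0}$ by $f(v)=\lambda^{i-h}$ if $v\in S_i$ and $f(v)=0$ if $v\in V\setminus U$. Then (i) is immediate. For (ii), each $v\in S_h$ has $f(v)=1$, so $\phif(U)\ge\phi(S_h)\ge\phi(R)-\phi(R\setminus S_h)\ge 4\alpha-\alpha/2\ge\alpha/2$ using Lemma~\ref{lem:makeR}(i) (which gives $\phi(R)\ge 1/(2^5\phi(U))=4\alpha$) and~(F3). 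For (iii), (F4) gives $d(v)\ge\delta_1$ for $v\in U\setminus S_0$, whence $\phif(v)\le 1/\delta_1$, while on $S_0$ we have $\phif(v)\le\lambda^{-h}$ directly from $f(v)=\lambda^{-h}$. Plugging Definition~\ref{def:consts} into logarithms one finds $\log\delta_1=\log|U|-o(\log|U|)$ while $\log\lambda^h=O(s)=o(\log|U|)$, so $\delta_1\ge\lambda^h$ for $u_0$ large, and hence $m_f\le\lambda^{-h}$.

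The real content is (iv). I would fix $X\subseteq V$ with $\phif(X)<\alpha/2$ and set $A=X\cap U$. The key preliminary observation is that $A$ is safe: since $f\equiv 1$ on $S_h$,
\[
\phi(A\cap R)\le \phi(A\cap R\cap S_h)+\phi(R\setminus S_h)\le \phif(A)+\alpha/2<\alpha
\]
by (F3), so both (F1) and (F2) apply to $A$. I would then classify each ordered pair in $E(X,V\setminus X)$ by the locations of its endpoints in $(U,\calS)$. Using $\lambda^{i(x)-h}\le 1$ on the LHS of Definition~\ref{def:valid}, $\lambda^{i(y)-h}\ge\lambda^{-h}$ on the RHS, and bounding the contribution of edges leaving $U$ by $\dr{A,V\setminus U}\le\dr{U,V\setminus U}\le 1/(2|V|)$, I obtain
\[
\mathrm{LHS}\le B(A,\calS)+\tfrac{1}{2|V|},\qquad \mathrm{RHS}\ge\lambda^{-h}\Gamma(A,\calS).
\]
A direct check from Definition~\ref{def:consts} gives $\Lambda\ge 8r\lambda^h/(r-1)$ for all $r>1$ (since $\lambda<r$), so combining this with (F1), i.e.\ $B(A,\calS)\le\Gamma(A,\calS)/\Lambda$, reduces validity to
\begin{equation}\label{eqn:proofplan-star}
\Gamma(A,\calS)\cdot\tfrac{r-1}{8r\lambda^h}\ge\tfrac{1}{2|V|}.
\end{equation}

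When $U=V$, the $1/(2|V|)$ error term is absent, so \eqref{eqn:proofplan-star} is vacuous and validity holds whenever $\phif(X)>0$; this gives the ``moreover'' clause. For general $U$, I would use the assumption $\phif(X)\ge\alpha/4$ to deduce $\phi(A\setminus S_0)\ge\alpha/8$: indeed $\phif(S_0)\le\lambda^{-h}\phi(U)\le\lambda^{-h+s/5}$ is $o(\alpha)$ for $u_0$ large, and $\phi(A\setminus S_0)\ge\phif(A\setminus S_0)=\phif(A)-\phif(A\cap S_0)$. Combining (F2) with (F1) then yields $\Gamma(A,\calS)\ge\phi(A\setminus S_0)D/(2\phi(U))\ge\alpha D/(16\phi(U))$, so \eqref{eqn:proofplan-star} becomes an inequality of the form $|V|\ge O\bigl(\phi(U)^4\lambda^h(\Lambda h)^{2h}/(r-1)\bigr)$. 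Substituting $\phi(U)\le\lambda^{s/5}$, $\lambda^h\le\lambda^{s/2+1}$, and $(\Lambda h)^{2h}\le e^{O(s^2)}$ shows the right-hand side is $|U|^{o(1)}$, so the inequality follows from $|V|\ge|U|\ge u_0$ for $u_0$ sufficiently large. The main obstacle throughout is exactly this out-of-$U$ error term: it forces us to produce a nontrivial lower bound on $\Gamma(A,\calS)$ via (F2), which in turn is the reason we can only get validity from $\phif(X)\ge\alpha/4$ (rather than $>0$) when $U\ne V$.
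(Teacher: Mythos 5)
Your proposal is correct and follows essentially the same route as the paper: the same weighting $f(v)=\lambda^{i-h}$ on $S_i$ (and $0$ off $U$), the same use of Lemma~\ref{lem:makeR}(i) with (F3) for part (ii), of (F4) for part (iii), and for part (iv) the same decomposition into $B(\cdot,\calS)$ plus the $\dr{U,V\setminus U}$ error term, absorbed via safety, (F1), and the lower bound on $\Gamma$ coming from (F2) when $\phif(X)>\alpha/4$. One intermediate claim in your part (iii) is false as stated: $\log\delta_1$ is \emph{not} $\log|U|-o(\log|U|)$, since $h\log(D/\phi(U))$ is dominated by $-2h^2(h+1)\log r=\Theta(s^3\log r)=\Theta(\log|U|)$; the conclusion $\delta_1\ge\lambda^h$ nevertheless survives because the relevant constant satisfies $2h^2(h+1)<s^3$ (so $\delta_1\ge|U|^{c}$ for some $c>0$ while $\lambda^h=|U|^{o(1)}$), which is exactly the constant-tracking calculation the paper carries out in~\eqref{eqn:quendor}--\eqref{eqn:frobozz}.
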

\begin{proof}
	Let $\mathcal{S} = (S_0, \dots, S_\nlevs)$ be the finished partition of~$U$ guaranteed by Lemma~\ref{lem:finished-partition}. Let $R=R(G,U)$ as usual. For all $v \in V$, we define
	\[
		f(v) = \begin{cases}
			0 & \mbox{if }v \in V \setminus U,\\
			\lambda^i/\lambda^h & \mbox{if }v \in S_i\mbox{ for }0\le i \le h.
		\end{cases}
	\]
    Thus (i) is immediate.  By Remark~\ref{rem:constants} $\lambda>1$, so $\lambda^i/\lambda^h$ increases with increasing $i$.
    
    We next prove (ii). We have 
    \[
    	\phif(U) \ge \phif(R \cap S_h) = \phi(R \cap S_h) = \phi(R) - \phi(R \setminus S_h).
    \]
    By Lemma~\ref{lem:makeR}(i) and (F3), it follows that $\phif(U) > 1/2^5\phi(U) - \alpha/2 = 7\alpha/2$, and so (ii) holds.
    
    We next prove (iii). For all $v \in V \setminus U$, $\phif(v) = 0 < 1/\lambda^h$. For all $v \in S_0$, $\phif(v) \le f(v) = 1/\lambda^h$. Finally, by (F4), for all $v \in U \setminus S_0$, $\phif(v) \le 1/\delta_1$. Thus to prove $m_f \le 1/\lambda^h$, it suffices to prove $\delta_1/\lambda^h \ge 1$. We have
    \begin{align}\label{eqn:quendor}
    	\frac{\delta_1}{\lambda^h} = \frac{|U|D^h}{\lambda^h\phi(U)^h} = |U|\Big(\frac{1}{2^{14}\lambda \phi(U)^3(\Lambda h)^{2h}} \Big)^h = |U|\Bigg(\frac{1}{2^{14}\lambda}\Big(\frac{r-1}{8rh}\Big)^{2h}\cdot\frac{1}{\phi(U)^3r^{2h^2}} \Bigg)^h.
    \end{align}
    By hypothesis, $\phi(U)^3 \le \lambda^{3s/5} < r^{3s/5}$. Moreover, $r^{2h^2} \ge r^{s^2/2}$. Thus when $|U|$ is sufficiently large, the $r^{2h^2}$ term in the denominator of the right-hand-side of~\eqref{eqn:quendor} dominates and we have
    \begin{align}\label{eqn:frobozz}
    	\frac{\delta_1}{\lambda^h} \ge |U|r^{-3h^3} \ge |U|r^{-3(s/2+1)^3} > |U|r^{-s^3/2} = \sqrt{|U|} > 1.
    \end{align}
    Thus (iii) holds.
        
    Finally, we prove (iv). 
	Let $X \subseteq V$ with
    $0 < \phif(X) < \alpha/2$. As in the proof of
    Lemma~\ref{lem:psif-supermart}, we write $x\sim y$ to abbreviate
    that $(x,y)\in E(X,V\setminus X)$.  Suppose that $x\sim y$. Then $f(x) > \lambda f(y)$ 
    if and only if either $x \in U$ and $y \in V \setminus U$ or, for some $0 \le j \le h$,
    $x \in S_j \cap X$ and $y \in S_{\leq j-2} \setminus X$. Therefore,
	\begin{align*}
		\sum_{\substack{x\sim y\\f(x)> \lambda f(y)}}\!\!\!\! \frac{f(x)}{d(x)\,d(y)} 
		    &\le \sum_{\substack{x\sim y\\f(x)> \lambda f(y)}}\!\!\!\! \frac{1}{d(x)\,d(y)}\\
            &= \dr{X \cap U, V\setminus U} + \sum_{j=0}^q \sum_{k=0}^{j-2} \dr{S_j\cap X, S_k\setminus X}\\
            &\le \dr{U, V\setminus U} + B(X, \calS)\,.
    \end{align*}
    We have $\phi(X \cap R \cap S_h) = \phif(X \cap R \cap S_h) \le \phif(X) < \alpha/2$ by the definition of $X$. Moreover, by (F3), we have $\phi(X \cap R \setminus S_h) \le \phi(R \setminus S_h) \le \alpha/2$. Thus $\phi(X \cap R) \le \alpha$ overall, so $X$ is safe and therefore 1-good in $\mathcal{S}$ by (F1). It follows that
    \begin{equation}\label{eqn:something-2}
		\sum_{\substack{x\sim y\\f(x)> \lambda f(y)}}\!\!\!\! \frac{f(x)}{d(x)\,d(y)}
		\le \dr{U,V \setminus U} + \frac{1}{\Lambda}\Gamma(X,\calS).
    \end{equation}
    
   	We next bound $\Gamma(X,\mathcal{S})$ below in terms of $\dr{U, V \setminus U}$. If $U=V$, then it is vacuously true that $\Gamma(X,\mathcal{S}) \ge \Lambda\dr{U,V \setminus U}$. Suppose instead that $U \ne V$ and $\phif(X) > \alpha/4$. Since $X$ is 1-good and $\Gamma(X,\calS) + B(X,\calS) = \dr{X, U \setminus X}$, we have $\Gamma(X,\mathcal{S}) \ge \dr{X, U \setminus X}/2$. By (F2), it follows that 
   	\begin{align*}
   		\Gamma(X,\mathcal{S}) 
   		&\ge \frac{\phi(X \setminus S_0)D}{2\phi(U)} 
   		\ge \frac{\phif(X \setminus S_0)D}{2\phi(U)} 
   		\ge \frac{(\phif(X) - \phif(S_0))D}{2\phi(U)}\\
   		&\ge \frac{(\alpha/4 - \lambda^{-h}\phi(U))D}{2\phi(U)} 
   		= D\left(\frac{1}{2^{10}\phi(U)^2} - \frac{1}{2\lambda^{h}}\right).
   	\end{align*}
   	Since $\phi(U) \le \lambda^{s/5}$, when $|U|$ is sufficiently large we have
   	\[
   		\frac{1}{2^{10}\phi(U)^2} \ge \frac{1}{2^{10}\lambda^{2s/5}} > \frac{1}{\lambda^{\ceil{s/2}}} = \frac{2}{2\lambda^h}.
   	\]
   	It follows that
   	\begin{align*}
   		\Gamma(X,\calS) \ge \frac{D}{2^{11}\phi(U)^2} = \frac{1}{2^{25}\phi(U)^4(\Lambda h)^{2h}}.
   	\end{align*}
   	As in the proof of~\eqref{eqn:frobozz} from~\eqref{eqn:quendor}, it follows that when $|U|$ is sufficiently large the $\Lambda^{-2h}$ term dominates and we have $\Gamma(X,\calS) \ge r^{-3h^2} \ge r^{-s^2}$. 
We have $|U| = r^{s^3}$, so when $|U|$ is sufficiently large, $\Gamma(X,\calS) > \Lambda/2|U| \ge \Lambda/2|V|$. By hypothesis, it follows that $\Gamma(X,\calS) \ge \Lambda\dr{U, V \setminus U}$. Thus $\Gamma(X,\calS) \ge \Lambda\dr{U, V \setminus U}$ when either $U=V$ or $\phi(X) > \alpha/4$, and so by~\eqref{eqn:something-2}, in both cases we have
   	\[
   		\sum_{\substack{x\sim y\\f(x)> \lambda f(y)}}\!\!\!\! \frac{f(x)}{d(x)\,d(y)}
   		\le \frac{2}{\Lambda}\Gamma(X,\mathcal{S}).
   	\]
   	
   	Now, for all $x \sim y$, we have $f(x) \le \lambda f(y)$ whenever there exists $j$ such that $x \in S_j$ and $y \in S_{\ge j-1}$. These are precisely the edges summed over in the definition of $\Gamma(X,\mathcal{S})$. Thus by the definitions of $f$ and $\Lambda$,
   	\begin{align*}
   		\sum_{\substack{x\sim y\\f(x)> \lambda f(y)}}\!\!\!\! \frac{f(x)}{d(x)\,d(y)}
   		&\le \frac{2}{\Lambda}\Gamma(X,\calS)
   		= \frac{2}{\Lambda}\sum_{j=0}^q\sum_{k=j-1}^q \dr{X \cap S_j, X \setminus S_k}\\
   		&\le \frac{2\lambda^h}{\Lambda} \sum_{j=0}^q\sum_{k=j-1}^q \lambda^{k-h}\dr{X \cap S_j, X \setminus S_k}\\
   		&\le \frac{2\lambda^h}{\Lambda}\sum_{\substack{x\sim y\\f(x)\le \lambda f(y)}}\!\!\!\! \frac{f(y)}{d(x)\,d(y)}
   		< \frac{r-1}{4r}\sum_{\substack{x\sim y\\f(x)\le \lambda f(y)}}\!\!\!\! \frac{f(y)}{d(x)\,d(y)}.
   	\end{align*}
   	Thus (iv) holds.
\end{proof}

\begin{lemma}\label{lem:layered}
For all $r>1$, there exists $u_0 > 0$ such that the following holds. Let $G = (V,E)$ be a connected graph. Let $U \subseteq V$ satisfy $|U| \ge u_0$, 
$\dr{U,V \setminus U} \le 1/2|V|$,
and $\phi(U) \le \lambda^{s/5}$. Let $\MM$ be a Moran process on $G$ with fitness $r$ such that $|U \setminus \MM{0}| \le 1$. With probability at least $1 - |V|^2\exp(-\lambda^{s/5})$, $\MM$ fixates before $U$ empties of mutants.
\end{lemma}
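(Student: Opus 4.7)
The plan is to apply Lemma~\ref{lem:true-potential} to obtain a weight function $f\colon V \to \reals_{\ge 0}$ with properties (i)--(iv), and then feed this $f$ into Lemma~\ref{lem:gen-pot-b} with $x^- = \alpha/4$ and $x^+ = \alpha/2$. The key observation that bridges the two is that the event ``$U$ empties of mutants before $M$ fixates'' is contained in the event ``$\phi_f(M(t)) \le \alpha/4$ for some $t \ge 0$'', because once $U \cap M(t) = \emptyset$ we have $\phi_f(M(t)) = 0$ by property~(i). Thus an upper bound on the latter probability automatically bounds the former.

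To set up Lemma~\ref{lem:gen-pot-b} I need to verify its hypotheses. First, validity is immediate from property~(iv): $f$ is $(\alpha/4, \alpha/2)$-valid (and in the case $U = V$ the stronger $(0, \alpha/2)$-validity also suffices, though we only need the weaker form). Second, I need $x^+ - x^- = \alpha/4 > m_f$; using property~(iii) this amounts to $\lambda^h > 4/\alpha = 2^9 \phi(U)$, which holds for $|U|$ sufficiently large because $h = \lceil s/2 \rceil$ gives $\lambda^h \ge \lambda^{s/2}$ whereas $\phi(U) \le \lambda^{s/5}$ by hypothesis. Third, I need $\phi_f(M(0)) \ge x^+ - m_f = \alpha/2 - m_f$; since $|U \setminus M(0)| \le 1$ and each vertex contributes at most $m_f$ to $\phi_f$, we have $\phi_f(M(0)) \ge \phi_f(U) - m_f$, which is at least $\alpha/2 - m_f$ by property~(ii).

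Lemma~\ref{lem:gen-pot-b} then yields
\[
	\Pr(\exists t \ge 0 : \phi_f(M(t)) \le \alpha/4) \le \left(\tfrac{8r}{r-1}\right)^{1/2} |V|^2 \exp\!\left(-\tfrac{\beta \alpha}{8 m_f}\right).
\]
The remaining task is to check that this upper bound is at most $|V|^2 \exp(-\lambda^{s/5})$ for all sufficiently large $|U|$. Using $m_f \le \lambda^{-h} \le \lambda^{-s/2}$, $\alpha = 1/(2^7 \phi(U))$, and $\phi(U) \le \lambda^{s/5}$, the exponent satisfies
\[
	\frac{\beta \alpha}{8 m_f} \ge \frac{\beta \lambda^{s/2}}{8 \cdot 2^7 \cdot \lambda^{s/5}} = \frac{\beta \lambda^{3s/10}}{2^{10}}.
\]
Since $\lambda > 1$, the ratio $\lambda^{3s/10}/\lambda^{s/5} = \lambda^{s/10}$ tends to infinity as $|U|\to\infty$, so for $u_0$ sufficiently large we have $\beta\alpha/(8 m_f) \ge \lambda^{s/5} + \tfrac{1}{2}\log(8r/(r-1))$, which gives the desired bound.

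The main obstacle is really just bookkeeping: ensuring $u_0$ is large enough that (a)~Lemma~\ref{lem:true-potential} applies, (b)~the gap $x^+ - x^-$ exceeds $m_f$, and (c)~the growth rate $\lambda^{3s/10}$ beats $\lambda^{s/5}$ with enough room to absorb the constant prefactor. Since each of these is a ``for sufficiently large $|U|$'' condition, a single choice of $u_0$ (depending on $r$) suffices, and no further machinery beyond Lemmas~\ref{lem:gen-pot-b} and~\ref{lem:true-potential} is needed.
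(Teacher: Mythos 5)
Your proposal is correct and follows essentially the same route as the paper's proof: obtain $f$ from Lemma~\ref{lem:true-potential}, apply Lemma~\ref{lem:gen-pot-b} with $x^-=\alpha/4$, $x^+=\alpha/2$, and observe that $U$ emptying forces $\phif(\MM{t})=0\le\alpha/4$. The only (immaterial) difference is the bookkeeping of the exponent — you bound $\beta\alpha/8m_f$ below by $\beta\lambda^{3s/10}/2^{10}$ where the paper routes through $\lambda^{s/4}$ — and both comfortably absorb the prefactor $(8r/(r-1))^{1/2}$ for large $|U|$.
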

\begin{proof}
Let $n=|V(G)|$.
Let $f\colon V\rightarrow\mathbb{R}_{\ge 0}$ be as in Lemma~\ref{lem:true-potential}. 
Since $U$ is non-empty, $f$ is not everywhere zero.
Our goal will be to apply 
Lemma~\ref{lem:gen-pot-b} with $x^- = \alpha/4$ and $x^+ = \alpha/2$.
We first establish the criteria in Lemma~\ref{lem:gen-pot-b}.
\begin{enumerate} 
\item[(C1)] $\alpha/2 \leq \phi_f(V)$.
\item[(C2)] $f$ is $(\alpha/4,\alpha/2)$-valid.
\item[(C3)] $\alpha/4 > m_f$.
\item[(C4)] $\phi_f(\MM(0)) \geq \alpha/2-m_f$.
\end{enumerate}

First, Lemma~\ref{lem:true-potential}(ii) guarantees that $\phi_f(U) \geq \alpha/2$, which gives (C1).
Since  $|U \setminus \MM{0}| \le 1$ by hypothesis and
 $m_f = \max_{v\in V}\phi_f(v)$,  by definition, we also obtain (C4).
 Lemma~\ref{lem:true-potential}(iv) guarantees (C2).
 Lemma~\ref{lem:true-potential}(iii) gives
 the following useful inequality:
  \begin{equation}
 \label{eq:makeiteasy}
 m_f \leq 1/\lambda^h.
 \end{equation}
 Recall Definition~\ref{def:consts}.
To get all of the tedious calculation out of the way at once, we 
note the following string of inequalities, which hold when $|U|$ is sufficiently large (so $s$ is sufficiently large).
The third inequality uses the hypothesis $\phi(U) \le \lambda^{s/5}$.
\begin{equation}
\label{eq:useitagain}
\frac{1}{\lambda^h} < 
\frac{\lambda^{s/4}}{\lambda^h} 
\leq \frac{1}{\lambda^{s/4}} 
\leq \frac{1}{\phi(U) \lambda^{s/4-s/5}} 
\leq \frac{\beta}{2^{10} \phi(U)} = \frac{\beta \alpha}{8}   < \frac{\alpha}{4}.
\end{equation}
Combining~\eqref{eq:makeiteasy} and $1/\lambda^h < \alpha/4$ from \eqref{eq:useitagain}, we get (C3).

Now let $p$ denote $\Pr(\exists t \ge 0 \text{ such that } \phi_f(\MM{t}) \le \alpha/4)$.
From Lemma~\ref{lem:gen-pot-b}, we obtain
$$
 p \le  {\left( {\tfrac{8r}{r-1}}\right)}^{1/2} n^2\cdot e^{-\beta\alpha/8m_f}. 
$$
Using~\eqref{eq:makeiteasy} and then~\eqref{eq:useitagain},
$\beta \alpha / 8 m_f \geq \beta \alpha \lambda^h / 8 \geq \lambda^{s/4}$, so
$$
p \leq   {\left({\tfrac{8r}{r-1}}\right)}^{1/2} n^2\exp(-\lambda^{s/4})
		\le n^2 \exp(-\lambda^{s/5}).$$
		
Finally, if $U$ empties of mutants at time $t$ then
$\MM{t} \cap U = \emptyset$ which, 		
by Lemma~\ref{lem:true-potential}(i), 
implies that   $\phi_f(\MM{t}) = 0$, so certainly $\phi_f(\MM{t}) \leq \alpha/4$.
It follows that
	\[
		\Pr(\MM\mbox{ fixates before $U$ empties of mutants}) \ge 1-p \geq 1 - n^2\exp(-\lambda^{s/5}),
	\]
	as required.
\end{proof}

\subsection{Applications of Lemmas~\ref{lem:true-potential} and~\ref{lem:layered}}\label{sec:layered-app}

We first require the following result, which we shall use in the large-potential case when~$U \ne V$.

\begin{lemma}\label{lem:killers}
	Let $x^+,r>1$, let $G=(V,E)$ be a sufficiently large connected graph, and let $U \subseteq V$ with $\phi(U) \ge x^+$. Suppose that for all 
	$S \subseteq U$ with $0 < \phi(S) < x^+$, $\dr{S,U \setminus S} \ge \frac{4r}{r-1}\dr{U, V \setminus U}$. Let $\MM$ be a Moran process on~$G$ with fitness $r$ and $|U\setminus \MM{0}|\leq 1$. Then, with probability at least $1-
	{\left( {\tfrac{8r}{r-1}}\right)}^{1/2}
	|V|^2e^{-\beta x^+/2}$, $\MM$ fixates before $U$ empties of mutants.
\end{lemma}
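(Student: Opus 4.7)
The plan is to apply Lemma~\ref{lem:gen-pot-b} to the indicator function $f\colon V\to\reals_{\ge 0}$ given by $f(v)=1$ if $v\in U$ and $f(v)=0$ otherwise, with $x^-=0$ and $x^+$ as in the statement. The choice is natural: $\phif(S)=\phi(S\cap U)$ for every $S\subseteq V$, so $\phif(V)=\phi(U)\ge x^+$, and $m_f=\max_{v\in U}1/d(v)\le 1$ because $G$ is connected on at least two vertices. The overall argument then has three parts: verify the validity hypothesis, check the remaining routine hypotheses, and translate the conclusion back into the language of the statement.

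The key step is checking that $f$ is $(0,x^+)$-valid, and this is where the constant $\tfrac{4r}{r-1}$ in the lemma's hypothesis gets used; I expect this to be the main obstacle, mostly in bookkeeping around which edges contribute to each sum. Since $f(x)>\lambda f(y)$ holds precisely when $x\in U$ and $y\notin U$, the ``bad'' sum in Definition~\ref{def:valid} is $\dr{X\cap U,(V\setminus X)\setminus U}\le \dr{U,V\setminus U}$, while the ``good'' sum collapses to $\dr{X,U\setminus X}\ge \dr{X\cap U,U\setminus X}$. When $0<\phif(X)<x^+$, i.e.\ $0<\phi(X\cap U)<x^+$, the hypothesis gives $\dr{X\cap U,U\setminus X}\ge \tfrac{4r}{r-1}\dr{U,V\setminus U}$, which is exactly enough for $\tfrac{r-1}{4r}$ times the good sum to dominate the bad sum.

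The other hypotheses of Lemma~\ref{lem:gen-pot-b} are straightforward. The gap condition $x^+-x^->m_f$ follows from $x^+>1\ge m_f$, and $|U\setminus\MM{0}|\le 1$ together with $1/d(v)\le 1$ for any omitted vertex yield $\phif(\MM{0})=\phi(U\cap\MM{0})\ge \phi(U)-1\ge x^+-m_f$. Lemma~\ref{lem:gen-pot-b} then gives that $\phif(\MM{t})\le 0$ holds for some $t$ with probability at most $\left(\tfrac{8r}{r-1}\right)^{1/2}|V|^2 e^{-\beta x^+/2m_f}$, which is in turn at most $\left(\tfrac{8r}{r-1}\right)^{1/2}|V|^2 e^{-\beta x^+/2}$ by the bound $m_f\le 1$.

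Finally, one needs to translate the conclusion. The event $\phif(\MM{t})\le 0$ is exactly $\MM{t}\cap U=\emptyset$, i.e.\ $U$ is empty of mutants at time~$t$. Extinction forces this event, so on its complement the almost-sure absorption of the Moran process must occur at fixation; conversely, if $U$ empties at a finite time, then fixation has not yet occurred since $U\neq\emptyset$. The event bounded above is therefore exactly the complement of ``$\MM$ fixates before $U$ empties of mutants'', which completes the plan.
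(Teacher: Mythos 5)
Your proposal matches the paper's proof essentially line for line: the same indicator function of $U$, the same verification of $(0,x^+)$-validity by applying the hypothesis $\dr{S,U\setminus S}\ge\tfrac{4r}{r-1}\dr{U,V\setminus U}$ to $S=X\cap U$ and noting that the ``bad'' sum is at most $\dr{U,V\setminus U}$, and the same invocation of Lemma~\ref{lem:gen-pot-b} with $x^-=0$ and $m_f\le 1$. The only blemish is the chain $\phif(\MM{0})\ge\phi(U)-1\ge x^+-m_f$, whose last inequality goes the wrong way when $m_f<1$; bounding the omitted vertex's contribution by $m_f$ rather than by $1$ gives $\phif(\MM{0})\ge\phi(U)-m_f\ge x^+-m_f$ directly.
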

\begin{proof}
	Let $f\colon V\rightarrow\mathbb{R}_{\ge 0}$ be the indicator function of $U$ in $V$, and let $\phi_f$ be the function given by Definition~\ref{defn:phif}.
	For all $S \subseteq U$ with $0 < \phi(S) < x^+$, by hypothesis we have
	\[
	\dr{S, U \setminus S} \ge \frac{4r}{r-1}\,\dr{U, V \setminus U} \ge \frac{4r}{r-1}\,\dr{S, V \setminus U}\,.
	\]
	Since 
	$$ \sum_{\substack{(x,y)\in E(S,V\setminus S)\\ f(x)\leq \lambda f(y)}}
	\frac{f(y)}{d(x)\,d(y)} = \dr{S,U\setminus S}$$
	and
	$$\sum_{\substack{(x,y)\in E(S,V\setminus S)\\ f(x) > \lambda f(y)}}
	\frac{f(x)}{d(x)\,d(y)} = \dr{S,V\setminus U},$$ 
	it follows that $f$ is $(0,x^+)$-valid. Moreover, we have $m_f \le 1$. Thus by Lemma~\ref{lem:gen-pot-b} with $x^-=0$,
	\[
	\Pr(\MM\mbox{ fixates before $U$ empties of mutants}) \ge 1 - 
	{\left( {\tfrac{8r}{r-1}}\right)}^{1/2} |V|^2e^{-\beta x^+/2}\,.\qedhere
	\]
\end{proof}

We are now in a position to prove Theorem~\ref{thm:new-phase-trans}. 
To make the statement of the theorem more natural, the logarithm is to the base~$e$.
(This follows our convention, which is also used in the proof below that logarithms are always to
the base~$e$ unless we specify otherwise.) We first prove the upper bound, which applies when $r<1$.

\begin{theorem}\label{thm:disadvantageous}
Let $0<r'<1$. Then there exists $C>0$, depending   on $r'$, 
such that the following holds. Let $G = (V,E)$ be a connected $n$-vertex graph, and let $\MM$ be a Moran process on $G$ with fitness $r'$ such that $|\MM{0}| \leq 1$. Then if $n$ is sufficiently large relative to $r'$, $\MM$ goes extinct with probability at least $1 - e^{-\exp(C(\log n)^{1/3})}$.
\end{theorem}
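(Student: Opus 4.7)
The plan is to reduce to the advantageous regime by duality and then dispatch two sub-cases with Lemma~\ref{lem:layered} and Theorem~\ref{thm:potential}. Set $r := 1/r' > 1$ and, given the trajectory $\MM$, define $M^*(t) := V \setminus \MM(t)$. A direct check of transition probabilities, using the identity $n + (r-1)|S| = r\bigl(n + (r'-1)(n-|S|)\bigr)$, which holds exactly when $rr' = 1$, shows that $M^*$ is distributed as a Moran process on $G$ of fitness $r$ started from $V \setminus \MM(0)$: a mutant-spawn of $\MM$ is the corresponding non-mutant-spawn of $M^*$, and vice versa. Extinction of $\MM$ is therefore the event that $M^*$ fixates, and the hypothesis $|\MM{0}| \le 1$ becomes $|V \setminus M^*(0)| \le 1$. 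It now suffices to bound $\Pr(M^* \text{ fixates})$ from below by $1 - e^{-\exp(C(\log n)^{1/3})}$ for a suitable $C = C(r')$.

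Using $\lambda = (r+1)/2$ and $s = (\log_r n)^{1/3}$ from Definition~\ref{def:consts}, I split on whether $\phi(V) \le \lambda^{s/5}$. In the small-$\phi(V)$ case, apply Lemma~\ref{lem:layered} to $M^*$ with $U = V$: the hypotheses $|V| \ge u_0$ (for $n$ large), $\dr{V, \emptyset} = 0 \le 1/(2|V|)$, and $\phi(V) \le \lambda^{s/5}$ are all immediate. Since the event ``$U = V$ empties of mutants'' coincides with extinction of $M^*$, the lemma gives $\Pr(M^* \text{ fixates}) \ge 1 - n^2 \exp(-\lambda^{s/5})$. Expanding $\lambda^{s/5} = \exp\!\bigl(\tfrac{\log\lambda}{5(\log r)^{1/3}}(\log n)^{1/3}\bigr)$, this easily absorbs the polynomial $n^2$ factor and yields the claimed bound for any positive $C < \tfrac{\log\lambda}{5(\log r)^{1/3}}$.

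In the complementary case $\phi(V) > \lambda^{s/5}$, Theorem~\ref{thm:potential} applied directly to $M^*$ with deterministic initial state $X = M^*(0)$ is already strong enough. Since every vertex of the connected graph $G$ has degree at least $1$ and $|V \setminus X| \le 1$, we have $\phi(X) \ge \phi(V) - 1$. Writing $a = r^{-\phi(X)\delta(G)}$ and $b = r^{-\phi(V)\delta(G)}$, the elementary inequality $(1-a)/(1-b) \ge 1 - 2a$ (valid once $b \le 1/2$, which holds with considerable room because $\phi(V)\delta(G) \ge \lambda^{s/5}$) converts Theorem~\ref{thm:potential} into $\Pr(M^* \text{ fixates}) \ge 1 - 2 r^{-\lambda^{s/5}/2}$ for large $n$, again within the desired bound after possibly shrinking $C$. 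Taking the smaller of the two values of $C$ from the two cases completes the argument. The only step requiring any real care is the duality verification; the rest is a routine application of the tools already assembled, with the threshold $\lambda^{s/5}$ chosen precisely to match the hypothesis of Lemma~\ref{lem:layered}.
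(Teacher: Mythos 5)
Your proposal is correct and follows essentially the same route as the paper: pass to the dual fitness-$r$ process, split on whether $\phi(V)\le\lambda^{s/5}$, and invoke Lemma~\ref{lem:layered} with $U=V$ in the small-potential case. The only deviation is that in the large-potential case you use Theorem~\ref{thm:potential} directly (together with $\phi(X)\ge\phi(V)-1$) where the paper invokes Lemma~\ref{lem:killers} with $U=V$; since the drift hypothesis of that lemma is vacuous when $U=V$, both tools yield a failure probability of the form $e^{-c\lambda^{s/5}}$ and the substitution is harmless.
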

\begin{proof}
Fix $r'<1$ and let $r=1/r'>1$. Recall the definitions of $\lambda$ and $\beta<1$ (depending on~$r$) from Definition~\ref{def:consts}.
Choose $C>0$ to be sufficiently small with respect to~$r$ 
so that, for all sufficiently large $x$, 
\begin{equation}
\label{eq:ugly}
{\left({\tfrac{8r}{r-1}}\right)}^{1/2} r^{2 x^3}
e^{\exp(C {(\log r)}^{1/3} x)}
  \leq 
   \exp(\beta\lambda^{x/5}/2).
\end{equation}

Now given the $n$-vertex graph $G$, let $U=V$  so that $s = (\log_r n)^{1/3}$
and the quantity $C(\log n)^{1/3}$, from the failure probability in the statement of the theorem, is equal to~$C {(\log r)}^{1/3} s$.
We will assume that $n$ is sufficiently large that \eqref{eq:ugly} holds for $x=s$.

Note that $\MM$ is dual to a Moran process $\MM'$ on $G$ with fitness $r=1/r' > 1$ and initial state $M'(0) = V\setminus \MM(0)$, such that $|V \setminus \MM'(0)| \le 1$ and $\MM$ goes extinct if and only if $\MM'$ fixates. 
 If  $\phi(V) > \lambda^{s/5}$, then by Lemma~\ref{lem:killers} (taking $U=V$ and $x^+ = \phi(V)$),
	\begin{equation*} 
		\Pr(\MM\mbox{ goes extinct}) \ge 1 - {\left({\tfrac{8r}{r-1}}\right)}^{1/2} n^2 \exp(-\beta\lambda^{s/5}/2).
	\end{equation*}
If instead  $\phi(V) \le \lambda^{s/5}$, then we apply Lemma~\ref{lem:layered} to $\MM'$ (again with $U=V$) to obtain
\begin{align*} 
	\Pr(\MM\mbox{ goes extinct}) \ge 1 - n^2\exp(-\lambda^{s/5}) 
		&\geq  1 - {\left({\tfrac{8r}{r-1}}\right)}^{1/2} n^2 \exp(-\beta\lambda^{s/5}/2)\\
		&=  1 - {\left({\tfrac{8r}{r-1}}\right)}^{1/2} r^{2 s^3} \exp(-\beta\lambda^{s/5}/2).
	\end{align*}
	
By~\eqref{eq:ugly}, the right-hand side is at least 
$1- e^{-\exp(C {(\log r)}^{1/3} s)}$, which is equal to
$1 - e^{-\exp(C(\log n)^{1/3})}$,
as desired.
	 \end{proof}

Finally, we prove the lower bound of Theorem~\ref{thm:new-phase-trans}, which applies when $r>1$.

\begin{theorem}\label{thm:advantageous}
	Let $r>1$. Then there exists $C>0$, depending   on $r$, such that the following holds. Let $G = (V,E)$ be a connected $n$-vertex graph, and let $\MM$ be a Moran process on $G$ with fitness $r$ such that $\MM{0}$ is a uniformly-random vertex. Then, if $n$ is sufficiently large relative to $r$, $\MM$ fixates with probability at least $e^{C(\log n)^{1/3}}/n$.
\end{theorem}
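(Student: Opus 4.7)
The plan is to mirror the case split used in the proof of Theorem~\ref{thm:disadvantageous}, but to replace the dual-process trick (which gave the luxury of starting with $|V\setminus \MM(0)|\le 1$) by a direct supermartingale argument starting from a single mutant. Fix $r>1$ and take $U=V$ in Definition~\ref{def:consts}, so that $s = (\log_r n)^{1/3}$ and $\lambda^{s/5} = e^{\Theta((\log n)^{1/3})}$. I split on whether $\phi(V) > \lambda^{s/5}$ or $\phi(V) \le \lambda^{s/5}$.

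In the first (easy) case, Corollary~\ref{cor:Dd} gives directly
$\pfix{G,r} \ge (r-1)\phi(V)/(2rn) > \tfrac{r-1}{2r}\cdot e^{C_0(\log n)^{1/3}}/n$
for an appropriate $C_0=C_0(r)>0$, which is of the required form.

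In the remaining case $\phi(V)\le \lambda^{s/5}$, I apply Lemma~\ref{lem:true-potential} with $U=V$ to obtain $f\colon V\to\reals_{\ge 0}$ with $\phif(V)\ge \alpha/2$, $m_f\le 1/\lambda^h$, and $f$ being $(0,\alpha/2)$-valid. For each vertex $v$, I bound the fixation probability from $\MM(0)=\{v\}$ by splitting the evolution at $\tau=\min\{t\ge 0 : \MM(t)=\emptyset \text{ or } \phif(\MM(t))\ge \alpha/2\}$. For $t<\tau$, the state $\MM(t)$ has $0<\phif(\MM(t))<\alpha/2$, so $f$ is valid there and Lemma~\ref{lem:psif-supermart} makes $\psif(\MM(t\wedge \tau))$ a nonnegative bounded supermartingale; $\tau$ is a.s.\ finite because fixation forces $\phif\ge\alpha/2$ and the absorption time is a.s.\ finite. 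Optional stopping, together with $\psif(\emptyset)=1$, yields
$\Pr(\MM(\tau)=\emptyset\mid \MM(0)=\{v\}) \le \psif(\{v\}) = e^{-\phif(v)\beta/m_f}$.
On the surviving branch, $\phif(\MM(\tau))\ge\alpha/2$, and since $(0,\alpha/2)$-validity implies $(\alpha/4,\alpha/2)$-validity (and $m_f<\alpha/4$ for large $n$ by the inequalities leading to~\eqref{eq:useitagain}), Lemma~\ref{lem:gen-pot-b} applied with $x^+=\alpha/2$, $x^-=\alpha/4$ shows that extinction after $\tau$ has probability at most $O(n^2 e^{-\beta\alpha/(8m_f)})$, which decays doubly-exponentially in $(\log n)^{1/3}$ and is comfortably $o(1/n)$.

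Combining the two phases and averaging over a uniform starting vertex gives
$\pfix{G,r} \ge \frac{1}{n}\sum_{v\in V}\bigl(1-e^{-\phif(v)\beta/m_f}\bigr) - o(1/n)$.
Because $\phif(v)\le m_f$, the exponent lies in $[0,\beta]\subseteq[0,1]$, so $1-e^{-x}\ge x/2$ applies and yields $\pfix{G,r}\ge \beta\phif(V)/(2nm_f) - o(1/n) \ge \beta\alpha\lambda^h/(4n) - o(1/n)$. A short calculation of the same flavour as the ones inside the proofs of Lemmas~\ref{lem:true-potential}(iii) and~\ref{lem:layered} shows $\alpha\lambda^h\ge \lambda^{3s/10}/2^7\ge e^{C(\log n)^{1/3}}$ for some $C=C(r)>0$, completing the proof (after absorbing the $o(1/n)$ term into a slightly smaller $C$). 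The main obstacle is the supermartingale bookkeeping: $\psif$ is only a supermartingale while $\phif(\MM)<\alpha/2$, so one must stop at $\tau$, control the ``early extinction'' probability there, then restart via Lemma~\ref{lem:gen-pot-b}, and verify that the two error terms are of strictly smaller order than the main contribution $\beta\phif(V)/(2nm_f)$ after averaging.
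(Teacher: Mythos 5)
Your proposal is correct, and it differs from the paper's proof in the one step that matters. Both arguments share the same skeleton: take $U=V$, dispose of the case $\phi(V)>\lambda^{s/5}$ via Corollary~\ref{cor:Dd}, and otherwise build $f$ from Lemma~\ref{lem:true-potential} and run a two-phase argument (climb from a single mutant to a potential threshold, then invoke Lemma~\ref{lem:gen-pot-b} to finish). The difference is in the climbing phase. The paper uses the \emph{linear} submartingale $\phi_f(\MM)$ (via Lemma~\ref{lem:add-submart}) and optional stopping; since that bound degrades like $\phi_f(v)/x^+$, the paper must stop at an intermediate threshold $x^+=\lambda^{-h}(\log n)^2$ — chosen just large enough that Lemma~\ref{lem:gen-pot-b} then guarantees fixation with probability $1/2$ — and pays a $(\log n)^2$ factor. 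You instead use the \emph{exponential} supermartingale $\psi_f(\MM)$ (Lemma~\ref{lem:psif-supermart}) with optional stopping at $\tau=\min\{t:\MM(t)=\emptyset\text{ or }\phi_f(\MM(t))\ge\alpha/2\}$, obtaining $\Pr(\MM(\tau)=\emptyset\mid\MM(0)=\{v\})\le e^{-\beta\phi_f(v)/m_f}$ and hence a survival probability of at least $\beta\phi_f(v)/2m_f$ in one shot, with no intermediate threshold and no $(\log n)^2$ loss (this is essentially the proof of Lemma~\ref{lem:gen-pot-a} rerun without its hypothesis $\phi_f(\MM(0))\ge x^+-m_f$, which indeed yields the sharper one-sided bound). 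Your bookkeeping is sound: $\tau$ is a.s.\ finite, $\psi_f$ is bounded, $f$ is valid on all states visited before $\tau$ because it is $(0,\alpha/2)$-valid and $U=V$ makes $\phi_f(X)>0$ for nonempty $X$, the exponent $\beta\phi_f(v)/m_f$ lies in $[0,1]$ so $1-e^{-x}\ge x/2$ applies, and the post-$\tau$ extinction probability from Lemma~\ref{lem:gen-pot-b} is doubly exponentially small in $(\log n)^{1/3}$ and so cannot affect the main term $\beta\alpha\lambda^h/4n\ge e^{C(\log n)^{1/3}}/n$. Your route is marginally sharper and arguably cleaner; the paper's route reuses the additive submartingale machinery it needs elsewhere. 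Either suffices for the stated theorem.
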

\begin{proof}
Let $U=V$, so that $s = (\log_r n)^{1/3}$
and the quantity $C(\log n)^{1/3}$, from the failure probability in the statement of the theorem, is equal to~$C {(\log r)}^{1/3} s$.
We will choose $C$ 
to be small enough (as a function of $r$) so that, when $n$ is sufficiently large, the final inequalities of~\eqref{eqn:advantageous-1} and~\eqref{eqn:advantageous-2} both hold. 
	
	Write $\MM{0} = \{m_0\}$, where $m_0 \in V$ is uniformly chosen. First suppose $\phi(V) \ge \lambda^{s/5}$. Then by Corollary~\ref{cor:Dd}, we have
	\begin{equation}\label{eqn:advantageous-1}
		\Pr(\MM\mbox{ fixates}) \ge \frac{(r-1)\phi(V)}{2rn} \ge \frac{(r-1)\lambda^{s/5}}{2rn} \ge \frac{e^{C(\log n)^{1/3}}}{n},
	\end{equation}
	and so the result follows when $n$ is sufficiently large.
	
Now suppose that $\phi(V) < \lambda^{s/5}$. 
If $n$ is sufficiently large, then 
$\phi(V) \leq \lambda^h / (\log n)^2 2^8$, so $\lambda^{-h} (\log n)^2 \leq \alpha/2$.
	Let $f$ be as in Lemma~\ref{lem:true-potential}. Let 
	\[
		\tau = \min\{t \ge 0 \mid \phif(\MM{t}) \in \{0\} \cup [\lambda^{-h}(\log n)^2,\phif(V)]\},
	\]
	and note that $\tau>0$ by Lemma~\ref{lem:true-potential}(iii), since $\phif(M(0)) \le m_f \le \lambda^{-h}$.
	Let $X_t = \phif(\MM{t})$ for $t \le \tau$, and let $X_t = \phif(\MM(\tau))$ for $t > \tau$. Lemma~\ref{lem:true-potential}(iv) implies that $f$ is $(0,\alpha/2)$-valid.
Thus, it is valid for all $Y\subseteq V$ such that $0 < \phif(Y) < \alpha/2$, hence for all $Y$
such that $0 < \phif(Y) < \lambda^{-h} (\log n)^2$. 
	Lemma~\ref{lem:add-submart} shows that 
	$X$ is a submartingale (that is, $\E[X_{t+1} \mid  X_t=c]\geq  c$).
	
	For all $x \in V$, let $p_x = \Pr(X_\tau \ge \lambda^{-h}(\log n)^2 \mid m_0 = x)$. We have $X_\tau \le \lambda^{-h}(\log n)^2+m_f$, and $m_f \le \lambda^{-h}$ by Lemma~\ref{lem:true-potential}(iii), so by the optional stopping theorem we have 
	\[
		X_0 \le \E(X_\tau) \le p_x(\lambda^{-h}(\log n)^2 + \lambda^{-h}) \le 2p_x\lambda^{-h}(\log n)^2.
	\]
	Since $X_0 = \phif(x)$, it follows that $p_x \ge \phif(x)\lambda^h/2(\log n)^2$. We therefore have
	\[
		\Pr\big(\phif(\MM{\tau}) \ge \lambda^{-h}(\log n)^2\big) = \frac{1}{n}\sum_{x \in V}p_x \ge \frac{\phif(V)\lambda^h}{2n(\log n)^2}.
	\]
	By Lemma~\ref{lem:true-potential}(ii), we have $\phif(V) \ge \alpha/2$. Hence
	\[
		\Pr\big(\phif(\MM{\tau}) \ge \lambda^{-h}(\log n)^2\big) \ge \frac{\alpha\lambda^h}{4n(\log n)^2} = \frac{\lambda^h}{2^9\phi(V)n(\log n)^2}. 
	\]
	Since $\phi(V) < \lambda^{s/5}$ and $h \ge {s/2}$, it follows that for sufficiently large $n$,
	\[
		\Pr\big(\phif(\MM{\tau}) \ge \lambda^{-h}(\log n)^2\big) \ge \frac{\lambda^{s/4}}{n}.
	\]
	By Lemma~\ref{lem:gen-pot-b}, applied with $x^- = 0$, $x^+ = \lambda^{-h}(\log n)^2$ and $m_f \le \lambda^{-h}$, it therefore follows that when $n$ is sufficiently large,
	\begin{align}\nonumber
		\Pr(\MM \mbox{ fixates}) &\ge \Pr(\MM \mbox{ fixates}\mid \phif(M(\tau)) 
		\ge \lambda^{-h}(\log n)^2) \cdot \lambda^{s/4}/n\\\label{eqn:advantageous-2}
		&\ge \left(1-{\left(\frac{8r}{r-1}\right)}^{1/2} n^2\cdot e^{-\beta(\log n)^2/2}\right) \frac{\lambda^{s/4}}{n} 
		\ge \frac{1}{2}\cdot\frac{\lambda^{s/4}}{n} 
		\ge \frac{e^{C(\log n)^{1/3}}}{n},
	\end{align}
	and so the result follows.
\end{proof}

Theorem~\ref{thm:new-phase-trans} now follows immediately from Theorems~\ref{thm:disadvantageous} and~\ref{thm:advantageous}, together with the fact~\cite[Lemma~1]{DGMRSS2014} that for all connected graphs $G$, $f_{G,1}=1/|V(G)|$.

\section{Double star}
\label{sec:dubstar}

\newcommand{\DS}{D_k}
For all integers $k \ge 1$, we define the \textit{double star} $\DS$ as follows. Let $L_1$, $L_2$, $\{x_1\}$ and $\{x_2\}$ be disjoint vertex sets with $|L_1| = |L_2| = k$. Add edges to join every vertex in $L_1$ to $x_1$, to join every vertex in $L_2$ to $x_2$, and to join $x_1$ to $x_2$. Thus $\DS$ consists of two $k$-leaf stars, induced by $L_1 \cup \{x_1\}$ and $L_2 \cup \{x_2\}$, connected by their centres.

\begin{theorem}
\label{thm:dubstar}
	Let $r > 1$, let $k$ be a positive integer, and let $m_0 \in V(\DS)$ be uniformly random. Let $\MM$ be a Moran process on $\DS$ with fitness $r$ and $\MM{0} = \{m_0\}$. Then the expected absorption time of $\MM$ is at least $\frac{(r-1)^2}{2^5r^4}|V(\DS)|^3$.
\end{theorem}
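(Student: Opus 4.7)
My plan is to lower-bound $\E[T_{\mathrm{abs}}]$ by the expected total time the process spends at the barrier state $S_1 = L_1 \cup \{x_1\}$, writing $n = |V(\DS)| = 2k+2$. Enumerating the transitions out of $S_1$ shows that the only state-changing moves are forward to $S_1 \cup \{x_2\}$ (at rate $r/((k+1)^2(r+1))$) and backward to $L_1$ (at rate $1/((k+1)^2(r+1))$), so the expected sojourn at $S_1$ per visit is $(k+1)^2 \ge n^2/4$. By the strong Markov property,
\[
\E[T_{\mathrm{abs}}] \;\ge\; \E[\text{total time at }S_1] \;=\; \Pr[\text{visit }S_1]\cdot \E_{S_1}[\text{total time at }S_1],
\]
and the theorem reduces to two claims: (i) $\Pr[\text{visit }S_1]$ is at least a positive function of $r$, and (ii) $\E_{S_1}[\text{total time at }S_1] = \Omega(n^3/r^2)$.

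For (ii) it suffices to upper-bound the probability $q$ that, after a single departure from $S_1$, the process absorbs without ever revisiting $S_1$, and show $q = O(r^2/n)$; then the number of visits is geometric with mean $1/q = \Omega(n/r^2)$ and multiplying by the sojourn $(k+1)^2$ gives the bound. Upon leaving $S_1$ we land at $S_1 \cup \{x_2\}$ with probability $r/(r+1)$ or at $L_1$ with probability $1/(r+1)$. From $S_1 \cup \{x_2\}$ the only state-changing transitions go back to $S_1$ or forward to some $S_1 \cup \{x_2, \ell\}$ with $\ell \in L_2$, and a direct rate computation gives the conditional probability of the forward move at the next change as $r/(k+1+r)$; since any return at a later step still counts as a revisit, the no-return probability from $S_1 \cup \{x_2\}$ is at most $r/(k+1+r) \le r/(k+1)$. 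Symmetrically, from $L_1$ the only state-changing transitions are to $S_1$ or to $L_1 \setminus \{v\}$, with the latter having conditional probability $1/(r(k+1)+1)$. Combining via total probability yields $q = O(r^2/n)$.

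For (i), I would condition on $m_0 \in L_1$, an event of probability at least $k/(2k+2) \ge 1/4$. Observe that whenever side~$1$ becomes completely mutant at a time when side~$2$ has no mutants, the process is at $S_1$; so it suffices to bound below the probability that the side-$1$ dynamics fill up before $x_2$ is ever infected. The dynamics restricted to side~$1$ form a perturbation of the fitness-$r$ Moran process on the star $K_{1,k}$, and the only perturbation is the rare $x_2$-spawns-onto-$x_1$ event, which has per-step rate $O(1/n^2)$. I would apply Theorem~\ref{thm:potential} to $\DS$ with a leaf initial state to obtain a fixation probability of at least $(1-1/r)/(1-r^{-\phi(V)}) \ge (r-1)/r - o(1)$, and then use a coupling with the pure star Moran process (whose fixation time admits a quantitative tail bound) to show that with probability $\Omega((r-1)/r)$, side~$1$ fills up before either $x_2$ gets infected or a leaf of $L_2$ ever becomes mutant; on this event, the process necessarily visits $S_1$.

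Combining (i) and (ii) yields $\E[T_{\mathrm{abs}}] \ge c(r)\,n^3$ for some $c(r)>0$, and carefully tracking the constants through the bound $q \le r^2/(k+1)$ and the estimate $\Pr[\text{visit }S_1] \ge (r-1)/(4r)$ recovers the explicit coefficient $(r-1)^2/(2^5 r^4)$ stated. The main technical obstacle is the visit-probability bound in (i): making the coupling between the side-$1$ dynamics and a standalone Moran process on $K_{1,k}$ rigorous requires controlling the (rare but not negligible) $x_2$-spawns-onto-$x_1$ events, and showing that side~$1$ fills up fast enough for the union bound against the leak to succeed needs a quantitative tail bound on the star's fixation time, which is not directly available from the prior $O(n^4)$ absorption-time estimates alone.
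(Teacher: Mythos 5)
There is a genuine gap in your step (i), the lower bound on $\Pr[\text{visit } S_1]$. The specific event you propose to use --- that side~$1$ fills up before $x_2$ is \emph{ever} infected --- has probability tending to $0$ as $k\to\infty$, not a constant, so the union bound against the ``leak'' cannot succeed no matter how sharp a tail bound on the star's fixation time you obtain. The obstruction is a coupon-collector count: every leaf of $L_1$ has $x_1$ as its unique neighbour, so for all of $L_1$ to be mutant simultaneously, $x_1$ must have spawned onto each of the $k-1$ initially non-mutant leaves at least once; since each spawn from $x_1$ targets a uniformly random element of $L_1\cup\{x_2\}$, this requires $\Omega(k\log k)$ spawns from $x_1$ with high probability. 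Each such spawn hits $x_2$ with probability $1/(k+1)$, so $x_2$ is infected $\Omega(\log k)$ times in expectation before side~$1$ first fills, and the probability that it is never infected is of order $(1-1/(k+1))^{\Theta(k\log k)}=k^{-\Theta(1)}$. To rescue (i) you would have to show instead that these $\Theta(\log k)$ infections of $x_2$ are all \emph{transient} --- each is reverted by a non-mutant leaf of $L_2$ (probability roughly $k/(k+r)$ per infection) before $x_2$ manages to spawn into $L_2$ --- and moreover that $x_2$ happens to be non-mutant at the first instant side~$1$ fills. That is a different and considerably more delicate argument than the one you sketch, and it is also the place from which your claimed constant $(r-1)^2/(2^5 r^4)$ would have to emerge; as written, the constant is not recoverable.

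For what it is worth, your step (ii) is sound: the sojourn time $(k+1)^2$ per visit to $S_1$, the bound of $O(r/k)$ on the per-departure escape probability (via the first state change out of $S_1\cup\{x_2\}$ or out of $L_1$), and hence the $\Omega(|V(D_k)|^3/r)$ bound on the expected time spent at $S_1$ conditioned on reaching it, are all correct, and this visit-counting decomposition is a genuinely different route from the paper's. The paper sidesteps the visit-probability problem entirely: starting from a leaf it bounds the extinction probability by $1/r$ via Theorem~\ref{thm:potential}, and bounds the probability of fixating within $C(k+1)^3$ steps (with $C=(r-1)/r^3$) by $(r-1)/2r$, using the fact that fixation forces, at some step, the conjunction ``$x_1$ spawns a mutant onto $x_2$ while side~$2$ is empty, and thereafter $x_2$ spawns before any leaf of $L_2$ does'', an event of probability $O(1/k^3)$ per step; a union bound over $C(k+1)^3$ steps then leaves probability at least $(r-1)/2r$ that absorption takes longer than $C(k+1)^3$. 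If you want to salvage your approach, the cleanest fix is probably to abandon the requirement of hitting the exact state $S_1$ and instead argue directly, as the paper does, about the rarity of the transitions needed to cross the bridge.
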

\begin{proof}
	For all $w \in V(\DS)$, let $\MM^w$ be a Moran process on $\DS$ with fitness $r$ and $\MM{0} = \{w\}$. Let $v \in L_1$; we first bound the expected absorption time of $\MM^v$, from which the result will follow easily. Let $\mathcal{E}_1$ be the event that $\MM^v$ goes extinct. Let $C = (r-1)/r^3$, and let $\mathcal{E}_2$ be the event that $\MM^v$ fixates at some time $t < C(k+1)^3$. Finally, let $\mathcal{E}_3$ be the event that $\MM^v$ fixates at some time $t \ge C(k+1)^3$. Then we have
	\begin{equation}\label{eqn:doub-star-1}
	\E(\mbox{Absorption time of }\MM^v) \ge C(k+1)^3\Pr(\mathcal{E}_3) = C(k+1)^3(1 - \Pr(\mathcal{E}_1) - \Pr(\mathcal{E}_2)).
	\end{equation}
	Since every vertex in $L_1$ has degree 1, by Theorem~\ref{thm:potential} we have
	\begin{equation}\label{eqn:doub-star-2}
	\Pr(\mathcal{E}_1) \le 1 - (1 - r^{-1}) = 1/r.
	\end{equation}
	
	We next bound $\Pr(\mathcal{E}_2)$ above. For all $t \ge 0$, let $\mathcal{E}_2^t$ be the intersection of the following three events:
	\begin{itemize}
		\item $\mathcal{A}_1^t$: $x_1 \in \MM{v}{t}$ and $(\{x_2\} \cup L_2) \cap \MM{v}{t} = \emptyset$;
		\item $\mathcal{A}_2^t$: in $\MM^v$, at time $t+1$, $x_1$ spawns a mutant onto $x_2$;
		\item $\mathcal{A}_3^t$: in $\MM^v$, in the time interval $[t+2,\infty)$, $x_2$ spawns before any vertex in $L_2$ spawns.
	\end{itemize}
	Note that $\MM^v$ cannot fixate until some vertex in $L_2$ becomes a mutant, so $\Pr(\mathcal{E}_2) \le \sum_{t=1}^{\floor{C(k+1)^3}}\Pr(\mathcal{E}_2^t)$. For all sets $S \subseteq V(\DS)$ with $(\{x_1,x_2\} \cup L_2) \cap S = \{x_1\}$, we have $\Pr(\mathcal{A}_2^t \mid \MM{v}{t} = S) = r/(k+1)W(S) \le r/2(k+1)^2$. Thus 
	\[
	\Pr(\mathcal{A}_2^t \mid \mathcal{A}_1^t) \le r/2(k+1)^2.
	\]
	Moreover, for all sets $S \subseteq V(\DS)$ with $L_2 \cap S = \emptyset$ and all $t' \ge 0$, conditioned on $\MM{v}{t'} = S$, $x_2$ spawns at time $t'+1$ with probability at most $r/W(S)$ and a vertex in $L_2$ spawns (a non-mutant) at time $t'+1$ with probability $k/W(S)$. It follows that
	\[
	\Pr(\mathcal{A}_3^t \mid \mathcal{A}_1^t \cap \mathcal{A}_2^t) \leq \frac{r}{k+r} < \frac{r}{k+1}.
	\]
	Thus $\Pr(\mathcal{E}_2^t) \le r^2/2(k+1)^3$, and so 
	\begin{equation}\label{eqn:doub-star-3}
	\Pr(\mathcal{E}_2) \le C(k+1)^3\cdot \frac{r^2}{2(k+1)^3} = \frac{r-1}{2r}.
	\end{equation}
	
	Combining~\eqref{eqn:doub-star-1},~\eqref{eqn:doub-star-2} and~\eqref{eqn:doub-star-3} yields
	\[
	\E(\mbox{Absorption time of }\MM^v) \ge C(k+1)^3\left(1 - \frac{1}{r} - \frac{r-1}{2r} \right) = \frac{(r-1)^2}{2r^4}(k+1)^3.
	\]
	By symmetry, the same bound holds for all $v \in L_2$. Since $m_0 \in L_1 \cup L_2$ with probability $k/(k+1) \ge 1/2$, it follows that the expected absorption time of $\MM$ is at least $\frac{(r-1)^2}{4r^4}(k+1)^3$. Since $|V(\DS)| = 2(k+1)$, the result follows.
\end{proof}

\section{Absorption time}\label{sec:abs-time}

\newcommand{\Tabs}{T_{\mathsf{abs}}}

In this section, we will prove our upper bound on absorption time (Theorem~\ref{thm:abs-time-intro}). In this section only, for a graph $G=(V,E)$ and $S\subseteq V$, we write $\overline{S} = V \setminus S$. We will focus on proving Theorem~\ref{thm:abs-time-intro} when $r>1$; as we will see in the proof of Theorem~\ref{thm:tabs-detail}, it is easy to show that the $r<1$ case is equivalent.

Throughout the section, we use the following lemma to bound the expected absorption time of a supermartingale. The techniques involved are well-known; see Hajek~\cite{Haj1982:Hitting-time-drift} or He and Yao~\cite{HY2001:Drift}.

\begin{lemma}\label{lem:he-yao}
	Let $Y$ be a Markov chain with finite state space $\Omega$. Let $k_1,k_2>0$, let $\Psi\colon \Omega\rightarrow\mathbb{R}_{\ge 0}$ be a function, and let $\tau \ge 0$ be a stopping time with $\tau \le \min\{i \mid \Psi(Y_i) = 0\mbox{ or }\Psi(Y_i) \ge k_1\}$. Suppose that:
	\begin{enumerate}[(i)]
		\item from every state $S_1 \in \Omega$ with $0 < \Psi(S_1) < k_1$, there exists a path in $Y$ from $S_1$ to some state $S_2$ with $\Psi(S_2) = 0$ or $\Psi(S_2) \ge k_1$;
		\item for all $i \ge 0$, if $\Psi(Y_i) < k_1$, then $\Psi(Y_{i+1}) \le k_1+1$; and
		\item for all $i \ge 0$ and all $Y \in \Omega$ such that the events $\tau>i$ and $Y_i=Y$ are consistent, $\E(\Psi(Y_{i+1}) - \Psi(Y_i) \mid Y_i=Y) \ge k_2$.
	\end{enumerate}
	Then we have $\E(\tau) \le (k_1-\Psi(Y_0)+1)/k_2$.
\end{lemma}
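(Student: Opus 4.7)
The plan is a standard additive drift argument via optional stopping applied to a drift-compensated stopped process. Specifically, I would define
\[
Z_i \;=\; \Psi(Y_{\min(i,\tau)}) - k_2\,\min(i,\tau),
\]
and show that $Z$ is a submartingale with respect to the natural filtration $(\mathcal{F}_i)$ of $Y$. On $\{\tau \le i\}$ this is immediate since $Z_{i+1}=Z_i$, and on $\{\tau > i\}$ the increment equals $\Psi(Y_{i+1})-\Psi(Y_i)-k_2$, whose conditional expectation given $\mathcal{F}_i$ is at least zero by hypothesis~(iii); note that $\{\tau>i\}\in\mathcal{F}_i$ since $\tau$ is a stopping time, so conditioning on $(Y_i,\tau>i)$ reduces to conditioning on $Y_i$ alone by the Markov property, exactly matching the form of~(iii).

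Next, I would introduce the truncated (and hence bounded) stopping time $\tau_n = \min(\tau,n)$ and apply the optional stopping theorem to obtain $\E(\Psi(Y_{\tau_n})) - k_2\E(\tau_n) \ge \Psi(Y_0)$, which rearranges to
\[
\E(\tau_n) \;\le\; \frac{\E(\Psi(Y_{\tau_n})) - \Psi(Y_0)}{k_2}.
\]
The key deterministic input is the overshoot bound $\Psi(Y_{\tau_n}) \le k_1+1$: if $\tau_n = n < \tau$ then $\Psi(Y_n) < k_1$ by the definition of $\tau$; if $\tau_n = \tau > 0$ then $\Psi(Y_{\tau-1}) < k_1$ and hypothesis~(ii) applied at $i=\tau-1$ yields $\Psi(Y_\tau) \le k_1+1$; and the case $\tau=0$ is trivial. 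Hence $\E(\tau_n) \le (k_1 - \Psi(Y_0) + 1)/k_2$ uniformly in $n$.

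Finally, I would let $n\to\infty$. Since $\tau_n \uparrow \tau$, monotone convergence gives $\E(\tau) \le (k_1-\Psi(Y_0)+1)/k_2$, which simultaneously shows $\tau<\infty$ almost surely. Hypothesis~(i), together with finiteness of $\Omega$, is used (only) to guarantee that $\tau$ is not vacuously infinite along paths that avoid the absorbing set forever; in fact the bound itself subsumes this. I do not anticipate any substantive obstacle: the proof is mostly bookkeeping, and the only subtle point is the submartingale verification on $\{\tau>i\}$, which is resolved by the Markov property as indicated above.
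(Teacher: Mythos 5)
Your proposal is correct. It proves the same additive-drift bound, but the packaging differs from the paper's argument: you compensate the potential, set $Z_i = \Psi(Y_{\min(i,\tau)}) - k_2\min(i,\tau)$, verify the submartingale property via the Markov property (the increment's conditional expectation given $\mathcal{F}_i$ is a function of $Y_i$, and on $\{\tau>i\}$ the realised value of $Y_i$ is one to which hypothesis~(iii) applies), apply optional stopping at the bounded time $\tau_n=\min(\tau,n)$, and pass to the limit by monotone convergence. The paper instead works directly with $N_i = \E(\Psi(Y_\tau)-\Psi(Y_i)\mid \tau>i)\Pr(\tau>i)$, derives the telescoping inequality $N_i - N_{i+1}\ge k_2\Pr(\tau>i)$, sums to a finite horizon $I$, and then kills the boundary term $N_{I+1}$ by arguing that $\Pr(\tau>I+1)\to 0$, which is exactly where hypothesis~(i) and the finiteness of $\Omega$ (hence boundedness of $\Psi$) enter. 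Your route buys something concrete: the truncation-plus-monotone-convergence step makes the tail term vanish for free, so hypothesis~(i) is never needed and finiteness of $\Psi$ is used only to justify integrability; as you note, $\E(\tau)<\infty$ falls out of the bound rather than having to be established separately. The one genuine edge case, $\tau=0$ with $\Psi(Y_0)>k_1+1$ (where the claimed right-hand side would be negative), is glossed over identically in both proofs and is an artefact of the lemma statement rather than of either argument, so it does not count against you.
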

\begin{proof}
	Without loss of generality, suppose $Y_0$ is deterministic. For any fixed $i\geq 0$, by (iii) we have
	\begin{align*}
	\E\big(\Psi(Y_\tau)-\Psi(Y_i) \mid \tau > i\big) &= \E\big(\Psi(Y_{i+1}) -\Psi(Y_i) \mid \tau>i\big)\ +\\
	&\qquad\qquad\E\big(\Psi(Y_\tau)-\Psi(Y_{i+1})\mid \tau>i+1\big)\Pr(\tau>i+1\mid\tau>i)\\
	&\ge k_2 + \E\big(\Psi(Y_\tau)-\Psi(Y_{i+1})\mid \tau>i+1\big)\Pr(\tau>i+1\mid\tau>i).
	\end{align*}
	Multiplying through by $\Pr(\tau>i)$, 
	and 	letting $N_i$ denote $\E(\Psi(Y_\tau)-\Psi(Y_i) \mid \tau > i)\Pr(\tau>i) $,	
	we obtain
	$ N_i - N_{i+1} \ge k_2\Pr(\tau>i)$.
	
	Now fix any $I\geq 0$ and  sum both sides for $i \in \{0,\ldots,I\}$ to obtain
	$N_0 - N_{I+1} \geq k_2 \sum_{i=0}^I\Pr(\tau>i)$.
	Since $\E(\Psi(Y_\tau)-\Psi(Y_0)) = N_0$,
	we conclude that, for any fixed~$I$,
	\begin{equation}\label{eq:tolimit}
	\E(\Psi(Y_\tau)-\Psi(Y_0))	\geq 
	k_2 \sum_{i=0}^I\Pr(\tau>i)
	+ N_{I+1}.
	\end{equation}
	
	We will now take the limit of both sides of~\eqref{eq:tolimit} as $I \rightarrow \infty$. 
	For the left-hand-side, since it   does not depend on~$I$, we have
	$\lim_{I\rightarrow \infty}  \E(\Psi(Y_\tau)-\Psi(Y_0))	=   \E(\Psi(Y_\tau)-\Psi(Y_0))$.
	For the first term in the right-hand-side, 
	$\lim_{I\rightarrow\infty}  \sum_{i=0}^I\Pr(\tau>i) 
	=  \sum_{i=0}^\infty \Pr(\tau>i) =  \E(\tau)$.
	Finally, since
	$|\E(\Psi(Y_\tau)-\Psi(Y_{I+1}) \mid \tau > I+1)|$
	is bounded above by a constant function of $\Omega$
	and   $\lim_{I\rightarrow\infty}\Pr(\tau>I+1) = 0$ by~(i),
	we conclude that 
	$\lim_{I\rightarrow\infty} N_{I+1}=0$.
	Putting it all together, we have
	\[
	\E\big(\Psi(Y_\tau)-\Psi(Y_0)\big) \ge k_2\E(\tau).
	\]
	By (ii), we have $\Psi(Y_\tau) \le k_1+1$, so $k_1+1 - \Psi(Y_0) \ge k_2\E(\tau)$. The result follows.
\end{proof}

\newcommand{\bs}{\rho} 
\begin{definition}\label{def:barrier} 
	Given an $n$-vertex connected graph $G = (V,E)$ on at least two vertices and a Moran process $\MM$ on $G$ with fitness $r>1$, recall from Definition~\ref{def:valid} that $\lambda = (r+1)/2$. Let $\Tabs = \min\{t \ge 0 \mid \MM{t} \in \{\emptyset,V\}\}$ and let $\bs(n) = \frac{10r}{r-1}\exp((\frac{10}{\log \lambda}\log\log n)^3\log r)$. A non-empty set $S \subset V$ is a \textit{barrier} if $\dr{S,\overline{S}} < 1/2n\bs(n)$.
\end{definition}

The value $1-1/2n^7$ in the following lemma is what we need in order to prove our main result (Theorem~\ref{thm:tabs-detail}), though we actually establish a tighter bound.

\begin{lemma}\label{lem:magic}
	For all $r>1$, there exists $n_0 \ge 2$ such that the following holds. Let $G=(V,E)$ be a connected graph with $n\geq n_0$ vertices, let $U \subseteq V$, and suppose $\dr{U,\overline{U}} \le 1/n\bs(n)$. Let $\MM$ be a Moran process on~$G$ with fitness $r$ such that $|U\setminus \MM{0}|\leq 1$. Then, with probability at least $1 - 1/2n^7,$ $\MM$ fixates before $U$ empties of mutants.
\end{lemma}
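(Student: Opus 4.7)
My plan is to prove Lemma~\ref{lem:magic} via a case split on $\phi(U)$, combined with a deterministic iterative shrinking of $U$ when neither Lemma~\ref{lem:layered} nor Lemma~\ref{lem:killers} applies immediately. The reduction is justified by the observation that any $U'\subseteq U$ satisfies $|U'\setminus \MM{0}|\le |U\setminus \MM{0}|\le 1$, and since $\MM{t}\cap U=\emptyset$ implies $\MM{t}\cap U'=\emptyset$, the event ``$\MM$ fixates before $U'$ empties'' implies ``$\MM$ fixates before $U$ empties''. So it suffices to exhibit any $U'\subseteq U$ to which one of the two lemmas applies with failure probability $\le 1/2n^7$. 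Throughout I write $K=1+\tfrac{4r}{r-1}$ and $s(U')=(\log_r|U'|)^{1/3}$.

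For setup, I use the drift bound to force $|U|$ to be large. Since $\bs(n)\to\infty$, for $n$ sufficiently large the hypothesis gives $\dr{U,\overline U}\le 1/2n$, and Lemma~\ref{lem:drift-props}(\ref{d:size}) applied with $\Delta(G)\le n-1$ yields either $U=V$ (so $|U|=n$) or $|U|\ge\bs(n)/2$. Substituting the explicit form of $\bs(n)$ into $s(U)$ shows $s(U)\ge(1-o(1))\tfrac{10}{\log\lambda}\log\log n$, so $\lambda^{s(U)/5}\ge(\log n)^{2-o(1)}$; this reserve is what makes per-iteration failure bounds super-polynomially small in $n$.

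Next, split into cases. If $\phi(U)\le\lambda^{s(U)/5}$, Lemma~\ref{lem:layered} applies directly to $U$ with failure probability at most $n^2\exp(-\lambda^{s(U)/5})$, which is much smaller than $1/2n^7$. Otherwise $\phi(U)>\lambda^{s(U)/5}$, and I try to apply Lemma~\ref{lem:killers} with $x^+=\lambda^{s(U)/5}$; its failure bound $(\tfrac{8r}{r-1})^{1/2}n^2 e^{-\beta\lambda^{s(U)/5}/2}$ is likewise much smaller than $1/2n^7$, provided the drift condition there holds. If it fails, I pick a witness $S\subsetneq U$ with $0<\phi(S)<x^+$ and $\dr{S,U\setminus S}<\tfrac{4r}{r-1}\dr{U,\overline U}$; a short drift-splitting calculation gives $\dr{S,\overline S}\le K\dr{U,\overline U}$. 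Setting $U_1=S$ and restarting the case analysis with $U_1$ produces a deterministic nested sequence $U_0=U\supsetneq U_1\supsetneq\cdots$ that stops as soon as either lemma becomes applicable at some $U_k$.

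The main obstacle is to show that the iteration terminates within few enough rounds and that the accumulated drift $K^i\dr{U,\overline U}$ stays $\le 1/2n$ throughout, so that both lemmas remain applicable at every level. Lemma~\ref{lem:drift-props}(\ref{d:size}) gives $|U_i|\ge\bs(n)/(2K^i)$, which forces the iteration to stop after at most $\log_K(\bs(n)/(2u_0))=O((\log\log n)^3)$ rounds, since $|U_i|$ must stay above Lemma~\ref{lem:layered}'s threshold $u_0$. The growth rate $\exp((\tfrac{10}{\log\lambda}\log\log n)^3\log r)$ of $\bs(n)$ and the leading constant $\tfrac{10r}{r-1}$ are calibrated precisely so that over these $O((\log\log n)^3)$ rounds the accumulated drift satisfies $K^i\dr{U,\overline U}\le 1/2n$ while $s(U_i)\ge(1-o(1))\tfrac{10}{\log\lambda}\log\log n$, keeping $\lambda^{s(U_i)/5}\ge(\log n)^{2-o(1)}$ at every level. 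A single successful application of the appropriate lemma at the terminal $U_k$ then gives ``$\MM$ fixates before $U_k$ empties'' with probability $\ge 1-1/2n^7$, which by the initial observation implies the required bound.
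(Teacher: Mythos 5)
Your overall strategy---reduce to a subset of $U$ (using that ``$U'$ empties'' is implied by ``$U$ empties'' for $U'\subseteq U$) and then dichotomise between Lemma~\ref{lem:layered} and Lemma~\ref{lem:killers}---is the right one, and the drift bookkeeping $\dr{S,\overline S}\le\dr{S,U\setminus S}+\dr{U,\overline U}\le K\dr{U,\overline U}$ for a witness $S$ is correct. The gap is the termination of your iteration. Because your threshold $\lambda^{s(U_i)/5}$ depends on the current set, a witness $U_{i+1}$ satisfies $\phi(U_{i+1})<\lambda^{s(U_i)/5}$ but need not satisfy $\phi(U_{i+1})\le\lambda^{s(U_{i+1})/5}$, so the recursion genuinely can continue; all that consecutive steps give you is $s(U_{i+1})<s(U_i)$, i.e.\ $|U_{i+1}|<|U_i|$, which bounds the number of rounds only by $|U|$. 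Your claimed bound of $O((\log\log n)^3)$ rounds does not follow from $|U_i|\ge\bs(n)/(2K^i)$: that is a \emph{lower} bound on $|U_i|$, and a lower bound on the size of the surviving set cannot force the process to stop---it is perfectly consistent with the sets shrinking by one vertex per round for polynomially many rounds, by which point $K^i\dr{U,\overline U}$ has blown up past $1/2n$ and neither lemma applies. As written the argument is circular: you assume the iteration stops early in order to keep $|U_i|$ above $u_0$, rather than deriving that it stops early.

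The paper's proof avoids any iteration by fixing the potential threshold in terms of $n$ rather than the current set: it sets $x=(\log_r((r-1)\bs(n)/10r))^{1/3}$, so that $\lambda^{x/5}=(\log n)^2$, and asks a single question: does $U$ contain a non-empty $S$ with $\phi(S)\le\lambda^{x/5}$ and $\dr{S,\overline S}\le\frac{5r}{r-1}\dr{U,\overline U}$? If yes, that one witness already satisfies the hypotheses of Lemma~\ref{lem:layered}: its small drift forces $|S|\ge(r-1)\bs(n)/10r$ via Lemma~\ref{lem:drift-props}(\ref{d:size}), hence $s(S)\ge x$ and $\phi(S)\le\lambda^{x/5}\le\lambda^{s(S)/5}$, so no further descent is needed. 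If no, then every $S\subseteq U$ with $0<\phi(S)\le\lambda^{x/5}$ has $\dr{S,U\setminus S}\ge\frac{5r}{r-1}\dr{U,\overline U}-\dr{U,\overline U}>\frac{4r}{r-1}\dr{U,\overline U}$, which is exactly the hypothesis of Lemma~\ref{lem:killers} with $x^+=\lambda^{x/5}$. You could repair your proof by adopting this fixed threshold; with a set-dependent threshold the fast-termination claim needs a real argument that is currently missing.
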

\begin{proof}
	Let $x = (\log_r ((r-1)\bs(n)/10r))^{1/3}$. Observe that by the definition of $\bs(n)$ (Definition~\ref{def:barrier}), $\lambda^{x/5} = (\log n)^2$. We split into two cases.
	
	\noindent {\bf Case 1.}\quad Suppose $U$ contains a non-empty set $S$ with $\dr{S,\overline{S}} \le 5r\dr{U,\overline{U}}/(r-1)$ and $\phi(S) \le \lambda^{x/5}$. Thus $\dr{S,\overline{S}} \le 5r/(r-1)n\bs(n)$ by hypothesis, and in particular we can ensure $\dr{S,\overline{S}} \le 1/2n$ by increasing $n_0$. Thus by Lemma~\ref{lem:drift-props}(\ref{d:size}), it follows that $|S| \ge (r-1)\bs(n)/10r$; in particular, we can make $S$ as large as we like by increasing $n_0$. Moreover, writing $s = (\log_r |S|)^{1/3}$, we have $x \le s$ and hence $\phi(S) \le \lambda^{x/5} \le \lambda^{s/5}$. Thus by Lemma~\ref{lem:layered}, applied with $U=S$, we have
	\[
		\Pr(\mbox{$\MM$ fixates before $U$ empties of mutants}) \ge 1 - n^2 \exp(-\lambda^{s/5}) \ge 1 - n^2\exp(-\lambda^{x/5}).
	\]
	Since $\lambda^{x/5} = (\log n)^2$, when $n$ is sufficiently large, this is at least $1-1/2n^7$.
	
	\noindent {\bf Case 2.}\quad Suppose that for all $S \subseteq U$ with $0 < \phi(S) \le \lambda^{x/5}$, $\dr{S, \overline{S}} \ge 5r\dr{U,\overline{U}}/(r-1)$. Thus $\phi(U) \ge \lambda^{x/5}$ and, for all such $S$, 
	\[
		\dr{S, U \setminus S} = \dr{S,\overline{S}} - \dr{S, V \setminus U} \ge \frac{5r}{r-1}\dr{U,\overline{U}} - \dr{U, \overline{U}} > \frac{4r}{r-1}\dr{U,\overline{U}}.
	\]
	Then it follows from Lemma~\ref{lem:killers}, applied with $x^+ = \lambda^{x/5}$, that
	\begin{align*}
	\Pr(\mbox{$\MM$ fixates before $U$ empties of mutants}) &\ge 1 - {\left( {\tfrac{8r}{r-1}} \right)}^{1/2} n^2 \exp(-(\beta/2)\lambda^{x/5}).
	\end{align*}
	Since $\lambda^{x/5} = (\log n)^2$, when $n$ is sufficiently large, this is at least $1 - 1/2n^7$.
\end{proof}
	
In order to prove Theorem~\ref{thm:abs-time-intro}, we must bound $\E(\Tabs)$ above. Barriers are so named because they act as barriers to fast absorption; if $G$ contained no barriers, we would have $\dr{M(t),\overline{M(t)}} \ge 1/2n\bs(n)$ until absorption, and so Theorem~\ref{thm:abs-time-intro} would be immediate from Lemma~\ref{lem:he-yao} on taking $Y = \MM$, $\Psi = \phi$, $\tau = \Tabs$, $k_1 = \phi(V) \le n$ and $k_2 = (r-1)/2rn^2\bs(n)$. (See Lemma~\ref{lem:tabs-2}.)

We therefore need to bound the time spent at barriers. Here, our proof contains two crucial ideas. The first is that with very high probability, if $\MM$ encounters distinct barriers $S_1, \dots, S_K$ before absorption (in that order), then $M$ fixates and $S_1 \subset \dots \subset S_K$. The second is that if $S$ is a barrier, once $\MM$ has reached potential $\phi(S) + (\log n)^2$, $\MM$ is very unlikely to return to $S$. Thus if $\MM$ encounters a barrier $S$ at time $t_0$, we may define a stopping time $\tau$ to be the first time $t > t_0$ at which either $M(t)$ is a barrier distinct from $S$ or $\phi(M(t)) \ge \phi(S) + (\log n)^2$; then after time $\tau$, $\MM$ is very unlikely to return to $S$. Using Lemma~\ref{lem:he-yao}, we may show that $\E(\tau - t_0\mid M(t_0) = S)$ is at most roughly $n(\log n)^2/\dr{S,\overline{S}}$. As extinction is very unlikely after a barrier has been encountered, we think of this as the amount of time spent bypassing $S$ on the way to fixation. It turns out for any collection of barriers $S_1 \subset \dots \subset S_K$, we can bound the sum $\sum_i n(\log n)^2/\dr{S_i,\overline{S_i}}$ above (see Lemma~\ref{lem:tabs-1}). This yields our bound on expected absorption time. To make this argument rigorous, we partition the interval $[0,\Tabs]$ as follows.

\begin{definition}
	Given a connected $n$-vertex graph $G = (V,E)$ with $n \ge 2$ and a Moran process $\MM$ on $G$, we define stopping times $T_1,T_2, \dots$ and $T_0',T_1', \dots$ by $T_0' = 0$ and, for all $i \ge 1$,
	\begin{align*}
		T_i &= \min\{t \ge T_{i-1}' \mid \MM{t} \in \{\emptyset,V\} \mbox{ or $\MM{t}$ is a barrier}\},\\
		S_i &= \MM{T_i},\\
		T_i' &= \min\{t \ge T_i \mid \MM{t} \in \{\emptyset, V\} \mbox{ or }\phi(\MM{t}) \ge \phi(S_i) + (\log n)^2\mbox{ or }\\
		&\qquad\qquad\qquad\qquad\qquad\qquad\MM{t}\mbox{ is a barrier with }\dr{\MM{t},\overline{\MM{t}}} < \dr{S_i,\overline{S_i}}\}.
	\end{align*}
	Let $K = \min(\{i \ge 1 \mid S_i \in \{\emptyset,V\}\} \cup \{i\ge 2 \mid S_{i-1} \not\subset S_i\})$.
\end{definition}

Note that $K < \infty$ with probability 1, and that
\begin{equation}\label{eqn:tabs}
	\Tabs = \sum_{i=1}^{K-1} (T_i' - T_i) + \sum_{i=0}^{K-1} (T_{i+1}-T_i') + (\Tabs - T_K).
\end{equation}
We will proceed by applying linearity of expectation to~\eqref{eqn:tabs}, and bounding each term individually. We first show that with high probability, $S_1 \subset \dots \subset S_K$.

\begin{lemma}\label{lem:blockchain}
	Let $r > 1$, let $G = (V,E)$ be a connected $n$-vertex graph, and let $\MM$ be a Moran process on $G$ with fitness $r$. If $n$ is sufficiently large, then for all $i \ge 1$ and all barriers $\emptyset \subset S \subset V$, we have $\Pr(S_i \subset S_{i+1} \mid S_i = S) \ge 1-1/n^6$.
\end{lemma}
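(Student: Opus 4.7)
The plan is to apply Lemma~\ref{lem:magic} with $U = S$, combine this with Lemma~\ref{lem:more-drift-props}(i) to derive a contradiction if $S \not\subset S_{i+1}$, and exploit the laminar-family structure from Lemma~\ref{lem:more-drift-props}(ii) to absorb the resulting union bound. Condition throughout on $S_i = S$, so that $\MM(T_i) = S$. Since $S$ is a barrier, $\dr{S, \overline{S}} < 1/(2n\bs(n)) \le 1/(n\bs(n))$ and $|S \setminus \MM(T_i)| = 0$, so Lemma~\ref{lem:magic} applied with $U = S$ yields an event $\mathcal{F}$ of probability at least $1 - 1/(2n^7)$ on which $\MM$ fixates and $S \cap \MM(t) \ne \emptyset$ for every $t \in [T_i, \Tabs]$.

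Under $\mathcal{F}$, if $T_{i+1} = \Tabs$ then $S_{i+1} = V \supsetneq S$ (using $S \ne V$ since $S$ is a barrier) and we are done. Otherwise $S_{i+1}$ is a barrier with $\emptyset \subsetneq S_{i+1} \subsetneq V$ and $S \cap S_{i+1} \ne \emptyset$. Suppose, for contradiction, that $S \not\subset S_{i+1}$, and set $T := S \setminus S_{i+1} \ne \emptyset$; by Lemma~\ref{lem:more-drift-props}(i),
\[
\dr{T, \overline{T}} \le \dr{S, \overline{S}} + \dr{S_{i+1}, \overline{S_{i+1}}} < 1/(n\bs(n)).
\]
Since $T \subset \MM(T_i)$ but $T \cap \MM(T_{i+1}) = T \cap S_{i+1} = \emptyset$, the set $T$ has emptied of mutants during $[T_i, T_{i+1}]$ without $\MM$ fixating --- precisely the failure event of Lemma~\ref{lem:magic} applied with $U = T$.

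To handle the fact that $T$ depends on the random set $S_{i+1}$, I will pass to a drift-minimal subset $T^* \subseteq T$ obtained by iteratively replacing the current set with a non-empty proper subset of no greater drift; the procedure terminates at a set in which every non-empty proper subset has strictly greater drift, satisfies $\dr{T^*, \overline{T^*}} \le \dr{T, \overline{T}} < 1/(n\bs(n))$, and (since $T^* \subseteq T$) also empties of mutants during $[T_i, T_{i+1}]$. By Lemma~\ref{lem:more-drift-props}(ii) such drift-minimal subsets of $V$ form a laminar family of size at most $2n-1$, so it suffices to union-bound Lemma~\ref{lem:magic}'s failure event over this polynomial-size collection. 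Crucially, the bound implicit in the proof of Lemma~\ref{lem:magic} is $O(n^2\exp(-\lambda^{x/5})) = O(n^2\exp(-(\log n)^2)) = n^{-\omega(1)}$, comfortably absorbing the $O(n)$ factor. The remaining case $S = S_{i+1}$ is excluded by noting that $\MM(T_i') \ne S$ (the state $S$ satisfies none of the three stopping conditions defining $T_i'$); in the sub-case $\phi(\MM(T_i')) \ge \phi(S) + (\log n)^2$, returning to $S$ requires $\phi$ to decrease by at least $(\log n)^2$, which is exponentially unlikely by the supermartingale of Lemma~\ref{lem:multsubmart} and optional stopping (as in the proof of Lemma~\ref{lem:gen-pot-b}). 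Summing the failure probabilities of $\mathcal{F}$, of the union bound, and of the potential-decrease event yields the required $1/n^6$. The main obstacle will be rigorously verifying that the iterative drift-reduction procedure produces a set genuinely drift-minimal in the sense of Lemma~\ref{lem:more-drift-props}(ii), and confirming that the sharper Lemma~\ref{lem:magic} bound propagates uniformly over the resulting laminar family.
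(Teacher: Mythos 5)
Your proposal is correct and follows essentially the same route as the paper: it covers the case $S\setminus S_{i+1}\neq\emptyset$ by passing to a deterministic, polynomial-sized family of low-drift subsets of $S$ (via Lemma~\ref{lem:more-drift-props}), applying Lemma~\ref{lem:magic} to each member with a union bound, and handles $S_{i+1}=S$ via the $r^{-\phi(\MM)\delta}$ supermartingale and optional stopping. The only cosmetic difference is that the paper uses \emph{minimal} sub-barriers of $S$, which Lemma~\ref{lem:more-drift-props}(ii) forces to be pairwise disjoint (so at most $n$ of them), rather than your laminar family of at most $2n-1$ drift-minimal sets; your appeal to the stronger implicit bound in Lemma~\ref{lem:magic} correctly absorbs the resulting constant-factor slack.
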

\begin{proof}
	Let $S$ be a barrier and let
	\[
		\mathcal{P} = \{\emptyset \subset A \subseteq S \mbox{ such that } \dr{A,\overline{A}} < 1/n\bs(n) \mbox{ and, }\forall\, \emptyset \subset A' \subset A,\,\dr{A',\overline{A'}} \ge 1/n\bs(n)\}.
	\]
	Note that the sets in $\mathcal{P}$ form an antichain under inclusion, so by Lemma~\ref{lem:more-drift-props}(\ref{d:minimal}) they are disjoint. Thus $|\mathcal{P}| \le n$. Moreover, every set $B \subseteq S$ with $\dr{B,\overline{B}} < 1/n\bs(n)$ contains some set in $\mathcal{P}$ as a subset.
	
	Suppose that $A$ is a barrier. Then by Lemma~\ref{lem:more-drift-props}(\ref{d:sub}), $\dr{S \setminus A,\overline{S \setminus A}} \le \dr{S,\overline{S}} + \dr{A,\overline{A}} < 1/n\bs(n)$. If $S \setminus A$ is non-empty, then it follows that there exists $S' \in \mathcal{P}$ with $S' \subseteq S \setminus A$ and hence $A \cap S' = \emptyset$. It follows that if $S \not\subseteq S_{i+1}$, then some set in $\mathcal{P}$ must be disjoint from $S_{i+1}$. However, by Lemma~\ref{lem:magic} (taking $U \in \mathcal{P}$) combined with a union bound, we have
	\[
		\Pr(\exists t \ge T_i\mbox{ and }S' \in \mathcal{P} \mbox{ with }\MM{t} \cap S' = \emptyset \mid S_i = S) \le n\cdot 1/2n^7.
	\]
	It follows that $\Pr(S_i \subseteq S_{i+1}\mid S_i=S) \ge 1-1/2n^6$.
	
	Now, if $S_i = S_{i+1} = S$, then by the definitions of $T_i'$ and $T_{i+1}$, we must have $\phi(V) > \phi(\MM{T_i'}) \ge \phi(S)+(\log n)^2$ and $\phi(\MM{T_{i+1}}) = \phi(S)$. However, by Lemma~\ref{lem:multsubmart}, writing $\delta = \delta(G)$, $r^{-\phi(\MM)\delta}$ is a supermartingale. Let 
	\[
		\tau_i = \min\{t \ge T_i' \mid \MM{t} = V\mbox{ or }\phi(\MM{t}) \le \phi(S)\}.
	\]
	For all $X \subseteq V$, let $p_X = \Pr(\phi(\MM{\tau_i}) \le \phi(S) \mid \MM{T_i'} = X)$. Then by the optional stopping theorem, for all $X \subseteq V$ with $\phi(X) \ge \phi(S) + (\log n)^2$, we have
	\[
		r^{-\phi(X)\delta} \ge \E(r^{-\phi(\MM{\tau_i})\delta} \mid \MM{T_i'} = X) \ge p_Xr^{-\phi(S)\delta} \ge p_Xr^{-(\phi(X) - (\log n)^2)\delta}.
	\]
	When $n$ is sufficiently large, rearranging yields $p_X \le r^{-\delta(\log n)^2} \le 1/2n^6$. Thus $\Pr(S_i \ne S_{i+1}\mid S_i=S) \ge 1-1/2n^6$, and so the result follows by a union bound. 
\end{proof}
\begin{corollary}\label{cor:blockchain}
	Let $r > 1$, let $G = (V,E)$ be a connected $n$-vertex graph, and let $\MM$ be a Moran process on $G$ with fitness $r$. If $n$ is sufficiently large, then with probability at least $1-1/n^5$, $S_1 \subset S_2 \subset \dots \subset S_K \in \{\emptyset,V\}$.
\end{corollary}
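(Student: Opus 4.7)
The plan is to reduce everything to a union bound over Lemma~\ref{lem:blockchain}. For each $i\geq 1$ define the bad event
\[
F_i \ = \ \{S_i \text{ is a barrier and } S_i \not\subset S_{i+1}\}.
\]
Conditioning on the value of $S_i$ and summing over all barriers $S$, Lemma~\ref{lem:blockchain} immediately gives
\[
\Pr(F_i) \ = \!\!\!\sum_{S \text{ a barrier}}\!\!\!\Pr(S_i = S)\,\Pr(S_i \not\subset S_{i+1}\mid S_i = S) \ \le \ \frac{1}{n^6}.
\]
Hence, by a union bound, the event $\mathcal{G} = \bigcap_{i=1}^{n} \overline{F_i}$ has probability at least $1 - n/n^6 = 1 - 1/n^5$. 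It therefore suffices to prove that on $\mathcal{G}$ the conclusion $S_1 \subset S_2 \subset \dots \subset S_K \in \{\emptyset, V\}$ holds.

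The key step is to show that on $\mathcal{G}$, $K\le n$, so that every relevant index is covered by the union bound. Suppose for contradiction that $K \ge n+1$ on $\mathcal{G}$. Then by minimality of $K$, each of $S_1,\dots,S_n$ is a barrier (in particular a non-empty proper subset of $V$); moreover, for each $i\in\{1,\dots,n-1\}$, the non-occurrence of $F_i$ forces $S_i \subsetneq S_{i+1}$. This gives a strict chain $S_1 \subsetneq S_2 \subsetneq \dots \subsetneq S_n$ of non-empty proper subsets of $V$, whose sizes would be a strictly increasing sequence of $n$ integers in $\{1,\dots,n-1\}$ — impossible. So $K\le n$ on $\mathcal{G}$.

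Given $K\le n$, the remaining verification is immediate from the definition of $K$. If $K=1$ then $S_1\in\{\emptyset,V\}$ and there is nothing further to check. Otherwise $K\ge 2$, and for $2\le i<K$ the minimality of $K$ already gives $S_{i-1}\subsetneq S_i$; moreover $S_{K-1}$ is a barrier with $K-1\le n-1$, so the non-occurrence of $F_{K-1}$ yields $S_{K-1}\subsetneq S_K$. This rules out the ``chain break'' clause of $K$'s definition at index $K$, leaving $S_K\in\{\emptyset,V\}$ as required.

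The only real obstacle here is the pigeonhole observation that bounds $K$ by $n$; without it, we would be stuck trying to union-bound over infinitely many $i$, or else trying to extract independence between the steps of the chain. Once the bound $K\le n$ is in hand, everything is routine bookkeeping with the definition of $K$ and Lemma~\ref{lem:blockchain}.
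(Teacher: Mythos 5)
Your proof is correct and follows essentially the same route as the paper's: a union bound over at most $n$ applications of Lemma~\ref{lem:blockchain}, combined with the observation that $S_1 \subset \dots \subset S_{K-1}$ is a strict chain of non-empty proper subsets of $V$ and hence $K \le n$. The only cosmetic difference is that the paper notes $K\le n$ holds deterministically from the definition of $K$, whereas you derive it on the good event, which is equally sufficient.
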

\begin{proof}
	If $S_1 \in \{\emptyset, V\}$, then $K=1$ and so the result holds. If instead $S_1 \notin \{\emptyset,V\}$, then $S_1$ is a barrier. The definition of $K$ then implies that $\emptyset \subset S_1 \subset \dots \subset S_{K-1} \subset V$, so $K \le n$. It follows by repeated application of Lemma~\ref{lem:blockchain} that
	\[
		\Pr(S_1 \subset \dots \subset S_K) \ge 1 - (n-1)/n^6 > 1 - 1/n^5.
	\]
	We have $S_{K-1} \subset S_K$ precisely when $S_K = V$, so the result follows. 
\end{proof}

We now bound the expected values of the three terms of~\eqref{eqn:tabs}, one by one.
	
\begin{lemma}\label{lem:tabs-1}
	Let $r > 1$, let $G = (V,E)$ be a connected $n$-vertex graph, and let $\MM$ be a Moran process on $G$ with fitness $r$. If $n$ is sufficiently large, then 
	\[
	\E\left(\sum_{i=1}^{K-1} (T_i'-T_i) \right) \le \frac{24ern^3(\log n)^3}{r-1}.
	\]
\end{lemma}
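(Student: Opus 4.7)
The plan is to apply Lemma~\ref{lem:he-yao} to bound each expected bypass time $\E(T_i' - T_i \mid S_i = S)$ individually, then combine these bounds using a structural result on $\sum_i 1/\dr{S_i,\overline{S_i}}$ that exploits the chain property $S_1 \subsetneq S_2 \subsetneq \cdots$ furnished by Corollary~\ref{cor:blockchain}.

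To bound the bypass time from a barrier $S$, I would use the strong Markov property to reduce to analysing the shifted chain $Y_t = \MM(T_i + t)$ (conditional on $S_i = S$) via Lemma~\ref{lem:he-yao} with $\Psi = \phi$ and $\tau = T_i' - T_i$. A short case split on whether $\phi(V) \ge \phi(S) + (\log n)^2$ lets me pick $k_1 = \max\{\phi(S) + (\log n)^2,\ \phi(V)\}$ so that $\tau$ is bounded above by the natural stopping time $\min\{t : \phi(Y_t) = 0 \text{ or } \phi(Y_t) \ge k_1\}$ while still satisfying $k_1 - \phi(S) \le (\log n)^2$. For any state $Y$ consistent with $\tau > t$, the definition of $T_i'$ ensures that either $Y$ is not a barrier (giving $\dr{Y,\overline{Y}} \ge 1/(2n\bs(n)) > \dr{S,\overline{S}}$) or $\dr{Y,\overline{Y}} \ge \dr{S,\overline{S}}$ by the drift condition; in either case Lemma~\ref{lem:drift-works} furnishes an expected increase of $\phi$ of at least $k_2 := (r-1)\dr{S,\overline{S}}/(rn)$, verifying condition~(iii). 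Condition~(ii) is immediate since $\phi$ changes by at most $1$ per step. Lemma~\ref{lem:he-yao} then yields, for every barrier $S$,
\[
\E(T_i' - T_i \mid S_i = S) \le \frac{((\log n)^2 + 1)\,rn}{(r-1)\,\dr{S,\overline{S}}} \le \frac{2rn(\log n)^2}{(r-1)\,\dr{S,\overline{S}}}.
\]

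By the tower property it then suffices to bound $\E\bigl(\sum_{i=1}^{K-1} 1/\dr{S_i,\overline{S_i}}\bigr)$. Let $E$ denote the event $S_1 \subsetneq S_2 \subsetneq \cdots \subsetneq S_K \in \{\emptyset,V\}$; by Corollary~\ref{cor:blockchain}, $\Pr(E^c) \le 1/n^5$. On $E$, the drifts are strictly decreasing, so for $1 \le i \le K-2$, Lemma~\ref{lem:more-drift-props}(\ref{d:sub}) applied to $S_{i+1}$ and $S_i$ gives
\[
\dr{S_{i+1}\setminus S_i,\,V\setminus(S_{i+1}\setminus S_i)} \le \dr{S_{i+1},\overline{S_{i+1}}} + \dr{S_i,\overline{S_i}} \le 2\dr{S_i,\overline{S_i}},
\]
which is at most $1/(2\Delta(G))$ for $n$ large (since $n\bs(n) \gg \Delta(G)$). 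Lemma~\ref{lem:drift-props}(\ref{d:size}) then yields $\dr{S_i,\overline{S_i}} \ge 1/\bigl(4\Delta(G)\,|S_{i+1}\setminus S_i|\bigr)$, and since the sets $S_{i+1}\setminus S_i$ are pairwise disjoint their sizes telescope: $\sum_{i=1}^{K-2} 1/\dr{S_i,\overline{S_i}} \le 4\Delta(G)\,|S_{K-1}\setminus S_1| \le 4\Delta(G)n$. A direct application of Lemma~\ref{lem:drift-props}(\ref{d:size}) to $S_{K-1}$ handles the final term as $1/\dr{S_{K-1},\overline{S_{K-1}}} \le 2\Delta(G)|S_{K-1}| \le 2\Delta(G)n$, for a total of at most $6\Delta(G)n \le 6n^2$ on $E$. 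On $E^c$, since $K \le n$ and Lemma~\ref{lem:drift-props}(\ref{d:size}) gives $1/\dr{S_i,\overline{S_i}} \le 2\Delta(G)|S_i| \le 2n^2$ deterministically, the sum is at most $2n^3$, contributing at most $2n^3/n^5 = 2/n^2$ in expectation.

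Combining yields $\E\bigl(\sum_{i=1}^{K-1} 1/\dr{S_i,\overline{S_i}}\bigr) \le 6n^2 + 2/n^2 \le 7n^2$ for $n$ large, so
\[
\E\Bigl(\sum_{i=1}^{K-1}(T_i' - T_i)\Bigr) \le \frac{2rn(\log n)^2}{r-1} \cdot 7n^2 = \frac{14rn^3(\log n)^2}{r-1},
\]
comfortably within the target $\frac{24ern^3(\log n)^3}{r-1}$ once $\log n \ge 7/(12e)$. The main technical obstacle will be the degenerate case $\phi(V) - \phi(S) < (\log n)^2$ when applying Lemma~\ref{lem:he-yao}: one must verify that the alternative choice $k_1 = \phi(V)$ still causes $\tau$ to be dominated by the natural stopping time despite $T_i'$ potentially stopping early at a smaller barrier. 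The second subtlety is the telescoping step, which relies on using exactly the difference $S_{i+1}\setminus S_i$ in Lemma~\ref{lem:more-drift-props}(\ref{d:sub}) rather than the more obvious $S_i$ alone.
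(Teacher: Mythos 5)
The first half of your argument --- conditioning on $S_i$ and applying Lemma~\ref{lem:he-yao} with $\Psi=\phi$, $k_1 = \min\{\phi(S)+(\log n)^2,\phi(V)\}$ and $k_2=\frac{r-1}{rn}\dr{S,\overline S}$ to get $\E(T_i'-T_i\mid S_i=S)\le \frac{2rn(\log n)^2}{(r-1)\dr{S,\overline S}}$ --- is exactly the paper's argument and is correct. (You wrote $k_1=\max\{\cdots\}$, but your subsequent claim $k_1-\phi(S)\le(\log n)^2$ makes clear you intend $\min$, which is also the choice for which $T_i'-T_i$ is dominated by the natural stopping time.) The problem is in the second half, where you bound $\sum_{i=1}^{K-1}1/\dr{S_i,\overline{S_i}}$.

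Your telescoping rests on the claim that ``on $E$, the drifts are strictly decreasing'', i.e.\ $\dr{S_{i+1},\overline{S_{i+1}}}\le\dr{S_i,\overline{S_i}}$, which you need in order to pass from $\dr{S_{i+1}\setminus S_i,V\setminus(S_{i+1}\setminus S_i)}\le\dr{S_i,\overline{S_i}}+\dr{S_{i+1},\overline{S_{i+1}}}$ to $\le 2\dr{S_i,\overline{S_i}}$, and hence to $1/\dr{S_i,\overline{S_i}}\le 4\Delta|S_{i+1}\setminus S_i|$. This claim is unjustified and false in general: $T_i'$ may be reached because $\phi$ rose by $(\log n)^2$ rather than because a smaller-drift barrier was hit, and the next barrier $S_{i+1}$ encountered at or after $T_i'$ can have larger drift than $S_i$ (nested barriers have no monotone relation between their drifts). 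Without monotonicity, Lemma~\ref{lem:more-drift-props}(\ref{d:sub}) combined with Lemma~\ref{lem:drift-props}(\ref{d:size}) only controls $\min\{1/\dr{S_i,\overline{S_i}},\,1/\dr{S_{i+1},\overline{S_{i+1}}}\}$ by $4\Delta|S_{i+1}\setminus S_i|$, so a barrier whose drift is much smaller than that of both of its neighbours in the chain is never charged to any difference set. In the worst case (reciprocal drifts alternating between $\approx n^2$ and $\approx 2n\bs(n)$) this route gives only $\sum_i 1/\dr{S_i,\overline{S_i}}=O(n^3/\bs(n))$, and since $\bs(n)=e^{O((\log\log n)^3)}=n^{o(1)}$ this is far larger than the $O(n^2\log n)$ needed; the resulting absorption-time bound would exceed $n^3e^{C(\log\log n)^3}$. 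The paper avoids this by bucketing the barriers according to the order of magnitude of $1/\dr{S_i,\overline{S_i}}$ (intervals $(2n\bs(n)e^k,2n\bs(n)e^{k+1}]$) and telescoping the difference sets only within a single bucket, where all drifts are comparable and the monotonicity issue disappears; one then pays a factor equal to the number of buckets, which is $O(\log n)$. That extra $\log n$ is precisely the third logarithm in the stated bound, and the fact that your computation lands at $(\log n)^2$ rather than $(\log n)^3$ is the symptom of the missing bucketing step. (A side remark: $S_1\subset\dots\subset S_{K-1}$ holds deterministically from the definition of $K$, so the event split via Corollary~\ref{cor:blockchain} is not needed for this sum.)
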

\begin{proof}
	First note that for any sequence $X_1, X_2, \dots$ of random variables, and non-negative integer random variable $Y$, $\E(\sum_{i=1}^{Y-1} X_i) = \sum_{i=1}^\infty \E(X_i \mid Y>i) \Pr(Y>i)$. 
	Thus
	\begin{align*}
	\E\left(\sum_{i=1}^{K-1} (T_i'-T_i) \right) &=
	\sum_{i=1}^{\infty}\E(T_i'-T_i \mid K>i)\Pr(K>i)\\
	&= \sum_{i=1}^\infty \E\Big(\E\big(T_i'-T_i \mid K>i,S_i\big)\,\Big|\, K>i\Big)\Pr(K>i).
	\end{align*}
	For all $S \subseteq V$, we have $\E\big(T_i'-T_i \mid K>i,S_i=S\big) = \E\big(T_i'-T_i \mid S_i=S\big)$. Thus
	\begin{align}\nonumber
	\E\left(\sum_{i=1}^{K-1} (T_i'-T_i) \right) &= \sum_{i=1}^\infty \E\Big(\E\big(T_i'-T_i \mid S_i\big)\,\Big|\, K>i\Big)\Pr(K>i)\\
	\label{eqn:tabs-2-1}
	&= \E\left(\sum_{i=1}^{K-1} \E(T_i' - T_i \mid S_i)\right)\,.
	\end{align}
	Let $i \ge 0$ and consider a possible value $S$ of $S_i$, subject to $K>i$; in particular, this implies $\emptyset \subset S \subset V$. Note that conditioned on $S_i = S$, 
	\[
	T_i' \le \min\big\{t \ge T_i \mid \phi(\MM{t}) = 0 \mbox{ or }\phi(\MM{t}) \ge \min\{\phi(S) + (\log n)^2,\phi(V)\}\big\}\,.
	\]
	Moreover, conditioned on $S_i = S$, for all $T_i \le t < T_i'$ we have $\dr{\MM{t},\overline{\MM{t}}} \ge \dr{S,\overline{S}}$. Thus for all $T_i \le t < T_i'$ and all possible values $X$ of $\MM{t}$, by Lemma~\ref{lem:drift-works},
	\begin{align*}
	\E\big(\phi(\MM{t+1})-\phi(\MM{t}) \mid \MM{t} = X\big) \ge \frac{r-1}{rn}\dr{X,\overline{X}} \ge \frac{r-1}{rn}\dr{S,\overline{S}}.
	\end{align*}
	By applying Lemma~\ref{lem:he-yao} with $Y_t = \MM{t-T_i}$, $\Psi = \phi$, $k_1 = \min\{\phi(S) + (\log n)^2,\phi(V)\}$, $k_2 = \frac{r-1}{rn}\dr{S,\overline{S}}$ and $\tau = T_i' - T_i$, we obtain 
	\[
	\E(T_i'-T_i\mid S_i=S) \le \frac{rn((\log n)^2+1)}{(r-1)\dr{S,\overline{S}}}.
	\]
	Now summing over all $i \in [K-1]$, by~\eqref{eqn:tabs-2-1} it follows that
	\begin{equation}\label{eqn:tabs-2-2}
	\E\left(\sum_{i=1}^{K-1} (T_i'-T_i) \right) \le \frac{2rn(\log n)^2}{r-1}\E\left(\sum_{i=1}^{K-1} \frac{1}{\dr{S_i,\overline{S_i}}} \right).
	\end{equation}
	
	We now bound $\sum_{i=1}^{K-1} \frac{1}{\dr{S_i,\overline{S_i}}}$ above. For all $i \ge 0$, let $I_i = (2n\bs(n)e^i, 2n\bs(n)e^{i+1}]$, and let $\mathcal{P}_i = \{S_j \mid 0\le j \le K-1,\,1/\dr{S_j,\overline{S_j}} \in I_i\}$. For all $i \in [K-1]$, since $S_i$ is a barrier we have $\dr{S_i,\overline{S_i}} < 1/2n\bs(n)$; and since $G$ is connected, there must be some edge from $S_i$ to $V \setminus S_i$, and so $\dr{S_i,\overline{S_i}} \ge 1/n^2$. It follows that writing $\gamma = \max\{x \mid 2\bs(n)e^x \le n\}$, 
	\begin{equation}\label{eqn:bdd-drift}
	\sum_{i=1}^{K-1} \frac{1}{\dr{S_i,\overline{S_i}}} \le \sum_{i=0}^{\gamma}\big(|\mathcal{P}_i|\cdot 2n\bs(n)e^{i+1}\big).
	\end{equation}
	
	Let $0 \le k \le \gamma$, and write $\mathcal{P}_k$ = $\{S_{i_1}, \dots, S_{i_{|\mathcal{P}_k|}}\}$ where $i_1 < \dots < i_{|\mathcal{P}_k|}$. Since $S_1 \subset \dots \subset S_{K-1}$ (by the definition of $K$), we have $|S_{i_{|\mathcal{P}_k|}}| \ge \sum_{j=1}^{|\mathcal{P}_k|-1} |S_{i_{j+1}} \setminus S_{i_j}|$. By Lemma~\ref{lem:more-drift-props}(\ref{d:sub}), for all $j \in [|\mathcal{P}_k|-1]$ we have $\dr{S_{i_{j+1}} \setminus S_{i_j},\overline{S_{i_{j+1}} \setminus S_{i_j}}} \le \dr{S_{i_{j+1}},\overline{S_{i_{j+1}}}} +\dr{S_{i_j},\overline{S_{i_j}}} \le 1/n\bs(n)e^k$. Note that $i_j \le K-1$ for all $j$, so $S_{i_j+1} \setminus S_{i_j}$ is non-empty; hence by Lemma~\ref{lem:drift-props}(\ref{d:size}) applied to each set $S_{i_{j+1}} \setminus S_{i_j}$, it follows that 
	\[
	n \ge |S_{i_{|\mathcal{P}_k|}}| \ge \sum_{j=1}^{|\mathcal{P}_k|-1} |S_{i_{j+1}} \setminus S_{i_j}| \ge (|\mathcal{P}_{k}|-1)\cdot \frac{\bs(n)e^k}{2}.
	\]
	Rearranging, we obtain
	\begin{equation*}
	|\mathcal{P}_k| \le \frac{2n}{\bs(n)e^k}+1 \le \frac{3n}{\bs(n)e^k} \mbox{ for all }0 \le k \le \gamma.
	\end{equation*}
	By~\eqref{eqn:bdd-drift}, it follows that
	\[
	\sum_{i=1}^{K-1} \frac{1}{\dr{S_i,\overline{S_i}}} \le (\gamma+1)\cdot 6en^2 \le 12en^2\log n.
	\]
	The result now follows by~\eqref{eqn:tabs-2-2}.
\end{proof}

\begin{lemma}\label{lem:tabs-2}
	Let $r > 1$, let $G = (V,E)$ be a connected $n$-vertex graph, and let $\MM$ be a Moran process on $G$ with fitness $r$. If $n$ is sufficiently large, then $\E\left(\sum_{i=0}^{K-1}(T_{i+1}-T_i') \right) \le \frac{4r}{r-1}n^3\bs(n)$.
\end{lemma}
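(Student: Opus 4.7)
The plan is to bound
\[
|A| \ :=\ \sum_{i=0}^{K-1}(T_{i+1}-T_i')
\]
by a drift argument on $\phi(\MM)$ combined with optional stopping at~$T_K$. The structural observation is that at every $t \in A = \bigcup_{i=0}^{K-1}[T_i',T_{i+1})$, the state $\MM{t}$ is by construction neither absorbing nor a barrier: if it were, the definition of $T_{i+1}$ would force $T_{i+1}\le t$. Hence $\dr{\MM{t},\overline{\MM{t}}} \ge 1/(2n\bs(n))$ throughout $A$, and by Lemma~\ref{lem:drift-works} the conditional expected increment of $\phi(\MM)$ at such times satisfies
\[
\E[\phi(\MM{t+1}) - \phi(\MM{t}) \mid \mathcal F_t] \ \ge\ \frac{r-1}{rn}\,\dr{\MM{t},\overline{\MM{t}}} \ \ge\ \frac{r-1}{2rn^2\bs(n)},
\]
where $\mathcal F_t := \sigma(\MM{0},\dots,\MM{t})$. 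At the remaining times $t \in [0,T_K)\setminus A$ (which correspond to the barrier intervals $[T_i,T_i')$) the same lemma still gives a nonnegative conditional drift.

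First I would verify that $T_K$ is an almost surely finite stopping time: the Moran process absorbs a.s., so some $S_i$ eventually lies in $\{\emptyset,V\}$, giving $K<\infty$ and $T_K\le\Tabs<\infty$ a.s. I would also note that $\mathbf 1[t\in A]$ is $\mathcal F_t$-measurable, since membership in $A$ is determined by $\MM{0},\dots,\MM{t}$ and the graph alone (one can detect barriers and compute $T_1,T_1',T_2,\dots$ from the history).

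Next, I would apply optional stopping at the bounded time $\tau_N := T_K \wedge N$. Telescoping $\phi(\MM{\tau_N})-\phi(\MM{0})$, applying the tower property using the $\mathcal F_t$-measurability of $\mathbf 1[t<\tau_N]$, and discarding the nonnegative contributions from $t\notin A$ gives
\[
\E\big[\phi(\MM{\tau_N})-\phi(\MM{0})\big] \ \ge\ \frac{r-1}{2rn^2\bs(n)}\,\E\big|A\cap[0,\tau_N)\big|.
\]
Letting $N\to\infty$, bounded convergence yields $\E\phi(\MM{\tau_N})\to\E\phi(\MM{T_K})$, monotone convergence yields $\E|A\cap[0,\tau_N)|\to\E|A|$, and the trivial bound $\phi(\MM{T_K})\le\phi(V)\le n$ then gives
\[
\E|A| \ \le\ \frac{2rn^3\bs(n)}{r-1} \ \le\ \frac{4rn^3\bs(n)}{r-1},
\]
as required.

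There is no genuine obstacle; the only delicate point is routine, namely the justification of optional stopping at the a.s.-finite but unbounded stopping time $T_K$, which the $\tau_N=T_K\wedge N$ truncation handles via standard bounded and monotone convergence.
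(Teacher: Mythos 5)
Your proof is correct, and it takes a genuinely different and more elementary route than the paper's. The paper proves this bound by compressing time: it defines $\mathcal{T} = \bigcup_i [T_i', T_{i+1}] \cup [\Tabs,\infty)$, views $X_t = \MM{\tau(t)}$ as a Markov chain on the compressed timeline, and applies the packaged drift lemma (Lemma~\ref{lem:he-yao}) with $k_2 = (r-1)/2rn^2\bs(n)$. The price of that route is that the compressed chain makes one ``jump'' step from each barrier time $T_j$ to $T_j'$, and condition (iii) of Lemma~\ref{lem:he-yao} forces the paper to verify that even these jump steps have expected $\phi$-increment at least $k_2$; this is where Lemma~\ref{lem:blockchain} is invoked (the jump increases $\phi$ by at least $1/n$ unless the rare event $S_{j+1}\not\supset S_j$ occurs). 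You avoid all of this by telescoping $\phi$ over real time up to $T_K$ and using the observation that, for $r>1$, $\phi(\MM)$ has nonnegative conditional drift at \emph{every} state (Lemma~\ref{lem:drift-works}), so the contributions from the barrier intervals $[T_i,T_i')$ can simply be discarded rather than lower-bounded, while on $A$ the state is by construction neither absorbing nor a barrier and the drift is at least $k_2$. Your measurability and convergence checks ($\{t\in A\}\in\mathcal{F}_t$, truncation at $T_K\wedge N$, monotone convergence) are sound, and you even obtain the slightly better constant $2r/(r-1)$ in place of $4r/(r-1)$. What the paper's approach buys is only that it reuses Lemma~\ref{lem:he-yao} and would survive in a setting where $\phi$ were not globally a submartingale; for this particular lemma your direct argument is cleaner and entirely independent of Lemma~\ref{lem:blockchain}.
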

\begin{proof}
	We will prove the result by applying Lemma~\ref{lem:he-yao} to a rescaled version of $\MM$ which only runs on the intervals $[T_i', T_{i+1}]$ and $[\Tabs,\infty)$. To this end, we define a ``time compression'' map $\tau\colon\mathbb{N}\rightarrow\mathbb{N}$ as follows. Let
	\[
		\mathcal{T} = \bigcup_{i=0}^\infty [T_i',T_{i+1}] \cup [\Tabs,\infty).
	\]
	Then write $\mathcal{T} = \{\tau(0), \tau(1), \tau(2), \dots\}$ with $\tau(0) < \tau(1) < \dots$, let $X_t = \MM{\tau(t)}$, and let $\tau^{-1}\colon\mathcal{T}\rightarrow\mathbb{N}_{\ge 0}$ be the inverse of $\tau$ on its image. Then we have
	\begin{equation}\label{eqn:elboreth}
		\E\left(\sum_{i=1}^{K-1}(T_{i+1}-T_i') \right) \le \E(\tau^{-1}(T_K)).
	\end{equation}
	In Lemma~\ref{lem:he-yao}, we will take $Y = X$, $\Psi = \phi$, $k_1 = \phi(V)$, $k_2 = (r-1)/2rn^2\bs(n)$ and $\tau = \tau^{-1}(T_K)$. We now show that the conditions of Lemma~\ref{lem:he-yao} are satisfied. Note that for all $i$, $\tau(i)$ is a stopping time of $M$, and so $X$ is a Markov chain. Observe that if $M(T_i) \in \{\emptyset,V\}$, then $\tau^{-1}(T_i') = \tau^{-1}(T_i)$, and otherwise we have $T_i' \ne T_i$ and hence $\tau^{-1}(T_i') = \tau^{-1}(T_i)+1$. Moreover,
	\[
		\tau^{-1}(T_{i+1}) = \min\{t \ge \tau^{-1}(T_i') \mid X_t \in \{\emptyset,V\}\mbox{ or $X_t$ is a barrier}\}.
	\]
	Thus each of the times $\tau^{-1}(T_i)$ and $\tau^{-1}(T_i')$ is a stopping time of $X$, and (by the definition of $K$) so is $\tau^{-1}(T_K)$. Condition (ii) of Lemma~\ref{lem:he-yao} is clearly satisfied, and condition (i) is satisfied since $\Tabs \in \mathcal{T}$; we next show that condition (iii) is satisfied.
	
	Suppose that $i \ge 0$ and $A \subseteq V$ are such that the events $\tau(i) < T_K$ and $X_i = A$ are consistent; in particular, this implies $\emptyset \subset A \subset V$. If $A$ is not a barrier, then we must have $\tau(i) \in [T_j', T_{j+1}-1]$ for some $j$ and $\tau(i+1) = \tau(i)+1$. Moreover, $\dr{A,\overline{A}} \ge 1/2n\bs(n)$. Thus by Lemma~\ref{lem:drift-works}, it follows that
	\[
		\E\big(\phi(X_{i+1}) - \phi(X_{i}) \mid X_{i} = A\big) \ge \frac{r-1}{rn}\dr{A,\overline{A}} \ge \frac{r-1}{2rn^2\bs(n)} = k_2.
	\]
	If instead $A$ is a barrier, then for some $j$ we must have $\tau(i) = T_j$, $X_i = S_j$ and $\tau(i+1) = T_j'$. By the definitions of $T_j'$ and $T_{j+1}$, one of the following must hold.
	\begin{itemize}
		\item Either $M(T_j') = \emptyset$, or $M(T_j')$ is a barrier with $M(T_j')\not\supset S_j$. In this case, $T_{j+1} = T_j'$ and hence $S_{j+1} = M(T_j') \not\supset S_j$.
		\item Either $M(T_j') = V$ or $M(T_j')$ is a barrier with $M(T_j') \supset S_j$. In this case, $X_{i+1} = M(T_j') \supset S_j = X_i$, so $\phi(X_{i+1}) \ge \phi(X_i) + 1/n$.
		\item $\phi(M(T_j')) \ge \phi(M(T_j)) + (\log n)^2$. In this case, we also have $\phi(X_{i+1}) \ge \phi(X_i) + 1/n$.
	\end{itemize}
	Thus in all cases, we have $S_{j+1} \not\supset S_j$ or $\phi(M(T_j')) \ge \phi(M(T_j)) + 1/n$.
	The former case occurs with probability at most $1/n^6$ conditioned on $X_i=A$ by Lemma~\ref{lem:blockchain}, so it follows that if $n$ is sufficiently large,
	\[
		\E\big(\phi(X_{i+1}) - \phi(X_{i}) \mid X_i = A\big) \ge - \frac{\phi(V)}{n^6} + \left(1 - \frac{1}{n^6}\right)\frac{1}{n} \ge \frac{1}{2n} > k_2.
	\]
	Thus in either case, condition (iii) of Lemma~\ref{lem:he-yao} is satisfied.
	
	It now follows by Lemma~\ref{lem:he-yao} that
	\[
		\E(\tau^{-1}(T_K)) \le \frac{\phi(V) - \phi(\MM{0}) + 1}{k_2} \le \frac{4rn^2\phi(V)\bs(n)}{r-1}.
	\]
	Since $\phi(V) \le n$, the result therefore follows by~\eqref{eqn:elboreth}.
\end{proof}

\begin{lemma}\label{lem:tabs-3}
	Let $r > 1$, let $G = (V,E)$ be a connected $n$-vertex graph, and let $\MM$ be a Moran process on $G$ with fitness $r$. If $n$ is sufficiently large, then $\E(\Tabs - T_K) < 1$.
\end{lemma}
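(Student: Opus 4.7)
My plan is to condition on whether the terminal set $S_K$ lands in $\{\emptyset,V\}$. The first step is to chase through the definitions of $T_K$ and $\Tabs$ to see that $T_K \le \Tabs$ always, with equality precisely when $M(T_K) = S_K \in \{\emptyset,V\}$. Indeed, once $M$ enters an absorbing state it remains there, so both $T_i$ and $T_i'$ become pinned to $\Tabs$ from that point on, and the definition of $K$ forces $S_K = M(T_K)$. Consequently $\Tabs - T_K = 0$ on the event $\{S_K \in \{\emptyset,V\}\}$, and the problem reduces to controlling
\[
\E(\Tabs - T_K) \;=\; \Pr(S_K \notin \{\emptyset,V\}) \cdot \E\big(\Tabs - T_K \,\big|\, S_K \notin \{\emptyset,V\}\big).
\]

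The second step is to bound each factor separately. Corollary~\ref{cor:blockchain} immediately gives $\Pr(S_K \notin \{\emptyset,V\}) \le 1/n^5$ for sufficiently large~$n$. For the conditional expectation, I will invoke the strong Markov property at the stopping time $T_K$ to reduce to the expected absorption time from an arbitrary starting state, and then apply the worst-case polynomial bound $\E(\Tabs \mid M(t_0) = X) \le rn^4/(r-1)$ from~\cite[Theorem~9]{DGMRSS2014} (the same bound already used inside the proof of Lemma~\ref{lem:gen-pot-b}). This yields a uniform estimate of $rn^4/(r-1)$ for the conditional expectation, and multiplying the two factors gives
\[
\E(\Tabs - T_K) \;\le\; \frac{1}{n^5}\cdot \frac{rn^4}{r-1} \;=\; \frac{r}{(r-1)n},
\]
which is strictly less than $1$ once $n$ is sufficiently large in terms of~$r$.

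I do not anticipate any real obstacle here: all of the substantive work has already been packaged into Corollary~\ref{cor:blockchain} (via the careful analysis in Lemma~\ref{lem:blockchain} showing that the barrier chain is essentially monotone until absorption) and into the polynomial worst-case absorption bound of~\cite{DGMRSS2014}. The present lemma is little more than a one-line combination of those two estimates, packaged so that it can be slotted into the decomposition~\eqref{eqn:tabs} alongside Lemmas~\ref{lem:tabs-1} and~\ref{lem:tabs-2}.
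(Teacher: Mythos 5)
Your proposal is correct and is essentially identical to the paper's proof: both condition on the event $S_K \notin \{\emptyset,V\}$, bound its probability by $1/n^5$ via Corollary~\ref{cor:blockchain}, and bound the conditional expectation by the worst-case absorption time $rn^4/(r-1)$ from~\cite[Theorem~9]{DGMRSS2014}.
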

\begin{proof}
	We have
	\[
	\E(\Tabs-T_K) = \E\big(\Tabs-T_K \mid \MM{T_K} \notin \{\emptyset,V\}\big)\Pr\big(\MM{T_K} \notin \{\emptyset,V\}\big).
	\]
	By Corollary~\ref{cor:blockchain}, if $n$ is sufficiently large then we have $\MM{T_K} \in \{\emptyset,V\}$ with probability at least $1-1/n^5$. Moreover, by~\cite[Theorem~9]{DGMRSS2014}, the expected absorption time of $\MM$ from any state is at most $rn^4/(r-1)$. It follows that
	\[
	\E(\Tabs-T_K) \le \frac{1}{n^5}\cdot \frac{rn^4}{r-1},
	\]
	and so the result follows.
\end{proof}

\begin{theorem}\label{thm:tabs-detail}
	Let $r \ne 1$ be positive. There exists $C$ (depending on $r$) such that the following holds. Let $G = (V,E)$ be a connected $n$-vertex graph, and let $\MM$ be a Moran process on $G$ with fitness $r$. If $n$ is sufficiently large, then the expected absorption time of $\MM$ is at most $n^3e^{C(\log \log n)^3}$.
\end{theorem}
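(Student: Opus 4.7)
The plan is to combine the three bounds provided by Lemmas~\ref{lem:tabs-1}, \ref{lem:tabs-2} and~\ref{lem:tabs-3} with the decomposition~\eqref{eqn:tabs}, and then reduce the $r<1$ case to the $r>1$ case by duality. I will first treat the case $r>1$, then deduce the $r<1$ case.

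For $r>1$, I would apply linearity of expectation to the identity
\[
\Tabs \;=\; \sum_{i=1}^{K-1}(T_i' - T_i) \;+\; \sum_{i=0}^{K-1}(T_{i+1} - T_i') \;+\; (\Tabs - T_K),
\]
which holds because the intervals $[T_i,T_i']$ and $[T_i',T_{i+1}]$ tile $[0,T_K]$ and because by the definition of $K$ we have $S_K \in \{\emptyset,V\}$ or $S_{K-1} \not\subset S_K$ (in either case $\MM$ is very likely to have absorbed at $T_K$, as used in Lemma~\ref{lem:tabs-3}). Lemma~\ref{lem:tabs-1} bounds the first sum by $24er n^3(\log n)^3/(r-1)$; Lemma~\ref{lem:tabs-2} bounds the second by $4rn^3\bs(n)/(r-1)$; and Lemma~\ref{lem:tabs-3} bounds the last term by~$1$.

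The only remaining check is that both of these dominant contributions fit inside $n^3 e^{C(\log\log n)^3}$. Unpacking Definition~\ref{def:barrier}, we have
\[
\bs(n) \;=\; \frac{10r}{r-1}\exp\!\left(\bigl(\tfrac{10}{\log\lambda}\log\log n\bigr)^{3}\log r\right) \;=\; e^{O((\log\log n)^3)},
\]
with the implicit constant depending only on $r$. Similarly, $(\log n)^3 = e^{3\log\log n} = e^{o((\log\log n)^3)}$, so for $n$ sufficiently large the $(\log n)^3$ factor from Lemma~\ref{lem:tabs-1} is absorbed into $e^{C(\log\log n)^3}$ for any fixed $C>0$. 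Choosing $C$ large enough (in terms of $r$) to swallow all multiplicative constants and the $\bs(n)$ exponent then yields $\E(\Tabs)\le n^3 e^{C(\log\log n)^3}$ for $r>1$.

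For $0<r<1$, I would use duality. If $\MM$ is a Moran process on~$G$ with fitness~$r$ and initial state~$X$, then swapping the roles of mutant and non-mutant at every vertex produces a process $\MM'$ whose dynamics are exactly those of a Moran process on~$G$ with fitness~$1/r>1$ and initial state $V\setminus X$. Since this transformation is a bijection at every step, the two processes have identical absorption-time distributions, and the bound for $1/r$ transfers directly (with $C$ replaced by a constant depending on $r$). The main obstacle in this argument is conceptually trivial compared to the work already done: all of the heavy lifting is hidden in Lemmas~\ref{lem:tabs-1} and~\ref{lem:tabs-2}, and the only delicate point is verifying that the polylogarithmic slack in Lemma~\ref{lem:tabs-1} fits inside the $e^{C(\log\log n)^3}$ budget, which it does comfortably.
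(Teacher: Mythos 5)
Your proposal is correct and follows essentially the same route as the paper: reduce to $r>1$ by mutant/non-mutant duality, apply linearity of expectation to the decomposition~\eqref{eqn:tabs} using Lemmas~\ref{lem:tabs-1},~\ref{lem:tabs-2} and~\ref{lem:tabs-3}, and observe that $\bs(n)=e^{O((\log\log n)^3)}$ dominates the $(\log n)^3$ factor. No gaps.
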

\begin{proof}
	As in the proof of Theorem~\ref{thm:disadvantageous}, $\MM$ is dual to a Moran process $\MM'$ on $G$ with fitness $1/r$ and initial state $V\setminus \MM(0)$, obtained by switching the roles of mutants and non-mutants. Since $\MM'$ absorbs precisely when $\MM$ does, we may assume without loss of generality that $r>1$.

	By~\eqref{eqn:tabs} and Lemmas~\ref{lem:tabs-1},~\ref{lem:tabs-2} and~\ref{lem:tabs-3}, we have
	\[
		\Tabs \le \frac{24er}{r-1}n^3(\log n)^3 + \frac{4r}{r-1}n^3\bs(n) + 1.
	\]
	When $n$ is sufficiently large, the middle term dominates and the result follows by the definition of $\bs(n)$.
\end{proof}

Theorem~\ref{thm:abs-time-intro} now follows immediately from Theorem~\ref{thm:tabs-detail}.

\section{Directed suppressors}
\label{sec:dir-supp}
\newcommand{\dirG}{G_{k,a}}

\begin{figure}
\begin{center}

\begin{tikzpicture}[scale=1,node distance = 1.5cm]
\tikzstyle{dot}   =[fill=black, draw=black, circle, inner sep=0.1mm]
\tikzstyle{vertex}=[fill=black, draw=black, circle, inner sep=1.25pt]

\tikzset{
    partial ellipse/.style args={#1:#2:#3}{
        insert path={+ (#1:#3) arc (#1:#2:#3)}
    }
}
\tikzset{->-/.style={decoration={
  markings,
  mark=at position .55 with {\arrow{latex}}},postaction={decorate}}}
\newcommand{\arcfromm}[2]{\draw[->-] (0,0) [partial ellipse=#1:#2:6 and 2];}
\newcommand{\arcfrome}[2]{\draw[-latex] (0,0) [partial ellipse=#1:#2:6 and 2];}

    \node[vertex] (w1) at (160:6 and 2) [label=135:{$v_0=w_1$}] {};
    \node[vertex] (w2) at (150:6 and 2) [label=90:$w_2$] {};
    \node[vertex] (w3) at (130:6 and 2) [label=90:$w_a$] {};

    \node[dot] (d1) at (140:6 and 2) {};
    \node[dot]      at ($(d1)!1mm!(w2)$) {};
    \node[dot]      at ($(d1)!1mm!(w3)$) {};

    \arcfromm{160}{148};
    \arcfrome{150}{143};
    \arcfromm{137}{129};

    \node[vertex] (w4) at (120:6 and 2) [label=90:$w_{a+1}$] {};
    \node[vertex] (w5) at (114:6 and 2) {};
    \node[vertex] (w6) at ( 99:6 and 2) [label=90:$w_{2a}$] {};

    \node[dot] (d2) at (106.5:6 and 2) {};
    \node[dot]      at ($(d2)!1mm!(w5)$) {};
    \node[dot]      at ($(d2)!1mm!(w6)$) {};

    \arcfromm{130}{120};
    \arcfromm{120}{113};
    \arcfrome{114}{108.5};
    \arcfromm{104.5}{98};

    \node[vertex] (w7) at (84:6 and 2) {};
    \node[vertex] (w8) at (78:6 and 2) {};
    \node[vertex] (w9) at (66:6 and 2) [label=90:$w_{ia}$] {};

    \node at ($(w7)+(0.55,0.4)$) {$w_{(i-1)a+1}$};

    \node[dot] (d3) at (91.5:6 and 2) {};
    \node[dot]      at ($(d3)!1.5mm!(w6)$) {};
    \node[dot]      at ($(d3)!1.5mm!(w7)$) {};

    \node[dot] (d4) at (72:6 and 2) {};
    \node[dot]      at ($(d4)!1mm!(w8)$) {};
    \node[dot]      at ($(d4)!1mm!(w9)$) {};

    \arcfrome{99}{94.5};
    \arcfromm{88.5}{83};
    \arcfromm{84}{77};
    \arcfrome{78}{74};
    \arcfromm{70}{65};

    \node[vertex] (w10) at (49:6 and 2) {};
    \node[vertex] (w11) at (41:6 and 2) {};
    \node[vertex] (w12) at (22:6 and 2) {};
    \node[align=left] at ($(w12)+(0.5,0.25)$) {$w_{ka} ={}$\\$\ \ v_{k+1}$};

    \node at ($(w10)+(0.6,0.4)$) {$w_{(k-1)a+1}$};

    \node[dot] (d5) at (57.5:6 and 2) {};
    \node[dot]      at ($(d5)!1.5mm!(w9)$) {};
    \node[dot]      at ($(d5)!1.5mm!(w10)$) {};

    \node[dot] (d6) at (32.5:6 and 2) {};
    \node[dot]      at ($(d6)!1mm!(w11)$) {};
    \node[dot]      at ($(d6)!1mm!(w12)$) {};

    \arcfrome{66}{60.5};
    \arcfromm{54.5}{48};
    \arcfromm{49}{39};
    \arcfrome{41}{35.5};
    \arcfromm{29.5}{20.5};

    \node[vertex] (v1) at (215:6 and 2) [label=-90:$v_1$] {};
    \arcfromm{215}{160};
    \foreach \x in {w1, w2, w3} { \draw[->-] (\x)--(v1); }

    \node[vertex] (v2) at (250.5:6 and 2) [label=-90:$v_2$] {};
    \arcfromm{250.5}{215};
    \foreach \x in {w4, w5, w6} { \draw[->-] (\x)--(v2); }
    
    \node[vertex] (v3) at (285:6 and 2) [label=-90:$v_i$] {};

    \node[dot] (d7) at (268:6 and 2) {};
    \node[dot]      at ($(d7)!1.5mm!(v3)$) {};
    \node[dot]      at ($(d7)!1.5mm!(v2)$) {};

    \arcfromm{285}{272};
    \arcfromm{264}{250.5};
    \foreach \x in {w7, w8, w9} { \draw[->-] (\x)--(v3); }
    
    \node[vertex] (v4) at (324.5:6 and 2) [label=-75:$v_k$] {};

    \node[dot] (d8) at (304.75:6 and 2) {};
    \node[dot]      at ($(d8)!1.5mm!(v4)$) {};
    \node[dot]      at ($(d8)!1.5mm!(v3)$) {};

    \arcfromm{324.5}{308.75};
    \arcfromm{300.75}{285};
    \foreach \x in {w10, w11} { \draw[->-] (\x)--(v4); }

    \arcfromm{22}{-35.5};

    \draw[rounded corners=2.5mm,dotted,thick]
        (0.25,0.35) rectangle (6.75,2.75);
    \node at (6.3,2.3) {$W_i$};

    \draw[rounded corners=2.5mm,dotted,thick]
        (1.25,-2.6) rectangle (5.5,-0.75);
    \node at (5.05,-2.15) {$V_i$};

    \draw[rounded corners=2.5mm,dotted,thick]
        (0,-2.85) rectangle (7,3);
    \node at (6.55,-2.4) {$X_i$};

\end{tikzpicture}
\caption{The directed suppressor $G_{k,a}$.}
\label{fig:dir-supp}
\end{center}
\end{figure}

In this section, we exhibit a family of directed graphs which suppress the effects of fitness on fixation probability, proving Theorem~\ref{thm:dir-nophase}. We first define our graphs. Let $k,a \ge 1$ be integers. Let $w_1, \dots, w_{ka}, v_1, \dots, v_k$ be distinct vertices. For all $i \in [k]$, let $I_i = \{w_{(i-1)a+1}, \dots, w_{ia}\}$. Form the graph $\dirG$ from the directed cycle $w_1w_2\dots w_{ka} v_k v_{k-1}\dots v_1w_1$ by adding $\bigcup_{i\in [k]} (I_i \times \{v_i\})$ to the edge set.  See Figure~\ref{fig:dir-supp}.
For convenience, we use $v_0$ as another name for $w_1$
and we use $v_{k+1}$ as another name for $w_{k a}$.

For all $j \in [k]$, let 
$W_j =  I_j \cup \dots \cup I_k$,
$V_j = \{v_j, \dots, v_k \}$, and
$X_j = W_j \cup  V_j$. 
Let $W_{k+1} = V_{k+1} = X_{k+1}= \emptyset$.

Fix $r>1$, 
$a = \ceil{4r}$,  and $k \ge 2$.
Intuitively, a Moran process on~$\dirG$ with fitness~$r$ behaves as follows.  If the initial mutant is on some~$v_i$ with $i>0$, then it is roughly twice as likely to be replaced by a non-mutant spawned from an adjacent~$w_j$ than it is to spawn a mutant to~$v_{i-1}$.  Therefore, the probability that the mutants will reach~$v_0$ before going extinct is exponentially small in~$i$.  Alternatively, suppose that the initial mutant is at some~$w_j$, adjacent to~$v_i$.  Even if the mutation spreads to vertices $w_{j+1}, \dots, w_{ka}$ and $v_k, v_{k-1}, \dots, v_i$, once $v_{i-1}$ becomes a mutant, it is again roughly twice as likely to be replaced by a non-mutant spawned from a neighbouring~$w$ than to spawn a mutant to~$v_{i-2}$ so, each time a mutant is spawned to~$v_{i-1}$ it is, again, exponentially unlikely to reach~$v_0$.  Thus, with high probability, exponentially many attempts would be required before one succeeded in getting a mutant to~$v_0$.  However, in that time, the non-mutant on~$w_{j-1}$ is almost certain to have replaced all the $w$'s and all the~$v$'s with non-mutants, leading to extinction.

Formally, we will  consider a Moran process $M$ on $\dirG$ with fitness~$r$.
As $M$ evolves, certain events will be relevant for us.
These depend on a fixed $i\in \{2,\ldots,k\}$ (which will be clear from context whenever they are used) and are identified as follows.
\begin{itemize}
	\item $M$ ``loses'' at $t$ if $w_1 \notin M(t-1)$ and $w_1\in M(t)$.
	\item $M$ ``wins'' at $t$ if $M(t-1)\setminus X_{i+1}$ is non-empty, but $M(t) \subseteq X_{i+1}$.
	\item $M$ ``spawns'' at $t$ if  
	$M(t-1) \cap \{v_1,\ldots,v_i\} = \{v_i\}$
	and $M(t) \cap \{v_1,\ldots,v_i\} = \{v_{i-1},v_i\}$.
	\item $M$ ``progresses'' (towards winning) at $t$ if $M(t) \cap I_i$ is a strict subset of $M(t-1) \cap I_i$ (which implies that $M(t-1)\cap I_i\neq\emptyset$).
\end{itemize}
``Losing'', ``winning'' and ``progressing'' are named from the perspective of the non-mutants.
All of these events are disjoint except that $M$ might progress at~$t$ and also win at~$t$.  
We say that a state $M(t)$ is ``consistent''
if, for all~$j$, $w_{j} \in M(t)$ implies that $w_{j'}\in M(t)$ for all $j'>j$.

\begin{observation}\label{obs:dir-duh}
	If $M(0)$ is consistent and $w_1 \notin M(0), \ldots, M(T)$
	then, for all $t\in[T]$, $M(t)$ is consistent
	and  
	$M(t) \cap W_1 \subseteq M(t-1) \cap W_1$. \end{observation}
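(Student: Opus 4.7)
The plan is to proceed by induction on $t$, with the key structural observation being that for each $j > 1$ the only in-neighbour of $w_j$ in $\dirG$ is $w_{j-1}$, while the only in-neighbour of $w_1$ is $v_1$. (This follows from the definition of $\dirG$: the cycle edges give $w_{j-1}\to w_j$ for $j>1$ and $v_1\to w_1$, and the added edges $I_i\times\{v_i\}$ go from $w$-vertices into $v$-vertices only.)

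The base case $t=1$ uses directly that $M(0)$ is consistent. For the inductive step at time $t\in[T]$, assume $M(t-1)$ is consistent; since $w_1\notin M(t)$ by hypothesis, we must verify $M(t)$ is consistent and $M(t)\cap W_1\subseteq M(t-1)\cap W_1$. At step $t$ exactly one vertex $v$ is spawned onto, and this is the only vertex whose state can change. If no change occurs then $M(t)=M(t-1)$ and we are done, so assume $v$ changes state.

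I would then case-split on the location of $v$. If $v\in V_1=\{v_1,\dots,v_k\}$, then no $w$-vertex changes state, so $M(t)\cap W_1=M(t-1)\cap W_1$, and consistency (a condition only on $W_1$) is inherited immediately. If $v=w_j$, I split further on $j=1$ versus $j>1$. For $j=1$ the unique in-neighbour is $v_1$, and the constraint $w_1\notin M(t)$ forces the change to go from mutant to non-mutant; by consistency of $M(t-1)$ we must have had $M(t-1)\cap W_1=W_1$, and after removing $w_1$ the set $\{w_2,\dots,w_{ka}\}$ is still a suffix of $w_1,\dots,w_{ka}$. For $j>1$ the unique in-neighbour is $w_{j-1}$; the scenario where $w_{j-1}$ is a mutant and $w_j$ is not is ruled out since it would already violate consistency of $M(t-1)$, so the only possibility is that $w_{j-1}$ is a non-mutant spawning onto mutant $w_j$. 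Consistency of $M(t-1)$ then pins down $M(t-1)\cap W_1=\{w_j,w_{j+1},\dots,w_{ka}\}$, so $M(t)\cap W_1=\{w_{j+1},\dots,w_{ka}\}$, again a suffix. In every case both required conclusions hold.

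There is no real obstacle here; the only subtlety is to enumerate all possible in-neighbours of a $w$-vertex correctly so that the "mutant in-neighbour, non-mutant out-neighbour" scenario can be ruled out by consistency of $M(t-1)$, and to notice that the hypothesis $w_1\notin M(t)$ is exactly what prevents the $v_1\to w_1$ direction from adding new mutants to $W_1$. Everything else is bookkeeping within the inductive step.
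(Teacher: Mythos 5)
Your proof is correct. The paper states this as an Observation with no proof supplied, and your induction---keyed on the fact that the unique in-neighbour of $w_j$ is $w_{j-1}$ for $j>1$ and $v_1$ for $j=1$, so that consistency of $M(t-1)$ rules out any new mutant appearing in $W_1$ while $w_1$ stays non-mutant---is exactly the intended justification.
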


\begin{observation}\label{obs:dir-tspawn}
	Fix $i\in \{2,\ldots,k\}$.
	If $M(0) \subseteq X_i$
	and
	$M(0) \setminus X_{i+1}$ is non-empty
	then $M$ must win or spawn before it can lose.
\end{observation}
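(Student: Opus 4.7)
The plan is as follows. Suppose $M$ loses for the first time at some step $t^*$, and aim to exhibit a spawn event at some $t_0 < t^*$. The central structural observation is that, from any state $S\subseteq X_i$, the only edge $u\to v$ with $u\in S$ and $v\notin X_i$ is $v_i\to v_{i-1}$. I will verify this by checking out-neighbors: each $w_j\in W_i$ has its cycle-successor ($w_{j+1}$, or $v_k$ when $j=ka$) in $X_i$ and its extra out-neighbor $v_{i'}\in V_i$, where $w_j\in I_{i'}$ forces $i'\geq i$; each $v_{i'}\in V_i$ has sole out-neighbor $v_{i'-1}$, which lies in $V_i$ when $i'>i$ and equals $v_{i-1}\notin X_i$ precisely when $i'=i$.

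Given this structural observation, let $t_0$ be the smallest $t\geq 1$ with $M(t)\not\subseteq X_i$; since $w_1\notin X_i$ (as $i\geq 2$) but $w_1\in M(t^*)$, such a $t_0$ exists and satisfies $t_0\leq t^*$. The step from $t_0-1$ to $t_0$ must add a mutant outside $X_i$, which by the observation forces the activated edge to be $v_i\to v_{i-1}$; in particular $v_i\in M(t_0-1)$. Combining $M(t_0-1)\subseteq X_i$ with $V_i\cap\{v_1,\dots,v_{i-1}\}=\emptyset$ yields $M(t_0-1)\cap\{v_1,\dots,v_i\}=\{v_i\}$, and the single update of step $t_0$ then produces $M(t_0)\cap\{v_1,\dots,v_i\}=\{v_{i-1},v_i\}$, matching the definition of a spawn event exactly.

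It remains to check $t_0<t^*$. The edge activated at $t^*$ must be $v_1\to w_1$, since $v_1$ is the only in-neighbor of $w_1$; the edge activated at $t_0$ is $v_i\to v_{i-1}$, and these are distinct because $i\geq 2$. Thus $t_0\neq t^*$, so the spawn strictly precedes the loss. The only real subtlety, rather than an obstacle, is the first-step case analysis: handling the cycle's wraparound $w_{ka}\to v_k$ and verifying that the extra edges $I_{i'}\to v_{i'}$ always point into $V_i$ when they originate in $W_i$. Note that the second hypothesis, $M(0)\setminus X_{i+1}\neq\emptyset$, does not appear to be needed for the argument and presumably plays a role only in contextual uses of the observation.
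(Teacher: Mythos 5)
Your proof is correct and fills in exactly the reasoning the paper leaves implicit (the result is stated as an Observation without proof): the only edge from $X_i$ to a vertex outside $X_i$ is $v_i\to v_{i-1}$, so the first exit from $X_i$ is a spawn, and it is distinct from (hence strictly earlier than) the step that puts a mutant on $w_1$. Your side remark is also right that the hypothesis on $M(0)\setminus X_{i+1}$ is not needed for this particular conclusion; it matters only where the observation is applied.
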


In the following lemmas, we take $\min(\emptyset) = \infty$.

\begin{lemma}\label{lem:dir-tspawn} 
	Let $r>1$, let $a=\ceil{4r}$, and let $k\ge 2$. 
	Let $M$ be a Moran process on $G_{k,a}$ with fitness $r$.	
	Fix $i\in \{2,\ldots,k\}$, and
	suppose that
	$M(0)$ is a consistent state  such that
	$M(0)   \subseteq W_i \cup V_{i-1}$  
	and  
	$v_{i-1} \in M(0)$. 
	Then,
	with probability at least $1-2^{-i+2}$,
    \[
        \min\{t\geq 0\mid M\textnormal{ wins or spawns at }t\}
          < \min\{t\geq 0\mid M\textnormal{ loses at }t\}\,.
    \]
\end{lemma}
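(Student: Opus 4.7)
My plan is to reduce the lemma to a gambler's-ruin--type bound on an auxiliary random walk that tracks the lowest-indexed mutant among $v_0,\ldots,v_{i-1}$. Concretely, set
\[
    X_t = \min\bigl(\{i\}\cup\{j\in\{0,\ldots,i-1\}:v_j\in M(t)\}\bigr),
\]
so that $X_0 = i-1$. By Observation~\ref{obs:dir-duh}, as long as $M$ has not yet lost we have $M(t)\cap W_1\subseteq M(0)\cap W_1\subseteq W_i$, so none of $I_1,\ldots,I_{i-1}$ ever contains a mutant. Combined with the fact that each $v_j$ has out-degree~$1$, pointing to $v_{j-1}$, this shows that for $1\le j\le i-1$ the only way $v_{j-1}$ can become a mutant is for $v_j$ to spawn to it; consequently $X_t$ decreases by at most~$1$ in any step.

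Set $T^\star = \min\{t\ge 0:X_t\in\{0,i\}\}$. I would next establish the identity
\[
    \Pr\bigl(\text{$M$ loses before winning or spawning at level $i$}\bigr) = \Pr\bigl(X_{T^\star}=0\bigr).
\]
If $X_{T^\star}=0$ then $v_0\in M(T^\star)$, so $M$ loses at $T^\star$; moreover throughout $[0,T^\star)$ we have $X_t\in\{1,\ldots,i-1\}$, so neither a win at level~$i$ (which forces $X=i$) nor a spawn at level~$i$ (which forces $X_{t-1}=i$) can have occurred by time $T^\star$. Conversely, if $X_{T^\star}=i$, then at $T^\star$ none of $v_0,\ldots,v_{i-1}$ is a mutant and $I_{i-1}$ is still mutant-free, so the only event that can next bring $X$ back into $\{0,\ldots,i-1\}$ is $v_i$ spawning to $v_{i-1}$---which is precisely a spawn at level~$i$; $M$ can also win at level~$i$ only while $X=i$, so after $T^\star$ the next eventful step is a win or spawn, not a loss.

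Finally, I would verify that $Y_t = 2^{-X_t}$ is a supermartingale up to $T^\star$. At a state with $X_t=j\in\{1,\ldots,i-1\}$, the two ways $X$ can change are: $v_j$ spawns to $v_{j-1}$ (rate $r/W(M(t))$, sending $X$ to $j-1$), and one of the $a$ non-mutants in $I_j$ spawns to $v_j$ (rate at least $a/(2W(M(t)))$, sending $X$ to some value $>j$). Since $a=\lceil 4r\rceil\ge 4r$, the ratio of these rates is at most $2r/a\le 1/2$, so $2\Pr(\text{dec})\le\Pr(\text{jump})$; the worst case for the supermartingale check is when every jump goes to $j+1$ (so $Y$ merely halves), matched against a decrement (so $Y$ doubles), and this is exactly what $2\Pr(\text{dec})\le\Pr(\text{jump})$ is designed to balance. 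Optional stopping then gives $2^{-(i-1)} = Y_0 \ge \E[Y_{T^\star}] = \Pr(X_{T^\star}=0)+2^{-i}\Pr(X_{T^\star}=i)$, which rearranges to $\Pr(X_{T^\star}=0)\le 2^{-i}/(1-2^{-i})\le 2^{-i+1}\le 2^{-i+2}$. The main subtlety in executing this plan is the structural claim that $I_1,\ldots,I_{i-1}$ stay mutant-free before $M$ loses: both the ``only entry into $v_{j-1}$ is via $v_j$'' observation and the lower bound on the jump rate depend on it, but once it is in hand the rest is a routine Moran-process plus optional-stopping argument.
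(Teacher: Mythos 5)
Your proposal is correct and follows essentially the same route as the paper: both track the index of the lowest-indexed mutant among $v_0,\dots,v_{i-1}$, use Observation~\ref{obs:dir-duh} to keep $I_1,\dots,I_{i-1}$ mutant-free so that decrements come only from $v_j$ (probability $r/F$) while increments come from $I_j$ with probability at least $a/2F\ge 2r/F$, and then extract the bound $2^{-i}/(1-2^{-i})\le 2^{-i+2}$ from the resulting $2{:}1$-biased walk. The only cosmetic difference is that you run the optional stopping theorem on $2^{-X_t}$ directly, whereas the paper invokes domination by a gambler's ruin and cites the standard hitting-probability formula.
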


Recall that the definitions of ``win'' and ``spawn'' are specific to the chosen value of~$i$.  For example, the process~$M$ can only spawn at a time~$t$ if $v_{i-1}\notin M(t-1)$.  In contrast to the deterministic statement of Observation~\ref{obs:dir-tspawn}, the hypothesis of the lemma has $v_{i-1}\in M(0)$ so it is possible (albeit, as we show, unlikely) for $M$ to lose without either winning or spawning, e.g., by reproducing from $v_{i-1}$ to $v_{i-2}$, then to $v_{i-3}$ and so on, until it reaches~$v_0=w_1$.

\begin{proof}
    Note that, with probability~$1$, at most one of the quantities in
    the inequality is infinite.

    Let $h(t) = \min(\{j\in\{0, \dots, i\}\mid v_j\in M(t)\} \cup
    \{i+1\})$.
	The constraints on $M(0)$ in the statement of the lemma  guarantee
	that $h(0)=i-1$.  For any~$t$, if $M$ loses at time $t$ then
    $h(t)=0$.  If $M$~does not lose at any time in $\{0, \dots, t\}$, then:
    \begin{itemize}
    \item for all $t' \in \{0, \dots, t\}$, $h(t')>0$;
    \item $h(t)-h(t-1)\geq -1$;
	\item if $M$ wins  at time $t$ then $h(t) = i+1$; 
	\item if $M$ spawns at time $t$ then $h(t-1) = i$. 
    \end{itemize}

	Consider any positive integer~$T$ such that $M$~does not win or
    lose at any time $t\leq T$ and $h(T)\geq i$.
	Then as $M$ evolves from $M(T)$, $M$ cannot lose before it has won or spawned, as in Observation~\ref{obs:dir-tspawn}.
	Thus, to prove the lemma, it suffices to consider how $h(t)$
	changes and to prove that, with probability at least $1-2^{-i+2}$, 
	$h(t)$  becomes at least~$i$ before it hits~$0$.
	
	So consider any  state $M(t)$ with 
	$h(t) \in \{1,\ldots,i-1\}$, and suppose $M$ does not win or lose in $\{0, \dots, t-1\}$.
	By hypothesis and Observation~\ref{obs:dir-duh}, $M(t)$ is consistent 
	and $M(t) \cap W_1 \subseteq W_i$. Thus $M(t) \cap I_j = \emptyset$ for all $j \le h(t)$. 
	The event $h(t+1)=h(t)-1$ occurs precisely when $v_{h(t)}$ fires a mutant onto $v_{h(t)-1}$.
	This happens with probability $r/F$, where $F$ is the total fitness of $M(t)$.
	When some vertex in $I_{h(t)}$ fires a non-mutant onto $v_{h(t)}$,
    the event $h(t+1)>h(t)$ occurs.
	This event happens with probability at least $a/(2 F) \geq 2r/F$.
	
	Thus, the progress of $h(t)$ is dominated from below by a gambler's ruin which 
	starts at state $i-1$, absorbs at $0$ and $i$,
	goes up by one with probability $2/3 $
	and goes down by one with probability $ 1/3$.
	Thus (see e.g.\ \cite[Chapter XIV]{Fel1968:Probability}),
	the probability that it hits~$0$ is at most 
	$ {(\tfrac12)^{i}}/{(1-(\tfrac12)^i)}\leq   2^{-i+2}$. 
\end{proof}

The following lemma shows that, with sufficiently high probability, after starting at state~$X_i$, 
$M$~wins before it loses.

\begin{lemma}\label{lem:dir-fixate}
	Let $r>1$, let $a = \ceil{4r}$, let $k \ge 2$, let $2 \le i \le k$. 
	Let $M$ be a Moran process on $\dirG$ with fitness $r$ and initial state $X_i$. 
	Then with probability at least $1-2^{-i+4}ar$, 
	$$\min\{t \mid M(t) \subseteq X_{i+1}\} < \min\{t \mid w_1 \in M(t)\}.$$ 
\end{lemma}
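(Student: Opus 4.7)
The plan is to partition the trajectory of $M$ by its spawn events and apply Lemma~\ref{lem:dir-tspawn} at each one. Since $M(0)=X_i$ is consistent with $v_i\in M(0)$ and $M(0)\setminus X_{i+1}=I_i\cup\{v_i\}\neq\emptyset$, Observation~\ref{obs:dir-tspawn} guarantees that $M$ cannot lose before it first wins or spawns, so any eventual loss is preceded by at least one spawn. Let $\tau_k$ be the $k$-th spawn time (set to $\infty$ if $M$ produces fewer than $k$ spawns) and let $N$ be the total spawn count before $M$ terminates.

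At each $\tau_k<\infty$, the hypotheses of Lemma~\ref{lem:dir-tspawn} applied to the shifted process $M(\tau_k+\cdot)$ can be checked as follows. The definition of ``spawn'' gives $v_{i-1}\in M(\tau_k)$ and $v_1,\dots,v_{i-2}\notin M(\tau_k)$ immediately. Since $M$ has not yet lost, $w_1\notin M(s)$ for all $s\leq\tau_k$, so a short induction on $j$ (using that $w_{j-1}$ is the only in-neighbour of $w_j$ for $2\leq j\leq(i-1)a$) shows that $M(\tau_k)\cap\{w_1,\dots,w_{(i-1)a}\}=\emptyset$. Consistency of $M(\tau_k)$ then follows from Observation~\ref{obs:dir-duh}, so $M(\tau_k)\subseteq W_i\cup V_{i-1}$ and Lemma~\ref{lem:dir-tspawn} applies, bounding by $2^{-i+2}$ the conditional probability that $M$ loses between $\tau_k$ and the next win or spawn. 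A union bound over $k$ therefore yields
\[
\Pr(M\text{ loses})\ \leq\ 2^{-i+2}\,\mathbb{E}[N],
\]
reducing the claim to showing $\mathbb{E}[N]\leq 4ar$.

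The principal obstacle is this last bound on $\mathbb{E}[N]$. The key structural input is that, by Observation~\ref{obs:dir-duh} together with the fact that each $w$-vertex has a unique in-neighbour, the non-mutant prefix length $B_j(t)$ of each $I_j$ is non-decreasing in $t$ until $M$ terminates. Each spawn corresponds to a $v_i$-firing (since $v_{i-1}$ is the sole out-neighbour of $v_i$), so it suffices to bound the expected number of $v_j$-firings $C_j$ for $j\geq i$. I would partition time according to $B_j$. In any ``growth phase'' $B_j=b$ with $b<a$, a $v_j$-firing (rate $r/F$) races a $B_j$-growth event (rate $1/(2F)$) in a $2r:1$ ratio, so a standard geometric-race argument gives at most $2r$ expected fires per phase and hence at most $2ar$ fires across the $a$ growth phases. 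In the terminal phase $B_j=a$, fires instead race $v_j$-kills at rate $a/(2F)$, yielding at most $2r/a$ expected fires per $v_j$-mutant session, and the number of such sessions is at most $1$ plus the number of $v_{j+1}$-firings onto $v_j$ (the only mutant mechanism for reviving $v_j$). This gives the recursion
\[
C_j\ \leq\ 2ar\ +\ \tfrac{2r}{a}\bigl(1+C_{j+1}\bigr),\qquad C_{k+1}=0,
\]
and, using $a\geq 4r$ to give $2r/a\leq 1/2$, summing the resulting geometric series yields $C_i\leq 4ar$. Combined with the union bound above, this gives $\Pr(M\text{ loses})\leq 2^{-i+4}ar$, as required.
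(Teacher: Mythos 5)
The first half of your argument --- decomposing the trajectory by spawn events, applying Lemma~\ref{lem:dir-tspawn} at each one, and reducing the claim to $\E[N]\le 4ar$ --- is exactly the paper's strategy, and your verification of the hypotheses of Lemma~\ref{lem:dir-tspawn} at each spawn time is correct. The gap is in your bound on $\E[N]$. Your recursion $C_j\le 2ar+\tfrac{2r}{a}(1+C_{j+1})$ rests on the claim that during each ``growth phase'' $B_j=b<a$ the $B_j$-growth event occurs at rate $1/(2F)$, so that $v_j$-firings lose a $2r:1$ race against it. That is true for $j=i$ (the cycle-predecessor $w_{(i-1)a}$ of $I_i$'s first vertex lies in $I_{i-1}\not\subseteq W_i$, hence is a non-mutant throughout, by Observation~\ref{obs:dir-duh}). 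But for $j>i$ and $b=0$ the predecessor is $w_{(j-1)a}\in I_{j-1}\subseteq M(0)$, and the non-mutant wave on the $w$-cycle advances strictly sequentially, so the $B_j$-growth rate is $0$ until the wave has traversed all of $I_i,\dots,I_{j-1}$. During that entire interval $v_j$ remains a mutant (nothing in $I_j$ can kill it) and fires at rate $r/F$, so the expected number of $v_j$-firings in its ``$b=0$ phase'' is on the order of $2r(j-i)a$ rather than $2r$. The per-level constant $2ar$ is therefore unjustified for $j>i$, and the recursion inequality is in general false for $j-i$ large; repairing it (e.g., by arguing that the win typically occurs before the wave reaches $I_{j}$) would require a substantially different analysis. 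There is also a minor arithmetic slip: even granting the recursion, its fixed point is $4ar+1$, not $4ar$, so the stated constant $2^{-i+4}ar$ would not quite follow.

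For contrast, the paper avoids tracking the levels $j>i$ altogether. It refines the stopping times to include ``progress'' events (strict shrinkage of $M(t)\cap I_i$) and shows that at every step before the first win or loss, conditioned on the current state, the probability of a spawn is at most $2r$ times the probability of a progress, win, or loss. Since $|M(0)\cap I_i|=a$, at most $a$ progress events can occur, so the $(a+1)$-st non-spawning event must be a win or loss; the number of spawns is then dominated by a negative binomial counting successes before $a+1$ failures with failure probability at least $1/(2r+1)$, giving $\E[N]\le(2r+1)(a+1)\le 4ar$ directly. You would do better to adopt that single-race argument than to try to patch the recursion.
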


\begin{proof}
	Let $\tau_0=0$. For $j>0$,
	let $\tau_j = \min \{t > \tau_{j-1} \mid \mbox{$M$ loses, wins, or spawns at~$t$}\}$.
	Let $\calE_j$ be the event that $M$ spawns at $\tau_1,\ldots,\tau_j$ and loses at $\tau_{j+1}$.
	We must show that $\sum_{j=0}^\infty \Pr(\calE_j) \leq 2^{-i+4} ar$.
	
	If $\calE_j$ occurs, then $M(\tau_j)= Y$
	for some state $Y$ that  
	is consistent 
	and satisfies $ Y\subseteq W_i \cup V_{i-1}$ (by Observation~\ref{obs:dir-duh}), and
	such that
	$v_{i-1} \in Y$ (since $M$ spawns at $\tau_j$). By Lemma~\ref{lem:dir-tspawn},
	$\Pr(\calE_j \mid M(\tau_j)= Y) \leq 2^{-i+2}$.
	
	Now let $S = \{ j>0 \mid \text{$M$ spawns at all of $\tau_1,\ldots,\tau_j$} \}$.
	Then
	\begin{align*}
	\sum_{j=0}^\infty \Pr(\calE_j) 
	&\leq 2^{-i+2} \sum_{j=0}^\infty \sum_Y
	\Pr( \text{$M$ spawns at all of $\tau_1,\ldots,\tau_j$ and $M(\tau_j)=Y$}) \\
	&= 2^{-i+2} \sum_{j=0}^\infty 
	\Pr( \text{$M$ spawns at all of $\tau_1,\ldots,\tau_j$})  \\
	&= 2^{-i+2} \mathbb{E}[|S|].
	\end{align*}
	
	So to finish, we need only show that $\mathbb{E}[|S|] \leq 4 ar $.
	To do this, let 
	$\tau'_0=0$. For $j>0$,
	let 
	$\tau'_j = \min \{t > \tau'_{j-1} \mid \mbox{$M$ loses, wins, progresses or spawns at~$t$}\}$.
	We say that $\tau'_j$ is a ``spawning event'' if $M$ spawns at~$\tau'_j$ and
	a ``non-spawning event'' otherwise.

	Let $j^* = \min\{j > 0 \mid \mbox{$M$ wins or loses at $\tau'_j$}\}$.
	From the definition of $S$,  $|S|$ is the number of spawns before the first win or loss,  so
	$|S| = |\{j \in  [j^*-1] \mid \mbox{$\tau'_j$ is a spawning event}\}|$.
	We will 
	show that
	the expected number of spawning events before $\tau'_{j^*}$ is at most $4 ar$,
	but in order to do this, it helps to give an alternative definition of $j^*$.
	$$j^* = \min\left\{j > 0 \ \ \middle| \begin{array}{l}\mbox{$M$ wins or loses at $\tau_j'$ or there are at least}\\\qquad\mbox{ $a+1$ non-spawning events in $\tau'_1,\ldots, \tau'_j$}\end{array} \right\}.$$
	To see that the new definition of~$j^*$ is equivalent to the old one,
	note that $M(0)=X_i$, so $M$ can progress at most $a$ times (after that, its intersection with~$I_i$ is empty)
	so the $(a+1)$st non-spawning event has to be a win or lose.
	
	Using the new definition of $j^*$, we will finish the proof by 
	showing that the expected number of spawning events before $\tau'_{j^*}$ is at most $4 a r$.
	To do this, we will consider any $j< j^*$ and
	show (in the bulleted items below)
	that  for any $t \in [\tau'_j,\tau'_{j+1})$, 
	conditioned on $M(t)$,
	the probability that $M$ has a spawning event at~$t+1$ is at most $2r$ times the probability
	that $M$ has a non-spawning event at~$t+1$.
	Thus the probability that $\tau_{j+1}'$ is a non-spawning event is at least $1/(2r+1)$, and the number of spawning events before~$j^*$
	is dominated above by a negative binomial variable with failure probability $1/(2r+1)$, where we 
	count the successes before $a+1$ failures have occurred.
	The expectation of this variable is at most $(2r+1)(a+1)\leq 4ar$,
	so we can conclude that $\mathbb{E}(|S|) \leq 4ar$.
	
	We conclude by looking at the relevant probabilities.
	Fix $0 < j < j^*$ and $t\in [\tau'_j,\tau'_{j+1})$.
	By Observation~\ref{obs:dir-duh},  $M(t)$ is  consistent and satisfies $M(t) \cap W_1 \subseteq W_i$.
	Also, since $j< j^*$,
	$M(t) \setminus X_{i+1}$ is non-empty.
	Our goal is to show that
	the probability that $M$ has a spawning event at~$t+1$ is at most $2r$ times the probability
	that $M$ has a non-spawning event at~$t+1$. 
	\begin{itemize}
		\item
		If $M(t) \cap \{v_1,\ldots,v_{i-1}\}$ is non-empty then the probability that $M$ has a spawning event at $t+1$ is zero, which is trivially
		at most $2r$ times the probability of  a non-spawning event at $t+1$.
		\item If $M(t) \cap \{v_1,\ldots,v_{i-1}\}$ is empty
		and $F$ is the total fitness of $M(t)$, then the probability that $M$ spawns at $t+1$ is at most $r/F$. 
		We will show that the probability that it has a non-spawning event is at least $1/(2F)$, as required.
		\begin{itemize}
			\item If $M(t) \cap I_i$ is non-empty then the probability that $M$ progresses at $t+1$ is at least $1/(2 F)$.
			\item If $M(t) \cap I_i$ is empty 
			then, since $M(t) \setminus X_{i+1}$ is non-empty, $v_i\in M(t)$.
			So the probability that $M$ wins at $t+1$ is at least $a/(2F) > 1/(2F)$.\qedhere
		\end{itemize}
	\end{itemize}
\end{proof}	

We now convert Lemma~\ref{lem:dir-fixate} into a bound on fixation probability.

\begin{lemma}\label{cor:dir-fixate}
	Let $r>1$, and let $a = \ceil{4r}$. Then for all $i \ge 1$, the probability of a Moran process on $\dirG$ with fitness $r$ fixating from any initial state contained in $X_i$ is at most $2^{-i+5}ar$.
\end{lemma}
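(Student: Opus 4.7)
The plan is to iterate Lemma~\ref{lem:dir-fixate} together with a monotonicity property of the fixation probability in the initial mutant set. First I would prove the monotonicity: for any $S \subseteq S' \subseteq V(\dirG)$, the fixation probability from~$S$ is at most that from~$S'$. This follows from a standard uniformization coupling of the Moran process: independently sample $(v, w, U)$ at each step with $v \in V(\dirG)$ and $U \in [0,1]$ uniform and $w$ a uniform out-neighbor of~$v$, and declare that $v$ spawns onto~$w$ whenever either $v$ is a mutant or ($v$ is a non-mutant and $U < 1/r$). This produces the correct $r:1$ ratio of mutant versus non-mutant reproduction rates, and driving the two processes with common $(v,w,U)$ preserves $M_S(t) \subseteq M_{S'}(t)$ by a routine induction on~$t$, so fixation of $M_S$ forces fixation of $M_{S'}$.

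Given monotonicity, it suffices to prove that $g_i := \Pr(M \text{ fixates} \mid M(0) = X_i) \leq 2^{-i+5}ar$ for all $i \geq 1$. This is trivial whenever $2^{-i+5}ar \geq 1$, which covers $i = 1$ (since $16ar > 1$), and for $i \geq k+1$ we have $X_i = \emptyset$ and hence $g_i = 0$. For $2 \leq i \leq k$, Lemma~\ref{lem:dir-fixate} says that with probability at least $1 - 2^{-i+4}ar$ the process reaches some state $M(\tau) \subseteq X_{i+1}$, at time $\tau := \min\{t : M(t) \subseteq X_{i+1}\}$, before $w_1$ ever becomes a mutant; on this event the strong Markov property combined with monotonicity gives a conditional fixation probability of at most $g_{i+1}$, while on the complementary event I bound fixation by~$1$. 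This yields the recurrence $g_i \leq 2^{-i+4}ar + g_{i+1}$, so iterating downward with $g_{k+1} = 0$ gives $g_i \leq \sum_{j=i}^{k} 2^{-j+4}ar \leq 2^{-i+5}ar$.

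The only subtlety is verifying the monotonicity coupling carefully; the geometric-sum iteration and the boundary cases are routine.
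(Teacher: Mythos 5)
Your proposal is correct and takes essentially the same route as the paper: both iterate Lemma~\ref{lem:dir-fixate} down the chain $X_i \subset X_{i-1} \subset \dots$ via the strong Markov property at the first time the mutant set enters $X_{j+1}$, and both invoke monotonicity of fixation probability in the initial mutant set to pass from the state $X_i$ to arbitrary subsets of it. The only differences are cosmetic: the paper phrases the recursion multiplicatively in extinction probabilities (bounding $1-\prod_j(1-x_j)$ by $\sum_j x_j$, which is exactly your additive recurrence), and it cites the monotonicity from~\cite[Theorem~6]{MoranAbsorb} rather than re-deriving it with the uniformization coupling you sketch, which is the standard and correct argument.
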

\begin{proof}
	Note that if $i\le 7$ then the bound is vacuous since $ar>5$; so suppose $i \ge 8$. For all $Y \subseteq V(\dirG)$, let $\MM^Y$ be a Moran process on $\dirG$ with fitness $r$ and initial state $Y$.  For all $j \in [k]$, let
	\begin{equation*}
	\tau_j = \min\{t\ge 0 \mid \MM{X_j}{t} \subseteq X_{j+1}\mbox{ or }\MM{X_j}{t} = V(\dirG)\}.
	\end{equation*}
	It is well-known that adding mutants to a Moran process does not increase its extinction probability; indeed, this is immediate from~\cite[Theorem 6]{MoranAbsorb}. In particular, for all $Y \subseteq X_j$, the extinction probability of $\MM^Y$ is at least that of $\MM^{X_j}$. It follows that for all $j \in [k]$,
	\begin{align*}
	\Pr(\MM^{X_j}\mbox{ goes extinct}) 
	&\ge \Pr(\MM{X_j}{\tau_j} \subseteq X_{j+1})\cdot \Pr(\MM^{X_{j+1}}\mbox{ goes extinct}).
	\end{align*}
	By Lemma~\ref{lem:dir-fixate}, it follows that
	\[
	\Pr(\MM^{X_j}\mbox{ goes extinct}) \ge (1-2^{-j+4}ar)\cdot \Pr(\MM^{X_{j+1}}\mbox{ goes extinct}).
	\]
	Solving the recurrence relation, since $X_{k+1} = \emptyset$ we obtain
	\[
	\Pr(\MM^{X_i}\mbox{ goes extinct}) \ge \prod_{j=i}^k (1-2^{-j+4}ar) \ge 1 - \sum_{j=i}^\infty 2^{-j+4}ar = 1-2^{-i+5}ar.
	\]
	Finally, we note that for all $Y \subseteq X_i$, we have $\Pr(\MM^Y\mbox{ goes extinct}) \ge \Pr(\MM^{X_i}\mbox{ goes extinct})$. The result follows.
\end{proof}

Our main results now follow easily.

\begin{theorem}\label{thm:dir-fixate}
	Let $r>1$, and let $a=\ceil{4r}$. Then for all $k \ge 2$, the fixation probability of a Moran process on $\dirG$ with fitness $r$ and uniformly random initial mutant is at most $72r\log_2(r+1)/|V(\dirG)|$.
\end{theorem}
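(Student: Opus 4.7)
\smallskip
\noindent\textbf{Proof proposal.} The approach is to average the per-vertex fixation probability bound from Lemma~\ref{cor:dir-fixate} over the uniformly random initial mutant, exploiting the fact that each vertex of $\dirG$ lies in some $X_i$ with $i$ roughly proportional to its ``distance around the cycle'' from $v_0=w_1$, so that vertices far from $w_1$ are very unlikely to fixate.

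First, I would observe that $|V(\dirG)| = k(a+1)$ and that the nested sets $X_1 \supseteq X_2 \supseteq \dots \supseteq X_{k+1}=\emptyset$ partition $V(\dirG)$ into $k$ ``layers'' $X_i \setminus X_{i+1} = I_i \cup \{v_i\}$, each of cardinality $a+1$. For any vertex $v$ in the $i$-th layer, $\{v\}\subseteq X_i$, so Lemma~\ref{cor:dir-fixate} gives
\[
\Pr(\MM^{\{v\}}\text{ fixates}) \leq \min\{1,\,2^{-i+5}ar\}.
\]
Averaging over the uniformly random initial mutant yields
\[
|V(\dirG)|\cdot f_{\dirG,r}\ \le\ (a+1)\sum_{i=1}^{k}\min\{1,\,2^{-i+5}ar\}.
\]

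Next I would split the sum at the threshold $M = \lceil\log_2(ar)\rceil + 5$, where the trivial bound $1$ becomes worse than the geometric bound. For $i\le M$ we use the bound $1$, and for $i>M$ we use the geometric bound $2^{-i+5}ar$, whose tail sums to at most $2^{5-M}ar\le 1$ by the choice of $M$. Therefore
\[
\sum_{i=1}^{k}\min\{1,\,2^{-i+5}ar\}\ \le\ M+1\ \le\ \lceil\log_2(ar)\rceil+6.
\]

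Finally I would perform the arithmetic to absorb constants into the claimed bound $72r\log_2(r+1)$. Using $a = \lceil 4r\rceil$, we have $a+1\le 4r+2\le 6r$ for $r>1$, and
\[
M+1 \ \le\ \log_2(5r^2)+7 \ =\ 2\log_2 r + 7 + \log_2 5\ \le\ 12\log_2(r+1),
\]
where the last inequality uses $\log_2(r+1)\ge 1$ for $r\ge 1$. Multiplying gives $(a+1)(M+1)\le 6r\cdot 12\log_2(r+1)=72r\log_2(r+1)$, and dividing through by $|V(\dirG)|$ finishes the proof. There is no real obstacle here beyond careful bookkeeping; all structural content is already packaged in Lemma~\ref{cor:dir-fixate}, and the only reason the final constant factor is not sharper is that we are matching the clean $\log_2(r+1)$ form stated in the theorem.
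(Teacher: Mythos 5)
Your proposal is correct and follows essentially the same route as the paper: group the vertices into the $k$ layers $I_i\cup\{v_i\}$ of size $a+1$, apply Lemma~\ref{cor:dir-fixate} to each singleton initial state, and split the resulting sum $\sum_i\min\{1,2^{-i+5}ar\}$ at the point where the geometric bound drops below $1$. The only differences are cosmetic (your threshold $M=\ceil{\log_2(ar)}+5$ versus the paper's $c=6+\ceil{\log_2(5r^2)}$, and slightly different final arithmetic), and your constant-chasing checks out.
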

\begin{proof}
	Let $m_0 \in V(\dirG)$ be uniformly random, and let $\MM$ be the Moran process on $\dirG$ with fitness $r$ and initial state $\{m_0\}$. We have
	\[
	\Pr(\MM\mbox{ fixates}) = \frac{1+a}{|V(\dirG)|}\sum_{i=1}^k \Pr(\MM \mbox{ fixates} \mid m_0 \in \{v_i\} \cup I_i).
	\]
	It follows by Lemma~\ref{cor:dir-fixate} that
	\[
	\Pr(\MM\mbox{ fixates}) \le \frac{1+a}{|V(\dirG)|} \sum_{i=1}^\infty \min\{1,2^{-i+5}ar\}.
	\]
	Let $c = 6+\ceil{\log_2(5r^2)}$, so that $\sum_{i=c}^\infty 2^{-i+5}ar \le 1$. Then we have
	\[
	\Pr(\MM\mbox{ fixates}) \le \frac{(1+a)}{|V(\dirG)|}\Big(c-1 + \sum_{i=c}^\infty 2^{-i+5}ar\Big) \le \frac{6r(10+2\log_2 r)}{|V(\dirG)|} < \frac{72r\log_2(r+1)}{|V(\dirG)|},
	\]
	as required.		
\end{proof}

\begin{theorem}\label{thm:dir-extinct}
	Let $r>1$, and let $a=\ceil{4r}$. Then for all $k \ge 7+\ceil{\log_2(5r^2)}$,
	the fixation probability of a Moran process on $\dirG$ with fitness $1/r$ and uniformly random initial mutant is at least 
	$1/\big(|V(\dirG)|\,(9r)^{60r^2}\big)$. 
\end{theorem}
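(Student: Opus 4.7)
The plan is to use Moran-process duality to reduce the statement to an extinction bound, and then to apply Lemma~\ref{cor:dir-fixate} after pushing the non-mutants into a favourable configuration. For any digraph $G$, if $\MM$ is a fitness-$(1/r)$ Moran process on $G$, then $\bar\MM(t) := V(G) \setminus \MM(t)$ is (after rescaling the fitnesses that result from swapping mutant/non-mutant labels) a fitness-$r$ Moran process on $G$, and $\MM$ fixates iff $\bar\MM$ goes extinct. Consequently, the fixation probability of interest equals the extinction probability of a fitness-$r$ process on $\dirG$ starting from $V(\dirG) \setminus \{m_0\}$ for a uniformly-random vertex $m_0$. Conditioning on $m_0 = w_1$ (which happens with probability $1/|V(\dirG)|$) reduces the problem to showing that the fitness-$r$ process starting from $V(\dirG)\setminus\{w_1\}$ goes extinct with probability at least $(9r)^{-60r^2}$.

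To prove this extinction bound, set $i^\star = 6 + \lceil \log_2(5r^2)\rceil$, noting $i^\star \le k$ by our hypothesis on $k$. By Lemma~\ref{cor:dir-fixate}, once the mutant set is contained in $X_{i^\star}$, the fitness-$r$ process goes extinct with probability at least $1 - 2^{-i^\star+5}ar \ge 1/2$. So it suffices to show that the non-mutants grow from the singleton $\{w_1\}$ to contain $V(\dirG) \setminus X_{i^\star} = \{w_1,\dots,w_{(i^\star-1)a}\} \cup \{v_1,\dots,v_{i^\star-1}\}$ with probability at least $2(9r)^{-60r^2}$. I would establish this by lower-bounding the probability of one specific ``lucky'' trajectory, which proceeds in $i^\star-1$ stages: in stage $j$, the non-mutants first extend along the cycle through the $a$-vertex block $I_j$ via consecutive cycle-edge spawns, and then one of those non-mutants in $I_j$ spawns onto the mutant $v_j$. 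At each cycle-extend step (current non-mutants consisting of the ``lower'' configuration extended by $\{w_{(j-1)a+1},\dots,w_{(j-1)a+\ell}\}$), the only state-changing events are the desired extend at rate $1/2$, spurious non-mutant attacks on $v_j$ at total rate $\ell/2$, and the reversal $v_j \to v_{j-1}$ at rate $r$, so the desired event occurs with conditional probability at least $1/(\ell+1+2r) \ge 1/(6r+1)$; the final attack step then succeeds with probability $(a/2)/(a/2+r) \ge 2/3$ since $a \ge 4r$.

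Multiplying the per-step probabilities over all $a(i^\star-1) = O(r \log r)$ critical steps yields a bound of order $(6r+1)^{-O(r \log r)}$ for the probability of the lucky trajectory, and elementary algebra verifies that this is at least $2(9r)^{-60r^2}$ for all $r>1$. The main obstacle is carefully identifying and enumerating all state-changing events at each intermediate state to justify the conditional probability computation; the design of $\dirG$ makes this tractable because most potential state changes are blocked by the fact that both endpoints of the relevant edge lie in the same-state set, and because between successive stages the ``lower'' portion of the non-mutant block remains insulated from attack (the only in-neighbour of $w_i$ for $1 < i \le (j-1)a$ is another non-mutant, and the only in-neighbour of $w_1$ is $v_1$, which has already been captured by non-mutants in previous stages).
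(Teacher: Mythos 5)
Your proposal is correct and follows essentially the same route as the paper: duality, conditioning on the initial mutant being $w_1$, lower-bounding the probability of one prescribed trajectory of active steps that marches the non-mutants block-by-block around the cycle (capturing each $v_j$ in turn) until the mutant set lies in $X_{i^\star}$ (the paper pushes one block further, to $X_{i^\star+1}$), and finishing with Lemma~\ref{cor:dir-fixate}. The only quibble is that at each ``attack $v_j$'' step you omit the competing active event $w_{ja}\to w_{ja+1}$, so the conditional probability there is $(a/2)/(a/2+\tfrac12+r)\ge 1/2$ rather than $2/3$; this is harmless given the slack in the final $(9r)^{-60r^2}$ bound.
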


\begin{proof}
	Let $n = |V(\dirG)|$.
	Let $m_0 \in V(\dirG)$ be uniformly random. By switching the roles of non-mutants and mutants, it suffices to prove that the extinction probability of a Moran process $\MM$ on $\dirG$ with fitness $r$ and initial state $V(\dirG) \setminus \{m_0\}$ is at least 
	$1/n(9r)^{60r^2}$. 
	
	Let $c = 6+\ceil{\log_2(5r^2)}$.
	We will first crudely lower-bound the probability that $\MM$ reaches state $X_{c+1}$ 
	before absorption, and then apply Lemma~\ref{cor:dir-fixate}. 
	For all $i \in \{1,\ldots,c-1\}$, define the
	$F_i$ to be the following sequence of $a+1$
	edges in $\dirG$.
	\[
	F_i = ((w_{(i-1)a+1}, w_{(i-1)a+2}), (w_{(i-1)a+2}, w_{(i-1)a+3}), \dots, (w_{ia}, w_{ia+1}), (w_{(i-1)a+1}, v_i)).
	\]
	Also, define $F_c$ to be the following sequence of $a$ edges.
	\[
	F_c = ((w_{(c-1)a+1}, w_{(c-1)a+2}), (w_{(c-1)a+2}, w_{(c-1)a+3}), \dots, (w_{c a-1}, w_{c a}), (w_{(c-1)a+1}, v_c)).
	\] 
	
	Let $\mathcal{E}$ be the event that $m_0 = w_1$
	(so the initial state is 
	$V(\dirG) \setminus \{w_1\}$)
	and the first $(a+1)c-1$ active steps of $M$ are   all spawns of non-mutants starting from~$w_1$ and following
	(exactly) the order of edges in   $F_1, \dots, F_c$.
	The resulting state is $X_{c+1}$.
	Given any intermediate state along this
	sequence of states,
	the probability that the next
	active step is the specified one
	is at least $(\tfrac12)/(\tfrac12 + \tfrac{a}{2}+ r+\tfrac12)\geq 1/9r$.  (Here, the first three terms in the first denominator correspond, respectively, to the desired step, up to $a$ of the vertices $w_{(i-1)a+1}, \dots, w_{ia}$ spawning a non-mutant onto~$v_i$, and $v_i$~spawning a mutant onto~$v_{i-1}$. The final term corresponds to $w_{ia+1}$~spawning a non-mutant onto~$w_{ia+2}$, if $i<c$, or $w_{ia}$ spawning a non-mutant onto $w_{ia+1}$, if $i=c$.)  Therefore,
	the probability that 
	$M$ reaches state $X_{c+1}$ before absorption is at least the probability 
	that $\mathcal{E}$ occurs, which is 
	at least 
	$1/n(9r)^{(a+1)c-1}$.
	
	By Lemma~\ref{cor:dir-fixate}, the probability that $\MM$ goes extinct from state $X_{c+1}$ is at least 
	$1 - 2^{-c+5}ar\geq 1/2\geq 1/9r$, so the probability that $M$ goes
	extinct is at least 
	$1/n(9r)^{(a+1)c}$.
	The result follows from the fact that 
	$a+1 \leq 6 r$
	and $c \leq 10r$.  (Obviously, this is not tight, but the precise function of $r$ will not be important for us.)
\end{proof}

Theorem~\ref{thm:dir-nophase} now follows easily from Theorems~\ref{thm:dir-fixate} and~\ref{thm:dir-extinct}, taking $\calG = \{G_{7+\ceil{\log_2(5r^2)},\ceil{4r}} \mid r >1\}$. (Note that fixation probability is increasing in $r$~\cite[Corollary~7]{MoranAbsorb}.)

\section{Undirected suppressors}
\label{sec:undir-supp}

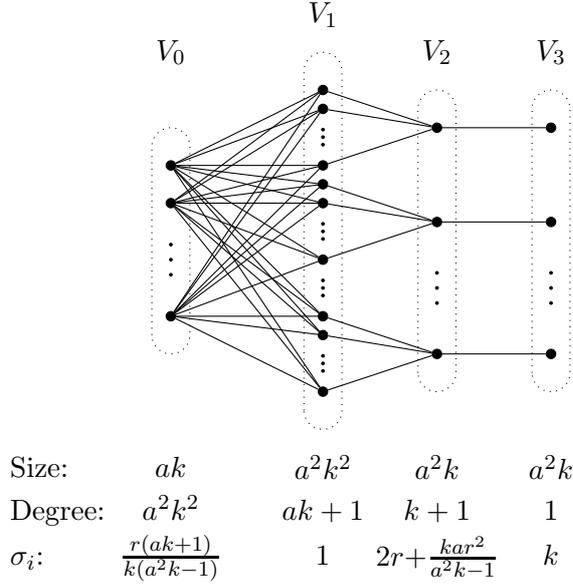
\begin{figure}
\begin{center}

\begin{tikzpicture}[scale=1,node distance = 1.5cm]
\tikzstyle{dot}   =[fill=black, draw=black, circle, inner sep=0.1mm]
\tikzstyle{vertex}=[fill=black, draw=black, circle, inner sep=1.25pt]

    \node[vertex] (v01) at (0,1.50) {};
    \node[vertex] (v02) at (0,1.00) {};
    \node[vertex] (v03) at (0,-0.5) {};

    \node[dot] (d0) at ($(v02)!0.5!(v03)$) {};
    \node[dot]      at ($(d0)!2mm!(v02)$) {};
    \node[dot]      at ($(d0)!2mm!(v03)$) {};

    \node[vertex] (v11) at (2,2.50) {};
    \node[vertex] (v12) at (2,2.25) {};
    \node[vertex] (v13) at (2,1.50) {};

    \node[vertex] (v14) at (2,1.25) {};
    \node[vertex] (v15) at (2,1.00) {};
    \node[vertex] (v16) at (2,0.25) {};

    \node[vertex] (v17) at (2,-0.50) {};
    \node[vertex] (v18) at (2,-0.75) {};
    \node[vertex] (v19) at (2,-1.50) {};

    \node[dot] (d1) at ($(v12)!0.5!(v13)$) {};
    \node[dot]      at ($(d1)!1mm!(v12)$) {};
    \node[dot]      at ($(d1)!1mm!(v13)$) {};

    \node[dot] (d2) at ($(v15)!0.5!(v16)$) {};
    \node[dot]      at ($(d2)!1mm!(v15)$) {};
    \node[dot]      at ($(d2)!1mm!(v16)$) {};

    \node[dot] (d3) at ($(v16)!0.5!(v17)$) {};
    \node[dot]      at ($(d3)!1mm!(v16)$) {};
    \node[dot]      at ($(d3)!1mm!(v17)$) {};

    \node[dot] (d4) at ($(v18)!0.5!(v19)$) {};
    \node[dot]      at ($(d4)!1mm!(v18)$) {};
    \node[dot]      at ($(d4)!1mm!(v19)$) {};

    \node[vertex] (v21) at (3.5,2.00) {};
    \node[vertex] (v22) at (3.5,0.75) {};
    \node[vertex] (v23) at (3.5,-1.0) {};

    \node[dot] (d5) at ($(v22)!0.5!(v23)$) {};
    \node[dot]      at ($(d5)!2mm!(v22)$) {};
    \node[dot]      at ($(d5)!2mm!(v23)$) {};

    \node[vertex] (v31) at (5,2.00) {};
    \node[vertex] (v32) at (5,0.75) {};
    \node[vertex] (v33) at (5,-1.0) {};

    \node[dot] (d6) at ($(v32)!0.5!(v33)$) {};
    \node[dot]      at ($(d6)!2mm!(v32)$) {};
    \node[dot]      at ($(d6)!2mm!(v33)$) {};

    \foreach \x in {1,2,3} {
        \foreach \y in {1, ..., 9} {
            \draw (v0\x) -- (v1\y);
        }
    }

    \foreach \x in {1,2,3} { \draw (v1\x) -- (v21); }
    \foreach \x in {4,5,6} { \draw (v1\x) -- (v22); }
    \foreach \x in {7,8,9} { \draw (v1\x) -- (v23); }

    \foreach \x in {1,2,3} { \draw (v2\x) -- (v3\x); }

    \draw[rounded corners=2.5mm,dotted]
        (-0.25,-1) rectangle (0.25,2);
    \draw[rounded corners=2.5mm,dotted]
        (1.75,-2) rectangle (2.25,3);
    \draw[rounded corners=2.5mm,dotted]
        (3.25,-1.5) rectangle (3.75,2.5);
    \draw[rounded corners=2.5mm,dotted]
        (4.75,-1.5) rectangle (5.25,2.5);

    \node at (0.0,3.0) {$V_0$};
    \node at (2.0,3.5) {$V_1$};
    \node at (3.5,3.0) {$V_2$};
    \node at (5.0,3.0) {$V_3$};

    \node[anchor=west] at (-2.25,-2.5)  {Size:};
    \node at (0.0,-2.5) {$ak$};
    \node at (2.0,-2.5) {$a^2k^2$};
    \node at (3.5,-2.5) {$a^2k$};
    \node at (5.0,-2.5) {$a^2k$};

    \node[anchor=west] at (-2.25,-3.1)  {Degree:};
    \node at (0.0,-3.1) {$\strut a^2k^2$};
    \node at (2.0,-3.1) {$\strut ak+1$};
    \node at (3.5,-3.1) {$\strut k+1$};
    \node at (5.0,-3.1) {$\strut 1$};

    \node[anchor=west] at (-2.25,-3.7) {$\strut \sigma_i$:};
    \node at (0.0,-3.7) {$\strut \frac{r(ak+1)}{k(a^2k-1)}$};
    \node at (2.0,-3.7) {$\strut 1$};
    \node at (3.5,-3.7) {$\strut 2r{+}\frac{kar^2}{a^2k-1}$};
    \node at (5.0,-3.7) {$\strut k$};
\end{tikzpicture}
\caption{The undirected suppressor $H_{a,k}$.}
\label{fig:undir-supp}
\end{center}
\end{figure}

\newcommand{\undG}{H_{a,k}}
In this section, we exhibit a family of strong undirected suppressors, proving Theorem~\ref{thm:undir-suppress}. We first define our graphs. Let $r>1$, and let $a,k\ge 1$ be integers. Let $V_0, \dots, V_3$ be disjoint vertex sets with accompanying weights $\sigma_0, \dots, \sigma_3$, where
\begin{alignat*}{5}
|V_0| &= ak,\qquad &\sigma_0 &= \frac{r(ak+1)}{k(a^2k-1)},\\
|V_1| &= a^2k^2, &\sigma_1 &= 1,\\
|V_2| &= a^2k, &\sigma_2 &= 2r + \frac{kar^2}{a^2k-1},\\
|V_3| &= a^2k, &\sigma_3 &= k.
\end{alignat*}
Define $\undG$ to be a graph with vertex set $V_0 \cup V_1 \cup V_2 \cup V_3$ which is the union of a complete bipartite graph between $V_0$ and $V_1$, $a^2k$ vertex-disjoint $k$-leaf stars between $V_1$ and $V_2$, and a perfect matching between $V_2$ and $V_3$. See Figure~\ref{fig:undir-supp}. For all $0 \le i \le 3$ and all $v \in V_i$, define $\sigma(v) = \sigma_i$. Define a potential function $\sigma\colon2^{V(\undG)} \rightarrow \mathbb{R}_{\ge 0}$ by $\sigma(S) = \sum_{v \in S}\sigma(v)$.

We will first show that if $\MM$ is a Moran process on $\undG$ then, unless $\sigma(\MM{t})$ is fairly large (for example if $\MM{t}$ contains a vertex in $V_3$), it decreases in expectation. This will allow us to use a standard optional stopping theorem argument to upper-bound the fixation probability of the Moran process from any given initial state. The rest of the proof will consist of trivial calculations.

\begin{lemma}\label{lem:undir-martin}
	Let $r>1$, let $a = \ceil{7r^2/2}$, and let $k \ge 28$ be an integer. Let $\MM$ be a Moran process on $\undG$ with fitness $r$. For all $S \subseteq V$ with $0 < \sigma(S) < k$ and all $t \ge 0$, 
	\[
	\E\big(\sigma(\MM{t+1}) - \sigma(\MM{t}) \mid \MM{t} = S\big) \le 0.
	\]
\end{lemma}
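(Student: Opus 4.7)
My plan is to show the stronger statement that
\[
D(S) \ :=\ \sum_{(x,y)\in E(S,V\setminus S)}\left(\tfrac{r\sigma(y)}{d(x)}-\tfrac{\sigma(x)}{d(y)}\right)\ \leq\ 0,
\]
from which the lemma follows by the same computation as in Lemma~\ref{lem:drift-works}: the conditional expected increment equals $D(S)/W(S)$ and $W(S)>0$. The hypothesis $\sigma(S)<k=\sigma_3$ forces $S\cap V_3=\emptyset$, and gives the further constraints $m_1<k$ and $m_2\sigma_2<k$, where $m_i:=|S\cap V_i|$.

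I would decompose $D(S)=D_{01}+D_{12}+D_{23}$ by layer interface. Let $n_i:=|V_i|-m_i$. Since $V_0$--$V_1$ is complete bipartite (with $d_0=a^2k^2$, $d_1=ak+1$) and $V_2$--$V_3$ is a perfect matching (with $d_2=k+1$, $d_3=1$), substituting the specific values of $\sigma_0$ and $\sigma_3$ and collecting terms gives
\[
D_{01}=\frac{-r\,m_0 n_1+\bigl((r^2-1)a^2k+1\bigr)m_1 n_0}{a^2k^2(a^2k-1)},\qquad D_{23}=m_2\!\left(\tfrac{rk}{k+1}-\sigma_2\right)\leq -m_2 r,
\]
where the last bound uses $\sigma_2\geq 2r$. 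For the star layer, let $e':=|E(V_1\setminus S,\,S\cap V_2)|\leq km_2$; since each vertex of $V_1$ has exactly one $V_2$-neighbour and each vertex of $V_2$ has exactly $k$ $V_1$-neighbours, a double-counting argument gives $|E(S\cap V_1,V_2\setminus S)|=m_1-km_2+e'$, and therefore
\[
D_{12}=(m_1-km_2)A+e'B,\quad A:=\tfrac{r\sigma_2}{ak+1}-\tfrac{1}{k+1},\quad B:=\tfrac{(r-1)\sigma_2}{ak+1}+\tfrac{r-1}{k+1}>0.
\]
A direct calculation from $\sigma_2=2r+kar^2/(a^2k-1)$, using $a\geq 7r^2/2$ and $k\geq 28$, shows $A\leq 0$.

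The remainder is algebra and constitutes the main obstacle; I would split into two cases. If $m_2=0$, then $e'=0$ and $D_{23}=0$, so $D(S)=D_{01}+m_1 A$; after substituting $n_0=ak-m_0$ and $n_1=a^2k^2-m_1$, the coefficient of $m_0$ becomes $-ra^2k^2<0$ and the coefficient of $m_0m_1$ simplifies to $-(r-1)\bigl((r+1)a^2k-1\bigr)<0$, so the expression is maximised at $m_0=0$, reducing the inequality to the one-variable bound $\tfrac{(r^2-1)a^2k+1}{ak(a^2k-1)}+\tfrac{r\sigma_2}{ak+1}\leq\tfrac{1}{k+1}$, which is elementary to verify from the hypotheses on $a$ and $k$. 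If $m_2\geq 1$, then the negative slack $D_{23}\leq -m_2 r$ must absorb the positive parts of $D_{01}$ (of order $r^2 m_1/(ak)\leq r^2/a$ using $n_0\leq ak$ and $m_1<k$) and of $D_{12}$ (namely $e'B\leq km_2 B=O(m_2 r^2/a)$, together with $(km_2-m_1)|A|$ when $m_1<km_2$); since $a\geq 7r^2/2$ gives $r^2/a\leq 2/7$ and $k\geq 28$ controls the remaining lower-order terms, each positive contribution is at most a small constant times $m_2$, giving enough slack. I expect the tightest subcase to be $m_2=1$ with $m_1<km_2$, where both the $(m_1-km_2)A$ and $e'B$ positive contributions of $D_{12}$ are near-maximal and must be offset by the single $-r$ available from $D_{23}$; verifying this relies crucially on $a\geq 7r^2/2$.
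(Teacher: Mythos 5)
Your decomposition by layer interface ($D_{01}+D_{12}+D_{23}$ with the $e'$ double-count) is genuinely different from the paper's proof, which groups the sum \emph{by vertex} of $S$: it pairs each $x\in S\cap V_2$ with the (at most $k$) mutant leaves in $V_1$ whose star-centre is $x$, and shows each of the three resulting groups is individually nonpositive. Your $m_2=0$ case and your setup (the formulas for $D_{01}$, $D_{23}$, the identity $|E(S\cap V_1,V_2\setminus S)|=m_1-km_2+e'$, and $A\le 0$) all check out. The approach can be completed, but as written the $m_2\ge 1$ case has a genuine quantitative gap.

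The problem is the budget $D_{23}\le -m_2 r$ combined with the claim $e'B\le km_2B=O(m_2r^2/a)$. That claim is false: $kB=(r-1)\bigl(\tfrac{k\sigma_2}{ak+1}+\tfrac{k}{k+1}\bigr)$, and the second summand is $\approx r-1$, not $O(r^2/a)=O(1)$. So $e'B$ can be as large as roughly $(r-1)m_2$, which is \emph{not} ``a small constant times $m_2$''. If you then bound each positive term independently, you get
\[
e'B+(km_2-m_1)|A|\;\le\;km_2(|A|+B)\;=\;m_2\Bigl(\tfrac{rk}{k+1}-\tfrac{k\sigma_2}{ak+1}\Bigr),
\]
which leaves slack only $m_2\bigl(\tfrac{r}{k+1}+\tfrac{k\sigma_2}{ak+1}\bigr)\approx m_2\bigl(\tfrac{r}{k+1}+\tfrac{2r}{a}\bigr)\approx \tfrac{4m_2}{7r}$ against the budget $m_2r$; for $r>2$ and $k$ large this is smaller than your bound $r^2/a\le 2/7$ on the positive part of $D_{01}$, so the independent bounds do not close. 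Two fixes are available. Either keep the full $D_{23}=m_2\bigl(\tfrac{rk}{k+1}-\sigma_2\bigr)$: the discarded term $-m_2\cdot\tfrac{kar^2}{a^2k-1}$ is exactly the part of $\sigma_2$ designed to absorb the $V_0$-gain of the up to $km_2$ mutant leaves attached to mutant centres, and one checks $\tfrac{(r^2-1)a^2k+1}{a(a^2k-1)}<\tfrac{kar^2}{a^2k-1}$ (equivalent to $a^2k>1$), so it dominates your $D_{01}$ bound. Alternatively, exploit the trade-off you are currently ignoring: the positive part of $D_{01}$ is at most $Cm_1$ with $C=\tfrac{(r^2-1)a^2k+1}{ak(a^2k-1)}$, and your $m_2=0$ inequality is precisely $C\le|A|$, so increasing $m_1$ loses at least as much from $(km_2-m_1)|A|$ as it gains in $D_{01}$; the worst case is then $m_1=0$, where $D_{01}^+=0$ and the budget $-m_2r$ suffices. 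Either repair is short, but one of them is needed.
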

\begin{proof}
	For all $t \ge 0$ and all $S \subseteq V$, writing $W(S)$ for the total fitness of state~$S$, we have
	\[
	\E\big(\sigma(\MM{t+1}) - \sigma(\MM{t}) \,\big|\, \MM{t} = S\big) = \frac{1}{W(S)} \sum_{x \in S}\sum_{y \in N(x) \setminus S} \Bigg(\frac{r\sigma(y)}{d(x)} - \frac{\sigma(x)}{d(y)} \Bigg).
	\]
	For all $x \in S$, define
	\[
	f_S(x) = \sum_{y \in N(x) \setminus S} \Bigg(\frac{r\sigma(y)}{d(x)} - \frac{\sigma(x)}{d(y)} \Bigg).
	\]
	We must therefore prove that $\sum_{x \in S}f_S(x) \le 0$ when $0 < \sigma(S) < k$. We split this sum into three parts and bound each part separately.
	
	First, note that since $\sigma(S) < k$, we have $|V_1 \setminus S| > a^2k^2-k$. It follows that
	\begin{equation}\label{eqn:undir-martin-1}
	\mbox{for all $x \in S \cap V_0$, }f_S(x) = \sum_{y \in V_1 \setminus S} \Bigg(\frac{r\sigma_1}{|V_1|} - \frac{\sigma_0}{ak+1}\Bigg) \le r\sigma_1 - (a^2k^2-k)\cdot\frac{\sigma_0}{ak+1} = 0.
	\end{equation}
	Next, let $S'$ be the set of all $x \in S \cap V_1$ such that the unique neighbour of $x$ in $V_2$ also lies in~$S$. We have
	\begin{equation*}
	\mbox{for all $x \in S'$, }f_S(x) \le \sum_{y \in V_0} \frac{r\sigma_0}{ak+1} = \frac{akr\sigma_0}{ak+1} = \frac{ar^2}{a^2k-1}.
	\end{equation*}
	Moreover, 
	\begin{equation*}
	\mbox{for all $x \in S \cap V_2$, }f_S(x) \le \frac{rk\sigma_1}{k+1} + \frac{r\sigma_3}{k+1} - \sigma_2 \le 2r - \sigma_2.
	\end{equation*}
	By the definition of $S'$, we have $|S'| \le k|S \cap V_2|$. It follows that
	\begin{equation}\label{eqn:undir-martin-2}
	\sum_{x \in S' \cup (S \cap V_2)}f_S(x) \le |S\cap V_2|\Bigg(\frac{kar^2}{a^2k-1} + 2r - \sigma_2 \Bigg) = 0\,.
	\end{equation}
	Finally, for all $x \in (S \cap V_1) \setminus S'$,
	\begin{align*}
	f_S(x) 
	&\le \frac{akr\sigma_0}{ak+1} + \frac{r\sigma_2}{ak+1} - \frac{\sigma_1}{k+1} 
	< \frac{ar^2}{a^2k-1} + \frac{r\sigma_2}{ak} - \frac{1}{k+1}
	= \frac{ar^2+r^3}{a^2k-1} + \frac{2r^2}{ak} - \frac{1}{k+1}\\
	&\le \frac{r^3+3ar^2}{a^2k-1}-\frac{1}{k+1}
	= \frac{(k+1)(r^3+3ar^2-a^2)+1+a^2}{(k+1)(a^2k-1)}.
	\end{align*}
	Let $f(x) = r^3+3xr^2-x^2$. Then by considering the derivative of $f$, it is clear that $f$ is decreasing on $[3r^2/2,\infty]$. Thus $f(a) \le f(7r^2/2) \le r^4 + 21r^4/2 - 49r^4/4 = -3r^4/4$. Since $k \ge 28$ and $a \le 9r^2/2$, it follows that
	\[
	(k+1)(r^3+3ar^2-a^2) + 1+a^2 \le -\frac{3(k+1)r^4}{4} + r^4 +\frac{81r^4}{4} \le 0.
	\]
	Thus
	\begin{equation}\label{eqn:undir-martin-3}
	\mbox{for all $x \in S \cap V_1\setminus S'$, } f_S(x) \le 0.
	\end{equation}
	Finally, note that $S \cap V_3 = \emptyset$ since $\sigma(S) < k$. 
	From~\eqref{eqn:undir-martin-1},~\eqref{eqn:undir-martin-2} and~\eqref{eqn:undir-martin-3}, it now follows that 
	\[
	\sum_{x \in S}f_S(x) = \sum_{x \in S \cap V_0} f_S(x) + \sum_{x \in S' \cup (S \cap V_2)} f_S(x) + \sum_{x \in S \cap V_1 \setminus S'} f_S(x) \le 0,
	\]
	as required.
\end{proof}

\begin{theorem}\label{thm:undir-suppress-main}
	Let $r>1$, let $a = \ceil{7r^2/2}$, and let $k \ge 36r$ be an integer. Let $\MM$ be a Moran process on $\undG$ with fitness $r$ whose initial state is the set containing a uniformly random vertex. Then $\Pr(\MM\textnormal{ fixates}) \le 10r^2/\sqrt{|V(\undG)|}$.
\end{theorem}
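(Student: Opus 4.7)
The plan is to apply the optional stopping theorem to the potential~$\sigma$, using Lemma~\ref{lem:undir-martin} to obtain the supermartingale property, and then average over initial mutants.

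First, define the stopping time $\tau = \min\{t \ge 0 \mid \sigma(\MM{t}) = 0 \textnormal{ or } \sigma(\MM{t}) \ge k\}$. Since $\MM$ reaches an absorbing state in finite expected time (e.g.\ by \cite[Theorem~9]{DGMRSS2014}) and the two absorbing states $\emptyset$ and $V$ satisfy $\sigma(\emptyset) = 0$ and $\sigma(V) \ge a^2k^2 > k$, we have $\tau < \infty$ almost surely. By Lemma~\ref{lem:undir-martin}, the stopped process $(\sigma(\MM{\min(t,\tau)}))_{t \ge 0}$ is a supermartingale; it is bounded above by $\sigma(V)$, so the optional stopping theorem gives $\E[\sigma(\MM{\tau}) \mid \MM{0}] \le \sigma(\MM{0})$.

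Next, because the empty set is absorbing, fixation of $\MM$ implies $\MM{t} \ne \emptyset$ for all $t$, so on the fixation event we have $\sigma(\MM{\tau}) \ne 0$ and hence $\sigma(\MM{\tau}) \ge k$. Therefore $k \cdot \Pr(\MM\textnormal{ fixates} \mid \MM{0}) \le \E[\sigma(\MM{\tau}) \mid \MM{0}] \le \sigma(\MM{0})$. Averaging over a uniformly random initial mutant $m_0$, and writing $n = |V(\undG)|$, yields
\[
\Pr(\MM\textnormal{ fixates}) \le \frac{1}{n} \sum_{v \in V(\undG)} \frac{\sigma(v)}{k} = \frac{\sigma(V)}{kn}\,.
\]

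It then remains to verify the arithmetic inequality $\sigma(V)/(kn) \le 10r^2/\sqrt{n}$ using the explicit values of the $|V_i|$ and $\sigma_i$. Expanding gives $\sigma(V) = 2a^2k^2 + 2a^2kr + [ar(ak+1) + a^3k^2r^2]/(a^2k-1)$, while $n = a^2k^2 + 2a^2k + ak$, so $ak \le \sqrt{n} \le ak\sqrt{1 + 3/k}$. The hypothesis $a = \ceil{7r^2/2}$ ensures $r^2/a \le 2/7$, which tames the fractional term to $akr^2 + O(1)$; the hypothesis $k \ge 36r$ then ensures that every lower-order term is comfortably dominated by the leading $2a^2k^2$. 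Plugging in and using $a \le 4r^2$ (which follows from $r > 1$ and $a \le 7r^2/2 + 1$), the leading-order bound is $2/k$, and the claimed $10r^2/\sqrt{n} \ge 10r^2/(ak\sqrt{1+3/k})$ is at least $\approx 20/(7k)$, giving the required slack. The main obstacle is purely bookkeeping, the most delicate piece being to check that the contribution $a^3k^2r^2/(a^2k-1) \approx akr^2$ from the $V_2$-part of $\sigma(V)$ is absorbed without eroding the constant $10$; no conceptual difficulty arises.
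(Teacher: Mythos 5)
Your proof is correct and follows essentially the same route as the paper: Lemma~\ref{lem:undir-martin} makes the stopped process $\sigma(\MM{\min(t,\tau)})$ a bounded supermartingale, optional stopping plus Markov's inequality gives $\Pr(\MM^x\mbox{ fixates})\le\sigma(x)/k$, and averaging reduces everything to the arithmetic estimate $\sigma(V)/(kn)\le 10r^2/\sqrt{n}$. One small correction in the bookkeeping: the claim $a\le 4r^2$ does \emph{not} follow from $r>1$ and $a\le 7r^2/2+1$ (it fails for $1<r<\sqrt{2}$, e.g.\ $r=1.1$ gives $a=5>4r^2=4.84$); the correct consequence is $a\le 9r^2/2$, which still yields $10r^2/(ak)\ge 20/(9k)$ and leaves enough slack over the leading term $2/k$ for the inequality to close, exactly as in the paper's computation.
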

\begin{proof}
	For all $x \in V(\undG)$, let $\MM^{x}$ be a Moran process on $\undG$ with fitness $r$ and initial state $\{x\}$. Let 
	\[
	\tau^{x} = \min\{t \ge 0 \mid \sigma(\MM{x}{t}) = 0 \mbox{ or }\sigma(\MM{x}{t}) \ge k\}.
	\]
	Then since $\sigma(\MM{x}{t}) = 0$ if and only if $\MM{x}{t} = \emptyset$, we have
	\[
	\Pr(\MM^{x}\mbox{ fixates}) \le \Pr(\sigma(\MM{x}{\tau^{x}}) \ge k) \le \frac{1}{k}\E(\sigma(\MM{x}{\tau^x})),
	\]
    by Markov's inequality.

	For all $t \le \tau^x$, define $\widehat{\MM}^x(t) = \MM{x}{t}$; otherwise, define $\widehat{\MM}^x(t) = \MM{x}{\tau^x}$. Thus $\E(\sigma(\MM{x}{\tau^x})) = \E(\sigma(\widehat{M}^x(\tau^x)))$ and, by Lemma~\ref{lem:undir-martin}, $\sigma(\widehat{M}^x(t))$ is a supermartingale. Thus by the optional stopping theorem, we have $\E(\sigma(\MM{x}{\tau^{x}})) \le \sigma(x)$ and hence
	\begin{equation}\label{eqn:undir-supp}
	\mbox{for all $x \in V(\undG)$, }\Pr(\MM^{x}\mbox{ fixates}) \le \sigma(x)/k.
	\end{equation}
	
	The remainder of the proof is pure calculation. From~\eqref{eqn:undir-supp}, we have
	\[
	\Pr(\MM\mbox{ fixates}) \le \frac{1}{|V(\undG)|} \sum_{i=0}^3 \frac{\sigma_i|V_i|}{k}.
	\]
	Taking the terms individually, since $k \ge 36r$ and $a = \ceil{7r^2/2}$, we have
	\begin{align*}
	\frac{\sigma_0|V_0|}{k} &= \frac{ar(ak+1)}{k(a^2k-1)} = \frac{r}{k}\left(1+\frac{a+1}{a^2k-1}\right) \le \frac{2r}{k},\\
	\frac{\sigma_1|V_1|}{k} &= a^2k,\\
	\frac{\sigma_2|V_2|}{k} &= 2ra^2 + \frac{ka^3r^2}{a^2k-1} \le 2ra^2 + 2ar^2 \le 3ra^2,\\
	\frac{\sigma_3|V_3|}{k} &= a^2k.
	\end{align*}
	Since $k \ge 36r$, it follows that
	\[
	\Pr(\MM^{x}\mbox{ fixates}) \le \frac{1}{|V(\undG)|}\left(\frac{2r}{k} + 3ra^2 + 2a^2k \right) \le \frac{1}{|V(\undG)|}\left(4ra^2 + 2a^2k \right) \le \frac{19a^2k}{9|V(\undG)|}.
	\]
	We have $|V(\undG)| > |V_1| = a^2k^2$, so $ak \le \sqrt{|V(\undG)|}$. Thus
	\[
	\Pr(\MM^{x}\mbox{ fixates}) \le \frac{19a}{9\sqrt{|V(\undG)|}} < \frac{10r^2}{\sqrt{|V(\undG)|}}.\qedhere
	\]
\end{proof}

Theorem~\ref{thm:undir-suppress} now follows immediately, taking $\mathcal{H} = \{H_{\ceil{7r^2/2},\ceil{36r}} \mid r>1\}$. (Note that fixation probability is increasing in $r$~\cite[Corollary~7]{MoranAbsorb}.)

\section{Improved FPRAS for fixation probability}\label{sec:FPRAS}

In this section, we give an improved FPRAS for the fixation
probability of the Moran process on undirected graphs with constant fitness $r>1$ and
a uniformly-chosen initial mutant, thus proving Theorem~\ref{thm:new-fpras}.

We first give a brief overview of prior work. Given a graph $G$ on $n$ vertices and an error tolerance $\eps \in (0,1)$, D\'iaz \emph{et al.}~\cite{DGMRSS2014} proceed using a simple Monte Carlo method. They choose a suitably large integer $N$ depending on $n$ and $\eps$ and simulate $N$ independent iterations (``\emph{runs}'') of the Moran process to absorption. Writing $X^i$ for the indicator variable of the event that the $i$th run fixates, they then output the sample mean of the $X^i$'s. They did not optimise the algorithm, and they simulated $O(n^8\eps^{-4})$ steps of Moran processes in total. Chatterjee \emph{et al.}~\cite{CIN2017} used the same basic approach, but cleaned up the analysis and only simulated the steps of the process in which the state actually changes, as encapsulated by the following definition.

\newcommand{\actM}{M_{\mathsf{act}}}
\begin{definition}\label{def:actm}
	Let $G$ be a graph, and let $M$ be a Moran process on $G$. Let $\tau_0 = 0$, and let $0<\tau_1 < \tau_2 < \dots < \tau_s$ be the times $t$ at which $M(t) \ne M(t-1)$; thus with probability 1, $s < \infty$ and $\tau_s$ is the time at which $M$ absorbs. We define the \textit{active Moran process} $\actM$ as follows. For $t \le s$, let $\actM(t) = M(\tau_t)$, and for $t > s$, let $\actM(t) = \actM(\tau_s)$.
\end{definition}

Writing $\Delta$ for the maximum degree of the input graph $G$, by simulating active Moran processes, Chatterjee \emph{et al.}\ were able to reduce the total number of steps simulated to $\Theta(n^2\Delta\eps^{-2}\log(n\eps^{-1}))$. The most important part of our algorithm is encapsulated in Theorem~\ref{thm:new-fpras}, which we prove in Section~\ref{sec:new-fpras-main}. This theorem says that writing $\dbar$ for the average degree of $G$, we improve over~\cite{CIN2017} by a factor of roughly $n^2\,/\,\dbar$, simulating $O(\Delta\dbar\eps^{-2}\log(\dbar\eps^{-1}))$ steps of active Moran processes in total.

We have two main sources of improvement. The first is an improved lower
bound on fixation probability: 
the bound used in~\cite{CIN2017} is $f_{G,r} \ge 1/n$, which requires they take $N = \Omega(n\epsilon^{-2})$ to ensure concentration of the output. (Note that with fewer than $1/f_{G,r}$ runs, it is reasonably likely that not even a single run will fixate.) Using Corollary~\ref{cor:Dd} it is easy to improve this to $N = \Theta(\overline{d}\epsilon^{-2})$.

The second main source of improvement is that Theorem~\ref{thm:potential} allows
us to stop a run
once the mutant set has high enough potential that extinction is
overwhelmingly unlikely, rather than waiting until absorption as in~\cite{CIN2017} and~\cite{DGMRSS2014}. A mutant set~$X$ goes extinct with
probability at most $r^{-\phi(X)}$ and we may assume that fixation
will occur when this probability is close enough to zero. 
The algorithm in~\cite{CIN2017} requires $\Theta(n\Delta\log (n\epsilon^{-1}))$ simulation steps per run; early termination based on potential allows us to use only $\Theta(\Delta\log(\dbar\eps^{-1}))$ steps per run (on average).

In Section~\ref{sec:new-fpras-cor}, we discuss turning Theorem~\ref{thm:new-fpras} into an FPRAS. Chatterjee \emph{et al.}~\cite{CIN2017} have already shown that a $T$-step run of an active Moran process can be simulated in $O(\Delta T)$ time with $O(n\dbar)$ preprocessing time. While the cost of preprocessing is irrelevant to their time bounds, it would dominate our running time since it is incurred with each run and our runs are shorter on average. We therefore present an alternative sampling algorithm which removes it. While the $O(\Delta\dbar\eps^{-2}\log(\dbar\eps^{-1}))$-step algorithm of Theorem~\ref{thm:new-fpras} is based on the simple Monte Carlo approach, this does not immediately yield an FPRAS for $f_{G,r}$ with running time $O(\Delta^2\dbar\eps^{-2}\log(\dbar\eps^{-1}))$ because if $G$ is sparse this may be substantially smaller than the time required to read $G$. We therefore proceed more carefully and formally.

We work in the standard word RAM model. Thus, given an input of total size $s$, we assume that all standard arithmetic and randomising operations on $O(\log s)$-sized words can be carried out in $O(1)$ time. We assume that $\eps$ is given in the form $p/q$ for some positive integers $p<q$ given in unary. (This ensures that arithmetic operations on $\eps$ can be carried out in $O(1)$ time, which is natural since an FPRAS has running time polynomial in $1/\eps$ rather than $\log(1/\eps)$.) We require that $G=(V,E)$ be presented in what we call ``augmented adjacency-list form'', in which we are given $|V|$, $|E|$, $\Delta(G)$, and, for each $v \in V$, $d(v)$ together with a list of $v$'s neighbours. (See Definition~\ref{def:aug-adj}.) Note that if $G$ is already in adjacency-list form, then it is trivial to put $G$ into augmented adjacency-list form in $O(n\dbar)$ time. We also assume that $G$ is connected and $|V| \ge 2$ (which can easily be verified in $O(n\dbar)$ time), since otherwise the problem is trivial. Under these assumptions, we will prove the following.

\begin{corollary}\label{cor:new-fpras}
	Let $r>1$. Then there is an FPRAS for $f_{G,r}$ whose running time is $O(\Delta^2\dbar\eps^{-2}\log(\dbar\eps^{-1}))$.
\end{corollary}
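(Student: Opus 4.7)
The plan is to invoke Theorem~\ref{thm:new-fpras}, which reduces the task to simulating $T = O(\Delta\dbar\eps^{-2}\log(\dbar\eps^{-1}))$ steps of active Moran processes split across $N = \Theta(\dbar\eps^{-2})$ independent runs, and to supply a sampling subroutine whose per-run startup cost does not dominate. As observed in Section~\ref{sec:FPRAS}, invoking the $O(\Delta)$-per-step sampler of~\cite{CIN2017} as a black box pays $\Theta(n\dbar)$ to preprocess each run. Since the early-termination rule built into the RAS makes the expected length of a run only $O(\Delta\log(\dbar\eps^{-1}))$ steps, that preprocessing would dominate, so I would design a variant whose initialization cost depends only on the initial mutant set, which here is the single uniformly-chosen vertex $m_0$.

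Concretely, for the current mutant set $M$ I would maintain, for each vertex $v$ that has at least one neighbour of opposite status, the \emph{crossing weight} $s(v) = \sum_{y} 1/d(y)$ summed over $y\in N(v)$ of opposite status to $v$, together with the aggregates $S_1 = \sum_{v\in M} r\,s(v)/d(v)$ and $S_0 = \sum_{v\notin M} s(v)/d(v)$, and a weighted-sampling tree keyed on the set of touched vertices, i.e., those that have ever had positive $s$-value or belonged to $M$. These jointly encode the distribution of the next active step: its source $x$ is drawn with probability proportional to $r\,s(x)/d(x)$ if $x\in M$ and $s(x)/d(x)$ otherwise; its target $y$ is then drawn from the opposite-status neighbours of $x$ with probability $(1/d(y))/s(x)$ via a single $O(\Delta)$ scan of $x$'s adjacency list. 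A status flip at $u$ is processed by updating $s$, $S_0$, $S_1$, and the sampling tree at $u$ and its $O(\Delta)$ neighbours, for a total of $O(\Delta)$ work per step (reusing the amortization of~\cite{CIN2017} to fold the $O(\log n)$ tree operations into the $O(\Delta)$ budget). The early-termination condition depends only on $\phi(M)$, which is trivially maintainable in $O(1)$ per flip.

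The gain over~\cite{CIN2017} is at initialization: because only $m_0$ and its neighbours have nonzero $s$-value in the starting state $M = \{m_0\}$, all of the structures above can be built in $O(\Delta)$ time by a single scan of $N(m_0)$, whose length $d(m_0)$ is obtained in $O(1)$ from the augmented adjacency-list input. To realize this, every $V$-keyed structure must be implemented by a hash table or a Briggs--Torczon lazy-initialization array, so that creating an empty structure for a new run costs $O(1)$ rather than $\Theta(n)$. Summing costs gives $O(\Delta N) = O(\Delta\dbar\eps^{-2})$ for initialization and $O(\Delta T) = O(\Delta^2\dbar\eps^{-2}\log(\dbar\eps^{-1}))$ for step simulation, the latter dominating; combined with Theorem~\ref{thm:new-fpras} this yields an FPRAS of the claimed running time. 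The main obstacle is precisely the data-structure engineering of the previous paragraph: the $O(\Delta)$ per-run initialization bound precludes any direct $V$-indexed array, and one must verify that the weighted-sample, update, and flip operations all genuinely fit in $O(\Delta)$ per step across the full range of $\Delta$ (including the regime $\Delta = o(\log n)$), which requires the same careful amortization already present in the CIN2017 sampler.
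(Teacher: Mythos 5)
Your overall strategy is the same as the paper's: invoke Theorem~\ref{thm:new-fpras}, observe that the $O(n\dbar)$ per-run preprocessing of the sampler in~\cite{CIN2017} would dominate because early termination makes runs short, and replace it with a boundary-tracking data structure, built from hash tables (or lazily initialised arrays), that can be set up in $O(\Delta)$ time from the single initial mutant and updated in $O(\Delta)$ time per active step. However, there is a genuine error in the sampling distribution you propose. Conditioned on a step of the Moran process being active, the probability that $x$ spawns onto $y$ is proportional to $\mathrm{fitness}(x)/d(x)$ for \emph{each} opposite-status neighbour $y$ of $x$; that is, the source is chosen with weight $\mathrm{fitness}(x)\,d_{\mathrm{opp}}(x)/d(x)$, where $d_{\mathrm{opp}}(x)$ is the \emph{number} of opposite-status neighbours, and the target is then uniform over those neighbours. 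Your scheme draws the source with weight $\mathrm{fitness}(x)\,s(x)/d(x)$ where $s(x)=\sum_y 1/d(y)$, and the target with probability $(1/d(y))/s(x)$, giving a joint law proportional to $\mathrm{fitness}(x)/(d(x)\,d(y))$. This is a different Markov chain, biased towards spawning onto low-degree targets, so the Monte Carlo estimator would converge to the wrong fixation probability. The fix is exactly to replace $s(v)$ by the integer count $d_{\mathrm{opp}}(v)$ and to pick the target uniformly from the opposite-status neighbours, which is what the paper does.

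A second, smaller gap is the weighted-sampling tree: an $O(\log n)$-cost query does not fit an $O(\Delta)$ per-step budget when $\Delta=o(\log n)$, and you flag this yourself without resolving it. The paper avoids the issue entirely by rejection sampling: it keeps the boundary vertices in a dynamically resized array supporting a uniform sample in $O(1)$ expected time, draws a uniform boundary vertex $v$ together with its boundary degree $d$, and accepts with probability $d/d_v$ (mutant) or $d/(rd_v)$ (non-mutant); since the acceptance probability is at least $1/(r\Delta)$, this costs $O(\Delta)$ expected time with no tree and no heavy arithmetic on fractional weights. Relatedly, your claim that $\phi(M)$ is ``trivially maintainable in $O(1)$ per flip'' glosses over the exact comparison of the potential against the stopping threshold; the paper handles this by scaling by $\mathrm{lcm}(1,\dots,\Delta)$, an $O(\Delta)$-bit integer, so each update costs $O(\Delta)$ (still within budget) rather than $O(1)$.
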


\subsection{Proof of Theorem~\ref{thm:new-fpras}}\label{sec:new-fpras-main}

We require the following standard Chernoff bound.

\begin{lemma}[{\cite[Corollary~4.6]{MU2005}}]\label{lem:Chernoff}
	Let $Z_1, \dots, Z_N$ be independent Bernoulli trials and let $Z =
	Z_1 + \dots + Z_N$.  For any $\eps\in(0,1)$,
	\[
	\Pr\big(|Z-\E(Z)|\geq \eps \E(Z)\big)
	\leq 2\exp(-\eps^2\E(Z)/3)\,.\pushQED{\qed}\qedhere
	\]
\end{lemma}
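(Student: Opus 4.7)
The plan is to follow the standard Chernoff argument via the moment generating function method, separately handling the upper and lower tails and combining them with a union bound. Write $\mu = \E(Z) = \sum_i p_i$ where $p_i = \Pr(Z_i=1)$. The result will be trivial if $\mu = 0$, so assume $\mu > 0$.

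For the upper tail, I would fix an arbitrary $t > 0$ and apply Markov's inequality to $e^{tZ}$, giving
\[
\Pr(Z \geq (1+\eps)\mu) \leq e^{-t(1+\eps)\mu}\,\E(e^{tZ}).
\]
By independence, $\E(e^{tZ}) = \prod_i \E(e^{tZ_i}) = \prod_i (1 + p_i(e^t-1))$, and using $1+x \leq e^x$ termwise this is at most $\exp(\mu(e^t-1))$. Choosing $t = \log(1+\eps)$ (the standard optimizer) yields the well-known bound
\[
\Pr(Z \geq (1+\eps)\mu) \leq \left(\frac{e^{\eps}}{(1+\eps)^{1+\eps}}\right)^{\mu}.
\]
The main analytic obstacle is to show that the right-hand side is at most $e^{-\eps^2\mu/3}$ for all $\eps \in (0,1)$. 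I would reduce this to the one-variable inequality $\eps - (1+\eps)\log(1+\eps) \leq -\eps^2/3$ on $(0,1)$, which I would verify by defining $g(\eps) = (1+\eps)\log(1+\eps) - \eps - \eps^2/3$, noting $g(0) = 0$, and checking that $g'(\eps) = \log(1+\eps) - 2\eps/3 \geq 0$ on $[0,1)$ by examining $g''(\eps) = 1/(1+\eps) - 2/3$, which is non-negative on $[0,1/2]$ and where any deficit on $[1/2,1)$ is absorbed by the value of $g'(1/2) > 0$. A short direct calculation (or quoting the standard derivation from~\cite{MU2005}) then finishes this step.

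For the lower tail, I would run the symmetric argument using $t < 0$: applying Markov to $e^{-tZ}$ with $t > 0$ gives $\Pr(Z \leq (1-\eps)\mu) \leq e^{t(1-\eps)\mu}\prod_i(1 + p_i(e^{-t}-1))$, and optimizing at $t = -\log(1-\eps)$ yields
\[
\Pr(Z \leq (1-\eps)\mu) \leq \left(\frac{e^{-\eps}}{(1-\eps)^{1-\eps}}\right)^{\mu} \leq e^{-\eps^2\mu/2} \leq e^{-\eps^2\mu/3},
\]
where the penultimate inequality follows from an analogous one-variable check: set $h(\eps) = (1-\eps)\log(1-\eps) + \eps - \eps^2/2$, observe $h(0)=0$, and compute $h'(\eps) = -\log(1-\eps) - \eps \geq 0$ for $\eps \in [0,1)$.

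Combining the two tail bounds by a union bound yields
\[
\Pr(|Z - \mu| \geq \eps\mu) \leq 2\exp(-\eps^2\mu/3),
\]
which is the claimed inequality. The only non-routine step in all of this is the pair of scalar inequalities bounding the exact Chernoff expressions by the simpler Gaussian-style form; everything else is the textbook MGF argument, which is why the result is simply quoted from Mitzenmacher and Upfal rather than proved in the paper.
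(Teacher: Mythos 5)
Your proposal is correct and follows exactly the standard moment-generating-function derivation behind the cited result; the paper itself offers no proof of this lemma, simply quoting Mitzenmacher and Upfal (Corollary~4.6), whose proof is precisely the argument you give (upper tail via $t=\log(1+\eps)$ and the scalar inequality $(1+\eps)\log(1+\eps)-\eps\geq\eps^2/3$, lower tail via $t=-\log(1-\eps)$ and $(1-\eps)\log(1-\eps)+\eps\geq\eps^2/2$, then a union bound). The only slightly informal step is your treatment of $g'$ on $[1/2,1)$, which is most cleanly closed by observing that $g'$ is decreasing there and $g'(1)=\log 2-2/3>0$, so $g'\geq 0$ on all of $[0,1]$.
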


We also require the following bound on the expected change in potential at an active
step due to Chatterjee \emph{et al}. Since their terminology is different from ours, we give a short proof for
completeness.

\begin{lemma}[\cite{CIN2017}]\label{lem:CIN-active}
	Let $\MM$ be a Moran process with fitness~$r>1$ on a connected
	graph~$G=(V,E)$ with at least two vertices and maximum degree at
	most~$\Delta$.  For any $\emptyset\subset X\subset V$, and any
	$t\geq 0$, 
	\[
	\E\big(\phi(\actM(t+1)) - \phi(\actM(t))\mid \actM(t) = X\big)
	\geq \frac{r-1}{(r+1)\Delta}\,.
	\]
\end{lemma}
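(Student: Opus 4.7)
The plan is to relate the expected one-step change of $\phi$ under $\actM$ to the expected one-step change under the underlying Moran process $\MM$, for which we already have Lemma~\ref{lem:drift-works}, and then to bound the ratio by a simple term-by-term estimate.

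First I would compute the probability that a step of $\MM$ from state $X$ actually changes the state. Writing $E^* = E(X, V\setminus X)$ for the set of ordered pairs $(x,y)$ with $x\in X$, $y\in V\setminus X$ and $xy\in E$, and recalling $W(X) = |V| + (r-1)|X|$, the event that $\MM$ changes state at the next step occurs exactly when some $x\in X$ spawns onto some $y\in V\setminus X$ (probability $r/W(X)d(x)$ per pair) or vice versa (probability $1/W(X)d(y)$ per pair). Hence
\[
p_{\text{change}}(X) \;:=\; \Pr\!\big(M(t+1)\neq X\mid M(t)=X\big) \;=\; \frac{1}{W(X)}\sum_{(x,y)\in E^*}\left(\frac{r}{d(x)} + \frac{1}{d(y)}\right).
\]

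Next, I would argue the key identity relating the two processes. Since $\actM(t+1)$ is by definition the first state after $\actM(t)=X$ on which $\MM$ differs from $X$, the conditional distribution of $\actM(t+1)$ given $\actM(t)=X$ is exactly the conditional distribution of $M(t+1)$ given $M(t)=X$ and $M(t+1)\neq X$. Because $X'=X$ contributes $0$ to the expectation of $\phi(M(t+1))-\phi(X)$, this gives
\[
\E\big[\phi(\actM(t+1))-\phi(X)\bigm| \actM(t)=X\big]
\;=\;\frac{1}{p_{\text{change}}(X)}\,\E\big[\phi(M(t+1))-\phi(X)\bigm| M(t)=X\big].
\]
By Lemma~\ref{lem:drift-works} the numerator equals $\frac{r-1}{W(X)}\dr{X,V\setminus X}$, so the $W(X)$ factors cancel and
\[
\E\big[\phi(\actM(t+1))-\phi(X)\bigm| \actM(t)=X\big]
\;=\;\frac{(r-1)\sum_{(x,y)\in E^*}\frac{1}{d(x)d(y)}}{\sum_{(x,y)\in E^*}\left(\tfrac{r}{d(x)}+\tfrac{1}{d(y)}\right)}.
\]

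Finally, the bound will follow term by term. For each $(x,y)\in E^*$, since $d(x),d(y)\le \Delta$,
\[
\frac{r}{d(x)}+\frac{1}{d(y)} \;=\; \frac{rd(y)+d(x)}{d(x)d(y)} \;\le\; \frac{(r+1)\Delta}{d(x)d(y)}.
\]
Summing and plugging in gives the desired lower bound $(r-1)/(r+1)\Delta$. The main conceptual step is the identity relating the active and ordinary one-step expectations; this is essentially immediate from the definition of $\actM$ but needs to be stated carefully. Everything else is a routine calculation, so I do not anticipate any real obstacle.
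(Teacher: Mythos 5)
Your proposal is correct and is essentially the paper's own argument: the paper likewise reduces to $\E[\phi(M(t+1))-\phi(X)\mid M(t+1)\ne X,\, M(t)=X]$ and bounds it via the per-edge inequality $rd(y)+d(x)\le(r+1)\Delta$, merely packaging the computation as a convex combination over boundary edges rather than as your ratio of sums (the two are the same estimate). One cosmetic point: the stated form of Lemma~\ref{lem:drift-works} is an inequality with $W(S)$ replaced by $rn$, so you should cite the exact identity $\E[\phi(M(t+1))-\phi(X)\mid M(t)=X]=\tfrac{r-1}{W(X)}\dr{X,V\setminus X}$ from its proof (or recompute it) rather than the lemma statement itself.
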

\begin{proof}
	It suffices to prove that for all $t \ge 0$,
	\begin{equation}\label{eqn:CIN-active}
	\E\big(\phi(M(t+1)) - \phi(M(t))\mid M(t+1)\ne M(t),\ M(t) = X\big)
	\geq \frac{r-1}{(r+1)\Delta}\,.
	\end{equation}
	Our argument is very similar to that of~\cite[Lemma~5]{DGMRSS2014}. Fix $t \ge 0$. For convenience, let $\calE_1$ be the event that $M(t)=X$. For all $(x,y) \in E(X,V\setminus X)$, let $\calE_{xy}$ be the event that at time $t+1$, either $x$ spawns onto $y$ or $y$ spawns onto $x$. Let $\calE_2$ be the (disjoint) union of all events $\calE_{xy}$, and note that $M(t+1) \ne M(t)=X$ if and only if $\calE_1 \cap \calE_2$ occurs. We have
	\begin{align}\nonumber
	&\E\big(\phi(M(t+1)) - \phi(M(t)) \mid \calE_1 \cap \calE_2\big)\\\label{eqn:CIN-active-2}
	&\qquad\qquad= \sum_{\substack{(x,y)\in\\ E(X,V\setminus X)}}\E\big(\phi(M(t+1)) - \phi(M(t)) \mid \calE_1 \cap \calE_{xy}\big)\Pr(\calE_{xy} \mid \calE_1 \cap \calE_2).
	\end{align}
	For all $(x,y) \in E(X,V\setminus X)$, we have 
	\begin{align*}
	\E\big(\phi(M(t+1)) - \phi(M(t)) \mid \calE_1 \cap \calE_{xy}\big) &= \frac{r/d(x)}{r/d(x)+1/d(y)}\cdot\frac{1}{d(y)} - \frac{1/d(y)}{r/d(x)+1/d(y)}\cdot\frac{1}{d(x)}\\
	&= \frac{r-1}{rd(y)+d(x)} \ge \frac{r-1}{(r+1)\Delta}.
	\end{align*}
	Thus by~\eqref{eqn:CIN-active-2} we obtain~\eqref{eqn:CIN-active}, and hence the result.
\end{proof}

\begin{lemma}\label{lem:fpras-run}
	Let $G=(V,E)$ be an $n$-vertex connected graph with maximum degree at most $\Delta$, where $n \ge 2$, and let $M$ be a Moran process on $G$ with fitness $r>1$. Let $0 < P \le \phi(V)$, and let $\tau = \min\{t \ge 0 \mid \actM(t) = \emptyset \mbox{ or }\phi(\actM(t)) \ge P\}$. Then:
	\begin{enumerate}[(i)]
		\item $\Pr(M\mbox{ goes extinct} \mid \phi(\actM(\tau)) \ge P) \le r^{-P}$;
		\item $\E(\tau) \le 2r(P+1)\Delta/(r-1)$.
	\end{enumerate}
\end{lemma}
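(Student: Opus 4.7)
The plan is to handle the two parts independently. Part~(i) will be a direct application of Theorem~\ref{thm:potential} via the strong Markov property, and part~(ii) will be a drift computation using Lemma~\ref{lem:CIN-active} combined with Lemma~\ref{lem:he-yao}.

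For part~(i), fix any state $X\subseteq V$ with $\phi(X)\geq P$ that is reachable as $\actM(\tau)=X$. Since $\actM$ is a time change of $M$, conditioning on $\actM(\tau)=X$ is equivalent to conditioning on $M(\tau')=X$, where $\tau'$ is the corresponding absolute time; by the strong Markov property the process restarts as a Moran process with initial state~$X$. Applying Theorem~\ref{thm:potential} to this restarted process gives fixation probability at least $(1-r^{-\phi(X)\delta(G)})/(1-r^{-\phi(V)\delta(G)})$, which is at least $1-r^{-\phi(X)\delta(G)}\geq 1-r^{-\phi(X)}\geq 1-r^{-P}$, since $\delta(G)\geq 1$ and $r>1$. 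Averaging over the possible values of $\actM(\tau)$ proves (i).

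For part~(ii), I will apply Lemma~\ref{lem:he-yao} with $Y=\actM$, $\Psi=\phi$, $k_1=P$, $k_2=(r-1)/((r+1)\Delta)$, and $\tau$ as defined in the statement. Condition~(i) of Lemma~\ref{lem:he-yao} holds because $G$ is finite and connected, so from any non-empty proper subset one can reach either $\emptyset$ or~$V$ with positive probability, and the latter has $\phi(V)\geq P$. Condition~(ii) holds because a single active step changes $\actM$ by exactly one vertex, so $\phi(\actM(t+1))\leq \phi(\actM(t))+1/\delta(G)\leq \phi(\actM(t))+1$; hence if $\phi(\actM(t))<P$ then $\phi(\actM(t+1))<P+1$. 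Condition~(iii) is precisely Lemma~\ref{lem:CIN-active}, since while $\tau>t$ we have $\emptyset\subset \actM(t)\subset V$. The conclusion of Lemma~\ref{lem:he-yao} then gives
\[
\E(\tau)\leq \frac{P-\phi(\actM(0))+1}{k_2}\leq \frac{(P+1)(r+1)\Delta}{r-1}\leq \frac{2r(P+1)\Delta}{r-1},
\]
where the last inequality uses $r+1\leq 2r$ for $r>1$.

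Neither part looks hard; the only subtlety is the small detail in~(ii) of checking that a single active step increases $\phi$ by at most~$1$ (so condition (ii) of Lemma~\ref{lem:he-yao} is satisfied with the choice $k_1=P$), and remembering in~(i) that $\phi(\actM(\tau))\geq P$ as a conditioning event requires averaging a Theorem~\ref{thm:potential} bound over the post-hitting distribution rather than applying it directly.
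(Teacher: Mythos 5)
Your proposal is correct and follows essentially the same route as the paper: part~(i) is Theorem~\ref{thm:potential} applied (via the Markov property) to the state reached at time~$\tau$, and part~(ii) is Lemma~\ref{lem:he-yao} with $\Psi=\phi$, $k_1=P$, $k_2=(r-1)/(r+1)\Delta$ and the drift bound of Lemma~\ref{lem:CIN-active}. Your explicit verification of conditions (i) and (ii) of Lemma~\ref{lem:he-yao} is a welcome extra detail the paper leaves implicit.
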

\begin{proof}
	We first prove (i). Suppose $\phi(X) \ge P$. If $P=\phi(V)$, then $M$ cannot go extinct from state $X$; otherwise, by Theorem~\ref{thm:potential}, the probability that $M$ goes extinct from state $X$ is at most $r^{-P}$. 
	
	We now prove (ii). By Lemma~\ref{lem:CIN-active}, for all $t \ge 0$ and all $X \subseteq V$ such that $0<\phi(X)<P$,
	\[
	\E\big(\phi(\actM(t+1)) - \phi(\actM(t)) \mid \actM(t) = X\big) \ge \frac{r-1}{(r+1)\Delta}.
	\]
	It follows by Lemma~\ref{lem:he-yao} applied with $Y_t=\actM(t)$, $\Psi = \phi$, $k_1 = P$, $k_2 = (r-1)/(r+1)\Delta$ and~$\tau$, that
	\[
	\E(\tau) \le (P+1)\frac{(r+1)\Delta}{r-1} \le \frac{2r(P+1)\Delta}{r-1}.\qedhere
	\]
\end{proof}

We are now in a position to prove Theorem~\ref{thm:new-fpras}.

\begin{reptheorem}{thm:new-fpras}
	\statenewfpras
\end{reptheorem}
\begin{proof}
	First note that by a standard argument, to prove the result it suffices to find a RAS which $\eps$-approximates $f_{G,r}$ while simulating $T = O(\Delta\dbar\eps^{-2}\log(\dbar\eps^{-1}))$ steps of active Moran processes \emph{in expectation}. Indeed, given such a RAS, we modify it to output $-1$ if it attempts to simulate more than $27T$ steps, then run it three times and output the median result. By Markov's inequality, each run of the algorithm has failure probability at most $1/3+1/27 = 10/27$. The median only fails to be a valid $\eps$-approximation if at least two of the three runs fail, so the output is correct with probability at least $1 - (\tfrac{10}{27})^3 - 3\cdot(\tfrac{10}{27})^2\cdot\tfrac{17}{27} > \tfrac{2}{3}$. 
	
	We now describe the algorithm. Suppose $G=(V,E)$ has at least two vertices (since otherwise $f_{G,r} = 1$) and that $G$ is connected (since otherwise $f_{G,r} = 0$). Let
	\[
		N = 18\left\lceil\frac{r\overline{d}}{\eps^2(r-1)}\right\rceil, \qquad P=\min\{\ceil{\log_r(6N)},\phi(V)\}.
	\]
	Let $M^1, \dots, M^N$ be independent Moran processes on $G$ with fitness $r$ and uniformly-chosen initial mutants. For each $i \in [N]$, let 
	\[
		\tau^i = \min\{t \ge 0 \mid \actM^i(t) = \emptyset \textnormal{ or }\phi(\actM^i(t)) \ge P\},
	\]
	and let $X^i$ be the indicator variable of the event that $\phi(\actM^i(\tau^i)) \ge P$. Let $Y = \frac{1}{N}\sum_{i=1}^N X^i$. Our algorithm samples each $\actM^i(\tau^i)$ by sampling each state $\actM^i(t)$ for $0 \le t \le \tau^i$, then outputs $Y$.
	
	We now prove correctness. Let $\widehat{X}^i$ be the indicator variable of the event that $M^i$ fixates, and let $\widehat{Y} = \frac{1}{N}\sum_{i=1}^N\widehat{X}^i$; we view each $X^i$ as an estimator for $\widehat{X}^i$, and $Y$ as an estimator for $\widehat{Y}$. By Lemma~\ref{lem:Chernoff}, we have 
	\[
	\Pr\big(|\widehat{Y} - f_{G,r}| \ge \eps f_{G,r}\big) = \Pr\big(|N\widehat{Y} - Nf_{G,r}| \ge \eps Nf_{G,r}\big) \le 2\exp\big({-}\eps^2Nf_{G,r}/3 \big).
	\]
	Corollary~\ref{cor:Dd} shows that $f_{G,r} \ge (r-1)/2r\overline{d}$. Since $N \ge 18r\overline{d}/\eps^2(r-1)$, it follows that $\Pr(|\widehat{Y} - f_{G,r}| \ge \eps f_{G,r}) \le 2e^{-3} < 1/6$. Let $\calE$ be the event that $\widehat{X}^i \ne X^i$ for some $i \in [N]$. Then by a union bound, it follows that
	\begin{equation*}
		\Pr\big(|Y - f_{G,r}| \ge \eps f_{G,r} \big) \le \Pr\big(Y \ne \widehat{Y}\big) + \Pr\big(|\widehat{Y}-f_{G,r}| \ge \eps f_{G,r}\big) \le \Pr(\calE) + 1/6.
	\end{equation*}
	If $P = \phi(V)$ then $\Pr(\calE) = 0$, since $\actM^i(\tau^i) = \phi(V)$ if and only if $\actM^i$ fixates. Otherwise, $P = \ceil{\log_r(6N)}$, so by Lemma~\ref{lem:fpras-run}(i) and a union bound we have $\Pr(\calE) \le Nr^{-P} \le 1/6$. Thus $Y$ is a valid $\eps$-approximation of $f_{G,r}$ with probability at least 2/3, as required.
	
	Finally, we bound the expected number of steps simulated. By Lemma~\ref{lem:fpras-run}(ii), for all $i \in [N]$ we have $\E(\tau^i) \le 2r(P+1)\Delta/(r-1)$. Thus the expected total number of steps simulated is at most
	\[
		\frac{2r(P+1)\Delta}{r-1}\cdot N = O(\Delta N\log N) = O\left(\dbar\Delta\eps^{-2}\log(\dbar\eps^{-1})\right),
	\]
	as required.
\end{proof}

\subsection{Proof of Corollary~\ref{cor:new-fpras}}\label{sec:new-fpras-cor}

We first formally define the form of our input graph $G$. We assume that it has at least two vertices (since otherwise $f_{G,r} = 1$ for all $r$) and is connected (since otherwise $f_{G,r} = 0$ for all $r$).

\begin{definition}\label{def:aug-adj}
	A connected graph $G$ in \emph{augmented adjacency-list form} is presented as a tuple $(n,m,\Delta, \prod_{v \in [n]}(d_v,L_v))$. Here:
	\begin{itemize}
		\item $n \ge 2$ is the number of vertices of $G$, and $[n]$ is the vertex set of $G$.
		\item $m \ge 1$ is the number of edges of $G$.
		\item $\Delta$ is the maximum degree of $G$.
		\item For each $v \in [n]$, $d_v \ge 1$ is the degree of $v$ in $G$.
		\item For each $v \in [n]$, $L_v$ is a non-empty list of $v$'s neighbours in $G$, in arbitrary order.
	\end{itemize}
\end{definition}

\newcommand{\Xx}{B}
\newcommand{\Ax}{A_B}
\newcommand{\Hx}{H_B}
\newcommand{\Nm}{N_\mathsf{mut}}
\newcommand{\Hm}{H_{\mathsf{mut}}}
\newcommand{\dbd}{d_\mathsf{bdry}}
\newcommand{\Pot}{\Phi}
\newcommand{\lcm}{D}

Throughout the rest of the section, we will assume that our input graph $G$ is given in this form, and write $\dbar = 2m/n$, $N = 18\ceil{r\overline{d}/\eps^2(r-1)}$, and $P = \min\{\ceil{\log_r(6N)},\phi(V)\}$ (as in the proof of Theorem~\ref{thm:new-fpras}). 

While simulating an active Moran process $\actM$, we will need to be somewhat careful about bounding the number of arithmetic operations used to keep track of $\phi(\actM(t))$, a sum of fractions with denominators in $[\Delta]$. For this reason, we first precompute the least common multiple $\lcm$ of $1, 2, \dots, \Delta$. It is not hard to show that $\lcm$ has $O(\Delta)$ bits and can be computed in $O(\Delta^2)$ time, but we give a proof below for completeness. We also precompute $N$ and $P' = \ceil{\log_r(6N)}$, which takes $O(\log N) = O(\log(\Delta/\eps))$ time.

Now, let $r>1$, let $M$ be a Moran process on $G$ with fitness $r$, and let $\tau = \min\{t \ge 0 \mid \actM(t) = \emptyset \textnormal{ or }\phi(\actM(t)) \ge P\}$. The algorithm of Theorem~\ref{thm:new-fpras} requires us to sample $\actM(\tau)$, and to determine whether or not $\actM(\tau) = \emptyset$. We will give a randomised algorithm to do this in expected time $O(\Delta)$ per step simulated. Given this, it is immediate that the algorithm of Theorem~\ref{thm:new-fpras} yields a RAS with expected running time $O(\Delta^2\dbar\eps^{-2}\log(\dbar\eps^{-1}))$. By derandomising the running time using Markov's inequality, as in the proof of Theorem~\ref{thm:new-fpras}, we obtain the FPRAS of Corollary~\ref{cor:new-fpras}.

We say an edge $xy \in E(G)$ is \emph{on the boundary} at time $t \ge 0$ if $x \in \actM(t)$ and $y \notin \actM(t)$ or vice versa. For all $v \in [n]$ and all $t \ge 0$, $\dbd(v,t)$ denotes the number of boundary edges incident to $v$ at time $t$ and $\Xx(t)$ denotes the set of all $v \in [n]$ with $\dbd(v,t) > 0$.

\textbf{Precomputing $\boldsymbol{\lcm}$:} Let $p_1, \dots, p_k$ be the primes in $[\Delta]$, and let $T_i = p_i^{\floor{\log_{p_i}(\Delta)}}$ for all $i \in [k]$; thus $\lcm = \prod_{i=1}^k T_i$. We have $T_i \le \Delta$ for all $i$, and by the prime number theorem, $k = O(\Delta/\log \Delta)$. Thus $\lcm = \Delta^{O(\Delta/\log\Delta)}$, and so $\log \lcm = O(\Delta)$. Use the sieve of Eratosthenes to compute $p_1, \dots, p_k$ in $O(\Delta^2)$ time. Then compute each term $T_i$; since $T_i \le \Delta$ for all $i$, this requires $O(\Delta\log \Delta)$ operations on $O(\log\Delta)$-bit numbers. Finally, multiply the $T_i$'s to form $\lcm$; since $\log \lcm = O(\Delta)$, this requires $O(\Delta)$ multiplications of $O(\Delta)$-bit numbers. Recall that in the word RAM model we can perform arithmetic operations on $O(\log \Delta)$-sized words in $O(1)$ time, and hence arithmetic operations on $O(\Delta)$-bit numbers in $O(\Delta)$ time (see e.g.\ Knuth~\cite[Chapter 4.3]{TAOCP}). We have therefore used $O(\Delta^2)$ time in total, as claimed.

\textbf{Defining the data structure:} In order to simulate $\actM$, we will build a data structure to represent its state and potential. Let $\Nm$ and $\Pot$ be integer variables with $0 \le \Nm \le n$ and $0 \le \Pot \le n\lcm$. Let $\Hm$ and $\Hx$ be dynamically-sized hash tables whose keys are in $[n]$; the keys in $\Hm$ have no associated data, and the keys in $\Hx$ map to data entries in $[3n]$. Let $\Ax$ be a dynamically-sized array whose entries are in $\{\mathsf{unoccupied}\} \cup ([n]\times[\Delta])$; we say $\Ax[i]$ is \emph{unoccupied} if $\Ax[i] = \mathsf{unoccupied}$ and \emph{occupied} otherwise. We resize $\Ax$ to ensure that if it is non-empty, then it is between one-third and two-thirds occupied. (Thus when it exceeds two-thirds occupation, we multiply its size by~$4/3$ and, when it falls below one-third full, we multiply its size by~$2/3$ so, in both cases, the new array is very close to half-occupied.)

Over uniform choices of suitable hash functions, any $N$ insertion, deletion and update operations on $\Hm$ and $\Hx$ require $O(N)$ expected time. Moreover, if $\Ax$ is non-empty, it supports sampling a uniform occupied or unoccupied slot in $O(1)$ expected time. Thus accounting for resizing, any $N$ operations taken from insertion, deletion, update and uniform sampling (of occupied or unoccupied elements of $\Ax$) require $O(N)$ expected time. 

As we simulate $\actM$, we will maintain the following invariants at all times $t \ge 0$.
\begin{enumerate}[({I}1)]
	\item $\Nm = |\actM(t)|$.
	\item $\Pot = \phi(\actM(t))\cdot \lcm$.
	\item Each $v\in [n]$ is a key in $\Hm$ if and only if $v \in \actM(t)$.
	\item Each $v\in [n]$ is a key in $\Hx$ if and only if $v \in \Xx(t)$.
	\item If $\Hx$ maps $v\in [n]$ to $i$, then $\Ax[i]$ contains the pair $(v,\dbd(v,t))$. Conversely, if $\Ax[i]$ is occupied with some value $(v,d)$, then $\Hx$ maps $v$ to $i$ and $d = \dbd(v,t)$. 
\end{enumerate}

\textbf{Initialisation: } We first choose $v_0 \in [n]$ uniformly at random, taking $\actM(0) = \{v_0\}$, and initialise our data structure accordingly. Set $\Nm$ to $1$, so (I1) holds. Set $\Pot$ to $\lcm/d_{v_0}$ (which is an integer), so (I2) holds. Add $v_0$ to $\Hm$, so (I3) holds. To guarantee (I4) and (I5), first insert $(v_0,d_{v_0})$ into $\Ax$ and add a corresponding entry to $\Hx$, then for each $w \in L_{v_0}$, insert $(w,1)$ into $\Ax$ and add a corresponding entry to $\Hx$. Note that since $\log \lcm = O(\Delta)$, we can compute $\lcm/d_{v_0}$ in $O(\Delta)$ arithmetic operations, and so we have used $O(\Delta)$ operations in total. 

\textbf{Simulating a step: } Recall that $\tau = \min\{t \ge 0 \mid \actM(t) = \emptyset \textnormal{ or }\phi(\actM(t)) \ge P\}$, $P' = \ceil{\log_r(6N)}$, and $P = \min\{P',\phi(V)\}$. Suppose that $\Nm$, $\Pot$, $\Hm$, $\Hx$ and $\Ax$ satisfy (I1)--(I5) at some time $0 \le t \le \tau$. We have $t = \tau$ if and only if $\Nm \in \{0,n\}$ or $\Pot \ge P'\lcm$, which we can test in $O(\Delta)$ time. If $t = \tau$ then we are done, so suppose $t < \tau$. Then we must simulate a step of $\actM$ to sample the random variable $\actM(t+1)$ and update our data structures accordingly. Proceed as follows.

\begin{itemize}
	\item {\bf Decide from which vertex to spawn:}
	\begin{enumerate}[(i)]
		\item Choose an occupied entry $(v,d)$ of $\Ax$ uniformly at random, so that $v$ is a uniformly random vertex in $\Xx(t)$ and $d = \dbd(v,t)$.
		\item If $v$ is a key in $\Hm$, so that $v \in \actM(t)$, then with probability $1 - d/d_v$ discard $v$ and return to (i).
		\item If $v$ is not a key in $\Hm$, so that $v \notin \actM(t)$, then with probability $1 - d/rd_v$ discard $v$ and return to (i).
		\item Choose to spawn from $v$.
	\end{enumerate}
	Note that for all $w \in \Xx(t)\cap \actM(t)$, we have
	$$
	\Pr(v = w) = \frac{\frac{\dbd(w)}{d_w}}{\sum_{x \in \Xx(t)\cap
	\actM(t)}\frac{\dbd(x)}{d_x} + \sum_{x \in \Xx[t] \setminus
	\actM(t)}\frac{\dbd(x)}{rd_x}}.$$
	Multiplying by~$r$ and using the fact that
	  $ \dbd(x)=0$
	if $x\notin\Xx(t)$, this is equivalent to
	$$\Pr(v = w) = \frac{\frac{r\dbd(w)}{d_w}}{\sum_{x \in
	\actM(t)}\frac{r\dbd(x)}{d_x} + \sum_{x \in [n] \setminus
	\actM(t)}\frac{\dbd(x)}{d_x}},$$
	which is what we require.
	Similarly, the probability of choosing any $w \in \Xx(t) \setminus \actM(t)$ is correct, and the probability of choosing any $w\in [n] \setminus B(t)$ is zero as required. Each iteration of (i)--(iv) takes $O(1)$ operations, and $d/d_v > d/rd_v \ge 1/r\Delta$ for all $v \in [n]$, so at most $O(\Delta)$ iterations are required in expectation.
	
	\item {\bf Decide onto which vertex to spawn:}
	If $v$ is a key in $\Hm$, then construct $\{w \in L_v \mid w \notin \Hm\}$, the list of non-mutant neighbours of $v$, and choose one uniformly at random to spawn onto. Otherwise, construct $\{w \in L_v \mid w \in \Hm\}$ and choose an element uniformly at random to spawn onto. In either case, $O(\Delta)$ operations are required.
	
	\item {\bf Update the data structures:}
	Suppose that we have decided to spawn from a mutant $v$ onto a non-mutant $w$ (spawning from a non-mutant is similar).
	We first increment $\Nm$ and add $\lcm/d_w$ to $P$; this maintains (I1) and (I2). We next add $v$ to $\Hm$, maintaining (I3). It remains to maintain (I4) and (I5). 
	
	Construct the set $Y_1 = \{x \in L_w \mid w \in \Hm\}$ of mutant neighbours of $w$ and the set $Y_2 = \{x \in L_w \mid w \notin \Hm\}$ of non-mutant neighbours of $w$. For each $x \in Y_1$, we have $x \in \Xx(t)$; look up $x$ in $\Hx$ to obtain an index $a_x$, and let $(x,d)$ be the entry in $\Ax[a_x]$. If $d > 1$, then update $\Ax[a_x]$ to $(x,d-1)$, and otherwise delete $\Ax[a_x]$ from $\Ax$ and $x$ from $\Hx$. For each $x \in Y_2$, check whether $x$ is a key in $\Hx$. If it is, with value $a_x$, update the value of $\Ax[a_x]$ from $(x,d)$ to $(x,d+1)$. Otherwise, add the entry $(x,1)$ to a uniformly random unoccupied slot $a_x$ of $\Ax$, and add $x$ to $\Hx$ as a key with entry $a_x$. Finally, look up $w$ in $\Hx$ to obtain an index $a_w$, and let $(w,d)$ be the entry in $\Ax[a_w]$. If $d=d_w$, then delete $\Ax[a_w]$ from $\Ax$ and $w$ from $\Hx$. Otherwise, update $\Ax[a_w]$ to $(w,d_w-d)$. It is clear that in total we have used $O(\Delta)$ operations.
\end{itemize}

Thus we have presented an algorithm to sample from $\actM(\tau)$ in $O(\Delta)$ time per step simulated, and $\actM(\tau) = \emptyset$ if and only if $\Nm = 0$. Corollary~\ref{cor:new-fpras} therefore follows.

\bibliographystyle{plain}
\bibliography{\jobname}

\end{document}